\newcommand{\be}{\begin{equation}}
\newcommand{\ee}{\end{equation}}
\newcommand{\bea}{\begin{eqnarray}}
\newcommand{\eea}{\end{eqnarray}}
\newcommand{\bee}{\begin{eqnarray*}}
\newcommand{\eee}{\end{eqnarray*}}
\newcommand{\BB}{\mathcal{B}}
\newcommand{\E}{\mathbb{E}}
\newcommand{\Rd}{{\mathbb{R}^2}}
\newcommand{\Rm}{{\mathbb{R}}}
\newcommand{\Pro}{\mathbb{P}}
\newcommand{\e}{\varepsilon}
\newcommand{\eps}{\varepsilon}
\def\fref#1{{\rm (\ref{#1})}}
\newcommand{\calE}{\mathcal{E}}
\newcommand{\calC}{\mathcal{C}}
\newcommand{\calH}{\mathcal{H}}
\newcommand{\calI}{\mathcal{I}}
\newcommand{\calJ}{\mathcal{J}}
\newcommand{\calL}{\mathcal{L}}
\newcommand{\calP}{\mathcal{P}}
\newcommand{\calQ}{\mathcal{Q}}
\newcommand{\calS}{\mathcal{S}}
\newcommand{\calX}{\mathcal{X}}
\newcommand{\calT}{\mathcal{T}}
\newcommand{\calK}{\mathcal{K}}
\newcommand{\bx}{\mathbf{x}}
\newtheorem{defi}{Definition}[section]
\newtheorem{thm}{Theorem}[section]
\newtheorem{prop}{Proposition}[section]
\newtheorem{cor}{Corollary}[section]
\newtheorem{lem}{Lemma}[section]
\title{Fractional White\,-\,Noise Limit and Paraxial Approximation for Waves in Random Media}
\author{Christophe Gomez\thanks{Aix Marseille Universit{\'e}, CNRS, Centrale Marseille, I2M, UMR 7373, 13453 Marseille, France, christophe.gomez@univ-amu.fr.} \quad and \quad Olivier Pinaud\thanks{Department of Mathematics, Colorado State University, Fort Collins, CO, USA, pinaud@math.colostate.edu} }
\date{}
\begin{document}

\maketitle

\begin{abstract}
This work is devoted to the asymptotic analysis of high frequency wave propagation in random media with long-range dependence. We are interested in two asymptotic regimes, that we investigate simultaneously: the paraxial approximation, where the wave is collimated and propagates along a privileged direction of propagation, and the white-noise limit, where random fluctuations in the background are well approximated in a statistical sense by a fractional white noise. The fractional nature of the fluctuations is reminiscent of the long-range correlations in the underlying random medium. A typical physical setting is laser beam propagation in turbulent atmosphere. Starting from the high frequency wave equation with fast non-Gaussian random oscillations in the velocity field, we derive the fractional It\^o-Schr\"odinger equation, that is a Schr\"odinger equation with potential equal to a fractional white noise. The proof involves a fine analysis of the backscattering and of the coupling between the propagating and evanescent modes. Because of the long-range dependence, classical diffusion-approximation theorems for equations with random coefficients do not apply, and we therefore use moment techniques to study the convergence. 
\end{abstract}

\section{Introduction} \label{intro}

Problems related to wave propagation in random media are encountered in many applications that range from imaging the earth's crust in geophysics \cite{claerbout}, to communication in underwater acoustics \cite{trappert} or laser beam propagation in the atmosphere \cite{fannjiang,strohbehn}. The random medium often models a complex medium for which only partial information is known. Typically, the large-scale variations of the medium (i.e. the background) are known, while the small-scale fluctuations (i.e. the heterogeneities) might be too difficult to estimate and are considered as random. 

In these applications, waves are generally in a high frequency regime, with frequencies sufficiently high so that the interaction of the wave with the fine structures of the medium cannot be ignored. From both the theoretical and numerical perspectives, describing the cumulative effects of this interaction is a very challenging task. There is therefore a need for an approximate, but still accurate, description of the wave propagation. The common strategy to attack the problem is based on the high frequency assumption and on asymptotic theories of random ODEs or PDEs. There is now a vast literature on this matter, and we refer to \cite{book} and the references therein for more details.

In this work, we are interested in two particular asymptotic limits, that we intend to perform at once. The first one is the paraxial (parabolic) approximation, which is valid when the wave has a privileged direction of propagation and is sufficiently collimated. In the frequency picture, the $d-$dimensional Helmholtz equation is reduced to the $(d-1)$-dimensional Schr\"odinger equation where the time variable plays the role of the variable along the axis of propagation. There is a significant gain since a boundary value problem is replaced by an evolution problem with lower spatial dimensions. In homogenous media, the derivation of the paraxial wave equation is relatively straightforward, and is based on asymptotics of the principal symbol of the operator describing the propagation (here that of the scalar wave equation). The situation is much more complex when the medium is heterogeneous since the interaction with the medium generates some backscattering. One has then to resort to some particular features of the medium, for instance small amplitude of the fluctuations \cite{bamberger}, or oscillatory behavior \cite{bailly,garnier0}, to justify the approximation. 


The second type of limit is of probabilistic nature, and depends on the correlation structure of the fluctuations. After the high frequency wave has propagated over sufficiently large distances in the random medium, it is natural to expect some sort of universal statistical behavior to describe the multiple scattering on the wavefield. We are naturally thinking here of applications of the (non-)central limit theorem. There is also a vast literature on this subject, see for instance \cite{book,taqqu1,taqqu}. In our context of the paraxial approximation, the random medium fluctuations are then asymptotically statistically equivalent to a white noise in the main direction of propagation (say $z$). This holds when the medium has sufficiently fast decaying correlations. The limiting model, known as the  It\^o-Schrodinger equation, is studied mathematically in \cite{dawson}.

When the starting point is the wave equation, or equivalently the Helmholtz equation, there are, to the best of our knowledge, only two references on the coupled paraxial-white noise limit: in \cite{bailly}, the authors consider the random Helmholtz equation in layered media and derive the It\^o-Schrodinger equation. Layered media are a nice setting since the dynamics is essentially one-dimensional and the transverse variables play little role. In this latter work, fluctuations of the medium in the transverse direction are too slow to have a significant effect, and the resulting white noise only depends on $z$. The cumulative effect of the random fluctuations on the wave is then a random phase shift driven by a Brownian motion. In \cite{garnier0}, the medium is much more general, and sufficiently complex to lead to a white noise in $z$ with transverse dependence. The cumulative effect is then more complicated and not just a phase shift.

These two references assume that the medium has short-range correlations. It is not always the case in practice, as is pointed out in \cite{dolan,holm,sidi} for geophysical problems, wave propagation in turbulent atmosphere, or medical imaging. This has then stimulated recent mathematical works on wave propagation in random media with long-range dependence \cite{bal2,garnier,gomez,gomez3,gomez4,marty,marty2}. It is shown there that the wave dynamics in such media can be in great contrast with that of waves in media with rapidly decaying correlations. For instance, anomalous diffusion phenomena were exhibited in \cite{garnier,gomez,gomez3}. 

The goal of this paper is to derive rigorously and simultaneously the paraxial and the white-noise approximations in the context of random media with slowly decaying correlations in the $z$ direction. Heuristically, the limiting classical white noise is replaced by a fractional white noise, leading to the \emph{fractional It\^o-Schr\"odinger equation}. From the mathematical viewpoint, this is a significantly more difficult problem than the ones addressed in \cite{bailly,garnier0}. Indeed, in the long-range case, the martingale techniques of \cite{bailly,garnier0} and standard diffusion-approximation theorems for ODEs with random coefficients do not apply. There is essentially no general theory in this long-range setting, and we are thus restricted to the use of moments techniques which are fairly involved analytically. Note as well that the existence theory for the fractional It\^o-Schr\"odinger equation is not direct, which leads to some additional difficulties in the asymptotic theory.

Let us be more specific now and introduce the scalar wave equation in the physical space $\mathbb{R}^{3}$ (the setting could be extended to $\Rm^d$, $d \geq 2$, since the techniques used in the paper are dimension independent),
\begin{equation}\label{waveeq}
\Delta P-\frac{1}{c^2(z,x)}\partial^2_t P = \nabla \cdot \mathbf{F}\qquad (t,z,x)\in (0,+\infty)\times\mathbb{R}\times\Rd,
\end{equation}
equipped with initial conditions 
\[P(t=0,z,x)=\partial_t P(t=0,z,x)=0\qquad \forall (z,x)\in \Rm\times \Rm^2.\] 
Above, the $z$-direction will play the role of the main propagation axis, $\Delta=\partial^2_z+\Delta_x$ is the Laplacian, and $\Delta_x$ the Laplacian with respect to the transverse variable $x$. Here, the forcing term $\mathbf{F} (t,z,x)$ has the form
\[  \textbf{F}(t,z,x):=f_0\Big(\frac{t}{\lambda_0},\frac{x}{r_0}\Big)\delta(z-L_S)\textbf{e}_{z},\]
where $\delta$ is the Dirac measure, $\textbf{e}_{z}$ is the unit vector pointing in the $z$-direction, and then models a source located in the plane $z=L_S<0$, emitting a wave in the $z$-direction with profile $f_0$ (see Figure \ref{dessin1}), central wavelength $\lambda_0$, and transverse width $r_0$. The divergence form of the source term is standard in linear acoustics where $P$ would represent the pressure wave, see \cite{garnier0} for instance. Other types of sources could be considered with minor modifications. 
In \eqref{waveeq}, the velocity field is assumed to be given by
\begin{figure}
\begin{center}
\includegraphics*[scale=0.3]{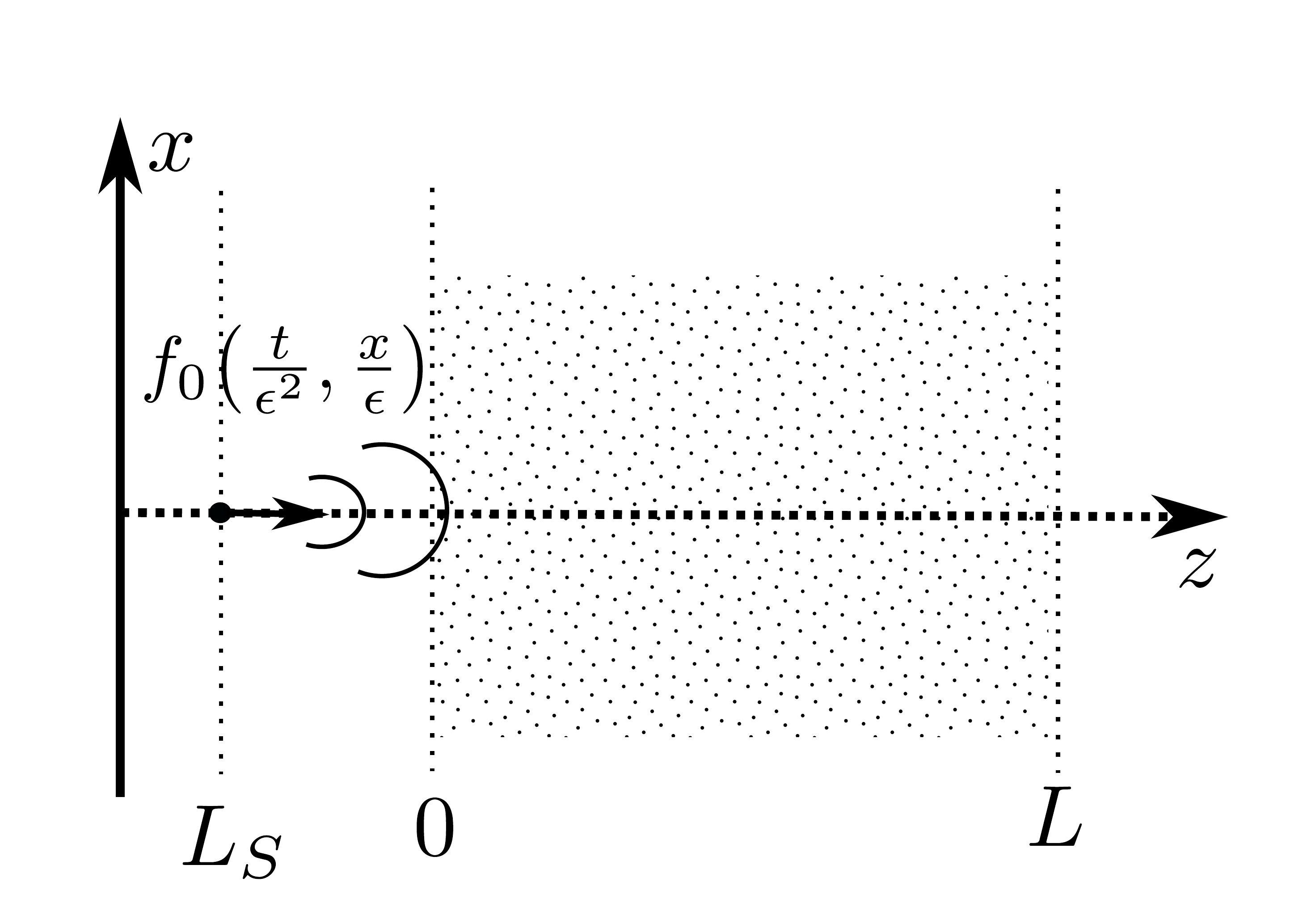}
\end{center}
\caption{\label{dessin1} Illustration of the wave propagation model.}
\end{figure}
\[\begin{split}
\frac{1}{c^2(z,x)} & :=  \left\{ \begin{array}{ccl} 
                                            \frac{1}{c_0^2}\big[1+ \sigma V\big(\frac{z}{l_c},\frac{x}{l_c}\big)\big]& \text{ if }  & z\in[0, L_z] \\
                                             \frac{1}{c_0^2} & \text{ if }  &z\in (-\infty,0)\cup(L_z,+\infty)\\
                                          \end{array} \right.  \quad\text{and}\qquad x\in\Rd,
\end{split}\]
where $c_0$ is the background velocity (constant for simplicity), and the random field $V(z,x)$, with a stationary covariance, models fluctuations around $c_0$ in the slab $(0,L_z) \times \Rd$. The parameters $\sigma$ and $l_c$ represent the amplitude and the correlation length of the fluctuations. The main assumption on $V$ is that it satisfies the long-range property in the $z$-direction, which is translated mathematically into a bounded non-integrable autocorrelation function which decreases at infinity only as  
\[\E[V(z+s,x)V(s,y)] \underset{z\to+ \infty}{\sim} \frac{c_{\mathfrak{H}}}{z^{\mathfrak{H}}}\qquad\text{with} \qquad \mathfrak{H} \in (0,1),\]
and then
\be  \label{longRV}  \int_0^{+\infty}\big\vert \E[V(z+s,x)V(s,y)] \big\vert dz=+\infty.\ee
We introduce now the scalings, which are similar to these of \cite{garnier0} where fluctuations with rapidly decaying correlations are considered. We assume first that the correlation length $l_c$ is small compare to the overall distance of propagation in the random medium  $L_z$, and we denote their ratio by
\[\e:=\frac{l_c}{L_z}\ll1.\]
Second, we assume that the transverse width $r_0$ of the source and the correlation length $l_c$ are of the same order,
\[r_0\sim l_c.\]
This assumption allows for a full interaction of the wave with the transverse fluctuations of the medium, leading to a non-trivial transverse behavior. Third, we assume that the central wavelength $\lambda_0$ is small compare to $L_z$ by taking
\[\e^2\sim\frac{\lambda_0}{L_z}.\]
This corresponds to a high frequency regime. With these choices, the Rayleigh length of the beam is of order of  the propagation distance $L_z$. The Rayleigh length is defined as the distance from the beam waist to the place where its cross-section is doubled by diffraction. Hence, the beam is still collimated at the exit of the random slab, which is a crucial assumption for the validity of the paraxial approximation. In homogeneous media, the Rayleigh length is of order $r^2_0/\lambda_0$. Therefore, we have for our problem
\[\frac{\lambda_0}{L_z}\sim \frac{\lambda_0}{r_0^2} \frac{r^2_0}{L_z}\sim \frac{r_0^2}{L^2_z}\sim\e^2.\]
This is a parabolic scaling, where the wave oscillations in the $z$ direction are much faster than in the transverse direction, which then leads to the paraxial wave equation. From now on, we consider the propagation distance $L_z$ as our reference scale of order 1, and rescale parameters as
\[ L_z=L,\qquad\lambda_0=\e^2 \qquad l_c=\e,\qquad\text{and}\qquad r_0=\e.\] 
Finally, we consider
\[\sigma=\e^{s}\qquad\text{with}\qquad s=2-\mathfrak{H}/2, \quad \mathfrak{H} \in (0,1),\]
where $\mathfrak{H}$ is related to the decay of the correlation function of $V$ in the variable $z$ as defined before. This specific choice of $s$ leads to a nontrivial asymptotic regime in the limit $\e$ goes to $0$. As a result, the wave equation \eqref{waveeq} becomes
\begin{equation}\label{waveeq2}
\Delta P-\frac{1}{c^2_0}\Big(1+\e^s V\Big(\frac{z}{\e},\frac{x}{\e}\Big)\mathbf{1}_{(0,L)}(z)\Big)\partial^2_t P =f_0\Big(\frac{t}{\e^2},\frac{x}{\e}\Big)\delta'(z-L_S).
\end{equation}
The main result of the paper is the asymptotic description of the pulse front exiting from the random section at $z=L$ and around the expected arrival time, which is defined by
\begin{equation}\label{pulse}
 P^\e_L(t,x)=P\Big(\lambda_0 t+\frac{L_z-L_S}{c_0},L_z,r_0 x \Big)= P\Big( \e^2 t+ \frac{L-L_S}{c_0}, L , \e x \Big).
\end{equation}
 Here, the solution is rescaled around the arrival time, and at the transverse scale of the source profile. We will show in this work that the process $P^\e_L$ converges in law in $\calC^0((0,+\infty),L^2(\Rd))$ to a process 
\[p^0_L(t,x):= \int    e^{-i\omega t} \Psi_\omega(L,x) d\omega,\]
where $\Psi_\omega$ satisfies the following fractional It{\^o}-Schrödinger equation
\begin{equation}\label{ItoSch0}
d\Psi_\omega(z,x)=\frac{i}{2k_\omega}\Delta_x \Psi_\omega+ik_\omega \Psi_\omega(z,x) dW_H (z,x)=0,
\end{equation}
where $k_\omega:=\omega/c_0$ is the wavenumber,
 \[\Psi_\omega(0,x):=\frac{1}{2}e^{-i\Delta_x L_S/(2k_\omega)}\check{f}_0(\omega,x),\] 
 with $e^{-iL_S \Delta_x /(2k_\omega)}$ the semigroup of the free Schr\"odinger equation, and with the convention 
\be \label{TFT} \check{f}(\omega)=  \frac{1}{2\pi} \int f(t)e^{i\omega t} dt\qquad\text{and}\qquad f(t)=\int \check{f}(\omega)e^{-i\omega t} d\omega.\ee 
Above, $W_H$ is a fractional field in $z$ with Hurst index 
\[H:=1-\mathfrak{H}/2\in (1/2,1)\]
and will be defined properly further, along with the nature of the stochastic integral. This latter integral is of pathwise type, and can be seen as a fractional equivalent to the It\^o-Stratonovich integral for standard Brownian motions. The function $\Psi_\omega$ describes the pulse deformation, in the paraxial approximation, due to the interaction of the wave with the random medium in the section $(0,L)$. The initial condition $\Psi_\omega(0)$ is simply the free propagation of the source from $z=L_S$ to $z=0$ in the paraxial approximation. In \fref{ItoSch0}, backscattering is neglected, leading to an initial value problem. As was already observed in different contexts in \cite{bal2,marty} for instance, the long-range nature thus leads to a different statistical description of the wave than in the classical mixing case of \cite{garnier0}. In the latter reference, waves are in the regime of the central limit theorem, and the resulting Schr\"odinger equation is driven by a standard Brownian field. Here, we are in a different regime where $\eps^{-\mathfrak{H}/2} \int_0^zV(u/\eps,x)du$ converges in law to a fractional field in $z$ with Hurst index $H=1-\mathfrak{H}/2 \in (1/2,1)$. A important difficulty in this work is then to justify that a similar type of limit holds for solutions to \fref{waveeq2}. There are in addition two other main technical points: showing that the coupling with the evanescent modes is negligible; these modes exist because of the non-trivial transverse frequency content of the random medium; and showing that backscattering can be ignored.

The paper is organized as follows. In Section \ref{main}, we introduce the various assumptions, define the stochastic integral and the notion of solution for \eqref{ItoSch0}, and state our main results. In Section \ref{outline}, we give an outline of the proof. The proof is then broken down into the subsequent sections. Section \ref{exist} concerns the derivation of some important estimates. Section \ref{RP} is devoted to central technical results about expectation and limits of iterated integrals. Section \ref{secevaprop} addresses the evanescent modes and Section \ref{proofnoback} the backscattering. Section \ref{proofItoSc} is devoted to the convergence to the fractional It\^o-Schrodinger equation. Section \ref{proofpulse} finalizes the proof of the main theorems, and Section \ref{proofL2p} addresses an estimate introduced further.

\textbf{Acknowledgment.} O. Pinaud acknowledges support from NSF CAREER grant DMS-1452349.

\section{Preliminaries and main results} \label{main}
Throughout this work, we will use the following conventions for the Fourier transform: $\check{f}$ denotes the Fourier transform w.r.t. the variable $t$ as in \fref{TFT}, and $\hat{f}$ that w.r.t. $t$ and $x$,
\[
\hat{f}(\omega,\kappa)=\frac{1}{(2\pi)^3} \int f(t,x)e^{i(\omega t+\kappa \cdot x)} dtdx\qquad\text{with}\qquad f(t,x)=\int  \hat{f} (\omega,\kappa)e^{-i(\omega t+\kappa \cdot x)}d\omega d\kappa.
\]

\subsection{Assumptions}

\paragraph{The source term.} We suppose that $\hat{f}_0(\omega,\kappa)$ is a bounded function with compact support in both variables, and even in the variable $\omega$. We assume moreover that it is supported away from zero w.r.t. $\omega$, that is there exists $\omega_c>0$ such that
\be \label{hypsup}
(-\omega_c,\omega_c) \cap supp_\omega \hat f_0(\omega,\kappa) = \emptyset, \qquad \forall \kappa \in \Rm^2.
\ee
The latter assumption essentially means that the source is shortband. Larger bandwidths could be included by direct modifications of the proofs.

\paragraph{The random field.} 
We construct the random field on a probability space $(\Omega,\mathcal{T},\Pro)$ and in the Fourier space as follows: the field $V$ is the Fourier transform of a random measure $\tilde V(z,dq)$, i.e.
\[
V(z,x)=\int_{\Rm^2} e^{- i q\cdot x} \tilde V(z,dq).
\]
We define $\tilde V(z,dq)$ sufficiently explicitly in order to be able to carry on the calculations. Let then $\calS \subset \Rm^2$ be a bounded domain, symmetric around the origin ($\calS=-\calS$), included in a ball $B(0,r_\calS)$. The domain $\calS$ will be the support of $\tilde V$ since we want the largest transverse frequency to be at most of order $\eps^{-1}$ (after rescaling). Let also $\BB_\mathfrak{H}$ be a real-valued  mean-zero Gaussian random field on $[0,+\infty)\times \calS$, continuous and stationary with respect to the variable $z$, and such that $\BB_\mathfrak{H}(z,q)=\BB_\mathfrak{H}(z,-q)$. Its covariance function is given by
\[ 
\E[\BB_\mathfrak{H}(z+z_0,q_1)\BB_\mathfrak{H}(z_0,q_2)]:=r_\mathfrak{H}(z)\hat{R}(q_1,q_2),
\] 
where $\hat{R}$ is assumed to be a continuous positive symmetric and bounded function such that 
 \[0< \hat{R}(q_1,q_2) \leq \hat{R}(q,q)=1,\qquad \forall(q,q_1,q_2)\in\calS\times\calS\times\calS.
\] 
 Besides, $r_\mathfrak{H}$ is a continuous even function bounded by $r_\mathfrak{H}(0)=1$ and 
\begin{equation}\label{LRr}
r_\mathfrak{H}(z)\underset{z\to +\infty}{\sim} \frac{c_\mathfrak{H}}{z^{\mathfrak{H}}}\qquad\text{with}\qquad \mathfrak{H}\in(0,1).
\end{equation}
Hence, $r_\mathfrak{H}$ is not integrable at the infinity. Let then $\Theta$ be a smooth odd function satisfying for all $l\in \mathbb{N}$,
\be\label{hyptheta}\sup_{u \in \Rm} \vert \Theta^{(l)}(u)\vert \leq C_{\Theta}^l,\ee
where $ \Theta^{(l)}$ stands for the $l$-th derivative of $\Theta$, and consider $\Theta(\mathcal{B}_\mathfrak{H}(z,q))$, which is not a Gaussian variable. Introducing a random measure $m(dq)$, supported on $\calS$, independent of the random field $\BB_\mathfrak{H}$, and whose properties are defined below, we write $\tilde V$ as $\tilde V(z,dq):=m(dq) \Theta(\mathcal{B}_\mathfrak{H}(z,q))$, so that
\begin{equation}\label{Vtheta}
V(z,x)=\int_{\calS}m(dq)e^{-i q \cdot x} \Theta\big(\BB_ \mathfrak{H}(z,q)\big).
\end{equation}
We suppose that $m^\ast(dq)=m(-dq)$, with bounded associated total variation measure $\vert m\vert$, that is, almost surely,
\begin{equation}\label{hypmomentm}
\vert m\vert (\calS) \leq C_m,
\end{equation}
for some deterministic constant $C_m>0$. This yields in particular that $V$ is real and bounded, and therefore that the velocity field cannot take negative values for $\eps$ sufficiently small. We suppose moreover that $m$ is stationary,
\[
\E[m(\varphi_1)m(\varphi_2)]=\int_\calS \mathfrak{m}(dq) \varphi_1(q)\varphi_2(q),
\]
where $\mathfrak{m}$ is a positive measure on $\calS$ with finite mass and the $\varphi_i$ are smooth functions. This yields that $V$ has a stationary covariance in both $z$ and $x$, which is a common assumption in applications. An example of such a measure is the following:
\be \label{expm}m(dq)=\sum_{j\geq 0} a_j (U_j \delta_{q_j}+\overline{U_j}\delta_{-q_j}),\ee
where $(a_j)_{j\geq0}\in l^1(\mathbb{N},\Rm)$ is deterministic, $(U_j)_{j\geq0 }\in\mathbb{C}^{\mathbb{N}}$ and $(q_j)_{j\geq 0}\in\calS^{\mathbb{N}}$ are independent iid sequences of random variables with appropriate distributions, and the $U_j$ have a zero mean.   
 
Note that $\E[V(z,x)]=0$ by symmetry, and in the same spirit as \cite[Lemma 1]{marty}, we show in Proposition \ref{LRV} of Section \ref{RP} that $V$ itself satisfies the long-range property
 \[
 \E[V(z+z_0,x)V(z_0,y)]\underset{z\to +\infty}{\sim} \frac{C_\mathfrak{H}}{z^{\mathfrak{H}}}  R_0(x-y)\quad\text{with}\quad C_\mathfrak{H}:=\frac{c_\mathfrak{H}}{2\pi}\Big(\int_{-\infty}^{+\infty}  \Theta(u) u e^{-u^2/2}du\Big)^2,
 \]
 and
 \begin{equation}\label{defR}
 R_0(x):= \int_{\calS}  \mathfrak{m}(dq) \hat R (q,q)e^{-iq\cdot x}= \int_{\calS}  \mathfrak{m}(dq) e^{-iq\cdot x}.
\end{equation}
This implies that \fref{longRV} is satisfied. Examples of realizations of $V$ are given in Figure \ref{plotV}.
\begin{figure}
\includegraphics*[scale=0.25]{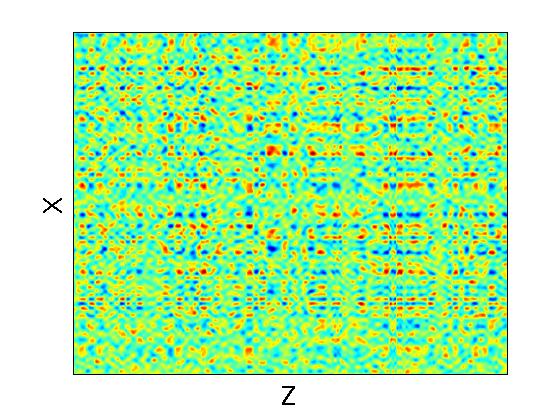}
\includegraphics*[scale=0.25]{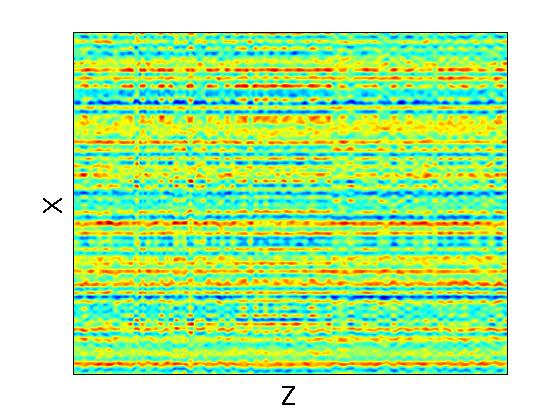}
\includegraphics*[scale=0.25]{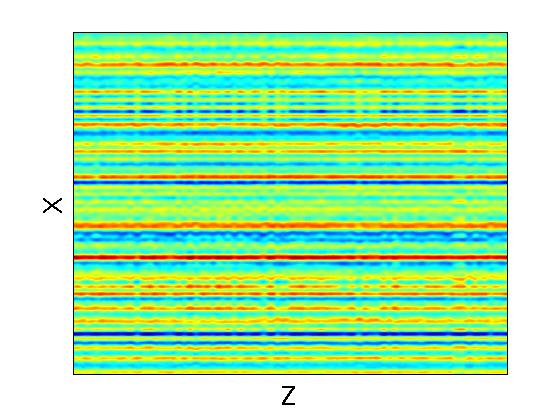}
\caption{\label{plotV} Examples of realizations of $V$. Here, $V$ is obtained via \fref{Vtheta} with $\Theta(x)= \sin(10x)$ and $\calS=[-15,15]$. The measure $m$ is as in \fref{expm}, where the $U_j$ are uniform in $[-1,1]$, the $q_j$ are chosen with a discrete uniform distribution among the points of a uniform discretization of $\calS$, and $a_j=1$ for $j \leq 100$ and zero otherwise. The field $\mathcal{B}_{\mathfrak{H}}$ is obtained via a similar formula as \fref{FBMB} where the $e_n$ are cosines, the $\beta_n$ behave like $n^{-2}$ and $W_{H,n}$ is replaced by a Gaussian process with autocorrelation $r_\mathfrak{H}(z)$ defined  as the Fourier transform of $\mathbf{1}_{(-15,15)}(k)/|k|^{1-\mathfrak{H}}$. From left to right, $\mathfrak{H}=0.9,0.5,0.1$. Observe the arising of long-range correlations in the $z$ direction as $\mathfrak{H}$ decreases, that is as $r_\mathfrak{H}$ decreases slower at the infinity.}
\end{figure}

The limiting field $W_H$ of \fref{ItoSch0} is heuristically obtained as follows: the scalings in $V$ and the long-range behavior act in a such a way that only the linear part in $\Theta$ is not negligible, and such that $\e^{-\mathfrak{H}/2}\int_0^z \mathcal{B}_\mathfrak{H}(u/\e,q)du$ is well approximated (in distribution) by a fractional Brownian field in $z$. Hence, $W_H$ is a random field with covariance operator given by 
\begin{equation}\label{correlation}
\E[ W_H(z_1,x) W_H(z_2,y)]= \frac{C_\mathfrak{H}}{2H(2H-1)}\big( z^{2H}_1+ z_2^{2H}-\vert z_1-z_2\vert^{2H}\big) R_0(x-y)\end{equation}
for all $(z_1,z_2)\in[0,+\infty)\times[0,+\infty)$ and $(x,y)\in \Rd\times \Rd$. The construction of $W_H$ and the definition of the stochastic integral are given in the next section.

\subsection{Stochastic integral and fractional It{\^o}-Schrödinger equation}\label{IntSto}

The stochastic integral with respect to a fractional Brownian motion obtained  here in the limiting process is of pathwise type, and is defined according to the work of Z\"{a}hle \cite{zahle}.  We start this section with the construction of the fractional field $W_H$ with covariance operator \eqref{correlation}, which is used thereafter to define the stochastic integral. Finally, we give the definition of a solution of \eqref{ItoSch0} before stating the main results of the paper.

\paragraph{Fractional field.}

A one dimensional standard fractional Brownian motion with Hurst index $H$, on a probability space $(\tilde{\Omega},\tilde{\mathcal{T}},\tilde{\Pro})$ is a centered Gaussian process $b_H$ with covariance
\[\E[b_H(u)b_H(v)]=\frac{1}{2}(u^{2H}+v^{2H}-\vert u-v \vert^{2H}),\qquad \forall (u,v)\in[0,+\infty)\times[0,+\infty).\] 
Moreover, such a process admits the following spectral representation
\begin{equation}\label{spectralB}
b_H(u)=C^{1/2}_{H}\int \frac{e^{iru}-1}{ir\vert r\vert^{H-1/2}}w(dr),
\end{equation}
with $C_H=H\Gamma(2H)\sin(\pi H)/\pi$, and where $w(dr)$ is a complex Gaussian random measure such that $w^\ast(dr)=w(-dr)$ and
\[\E[w(dr)w^\ast(ds)]=\delta(r-s)drds.\]
The construction of the fractional field with covariance operator \eqref{correlation} is done in the Fourier domain. Let $(W_{H,n})_{n\geq 1}$ be a sequence of independent standard fractional Brownian motions on the probability space $(\tilde{\Omega},\tilde{\mathcal{T}},\tilde{\Pro})$ with Hurst index $H$. Using the fact that 
\[\begin{array}{rcl} 
Q : L^2(\calS)&\longrightarrow&L^2(\calS)\\
\varphi &\longmapsto & \int_\calS dp \hat{R}(p,q)\varphi(q)
\end{array} \]
 is a positive self-adjoint trace class operator \cite[Corollary 4.4]{brislawn}, their exist a sequence $(e_n)_{n\geq1}$ of orthonormal eigenvectors and  a sequence $(\beta_n)_{n\geq 1}$ of positive eigenvalues for $Q$. Therefore, the Gaussian random field  
 \begin{equation}\label{FBMB}
 B_H(z,q):=\sum_{n\geq 1} \sqrt{\beta_n } e_n(q) W_{H,n}(z)
 \end{equation}
 defines a infinite-dimensional standard fractional Brownian motion on $L^2(\calS)$, and then
\begin{equation}\label{FBM}
W_H(z,x):=\sigma_H\int_\calS m(dq) e^{-iq\cdot x}  B_H(z,q)\qquad\text{with}\qquad \sigma^2_H=\frac{C_\mathfrak{H}}{H(2H-1)}
\end{equation} 
defines a random mixture of fractional Brownian fields with Hurst index $H$ and covariance operator given by \eqref{correlation}. It is interesting to note that our asymptotic noise model is not Gaussian. In fact, for medium perturbations with mixing properties the asymptotic noise is always Gaussian \cite{bailly,book,garnier0}. However, for medium perturbations with slowly decaying correlations, it is not necessarily the case. In a one-dimensional wave propagation setting, it has been observed in \cite{marty2} that the asymptotic noise model is not necessarily Gaussian if the initial medium fluctuations have non-Gaussian statistics. In our context, the non-Gaussianity of $W_H$ is due to the random measure $m$, which has been introduced for the covariance stationarity of $V$. Nevertheless, the form of $W_H$ allows the use of the Gaussian properties, and then the use of \cite{zahle} to define the stochastic integral in \eqref{ItoSch0}.

\paragraph{Stochastic integral.}\label{intsto}

 We follow here the approach of \cite{maslowski,nualart}. Let us consider the Banach space
 \[W^{\alpha}(0,L,B):=\Big\{\psi\in\mathcal{C}^0([0,L],B) \quad\text{such that}\quad  \| \psi \|_{\alpha,B}<+\infty \Big\},\]
with
\[  \| \phi \|_{\alpha,B}:= \sup_{z\in[0,L]} \Big[ \|\phi(z) \|_{B} +\int_0^z \frac{\| \phi(z)-\phi(u)  \|_{B}}{(z-u)^{\alpha+1}}du\Big],\]
and where $B$ is a given Banach space. Moreover, for $\beta\in(0,1)$, let us denote by $\mathcal{C}^{\beta}([0,L],B)$ the Banach space of $\beta$-H\"older functions on $[0,L]$ with values in $B$,
equipped with
\[\|\phi\|_{\beta,\mathcal{C},B} := \sup_{z\in[0,L]} \|\phi(z) \|_{B} + \sup_{0\leq v<u\leq L}\frac{\| \phi(u)-\phi(v)  \|_{B}}{(u-v)^{\beta}}.\]
One can remark that for $\alpha\in (0,1/2)$ 
\begin{equation}\label{inclusion}\|\phi\|_{\alpha,B} \leq L^{1-2\alpha} \|\phi\|_{1-\alpha,\mathcal{C},B} \quad\text{so that}\quad\mathcal{C}^{1-\alpha}([0,L],B)\subset W^{\alpha}(0,L,B). \end{equation}
Now, for $\alpha\in(0,1)$ and $z\in(0,L)$, let us introduce, for a real-valued function $f$, the so-called Weyl's derivative given by
\[\begin{split}
D^\alpha_{0^+}f(z)&:=\frac{1}{\Gamma(1-\alpha)}\Big[\frac{f(z)}{z^\alpha}+\alpha\int_0^z\frac{f(z)-f(u)}{(z-u)^{\alpha+1}}du\Big],\\
D^\alpha_{L^-}f(z)&:=\frac{(-1)^\alpha}{\Gamma(1-\alpha)}\Big[\frac{f(z)}{(L-z)^\alpha}+\alpha\int_z^L\frac{f(z)-f(u)}{(u-z)^{\alpha+1}}du\Big],
\end{split}\]  
whenever these quantities are well-defined, and where $\Gamma(u)=\int_0^{+\infty} r^{u-1}e^{-r}dr$ is the gamma function. Following \cite{zahle}, the generalized Stieljes integral of a function $f\in\mathcal{C}^{\nu}([0,L],\mathbb{R})$ with respect to $g\in\mathcal{C}^{\mu}([0,L],\mathbb{R})$, with $\nu+\mu>1$, $\nu>\alpha$, and $\mu>1-\alpha$ is defined by
\begin{equation}\label{defintsto}
\int_0^L f dg:=(-1)^{\alpha} \int_0^L D^\alpha_{0^+}f(u) D^{1-\alpha}_{L^-}g_{L^-}(u)du,
\end{equation}
where $g_{L^-}(u):=g(u)-g(L^-)$. The definition does not depend on $\alpha$, and we have
\[\int_0^z fdg=\int_0^L f\textbf{1}_{(0,z)}dg.\]
Moreover, according to \cite{nualart}, this integral can be extended to more general classes of functions thanks to the relation
\[\Big\vert  \int_0^L fdg \Big\vert\leq \|f\|_{\alpha,1}\Lambda_\alpha(g),\]
where
\[ \|f\|_{\alpha,1}:=\int_0^L\Big( \frac{\vert f(u)\vert}{u^\alpha}+\int_0^u \frac{\vert f(u)-f(v)\vert}{(u-v)^{\alpha+1}}dv \Big)du,\]
and
\[\Lambda_\alpha(g):=\frac{1}{\Gamma(1-\alpha)\Gamma(\alpha)}\sup_{0<u<z<L}\vert D^{1-\alpha}_{t-}g_{t^-}(u)\vert.\]
Consequently, this integral is well-defined as soon as $f\in W^{\alpha}(0,L,\mathbb{R})$ and $\Lambda_\alpha(g)<+\infty$. 

As a result, for a random function $F\in W^{\alpha}(0,L,L^2(\Rd))$, the stochastic integral with respect to the fractional field $W_H$,
 \[ \int_0^z F(u,x) dW_H(u,x),\]
  is defined by \eqref{defintsto} almost everywhere in $x$ and $\Pro$-almost surely. In fact, we have for $\alpha\in(1-H,1/2)$ and for all $z\in[0,L]$,
 \[
  \Big\| \int_0^z F(u) dW_H(u)\Big\|_{L^2(\Rd)}\leq C\|F\|_{W^{\alpha}(0,L,L^2(\Rd))} \sup_{x\in \Rd} \Lambda_{\alpha}(W_H(x)),
 \]
with
\[
\E\Big[\sup_{x\in \Rd} \Lambda_{\alpha}(W_H(x))\Big] \leq \int_\calS  \E[\vert m\vert(dq)] \E[\Lambda_{\alpha}(B_H(q))]\leq C_m  \sup_{q\in\calS}\E[\Lambda_{\alpha}(B_H(q))]<\infty, \]
as will be proved later in Lemma \ref{techBA}.

\paragraph{Fractional It\^o-Schrödinger equation.}

The notion of solution for the It\^o-Schrödinger equation \eqref{ItoSch0} is made precise in the following definition. First, let us introduce some additional notations. Let $k\in \mathbb{N}^\ast$, and let us denote by $H^k(\Rd)$ the $k$-th Sobolev space on $\Rd$. Consider moreover $W^{\alpha}_k(0,L):=W^{\alpha}(0,L,H^k(\Rd))$, equipped with the norm $ \| \cdot \|_{\alpha,H^k(\Rd)}$, and the complete metric space
\[  W^{\alpha}_\infty(0,L):=\bigcap_{k\in \mathbb{N}^\ast}W^{\alpha}_k(0,L),\]
equipped with
 \[ d_{\alpha,\infty} (\phi,\psi): =\sum_{k\geq 1} \frac{1}{2^k} \left(1\wedge \| \phi -\psi \|_{\alpha,H^k(\Rd)} \right).\] 

\begin{defi}
Let $H\in(1/2,1)$, $\alpha\in(1-H,1/2)$, and $W_H$ be the fractional field defined by \eqref{FBM}. We say that $\Psi_\omega \in W^{\alpha}_{\infty}(0,L)$ is a pathwise solution of \eqref{ItoSch0} if, with probability one, for all $(z,x)\in [0,L]\times \Rd$, we have
\[ \Psi_\omega (z,x) = \Psi_\omega (0,x)+ \frac{i}{2k_\omega}\int_0^z \Delta_x \Psi_\omega(u,x) du +i k_\omega \int_0^z \Psi_\omega(u,x) dW_H(u,x).\]
\end{defi}
In other words, a solution of \eqref{ItoSch0}  is a pointwise solution of this equation for almost all realizations of the randomness. We will see later that a solution to \eqref{ItoSch0} has automatically H\"older regularity
\[\Psi_\omega\in \mathcal{C}^{H-\theta}_{\infty}(0,L):=\bigcap_{k\in \mathbb{N}^\ast} \mathcal{C}^{H-\theta}([0,L],H^k(\Rd))\quad\text{with}\quad \theta=H+\alpha-1.\]
Here, $\mathcal{C}^{H-\theta}_{\infty}(0,L)$ is a complete metric space
equipped with
 \[ d_{H-\theta,\mathcal{C},\infty} (\phi,\psi): =\sum_{k\geq 1} \frac{1}{2^k} \left(1\wedge \| \phi -\psi \|_{H-\theta,\mathcal{C},H^k(\Rd)}\right),\]
so that $\mathcal{C}^{H-\theta}_{\infty}(0,L)\subset W^\alpha_\infty(0,L)$ according to \eqref{inclusion}. The solutions we define here are classical solutions in the standard terminology. It is not completely trivial to construct less regular solutions to \fref{ItoSch0}, which is required for non-linear problems, see \cite{pinaud}. Indeed, the standard technique is to use the mild formulation, and then treat a term of the form 
\[
\int_0^z S(z-u) \Psi_\omega(u,x) dW_H(u,x),
\]
 where $S$ is the Schr\"odinger semigroup. As explained before, some H\"older regularity in $u$ is needed in order to make sense of the integral. Since the semigroup is not sufficiently regularizing, this regularity in $u$ has to be exchanged for some regularity in $x$ on $\Psi_\omega$, and the fixed point procedure cannot be closed. This is not a problem in our linear setting where we can iterate the stochastic integrals and suppose that the initial condition is $\calC^\infty$ in $x$. A different strategy has to be adopted in the non-linear case \cite{pinaud}.

Note that the stochastic integral here is the fractional equivalent to the It{\^o}-Stratonovich integral for standard Brownian motions, and as such satisfies the classical integration by parts formula. This then formally yields the conservation relation, for all $z\in[0,L]$,
\[\|\Psi_\omega(z)\|_{L^2(\Rd)}=\|\Psi_\omega(0)\|_{L^2(\Rd)}=\frac{1}{2}\|\check{f}_0(\omega)\|_{L^2(\Rd)}.\]

\subsection{Main results}

We will actually not work directly with the process $P_L^\eps$ given by \eqref{pulse}, but rather with an approximate process $p_L^\eps$ defined by
\be \label{pulse22}
p_L^\eps(t,x):=p\Big( \e^2 t+ \frac{L-L_S}{c_0}, L , \e x \Big),
\ee
where $p$ solves the wave equation \fref{waveeq2} with artificial absorption $\alpha_\eps$,
\[\Delta p-\frac{1}{c^2_0}\Big(1+\e^s V\Big(\frac{z}{\e},\frac{x}{\e}\Big)\mathbf{1}_{(0,L)}(z)\Big)\partial^2_t p+ i \alpha_\eps p =f_0\Big(\frac{t}{\e^2},\frac{x}{\e}\Big)\delta'(z-L_S),\]
and vanishing initial conditions. Thanks to the estimate below, proved in Section \ref{proofL2p},
\be \label{L2p}
\sup_{(t,z) \in (0,T) \times \Rm}\| P(t,z,\eps \cdot)-p(t,z,\eps \cdot)\|_{L^2(\Rm^2)}\leq  \frac{C_T\alpha_\eps^{1/2}}{ \eps}\qquad\forall T>0,
\ee
it is equivalent, from the viewpoint of convergence in law (see \cite[Theorem 3.1 pp. 27]{billingsley}), to consider $p_L^\eps$ instead of $P_L^\eps$ by choosing $\alpha_\eps=o(\eps^2)$. The main theorem will be hence stated in terms of $p_L^\eps$. The introduction of $p$ is an important point since the absorption term provides us with straightforward estimates in $L^2((0,L) \times \Rm^2)$, that would require much more work with the process $P$. These estimates are not uniform, but sufficiently tamed, and are exploited throughout the paper.

We will mostly work in the frequency domain, and in order to take Fourier transforms in time, we extend $p$ to negative times by setting $p(-t,z,x)=p(t,z,x)$, for all $t>0$.

The main result of this paper is the following theorem, that states the convergence of the pulse \eqref{pulse22}. 
 
\begin{thm}[Convergence result]\label{mainth1}
The family $(p^\e_L )_{\e\in(0,1)}$, defined by \eqref{pulse22}, converges in law in the space $\mathcal{C}^0((-\infty,+\infty), L^2(\Rd))\cap L^2((-\infty,+\infty)\times \Rd)$ to a limit given by 
\[p^0_L(t,x)= \int  e^{-i\omega t}\Psi_\omega(L,x)d\omega,\]
where $\Psi_\omega$ is the unique pathwise solution to the fractional It\^o-Schr\"odinger equation  \eqref{ItoSch0}.
\end{thm}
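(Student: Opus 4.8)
The plan is to decompose the proof of Theorem \ref{mainth1} into three main blocks that correspond to the three technical obstacles mentioned in the introduction: (i) passing from the wave equation \fref{waveeq2} to a coupled mode system, isolating the propagating modes and showing that the evanescent modes are asymptotically negligible; (ii) showing that backscattering can be neglected, so that the propagating-mode dynamics reduces to an initial value problem; and (iii) proving convergence in law of the resulting forward-propagating amplitude to the solution of the fractional It\^o-Schr\"odinger equation \fref{ItoSch0}, and then transferring this to the pulse \fref{pulse22}. Throughout I would work in the frequency domain, using the absorbed process $p$ (so that the $L^2((0,L)\times\Rm^2)$ estimates of Section \ref{exist} are available), and use \fref{L2p} at the very end to conclude the corresponding statement for $P^\e_L$; by Slutsky-type arguments (\cite[Theorem 3.1]{billingsley}) the two pulses have the same limit in law once $\alpha_\e=o(\e^2)$.

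First I would take the Fourier transform in $t$ and in the transverse variable $x$, and rewrite the Helmholtz-type equation satisfied by $\hat p$ as a first-order system in $z$ for the right- and left-going wave components, split according to whether $|\kappa|$ is below or above the propagating threshold $k_\omega$. This yields amplitudes $a^\e_\omega(z,\kappa)$ (propagating, forward), $b^\e_\omega(z,\kappa)$ (propagating, backward) and an evanescent part, coupled through the rescaled random potential $\e^s V(z/\e,x/\e)$, whose transverse frequency content lives on the bounded set $\calS/\e$ — this is precisely why evanescent modes are excited. Using the boundedness of $V$ and the absorption, together with the estimates of Section \ref{exist}, I would derive uniform-in-$\e$ bounds on the relevant norms of these amplitudes and then, via the propagator estimates of Section \ref{secevaprop}, show that the contribution of the evanescent modes to the forward amplitude at $z=L$ vanishes in $L^2$ in probability as $\e\to0$; similarly, the analysis of Section \ref{proofnoback} shows the backward amplitude $b^\e_\omega$ is negligible, leaving a closed integral equation for $a^\e_\omega$ of the schematic form $a^\e_\omega(z)=a^\e_\omega(0)+\frac{i}{2k_\omega}\int_0^z \Delta_x a^\e_\omega\,du + i k_\omega \int_0^z a^\e_\omega(u)\, dV_\e(u) + (\text{error})$, where $V_\e(z,x):=\e^{-\mathfrak{H}/2}\int_0^z V(u/\e,x/\e)\,du$ up to the transverse rescaling and $s=2-\mathfrak{H}/2$ is exactly the exponent making this term order one.

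The core probabilistic step is then to prove joint convergence in law, in $\mathcal{C}^0$ in $z$ with values in the appropriate space, of $(a^\e_\omega(0,\cdot),\, V_\e)$ to $(\Psi_\omega(0,\cdot),\, W_H)$, and to upgrade this to convergence of $a^\e_\omega(L,\cdot)$ to $\Psi_\omega(L,\cdot)$. Since the long-range dependence precludes martingale and classical diffusion-approximation arguments, I would instead use the moment method: expand $a^\e_\omega$ in iterated stochastic integrals (a Duhamel/Dyson series in the potential), justify its convergence using the $W^\alpha$-norm bounds and the absorption, and compute the limit of each iterated integral termwise using Proposition \ref{LRV} and the iterated-integral limit results of Section \ref{RP} — here the key point is that only the linear part of $\Theta(\BB_\mathfrak{H})$ survives in the limit, so the limiting iterated integrals match those generated by the pathwise (It\^o-Stratonovich-type) fractional integral against $W_H$, whose covariance is \fref{correlation}. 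Tightness in $\mathcal{C}^0((-\infty,+\infty),L^2(\Rd))\cap L^2$ follows from the uniform moment bounds plus the compact-support and shortband assumptions \fref{hypsup} on $\hat f_0$ (which give equi-integrability in $\omega$), and identification of the limit uses the uniqueness of the pathwise solution to \fref{ItoSch0} established earlier. Finally, inverting the Fourier transform in $\omega$ and using the continuous-mapping theorem yields convergence of $p^\e_L$ to $p^0_L(t,x)=\int e^{-i\omega t}\Psi_\omega(L,x)\,d\omega$.

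I expect the main obstacle to be the second block combined with the moment analysis: controlling the backscattering and the evanescent coupling uniformly in $\e$ while simultaneously keeping enough structure to identify the limit of the iterated integrals. The difficulty is that the natural a priori bounds coming from the absorption $\alpha_\e$ are not uniform, so one must track the precise $\e$-dependence through the Dyson expansion and show the error terms (cross terms between propagating and evanescent parts, repeated backscattering, and the nonlinear-in-$\Theta$ contributions) are summably small; the non-integrability \fref{longRV} of the correlation means the combinatorics of the surviving diagrams is genuinely different from the mixing case and must be handled by the refined estimates of Section \ref{RP}.
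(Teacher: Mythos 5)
Your three-block decomposition (evanescent coupling negligible, backscattering negligible, moment-method identification of the limit via a Dyson expansion and the iterated-integral results of Section \ref{RP}) is exactly the architecture of the paper's proof, including the use of the absorbed process $p$ and \fref{L2p} to transfer the result back to $P^\e_L$, the reduction to an initial value problem for the forward amplitude, the regularization of the limiting noise to justify the termwise moment computations, and the final upgrade from weak to strong $L^2$ convergence via norm conservation and Skorohod's representation theorem.

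The one step that would fail as written is the very last one: you propose to conclude by ``inverting the Fourier transform in $\omega$ and using the continuous-mapping theorem.'' The convergence in law you obtain for $\calX^\e_\omega(L)$ is only joint over \emph{finitely many} frequencies $\omega_1,\dots,\omega_M$ (that is what the moment method delivers), not convergence of $\omega\mapsto\calX^\e_\omega(L)$ as a process in a topology for which $(\calX^\e_\omega)_\omega\mapsto\int e^{-i\omega t}\,\cdot\,d\omega$ is continuous; the paper explicitly notes that no continuity argument is available here. The repair is to bypass the mapping theorem entirely: prove tightness of $t\mapsto p^\e_{2,L}(t,\cdot)$ in $\calC^0([-T,T],L^2_w(\Rm^2))$ directly from the conservation law $\|\calX^\e_\omega(z)\|_{L^2}=\|\phi^\e_\omega\|_{L^2}$ and the compact support of $\hat f_0$ in $\omega$, and then identify the finite-dimensional distributions by computing the moments of $\prod_m p^\e_{2,L}(t_m,\phi_m)$ themselves, interchanging the $\omega$-integrals with expectation and the limit $\e\to0$ by Fubini and dominated convergence (again justified by the uniform $L^2$ bound). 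Since you already have all the ingredients for that argument in your blocks (ii)--(iii), this is a local fix rather than a flaw in the overall strategy.
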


The second theorem below is a by-product of the proof of the main theorem, and provides us with some interesting properties of the solutions of the fractional It\^o-Schr\"odinger equation: existence and uniqueness, conservation of the energy, approximation by a smooth process which can be expanded in terms of scattering events, and approximate formulas for moments of any order.

\begin{thm}[Properties of the fractional It\^o-Schr\"odinger equation]\label{mainth2}
We have the three following statements:
\begin{enumerate}
\item  The fractional It\^o-Schr\"odinger equation \eqref{ItoSch0} admits a unique pathwise solution $\Psi_\omega$ for all $\alpha\in(1-H,1/2)$, which satisfies
\begin{equation}\label{consITO}
\|\Psi_\omega(z)\|_{L^2(\Rd)}=\frac{1}{2}\|\check{f}_0(\omega)\|_{L^2(\Rd)}\qquad \forall z\in[0,L].
\end{equation}
Moreover,  $\Psi_\omega\in \mathcal{C}^{H-\theta}_{\infty}(0,L)$ for all $\theta\in(0,H-1/2)$.
\item For all $\theta\in(0,H-1/2)$, the process $\Psi_\omega$ can be approximated by 
\[
\Psi_\omega(z)=\lim_{A\to+\infty}\Psi^A_\omega(z)\qquad{with}\qquad \Psi^A_\omega(z)=\check{f}_0(\omega,\cdot)+ \sum_{n\geq 1} \Psi^{A,n}_{\omega}(z),
\]
where the limit holds in $C^{H-\theta}_\infty(0,L)$ in probability.
Here, we have in the Fourier domain
\[\begin{split}
\hat{\Psi}^{n,A}_{\omega}(z,\kappa):= (ik_\omega )^n \int_{\calS^n}\mathbf{m}(d\mathbf{q}^{(n)}) \int_{\Delta_n(z)} d\mathbf{u}^{(n)} & e^{-i\vert \kappa\vert^2z/(2k_\omega)}e^{iG_n(\mathbf{u}^{(n)},\mathbf{Q}^{(n)})}\hat{f}_0(\omega,Q_n)\\
&\times \prod_{m=1}^n \int_{-A}^A \frac{e^{ir_m u_m}}{\vert r_m \vert^{H-1/2}}w(dr_m,q_m),
\end{split}\]
where $\mathbf{u}^{(n)}:=(u_1,\dots,u_n)$, $\mathbf{q}^{(n)}:=(q_1,\dots,q_n)$, $\mathbf{m}(d\mathbf{q}^{(n)})=m(dq_1)\dots m(dq_n)$, $\mathbf{Q}^{(n)}:=(Q_0,\dots,Q_n)$, with $Q_m:=\kappa-q_1-\cdots-q_m$, and
\[
 G_n(\mathbf{u}^{(n)},\mathbf{Q}^{(n)}):=\frac{1}{2k_\omega}\sum_{m=1}^n\big(\vert Q_{m-1}\vert^2- \vert Q_m\vert^2 \big)u_m.
 \]
Moreover, 
\[\Delta_n(z):=\big\{(u_1,\dots,u_n)\in[0,z]^n,\quad\text{s.t.}\quad 0\leq u_j \leq u_{j-1}\quad \forall j\in\{2,\dots,n\}\big\},\]
and $(w(dr,q))_{q\in\calS}$ is a family of complex Gaussian random measure defined by
\begin{equation}\label{defw}
w(dr,q):=\sum_{n\geq 1} \sqrt{\frac{\beta_nC_{\mathfrak{H}}\Gamma(2H-1)\sin(\pi H) }{\pi}} e_n(q)w_n(dr),
\end{equation}
where $(w_n(dr))_{n\geq 1}$ is the family of independent complex Gaussian random measure in the spectral representation \eqref{spectralB} of the family $(W_{n,H})_{n\geq 1}$ introduced in \eqref{FBM}.
\item We have
\[\begin{split}
\E\Big[\Big< \prod_{j_1=1}^{M_1}&\Psi_{\omega_{1,j_1}}(z)\prod_{j_2=1}^{M_2}\overline{\Psi_{\omega_{2,j_2}}(z)}, \varphi\Big>_{L^2(\mathbb{R}^{2(M_1+M_2)})}\Big]\\
&=\lim_{A\to+\infty}\E\Big[\Big< \prod_{j_1=1}^{M_1}\Psi^A_{\omega_{1,j_1}}(z)\prod_{j_2=1}^{M_2}\overline{\Psi^A_{\omega_{2,j_2}}(z)}, \varphi\Big>_{L^2(\mathbb{R}^{2(M_1+M_2)})}\Big],
\end{split}\]
for all $(M_1,M_2)\in\mathbb{N}^2$, frequencies $(\omega_{i,j})_{(i,j)\in\{1,2\}\times\{1,\dots,M_i\}}$ and $\varphi\in L^2(\mathbb{R}^{2(M_1+M_2)})$.
\end{enumerate}
\end{thm}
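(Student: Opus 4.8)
The plan is to derive all three statements from a single object --- the spectrally truncated solutions $\Psi^A_\omega$ appearing in statement~2 --- together with the limit $A\to+\infty$, the delicate point being an $A$-uniform a priori bound. First I would introduce the truncated field $W^A_H$, obtained by restricting the spectral integral \fref{spectralB} to $\{|r|\le A\}$ in each $W_{H,n}$ entering \fref{FBM}, equivalently by restricting $w(dr,q)$ of \fref{defw} to $\{|r|\le A\}$. Because that restriction is supported on a bounded set, $W^A_H(\cdot,x)$ is almost surely a $\calC^\infty$ function of $z$ with values in $\calC^\infty$ in $x$ (its $z$-derivatives are finite-variance Gaussian fields, and its transverse Fourier support lies in $\calS$); hence the pathwise integral $\int_0^z F\,dW^A_H$ reduces to the Riemann--Stieltjes integral $\int_0^z F\,\partial_u W^A_H\,du$, and $\Psi^A_\omega$ is simply the classical solution of the linear Schr\"odinger equation with smooth, real, bounded potential $k_\omega\,\partial_z W^A_H$. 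Iterating Duhamel's formula in the transverse Fourier variable --- a scattering event being convolution by the $\kappa$-measure $\widehat{\partial_z W^A_H}$, which shifts $\kappa$ by an element of $\calS$ --- yields precisely the displayed formula for $\hat\Psi^{n,A}_\omega$, and the constant in \fref{defw} is exactly what is needed for this identification. Convergence of $\sum_n\Psi^{n,A}_\omega$ in every $H^k(\Rd)$ then rests on three inputs: the simplex $\Delta_n(z)$ has volume $z^n/n!$; the compact support of $\hat f_0$ makes $\hat f_0(\omega,Q_n)\neq 0$ force $|\kappa|\le R+nr_\calS$, so the $H^k$ norm of the $n$-th term costs only the polynomial factor $n^k$; and the random product $\prod_m\int_{-A}^A|r_m|^{1/2-H}e^{ir_mu_m}w(dr_m,q_m)$ is, for fixed $A$, bounded almost surely by $C_A^n$. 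This gives $\Psi^A_\omega\in\calC^{H-\theta}_\infty(0,L)$, and, the potential being real, $\|\Psi^A_\omega(z)\|_{L^2(\Rd)}=\frac{1}{2}\|\check f_0(\omega)\|_{L^2(\Rd)}$ for all $z$ and all $A$.

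For the passage $A\to+\infty$ (statement~2), the workhorse is the estimate $\|\int_0^z F\,dg\|_{H^k(\Rd)}\le C\|F\|_{W^\alpha(0,L,H^k(\Rd))}\sup_x\Lambda_\alpha(g(x))$ for $\alpha\in(1-H,1/2)$, combined with Lemma \ref{techBA}, whose proof also yields $\E[\sup_x\Lambda_\alpha(W^A_H(x))]\le C$ uniformly in $A$ (truncation only decreases the variance of increments of the underlying fractional Brownian motions) and $\E[\sup_x\Lambda_\alpha(W^A_H(x)-W^{A'}_H(x))]\to 0$ as $A,A'\to+\infty$. Feeding this into the integral equation for $\Psi^A_\omega$ on subintervals short enough that the noise term is a contraction in $W^\alpha_k(0,L)$ produces bounds on $\|\Psi^A_\omega\|_{W^\alpha_k(0,L)}$ uniform in $A$ in probability; the Schr\"odinger drift $\int_0^z\Delta_x\Psi^A_\omega\,du$ couples $H^k$ to $H^{k+2}$, but this hierarchy closes since $\|\Psi_\omega(0)\|_{H^m(\Rd)}$ grows only geometrically in $m$ (compact spectral support of $f_0$). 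Running the same scheme on the differences $\Psi^A_\omega-\Psi^{A'}_\omega$ shows $(\Psi^A_\omega)_A$ is Cauchy in probability in $W^\alpha_\infty(0,L)$, hence in $\calC^{H-\theta}_\infty(0,L)$ by \fref{inclusion}; and since $\int_0^z\Psi^A_\omega\,dW^A_H\to\int_0^z\Psi_\omega\,dW_H$ by the same estimate, the limit $\Psi_\omega$ is a pathwise solution of \fref{ItoSch0}.

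Existence being now granted, for uniqueness in statement~1 I would take two pathwise solutions $\Psi^1_\omega,\Psi^2_\omega\in W^\alpha_\infty(0,L)$ with the same datum, set $\Phi=\Psi^1_\omega-\Psi^2_\omega$, and pass to the interaction picture $\Xi(z)=e^{-iz\Delta_x/(2k_\omega)}\Phi(z)$: since $e^{-iz\Delta_x/(2k_\omega)}$ commutes with $\Delta_x$, the drift terms cancel and $\Xi(z)=ik_\omega\int_0^z e^{-iu\Delta_x/(2k_\omega)}\big[e^{iu\Delta_x/(2k_\omega)}\Xi(u)\,dW_H(u)\big]$, so the pathwise estimate plus a subinterval Gronwall argument (closing the Sobolev hierarchy as above, with $\Phi(0)=0$) forces $\Phi\equiv 0$. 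Energy conservation \fref{consITO} follows by applying the integration-by-parts rule for the Z\"ahle integral \fref{defintsto} --- legitimate because it is the fractional analogue of the Stratonovich integral --- to $\|\Psi_\omega(z)\|_{L^2(\Rd)}^2$. Finally $\int_0^z\Psi_\omega\,dW_H$ inherits the $(H-\theta)$-H\"older regularity of $W_H$ while $\int_0^z\Delta_x\Psi_\omega\,du$ is Lipschitz in $z$, so $\Psi_\omega\in\calC^{H-\theta}_\infty(0,L)$ for all $\theta\in(0,H-1/2)$.

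Statement~3 is then the soft part: by statement~2, $\Psi^A_\omega(z)\to\Psi_\omega(z)$ in $L^2(\Rd)$ in probability, so the tensor products converge in probability in $L^2(\mathbb{R}^{2(M_1+M_2)})$ and hence so do their pairings with $\varphi$; moreover
\[
\Big|\Big\langle \prod_{j_1=1}^{M_1}\Psi^A_{\omega_{1,j_1}}(z)\prod_{j_2=1}^{M_2}\overline{\Psi^A_{\omega_{2,j_2}}(z)},\,\varphi\Big\rangle\Big|\le \|\varphi\|_{L^2(\mathbb{R}^{2(M_1+M_2)})}\prod_{i,j}\|\Psi^A_{\omega_{i,j}}(z)\|_{L^2(\Rd)}=\|\varphi\|_{L^2(\mathbb{R}^{2(M_1+M_2)})}\prod_{i,j}\frac{1}{2}\|\check f_0(\omega_{i,j})\|_{L^2(\Rd)},
\]
a deterministic bound independent of $A$, so dominated convergence gives the stated identity. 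The main obstacle is the $A$-uniform control of $\Psi^A_\omega$ in the $W^\alpha_\infty$ metric: the naive smooth-potential estimates for the truncated equation are useless because $\|\partial_z W^A_H\|$ blows up as $A\to+\infty$, so one must work solely with the fractional-integral bounds through $\Lambda_\alpha$ --- which are stable in $A$ --- while extracting the decisive gain from the factorial $1/n!$ of the simplex and the geometric growth of the Sobolev norms of the spectrally compact initial datum.
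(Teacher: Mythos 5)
Your proposal is correct and takes essentially the same route as the paper: spectral truncation of the noise at level $A$, identification of $\Psi^A_\omega$ with the explicit Duhamel series, $A$-uniform control through $\Lambda_\alpha$ of the truncated field via the Garsia-Rademich-Rumsey inequality, closure of the $H^k\to H^{k+2}$ loss of derivatives using the compact Fourier support of the datum together with the simplex factorial, uniqueness by a Gronwall argument in the interaction picture (which is exactly the paper's Fourier-side process $\calX_\omega(z,\kappa)=e^{i\vert\kappa\vert^2z/(2k_\omega)}\hat\Psi_\omega(z,\kappa)$), and statement~3 from convergence in probability plus the deterministic $L^2$ bound. Two minor differences of execution: the paper replaces your ``contraction on short subintervals'' (which cannot literally close because of the derivative loss) by the weighted Gronwall sequence $\beta_n$ of \cite{nualart}, yielding the $1/\Gamma((n+1)(1-2\alpha))$ gain, and it obtains the conservation law \eqref{consITO} by differentiating $\|\calX^A_\omega(z)\|^2_{L^2(\Rd)}$ at finite $A$ (where the noise is smooth) and passing to the limit, rather than by applying the Z\"ahle integration-by-parts formula directly to the limit object; likewise your almost-sure bound $C_A^n$ on the random product should be read as a random (or high-probability) constant, as in the paper's restriction to the event where $\int_\calS\vert m\vert(dq)\vert b_H^{A}(u,q)\vert$ is controlled.
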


The conservation relation \eqref{consITO} is a consequence of a negligible backscattering and shows that the energy of the pulse is conserved at the end of the random section, that is
\[\|p^0_L\|_{L^2((-\infty,+\infty)\times \Rd)}=\frac{1}{2}\|f_0\|_{L^2((-\infty,+\infty)\times \Rd)}.\]
Note also that the convergence in the second point holds in $W^\alpha_\infty(0,L)$ with $\alpha=1-H+\theta$ according to \eqref{inclusion}, and that $\Psi_\omega$ becomes smoother in $z$ as $H$ increases, which is expected since the regularity of the fractional brownian motion improves with $H$. Moreover, we will see further that the process $\Psi^A_\omega$ is the solution to a fractional It\^o-Schr\"odinger equation with a regularized fractional white noise, and as such enjoys some regularity properties (w.r.t. $z$) that are convenient in justifying formal computations, in particular the calculation of the moments as in item (3) above. Moments are important for instance in imaging applications, where they help quantify the stability of reconstructions with respect to changes in the random medium, see e.g. \cite{CINT,BP-CPDE,BP-M3AS,garnier2}. The series expansion in item (2) is the classical Born approximation, see e.g. \cite[Section 17.2]{ishimaru}.

Note finally that $\Psi_\omega$ satisfies various formulations of \eqref{ItoSch0}, for instance
  \[ \hat{\Psi}_\omega (z,\kappa) =\hat{\Psi}_\omega (0,\kappa) -\frac{i\vert \kappa\vert^2}{2k_\omega} \int_0^z  \hat{\Psi}_\omega(u,\kappa) du  +i k_\omega \sigma_H \int_\calS m(dq) \int_0^z \hat{\Psi}_\omega(u,\kappa -q) dB_H(u,q)\]
 in the Fourier domain, or the mild formulation
   \[ \hat{\Psi}_\omega (z,\kappa) =  e^{-i\vert \kappa \vert ^2z/(2k_\omega)}\hat{\Psi}_\omega (0,\kappa) + ik_\omega \sigma_H \int_\calS m(dq) \int_0^z e^{-i\vert \kappa \vert ^2(z-u)/(2k_\omega)} \hat{\Psi}_\omega(u,\kappa -q) dB_H(u,q)du,\]
 where the relation between $W_H$ and $B_H$ is given by \eqref{FBM}. 

The rest of the paper is dedicated to the proofs of the theorems. We will focus on Theorem \ref{mainth1} since its proof contains that of Theorem \ref{mainth2}. We begin with an outline in the next section.

\section{Outline of the proof} \label{outline}

The proof starts with recasting the wave equation in the Fourier domain in all variables, except $z$. Defining for this the Fourier transform in time 
\[\check{g}^\e(\omega)=  \frac{1}{2\pi \e^2} \int g(t)e^{i\omega t/\e^2} dt\qquad\text{with}\qquad g(t)=\int \check{g}^\e(\omega)e^{-i\omega t/\e^2} d\omega,\]
that accounts for the high frequencies generated by the source, we obtain from \eqref{waveeq2} the Helmholtz equation
\begin{align}\nonumber \label{helm1}
&\partial^2_z\check{p}^\e_\omega (z,x)+\Delta_x \check{p}^\e_\omega (z,x)+\frac{k^2_\omega}{\e^4}\Big( 1+\e^{s} V\Big(\frac{ z}{\e},\frac{x}{\e} \Big)\mathbf{1}_{(0,L/\eps)}(z)\Big)\check{p}^\e_\omega (z,x)+i \alpha_\eps \check{p}^\e_\omega (z,x)\\
& \hspace{2cm}=\check{f}_0\Big(\omega,\frac{x}{\e} \Big)\delta'(z-L_S).
\end{align}
We will construct solutions to \fref{helm1} in section \ref{exist}, and show that $\check{p}^\e_\omega$   satisfies the required regularity to justify all the calculations. Taking the Fourier transform w.r.t to $x$ of the wavefield rescaled around the propagation axis $\check{p}^\e_\omega (z,\e x)$, that is
\[\hat{p}^\e_\omega (z,\kappa):=\frac{1}{(2\pi)^2}\int \check{p}^\e_\omega (z,\e x) e^{i\kappa x} dx,\] 
we find
 \begin{equation}\label{helmholtzF}\begin{split}
\partial^2_z\hat{p}^\e_\omega (z,\kappa)&+\frac{k^2_\omega}{\e^4}\Big(1 - \frac{\e^2 \vert \kappa \vert ^2}{k^2_\omega}\Big)\hat{p}^\e_\omega (z,\kappa) 
+i \alpha_\eps \hat{p}^\e_\omega (z,\kappa)
\\
&+\e^{s-4} k_\omega^2\int_{\calS}m(dq)\hat{V}\Big(\frac{ z}{\e},q \Big)\hat{p}^\e_\omega(z,\kappa-q)=\hat{f}_0(\omega, \kappa)\delta'(z-L_S),
\end{split}\end{equation}
where $k_{\omega}=\omega/c_0$ is the wavenumber, and $\hat{V}=\Theta(\mathcal{B}_{\mathfrak{H}})$ according to \eqref{Vtheta}. Following the standard terminology in absence of absorption ($\alpha_\e=0$), when the wavevector $\kappa$ satisfies $\vert \kappa \vert < \vert k_\omega \vert/\e$ we will refer to the corresponding mode as a \emph{propagating mode}. These modes can propagate over large distances. When the wavevector satisfies $\vert \kappa \vert > \vert k_\omega \vert/\e$, we refer to an \emph{evanescent mode}. The proof is then based on a decomposition of $\hat{p}^\e_\omega$ into right- and left-going propagating modes, see Figure \ref{dessin2}, with amplitudes $\hat{a}^\e_\omega$ and $\hat{b}^\e_\omega$, respectively. These amplitudes are  defined as the solutions to \bea\label{lrmode}
 \hat{p}^\e_\omega(z,\kappa)&=&\frac{1}{\sqrt{\lambda_{\e,\omega}(\kappa)}}\Big( \hat{a}^\e_\omega(z,\kappa)e^{ik_{\omega}\lambda_{\e,\omega}(\kappa)z/\e^2}+ \hat{b}^\e_\omega(z,\kappa)e^{-ik_{\omega}\lambda_{\e,\omega}(\kappa)z/\e^2}\Big)\\
\label{lrmode2}\partial_z\hat{p}^\e_\omega(z,\kappa)&=&\frac{ik_{\omega}\sqrt{\lambda_{\e,\omega}(\kappa)}}{\e^2}\left( \hat{a}^\e_\omega(z,\kappa)e^{ik_{\omega}\lambda_{\e
,\omega}(\kappa)z/\e^2}- \hat{b}^\e_\omega(z,\kappa)e^{-ik_{\omega}\lambda_{\e,\omega}(\kappa)z/\e^2}\right),
 \eea
for $\vert \kappa \vert < \vert k_\omega \vert/\e$, and where
\be \label{deflam}\lambda_{\e,\omega}(\kappa):=\sqrt{1 -\e^2 \vert \kappa \vert ^2/k^2_\omega+i \alpha_{\omega,\e}},\qquad \textrm{with} \qquad \alpha_{\omega,\e} =\frac{\alpha_\e \e^4}{k_\omega^2}.\ee
\begin{figure}
\begin{center}
\includegraphics*[scale=0.3]{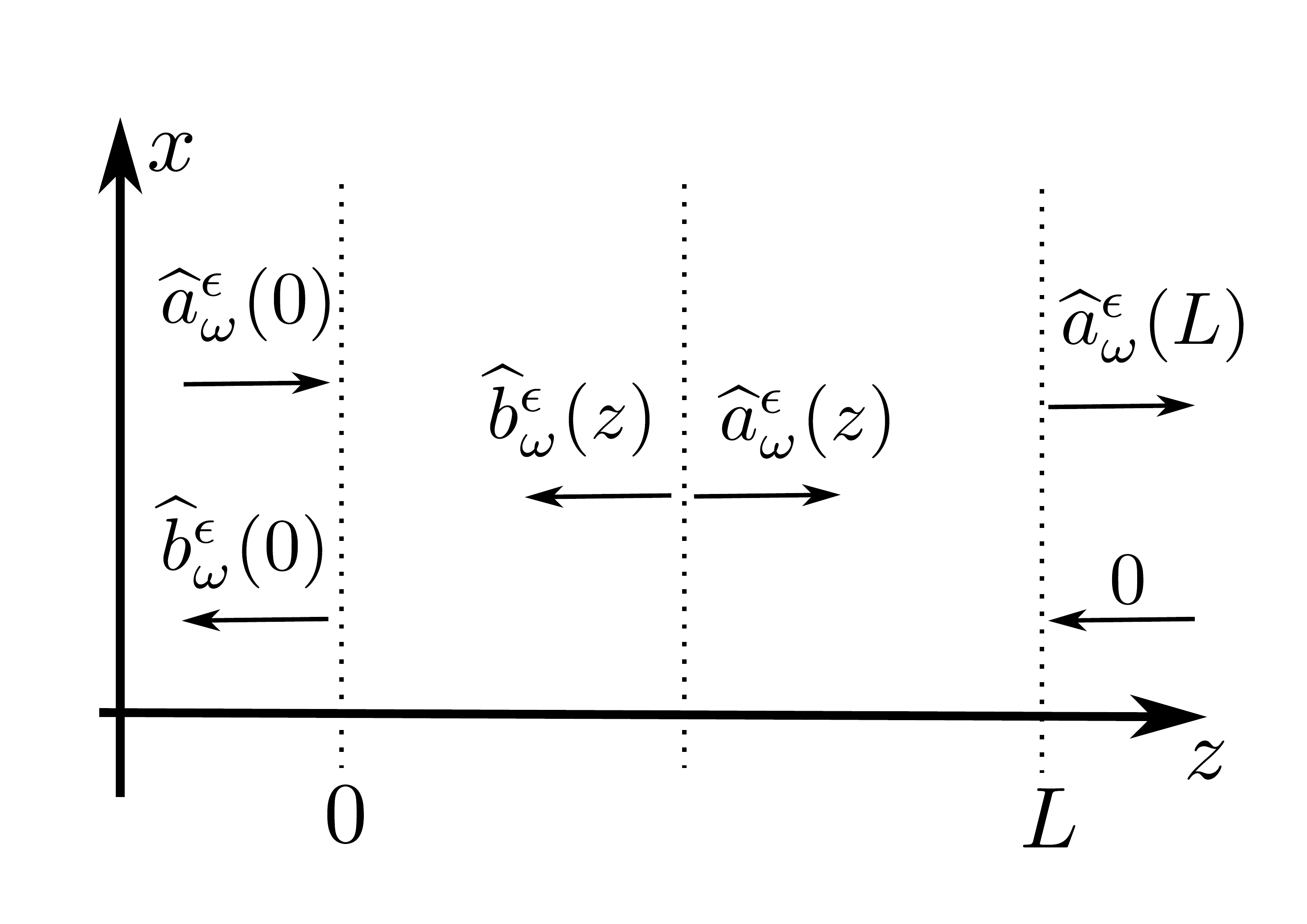}
\end{center}
\caption{\label{dessin2} Illustration of the right- and left going mode amplitudes.}
\end{figure}
We consider here the principal square root for complex numbers, namely the square root with positive imaginary part. Using \fref{helmholtzF} we find the following coupled-mode equations
\begin{equation}\label{cmode}\begin{split}
\partial_z\begin{bmatrix} \hat{a}^\e_\omega(z,\kappa)\\ \hat{b}^\e_\omega(z,\kappa) \end{bmatrix}&=\int_{\{\vert \kappa-q \vert <\vert k_\omega\vert/\e\}\cap\calS}m(dq) H^{\e}_\omega(z,\kappa,q)   \begin{bmatrix} \hat{a}^\e_\omega(z,\kappa-q)\\ \hat{b}^\e_\omega(z,\kappa-q) \end{bmatrix}\\
&+\e^{s-2}\frac{i k_\omega}{2\sqrt{\lambda_{\e,\omega}(\kappa)}}\int_{\{\vert \kappa-q \vert > \vert k_\omega \vert /\e\}\cap\calS}m(dq)\hat{V}( z/\e,q) \begin{bmatrix} e^{-ik_\omega \lambda_{\e,\omega}(\kappa)z/\e^2} \\- e^{ik_\omega \lambda_{\e,\omega}(\kappa)z/\e^2}  \end{bmatrix}\hat{p}_\omega(z,\kappa-q),
\end{split}\end{equation}
for $\vert \kappa \vert < \vert k_\omega \vert/\e$, with 
\begin{equation}\label{defH}\begin{split}
H^{\e}_\omega(z,\kappa,q)=\e^{s-2}&\frac{i k_\omega}{2\sqrt{\lambda_{\e,\omega}(\kappa)\lambda_{\e,\omega}(q)}}\hat{V}( z/\e,q)\\
&\times \begin{bmatrix} e^{ik_\omega( \lambda_{\e,\omega}(q)- \lambda_{\e,\omega}(\kappa))z/\e^2} & e^{-ik_\omega( \lambda_{\e,\omega}(q)+\lambda_{\e,\omega}(\kappa))z/\e^2} \\ -e^{ik_\omega( \lambda_{\e,\omega}(q)+\lambda_{\e,\omega}(\kappa))z/\e^2} & -e^{-ik_\omega( \lambda_{\e,\omega}(q)- \lambda_{\e,\omega}(\kappa))z/\e^2}
\end{bmatrix}.
\end{split}\end{equation}
The system \eqref{cmode} is equipped with the boundary conditions
\begin{equation}\label{defa0}
 \hat{a}^\e_\omega(0,\kappa)=\frac{\sqrt{\lambda_{\e,\omega}(\kappa)}}{2}e^{-ik_\omega\lambda_{\e,\omega} L_S/\e^2}   \hat{f}_0(\omega,\kappa)
 \qquad\text{and}\qquad  \hat{b}^\e_\omega(L,\kappa)=0,
 \end{equation}
where the first condition represents the (known) amplitude of the wave coming from the left-homogeneous half-space and entering the slab $(0,L)$, and the second condition implements the fact that no wave is entering the right-hand side of the slab. These conditions will be investigated in more details in Section \ref{exist}. Despite its formulation, the system \fref{cmode} is not an initial value problem (IVP), but rather a boundary value problem. The limiting problem will nevertheless be shown to be an IVP.

Note that in the homogeneous case where $V=0$, the justification of the paraxial approximation is straightforward. Indeed, in that case $\hat{a}^\e_\omega(z,\kappa)$ is constant for $z>L_S$ and therefore equal to $\hat{a}^\e_\omega(0,\kappa)$ defined in \fref{defa0}, so that the right-going wave \eqref{pulse} is given by 
 \[\begin{split}
p^\e_z(t,x)&=p\Big( \e^2 t+ \frac{z-L_S}{c_0},z, \e x \Big)\\
&=\frac{1}{2}\iint  e^{-i\omega t} e^{-i\kappa x} e^{ik_\omega(\lambda_{\e,\omega}(\kappa)-1)(z-L_S)/\e^2} \hat{f}_0(\omega,\kappa) d\omega d\kappa,
\end{split}\]
and therefore, pointwise in $(t,x)$,
\[\begin{split}
\lim_{\e\to 0}p^\e_z(t,x)&=\frac{1}{2}\iint e^{-i\omega t}e^{-i\kappa x}e^{-i\frac{\vert \kappa \vert ^2}{2k_{\omega}}(z-L_S)} \hat{f}_0(\omega,\kappa)d\omega d\kappa\\
&=\frac{1}{2}\int e^{-i\omega t}\Big[e^{\frac{i}{2k_\omega}(z-L_S)\Delta_x} \check{f}_0(\omega,\cdot)\Big](x)d\omega,
\end{split}\]
where $e^{iz\Delta_x /(2k_\omega)}$ is the semigroup of the free Schr\"odinger equation. 

When $V$ is random, the core of the  proof is an asymptotic analysis of the amplitudes $\hat{a}^\e_\omega$ and $\hat{b}^\e_\omega$ solutions to \fref{cmode}. There are several steps, most of which involve computing moments of the form
\be \label{defEB} \E\Big[ \prod_{j=1}^n \Theta\Big(\BB_\mathfrak{H}\big(\frac{u_j}{\e},p_j\big) \Big) \Big].\ee
The random variables $\Theta(\BB_\mathfrak{H}(u/\e,q))$ are not Gaussian, but we will see in Section \ref{RP} that they asymptotically behave as Gaussian variables. This is based on the ideas of \cite{taqqu}: suppose that $n$ is even; the even function $\Theta$ is then projected on the basis of the Hermite polynomials, which gives rise to a series of terms in \fref{defEB}. The leading one corresponds to the product of the first order terms in the Hermite expansions, and consists of the product of $n/2$ moments of order 2 of $\BB_\mathfrak{H}$. Any other term involves at least $n/2+1$ moments of order 2 which, after integration, the use of the scaling $u_j/\eps$ and of the long-range dependance property, leads to negligible contributions. The fact that a fractional Brownian field is obtained at the limit is a direct consequence of the asymptotic behavior of the correlation function \fref{LRr}.

Owing the technical results on terms of the form \fref{defEB}, we can then proceed to the analysis of $\hat{a}^\e_\omega$ and $\hat{b}^\e_\omega$. The first step is to show that the coupling between propagating and evanescent modes can be neglected. This is actually a fairly subtle point. The fact that evanescent modes are expected to decrease exponentially as $z$ increases in $(0,L)$ cannot be exploited close to the transition propagating/evanescent modes. Indeed, in this case, $\lambda_{\e,\omega}$ is too small around the transition, which essentially yields exponentials in \fref{lrmode} of order one. Our method then goes as follows: we start from \fref{cmode} with $\vert\kappa\vert<\vert k_\eps\vert /\e$, and therefore only consider the propagating modes; the first term in the r.h.s corresponds to propagating modes with momentum $\kappa-p$ scattered to propagating modes with momentum $\kappa$ after interaction with the random medium; the second term corresponds to evanescent modes with momentum $\kappa-p$ scattered  to propagating modes with momentum $\kappa$. We will prove the convergence of $(\hat{a}^\e_\omega, \hat{b}^\e_\omega)$ in the distribution sense, which will limit $|\kappa|$ to some bounded domain independent of $\eps$, say $\vert \kappa\vert<R$. In this case, the second term  is zero, since only propagating modes with large wavenumbers of order $\vert k_\omega\vert /\eps$ (which is of order $\eps^{-1}$ since $k_\omega$ is bounded from below independently of $\eps$ according to assumption \fref{hypsup}) are coupled to the evanescent modes. This naturally does not mean that evanescent modes have no influence on the propagating modes, the coupling appears in the first term of the r.h.s via modes with larger and larger wavenumbers that get closer to  the transition. 

As an approximation, it is therefore natural to introduce the following system, that only describes propagating modes, for all $\vert\kappa\vert<\vert k_\omega\vert/\e$,
\be \label{AB}\begin{split}
\begin{bmatrix} \hat{A}^\e_\omega(z,\kappa)\\ \hat{B}^\e_\omega(z,\kappa) \end{bmatrix}=\begin{bmatrix} \hat{a}^\e_\omega(0,\kappa)\\ \hat{b}^\e_\omega(0,\kappa) \end{bmatrix}&+\int_0^zdu \int_{\{\vert \kappa-q \vert <\vert k_\omega\vert/\e  \}\cap\calS}m(dq) \tilde{H}^{\e}_\omega(u,\kappa,q)   \begin{bmatrix} \hat{A}^\e_\omega(u,\kappa-q)\\ \hat{B}^\e_\omega(u,\kappa-q) \end{bmatrix},
\end{split}\ee
extended to $0$ for $\vert\kappa\vert>\vert k_\omega\vert/\e$, and where
\begin{equation}\label{defHt}\begin{split}
\tilde H^{\e}_\omega(z,\kappa,q)=&\frac{i \e^{s-2} k_\omega}{2}\hat{V}(z/\e,\kappa-q) \begin{bmatrix} e^{ik_\omega( \lambda^r_{\e,\omega}(q)- \lambda^r_{\e,\omega}(\kappa))z/\e^2} & e^{-ik_\omega( \lambda^r_{\e,\omega}(q)+\lambda^r_{\e,\omega}(\kappa))z/\e^2} \\ -e^{ik_\omega( \lambda^r_{\e,\omega}(q)+\lambda^r_{\e,\omega}(\kappa))z/\e^2} & -e^{-ik_\omega( \lambda^r_{\e,\omega}(q)- \lambda^r_{\e,\omega}(\kappa))z/\e^2}
\end{bmatrix}
\end{split}\end{equation}
with
\[\lambda^r_{\e,\omega}(q):=\sqrt{1-\e^2\vert q\vert^2/k^2_\omega}.\]
The important fact is that evanescent modes are absent in the system above, a minor point is that $H^{\e}_\omega$ is approximated by $\tilde{H}^{\e}_\omega$, where the absorption $\alpha_\eps$ is set to zero and the $\lambda_{\e,\omega}$ in the denominator in $H^{\e}_\omega$ are set to one. We want to show that $(\hat{A}^\e_\omega, \hat{B}^\e_\omega)$ is a good approximation of $(\hat{a}^\e_\omega, \hat{b}^\e_\omega)$. For the sake of clarity, let us assume that $\tilde{H}^{\e}_\omega$ is replaced by $H^{\e}_\omega$ in \fref{AB} since this is not the main issue. Integrating \fref{cmode} in $z$ and taking the difference with \fref{AB}, we then need to prove that two homogeneous solutions to \fref{AB} for $|\kappa|<R$, that is solutions with the first term in the r.h.s set to zero, are close to each other as $\eps \to 0$. The solutions are not equal for $\eps$ fixed, otherwise there would not be any evanescent modes in \fref{cmode}, which is obviously wrong. The main difficulty is that the system is not closed: even if we only consider modes with $|\kappa|<R$ in the l.h.s., modes with larger wavenumbers are involved in the r.h.s.. We will use the following observation to overcome this issue: two modes with very different wavenumbers, say one with $|\kappa|<R$ and the other with $|\kappa'|\gg R$, are related to each other only after a large number of interactions with the medium; since there is some loss of amplitude at each interaction (due to scattering and not the artificial absorption $\alpha_\eps$), the coupling between the $|\kappa|$ and the $|\kappa'|$ modes is expected to be small. This idea can be seen as a form of asymptotic closure of the non-closed system, and is implemented in the following proposition, proved in Section \ref{secevaprop}:

\begin{prop}[Coupling with evanescent modes is negligible]\label{evaprop}
For all $z\in[0,L]$, for all $\mu>0$, and  for all test function $\phi \in \calC^\infty_0(\Rd)\times  \calC^\infty_0(\Rd)$, we have
\[
\lim_{\e\to 0}\Pro\Big( \Big\vert \Big<\begin{bmatrix} \hat{a}^\e_\omega(z)\\ \hat{b}^\e_\omega(z) \end{bmatrix}-\begin{bmatrix} \hat{A}^\e_\omega(z)\\ \hat{B}^\e_\omega(z) \end{bmatrix} ,\phi \Big>_{L^2(\Rd)\times L^2(\Rd)} \Big\vert>\mu\Big)=0.
\]
\end{prop}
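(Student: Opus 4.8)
The plan is to show that the difference $\hat e^\e_\omega := (\hat a^\e_\omega,\hat b^\e_\omega) - (\hat A^\e_\omega,\hat B^\e_\omega)$, tested against a fixed compactly supported $\phi$, goes to zero in probability. Since $\phi$ is supported in some ball $|\kappa| < R$, and since the second (evanescent-coupling) term in \fref{cmode} vanishes identically for $|\kappa| < R$ once $\e$ is small enough (it requires $|\kappa - q| > |k_\omega|/\e$, hence $|\kappa| \gtrsim \e^{-1} \gg R$ using \fref{hypsup} and the boundedness of $\calS$), the only discrepancy on $\{|\kappa|<R\}$ comes from: (i) the replacement of $H^\e_\omega$ by $\tilde H^\e_\omega$ (absorption set to zero, the $\lambda_{\e,\omega}$ in the denominators set to one), which is a lower-order perturbation controllable by the $L^2((0,L)\times\Rd)$ estimates on $\hat p^\e_\omega$ coming from the absorption term together with $\alpha_\e = o(\e^2)$ and a Taylor expansion of $\lambda_{\e,\omega}$; and (ii) the fact that, even though the evanescent term is absent on $\{|\kappa|<R\}$, the right-hand side of \fref{cmode} couples the modes in $\{|\kappa|<R\}$ to propagating modes with $|\kappa|$ up to $|k_\omega|/\e$, so the system restricted to $\{|\kappa|<R\}$ is \emph{not closed} and $\hat e^\e_\omega$ does not satisfy a closed Gr\"onwall-type inequality.

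To handle (ii), which is the genuine difficulty, I would implement the "asymptotic closure" idea sketched before the proposition. Introduce a cutoff at scale $\rho$ with $R \ll \rho \ll \e^{-1}$, split each mode amplitude into a low-frequency part ($|\kappa| < \rho$) and a high-frequency part ($|\kappa| \ge \rho$), and iterate the Duhamel/integral form of \fref{cmode} (equivalently \fref{AB}): a mode at $|\kappa| < R$ is coupled to a mode at $|\kappa'| > \rho$ only after at least $(\rho - R)/r_\calS \sim \rho$ successive scattering events (since each factor $m(dq)$ shifts the transverse wavenumber by at most $r_\calS$). The key quantitative input is that each scattering event contributes a factor of size $O(\e^{s-2}\cdot$(stochastic amplitude)$)$, and after taking expectations of the relevant product of $\Theta(\BB_\mathfrak{H}(u_j/\e,p_j))$'s — using precisely the moment asymptotics from Section \ref{RP}, i.e. that these behave like Gaussians and that the long-range scaling $u_j/\e$ together with \fref{LRr} produces the $\e^{\mathfrak H/2}$ gains per pair — the $n$-th term of the Born-type series is bounded by $(C z^{2H})^{n/2}/(n/2)!$ or similar, uniformly in $\e$. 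Hence the tail of the series beyond $n \sim \rho$ scattering events is super-exponentially small in $\rho$, and choosing $\rho = \rho(\e) \to \infty$ slowly (e.g. $\rho = |\log\e|$) both kills this tail and keeps $\rho \ll \e^{-1}$. The contributions from $\le \rho$ interactions, restricted to $\{|\kappa| < \rho\}$, then do form an (approximately) closed system, on which a Gr\"onwall argument in the iterated-integral/$L^2$ norm — fed by the same moment bounds and the estimate $\E\|\hat p^\e_\omega\|^2_{L^2((0,L)\times\Rd)} \lesssim \e^{-2}$-type bounds from the absorption — shows that the low-frequency parts of $(\hat a^\e,\hat b^\e)$ and $(\hat A^\e,\hat B^\e)$ converge to each other.

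Concretely, the steps in order: (1) reduce to $|\kappa| < R$ via the support of $\phi$ and note the evanescent term in \fref{cmode} is then zero for small $\e$; (2) dispose of the $H^\e_\omega \leftrightarrow \tilde H^\e_\omega$ discrepancy using the absorption-based $L^2$ estimates and $\alpha_\e = o(\e^2)$; (3) write both $(\hat a^\e,\hat b^\e)$ and $(\hat A^\e,\hat B^\e)$ as Duhamel iterations and introduce the intermediate cutoff $\rho(\e)\to\infty$; (4) bound the $\ge \rho$-interaction tail of both expansions in $L^2$ (or in second moment) uniformly in $\e$, using the moment estimates of Section \ref{RP} to get factorial decay in the number of interactions; (5) on the truncated, approximately closed low-frequency system, close a Gr\"onwall estimate to control $\E\|\hat e^\e_\omega \mathbf 1_{|\kappa|<\rho}\|^2$ up to an error vanishing with $\e$; (6) combine (4)–(5) and apply Chebyshev to pass from $L^2$ control to the probability statement. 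I expect step (4) — getting the \emph{uniform-in-}$\e$ factorial decay of the iterated scattering series in the presence of the long-range correlations, which is exactly where the non-Gaussian, non-mixing nature forces the moment machinery of Section \ref{RP} rather than a classical diffusion-approximation argument — to be the main obstacle; step (2) and the cutoff bookkeeping in (3) are routine by comparison.
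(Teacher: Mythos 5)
Your strategy is essentially the one the paper follows: expand \fref{cmode} in a Duhamel/Born iteration truncated before evanescent modes can enter, use the uniform moment bounds of Section \ref{RP} together with the $1/n!$ from the simplex to get factorial decay of the iterates, control the remainder (which contains the exact, unknown field on $(0,L)$) with the absorption-based estimate \fref{E1}, treat the $H^\e_\omega\leftrightarrow\tilde H^\e_\omega$ replacement separately, and finish with Markov and Cauchy--Schwarz. The paper phrases the truncation in the number of scattering events rather than in frequency: it stops the iteration at $n_\e\sim\e^{-1/2}$, the first index at which $Q_n=\kappa-q_1-\cdots-q_n$ could reach $|k_\omega|/\sqrt{\e}$; since each $q_j\in\calS\subset B(0,r_\calS)$, this automatically keeps every $Q_j$ far below the evanescent threshold $|k_\omega|/\e$, which is exactly your ``$\sim\rho$ interactions to reach frequency $\rho$'' observation with $\rho\sim\e^{-1/2}$. (Your $\rho=|\log\e|$ would also work, but only marginally: the remainder carries the polynomially divergent factor $\e^{s-4}\alpha_\e^{-3/4}$ coming from \fref{E1}, and $\sqrt{(|\log\e|)!}$ beats it only because $\log|\log\e|\to\infty$; the paper's $\e^{-1/2}$ leaves much more room.)

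The one step that would not survive as literally written is the Gr\"onwall closure in your step (5). Neither a pathwise nor a second-moment Gr\"onwall inequality closes for the truncated low-frequency system: the coefficient $\e^{s-2}\hat V(z/\e,\cdot)$ is pointwise of size $\e^{-\mathfrak{H}/2}\to\infty$, so a pathwise Gr\"onwall yields $e^{C\e^{-\mathfrak{H}/2}}$, and the evolution of $\E\|\hat e^\e_\omega\|^2$ is not closed because the cross term involving $\E[\hat V\,\hat e^\e_\omega\,\overline{\hat e^\e_\omega}]$ cannot be expressed through $\E\|\hat e^\e_\omega\|^2$; the smallness you need only emerges after expanding the full iterated product of $\Theta(\BB_\mathfrak{H}(u_j/\e,\cdot))$'s and taking expectations, i.e., after undoing the Gr\"onwall. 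The paper sidesteps this entirely: once $H^\e_\omega$ is replaced by $\tilde H^\e_\omega$ and the remainder is discarded, the truncated iteration of \fref{cmode} and the solution of \fref{AB} are Born series issued from the \emph{same} data $(\hat a^\e_\omega(0),\hat b^\e_\omega(0))$, so on the support of $\phi$ their difference is exactly the tail $\sum_{n\ge n_\e}$ of a series whose $n$-th term is $O(C^n n/\sqrt{n!})$ in $L^2$ expectation --- no closed inequality is needed, only the termwise bound you already have from your step (4). With that replacement your outline matches the paper's proof.
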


An easy consequence of this result is that for all $\mu>0$, $T>0$, and test function $\phi$ with $\hat \phi\in  \calC^\infty_0(\Rd)$, we have
\be \label{CVpp1} \lim_{\e\to 0}\Pro\Big(\sup_{t\in[-T,T]}\vert \big< p^\e_{L}(t)-p^\e_{1,L}(t),\phi\big>_{L^2(\Rd)}\vert>\mu\Big)=0,\ee
where $p^\e_L$ is given by \eqref{pulse} and
\[
p^\e_{1,L}(t,x):=\iint d\omega d\kappa e^{-i\omega t}e^{-i\kappa\cdot x} \frac{ e^{-i k_\omega (L-L_S)/\e^2}  }{2\sqrt{\lambda_{\e,\omega}(\kappa)}}\hat{A}^{\e}_\omega(L,\kappa) e^{ik_\omega\lambda_{\e,\omega}(\kappa)L/\e^2}.\]
Let us remark that the left-going mode amplitude is not involved in $p^\e_{1,L}$ because of the boundary condition $\hat{b}^{\e}_\omega(L)=0$ (no wave is coming from the right homogeneous space). Since $p^\e_{L}-p^\e_{1,L}$ converges in probability to $0$, it is then enough to investigate the limit in law of $p^\e_{1,L}$ to prove Theorem \ref{mainth1} (see \cite[Theorem 3.1 pp. 27]{billingsley}). Even if this latter convergence only holds in a weak sense, energy estimates given in Section \ref{exist} will allow us to obtain the strong convergence in $L^2(\Rd)$.

The second step of the proof is to study $p^\e_{1,L}$ and therefore the couple $(\hat{A}^\e_\omega, \hat{B}^\e_\omega)$. The system \fref{AB} is closed, but the backscattered mode amplitude $\hat{b}^\e_\omega(0)$ is unknown and cannot be considered as an initial condition. This issue is corrected by introducing the propagator $\calP^\e_\omega$, defined as the solution to, for all $|\kappa|<|k_\omega|/\eps$,
\[
\calP^\e_\omega(z,\kappa,r)=\begin{bmatrix} \delta(\kappa-r)&0\\0& \delta(\kappa-r)\end{bmatrix} +\int_0^z du\int_{\{\vert \kappa-q \vert <\vert k_\omega\vert/\e  \}\cap\calS}m(dq) \tilde{H}^{\e}_\omega(u,\kappa,q)  \calP^\e_\omega(u,\kappa-q,r),
\]
where the initial condition is the identity operator. The propagator is extended to zero for $|\kappa|>|k_\omega|/\eps$. According to symmetries in $\tilde{H}^{\e}_\omega$, the propagator can be decomposed as follows 
\[\calP^\e_\omega(z,\kappa,r)=\begin{bmatrix} \calP^{a,\e}_\omega(z,\kappa,r) &\overline{ \calP^{b,\e}_\omega(z,\kappa,r) }  \\  \calP^{b,\e}_\omega(z,\kappa,r)  & \overline{\calP^{a,\e}_\omega(z,\kappa,r) }  \end{bmatrix},\]
where $(\calP^{a,\e}_\omega,\calP^{b,\e}_\omega)$ is the solution of
\begin{equation}\label{schroeq2}
\begin{bmatrix}\calP^{a,\e}_\omega(z,\kappa,r)\\ \calP^{b,\e}_\omega(z,\kappa,r)\end{bmatrix}=\begin{bmatrix} \delta(\kappa-r)\\0\end{bmatrix} +\int_0^z du\int_{\{\vert \kappa-q \vert <\vert k_\omega\vert/\e  \}\cap\calS}m(dq) \tilde{H}^{\e}_\omega(u,\kappa,q) \begin{bmatrix}\calP^{a,\e}_\omega(u,\kappa-q,r)\\ \calP^{b,\e}_\omega(u,\kappa-q,r)\end{bmatrix}.
 \end{equation}
The term $\calP^{a,\e}_\omega$ describes scattering to the same direction of propagation, while $\calP^{b,\e}_\omega$ describes scattering to the opposite direction. We then find the following relation between the right and left going modes in terms of the propagator: 
\[\begin{bmatrix}\hat{A}^\e_\omega(z,\kappa)\\ \hat{B}^\e_\omega(z,\kappa)\end{bmatrix}=\int \calP^\e_\omega(z,\kappa,r)\begin{bmatrix}\hat{a}^\e_\omega(0,r)\\ \hat{b}^\e_\omega(0,r)\end{bmatrix}dr,\qquad \forall z\in[0,L].\]
The expression of the wave exiting the random section is thus, after integration against a test function $\hat \phi\in \calC^\infty_0(\Rd)$,
\[ \big< p^\e_{1,L}(t),\phi\big>:=p^\e_{a,L}(t,\phi)+p^\e_{b,L}(t,\phi),\]
with
\begin{align*}
& p^\e_{a,L}(t,\phi):=\iint  d\omega d\kappa e^{-i\omega t} \psi^\e_\omega(\kappa)\overline{\hat{\phi}(\kappa)}  \int dr \calP^{a,\e}_\omega(L,\kappa,r)\phi^\e_\omega(r)\\
&p^\e_{b,L}(t,\phi):=\iint d\omega dr  e^{-i\omega t}e^{ik_\omega L_S/\e^2}\hat{b}^\e_\omega(0,r)\int d\kappa\overline{ \calP^{b,\e}_\omega(L,\kappa,r)}   \psi^\e_\omega(\kappa)\overline{\hat{\phi}(\kappa)}.
\end{align*}
Here, we have defined the following functions,
\[\begin{split}
\phi^\e_\omega(r)&:=\frac{\sqrt{\lambda_{\e,\omega}(r)}}{2} \hat{f}_0(\omega,r) e^{-ik_\omega(\lambda_{\e,\omega}(r)-1)L_S/\e^2}, \quad \psi^\e_\omega(\kappa):=\frac{e^{ik_\omega(\lambda_{\e,\omega}(\kappa)-1)L/\e^2}}{2\sqrt{\lambda_{\e,\omega}(\kappa)}}.
\end{split}\]
Before describing the asymptotic behavior of the propagator, we need to introduce a few more notations:
\begin{equation}\label{defphi0}
\begin{split}
&\calP^{a,\e}_{\omega,\phi^\e_\omega}(L,\kappa):=\int \calP^{a,\e}_\omega(L,\kappa,r)\phi^\e_\omega(r)dr,\qquad \calP^{\ast,b,\e}_{\omega,\psi^\e_\omega}(L,r):=\int \overline{\calP^{b,\e}_\omega(L,\kappa,r)}\psi^\e_\omega(\kappa) \overline{\hat{\phi}(\kappa)}d\kappa\\
&\phi^0_\omega(r):=\hat{f}_0(\omega,r)e^{i\vert r \vert ^2 L_S/ k_\omega }.
\end{split}\end{equation}
The next result shows that the backscattering is negligible and provides us with the leading term in the propagator.

\begin{prop}[Backscattering is negligible]\label{noback} 
We have the  following two statements:
\begin{enumerate}
\item For all $\eta>0$, we have
\[\lim_{\e\to 0}\Pro\Big(\int  \|  \calP^{\ast,b,\e}_{\omega,\psi^\e_\omega}(L)\|_{L^2(\Rd)} d\omega >\eta \Big)=0,
\]
where $\calP^{\ast,b,\e}_{\omega,\psi^\e_\omega}$ is extended by $0$ for $\vert \kappa\vert >\vert k_\omega \vert/\e$.
\item For all $\eta>0$, we have 
\[
\lim_{\e\to 0}\Pro\Big(\int \|  \calP^{a,\e}_{\omega,\phi^\e_\omega}(L)-\calX^\e_\omega (L)\|_{L^2(\Rd)} d\omega >\eta \Big)=0,
\]
where $\calX^\e_\omega$ is the solution to
\begin{equation}\label{eqdiff0}
\partial_z \calX^\e_\omega (z,\kappa)=\int_{\{\vert \kappa-q\vert<\vert k_\omega\vert /\e\}\cap\calS}m(dq) \tilde{H}^{\e}_{\omega,1,1}(z,\kappa,q)\calX^\e_\omega (z,\kappa-q)\quad\text{with}\quad\calX^\e_\omega (0)=\phi^\e_\omega,
\end{equation}
for $\vert \kappa\vert <\vert k_\omega\vert/\e$, and $\tilde{H}^{\e}_{\omega}$ is defined by \eqref{defHt} ($ \tilde{H}^{\e}_{\omega,1,1}$ is the $(1,1)$ entry of $\tilde{H}^{\e}_{\omega}$). Here, $ \calP^{a,\e}_{\omega,\phi^\e_\omega}$ and $\calX^\e_\omega$ are extended by $0$ for $\vert \kappa\vert >\vert k_\omega \vert/\e$.
\end{enumerate}
\end{prop}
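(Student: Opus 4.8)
The plan is to expand the propagator into the iterated-integral (Neumann) series obtained by iterating \fref{schroeq2}, to truncate this series at a finite order with a tail estimate uniform in $\e$, and to show that each of the finitely many remaining terms contributing to $\calP^{\ast,b,\e}_{\omega,\psi^\e_\omega}(L)$, or to the difference $\calP^{a,\e}_{\omega,\phi^\e_\omega}(L)-\calX^\e_\omega(L)$, is negligible because it carries an oscillation of frequency of order $\e^{-2}$. The mechanism is the dichotomy already visible in the matrix kernel \fref{defHt}: on the bounded set of momenta to which Proposition \ref{evaprop} has reduced us (here $|\kappa|<R$, with $k_\omega$ bounded away from $0$ by \fref{hypsup}), the diagonal entries of $\tilde H^\e_\omega$ carry the \emph{slow} phase $e^{\pm ik_\omega(\lambda^r_{\e,\omega}(q)-\lambda^r_{\e,\omega}(\kappa))z/\e^2}$, which is of order one, whereas the off-diagonal entries --- those coupling the right- and left-going components --- carry the \emph{fast} phase $e^{\mp ik_\omega(\lambda^r_{\e,\omega}(q)+\lambda^r_{\e,\omega}(\kappa))z/\e^2}$, whose exponent is of order $\e^{-2}$ since $\lambda^r_{\e,\omega}(q)+\lambda^r_{\e,\omega}(\kappa)\to 2$.

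First I would iterate \fref{schroeq2} and write $\calP^{a,\e}_{\omega,\phi^\e_\omega}(L)=\sum_{n\ge 0}\calI^{a,\e,n}_\omega$, $\calX^\e_\omega(L)=\sum_{n\ge 0}\calI^{X,\e,n}_\omega$, and, integrating $\overline{\calP^{b,\e}_\omega(L,\kappa,\cdot)}$ against $\psi^\e_\omega(\kappa)\overline{\hat\phi(\kappa)}$, $\calP^{\ast,b,\e}_{\omega,\psi^\e_\omega}(L)=\sum_{n\ge 1}\calI^{b,\e,n}_\omega$; the $n$-th term is an $n$-fold integral over the simplex $\Delta_n(L)$ of a product of $n$ entries of $\tilde H^\e_\omega$, hence of a product of $n$ random variables $\Theta(\BB_\mathfrak{H}(u_j/\e,\cdot))$, of the deterministic amplitude $(ik_\omega/2)^n\e^{-n\mathfrak{H}/2}$, of a product of phase factors, and of the datum ($\phi^\e_\omega$, resp.\ $\psi^\e_\omega\overline{\hat\phi}$) evaluated at the outgoing momentum; expanding the matrix products, each term splits into a finite sum over $\{1,2\}$-valued paths. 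Since $\calI^{X,\e,n}_\omega$ is precisely the all-diagonal path inside $\calI^{a,\e,n}_\omega$, every path surviving in $\calI^{a,\e,n}_\omega-\calI^{X,\e,n}_\omega$ has an even nonzero number of off-diagonal entries, hence at least two fast-phase factors, while every path contributing to $\calI^{b,\e,n}_\omega$ has an odd number of them, hence at least one. Using the $L^2$ energy estimates of Section \ref{exist} together with the moment bounds of Section \ref{RP} applied to the $2n$-fold products arising from $\E[\|\calI^{a,\e,n}_\omega\|^2_{L^2(\Rd)}]$, $\E[\|\calI^{b,\e,n}_\omega\|^2_{L^2(\Rd)}]$ and $\E[\|\calI^{X,\e,n}_\omega\|^2_{L^2(\Rd)}]$ --- where the crude bound $|r_\mathfrak{H}|\le 1$, the double-simplex volume $|\Delta_n(L)|^2$ and the polynomial growth of $\Theta$ from \fref{hyptheta} bound $\sup_{\e\in(0,1)}$ of each by some $\gamma_n$ with $\sum_n\sqrt{\gamma_n}<\infty$ --- I would fix, given $\eta>0$, a single $N$ making the tails $\sum_{n>N}$ of all three series contribute less than $\eta$ to $\E[\int\|\cdot\|_{L^2(\Rd)}\,d\omega]$, uniformly in $\e$. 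It then remains to handle the terms $n\le N$.

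For each such $n$, I would estimate $\E[\|\calI^{b,\e,n}_\omega\|^2_{L^2(\Rd)}]$ and $\E[\|\calI^{a,\e,n}_\omega-\calI^{X,\e,n}_\omega\|^2_{L^2(\Rd)}]$ directly. Expanding the square produces a $2n$-fold moment $\E[\prod_j\Theta(\BB_\mathfrak{H}(u_j/\e,p_j))]$ which, by Section \ref{RP}, is up to negligible corrections a sum over pairings of products of $n$ covariances $r_\mathfrak{H}((u_a-u_b)/\e)\hat R(p_a,p_b)$. Inserting this and rescaling the relative time variables by $\e$, the non-integrability \fref{LRr} produces the critical balance $\e^{-n\mathfrak{H}}\cdot\e^{n\mathfrak{H}}=O(1)$ per pairing \emph{in the absence of fast phases}, which is exactly what yields the nontrivial limit $\calX^\e_\omega$. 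On a path carrying off-diagonal entries, however, the combined phase of the two factors of the square --- after the partial cancellation allowed by pairing diagonal segments against diagonal segments --- still contains a genuinely oscillatory factor $e^{i\Phi_\e/\e^2}$ with $\Phi_\e$ a non-trivial affine function of the time variables; integrating it out against the remaining amplitude (a product of $r_\mathfrak{H}$'s and of smooth phases) costs an extra factor tending to $0$ as $\e\to 0$. Since for fixed $N$ there are only finitely many terms $n\le N$ and, for each, finitely many paths and pairings, their total contribution tends to $0$; combined with the tail bound this gives $\limsup_{\e\to 0}\E[\int\|\calP^{\ast,b,\e}_{\omega,\psi^\e_\omega}(L)\|_{L^2(\Rd)}\,d\omega]\le\eta$ and likewise for $\E[\int\|\calP^{a,\e}_{\omega,\phi^\e_\omega}(L)-\calX^\e_\omega(L)\|_{L^2(\Rd)}\,d\omega]$, and letting $\eta\downarrow 0$ and using Markov's inequality yields the two statements.

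The hard part will be showing that on any path with off-diagonal entries an \emph{uncompensated} fast phase genuinely survives the Gaussian pairing. In the second moment a diagonal segment of the path in one copy of the square may be paired with a diagonal segment of the other copy, so that parts of both the slow and the fast phases cancel; one must check that, because off-diagonal entries always come with the combination $\lambda^r_{\e,\omega}(q)+\lambda^r_{\e,\omega}(\kappa)$ of the symbols --- which stays bounded below thanks to $|\kappa|<R$ and \fref{hypsup} --- at least one oscillation of frequency of order $\e^{-2}$ always remains, and that extracting smallness from it (by an integration by parts, or by using that the $\e$-rescaled Fourier transform of $r_\mathfrak{H}$ at frequency $\sim\e^{-1}$ is small) is not spoiled by the amplitude, whose increments are controlled only on the scale $\e$ through the factors $r_\mathfrak{H}((u_a-u_b)/\e)$. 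This bookkeeping, carried out uniformly in the pairings, is where the precise iterated-integral estimates of Section \ref{RP} and the exponent $\mathfrak{H}<1$ are essential, and it is exactly here that the martingale and diffusion-approximation arguments available in the short-range setting of \cite{bailly,garnier0} are of no help.
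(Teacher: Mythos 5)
Your proposal is correct and follows essentially the same route as the paper: Neumann expansion of the propagator, term-by-term treatment justified by a bound summable in $n$ uniformly in $\e$ (the paper's Lemma \ref{inversion1}, whose $C^{2n}/n!$ estimate plays the role of your $\gamma_n$), reduction of second moments to sums over pairings via Proposition \ref{LRM}, and an oscillatory-integral argument killing every path that contains an off-diagonal entry of $\tilde{H}^\e_\omega$. The ``hard part'' you flag is closed in the paper exactly along the lines you anticipate, and the feared compensation cannot in fact occur: each fast phase $e^{\pm ik_\omega(\lambda^r_{\e,\omega}(Q_m)+\lambda^r_{\e,\omega}(Q_{m-1}))u_m/\e^2}$ depends only on its own time variable while the pairing couples time variables only through the non-oscillatory kernel $\vert u_\alpha-u_\beta\vert^{-\mathfrak{H}}$, so writing $\vert u-v\vert^{-\mathfrak{H}}=\tilde{c}_{\mathfrak{H}}\int e^{ir(u-v)}\vert r\vert^{\mathfrak{H}-1}\,dr$ and integrating by parts in the single time variable attached to one off-diagonal entry reduces the whole bookkeeping to the elementary oscillatory estimate of Lemma \ref{fastphase}.
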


Proposition \ref{noback} is proved in Section \ref{proofnoback}. The proof is based on a series expansion of the propagator and on the fact that the coupling between right and left going modes appears via an oscillatory integral. Note also that neglecting the backscattering leads to an IVP on $\calX^\e_\omega$. 

The last step is to characterize the limit of $\calX^\e_\omega$. With the same arguments as before, we only need to investigate the convergence in law of $\calX^\e_\omega$ to prove Theorem \ref{mainth1}. The proof, given in Section \ref{proofItoSc}, is also based on a series expansion of $\calX^\e_\omega$ and on the computation of the limiting moments of $\calX^\e_\omega$. We will need the following functional spaces: for $k\in\mathbb{N}^\ast$, let us denote by
 \[\calH_k:=\Big\{\phi=\hat{\psi}\quad \text{with}\quad  \psi\in H^k(\Rd) \Big\} \qquad \text{with} \qquad\|\phi\|^2_{\calH_k}:=\int (1+\vert \kappa \vert^2)^{k/2} \vert \phi (\kappa)\vert^2 d\kappa.\]
Consider also $\hat{W}^{\alpha}_k(0,L):=W^{\alpha}(0,L,\calH_k)$, equipped with the norm $\| \cdot \|_{\alpha,k}:= \| \cdot \|_{\alpha,\calH_k}$, and  the complete metric space
\[  \hat{W}^{\alpha}_\infty(0,L):=\bigcap_{k\in \mathbb{N}^\ast}\hat{W}^{\alpha}_k(0,L) \qquad \textrm{equipped with} \qquad
\hat{d}_{\alpha,\infty} (\phi,\psi) :=\sum_{k\geq 1} \frac{1}{2^k} \left(1\wedge \| \phi -\psi \|_{\alpha,k}\right).\] 
We finally introduce the complete metric space of $\calH_k$-valued functions with H\"older regularity 
\[\hat{\mathcal{C}}^{H-\theta}_\infty(0,L):=\bigcap_{k\in \mathbb{N}^\ast}\mathcal{C}^{H-\theta}([0,L],\mathcal{H}_k),\]
equipped with 
 \[ \hat{d}_{H-\theta,\mathcal{C},\infty} (\phi,\psi) :=\sum_{k\geq 1} \frac{1}{2^k} \left(1\wedge \| \phi -\psi \|_{H-\theta,\mathcal{C},\mathcal{H}_k}\right).\] 
We will also use the spaces $\hat{\mathcal{C}}^{0}_\infty(0,L)$ and $\hat{\mathcal{C}}^{\infty}_\infty(0,L)$ with immediate definitions.  Let us recall that according to \eqref{inclusion}, we have $\hat{\mathcal{C}}^{H-\theta}_\infty(0,L)\subset \hat{W}^{\alpha}_\infty(0,L)$ for $\theta=H-\alpha-1$. The convergence result is the following:

\begin{prop}[Convergence to the fractional It\^o-Schr\"odinger equation]\label{ItoSc} We have the three statements below: 
\begin{enumerate}
\item For all $M\in\mathbb{N}^\ast$ and frequencies $(\omega_{1},\dots,\omega_M)$, the family $(\calX^{\e}_{\omega_1}(L),\dots,\calX^{\e}_{\omega_M}(L))$ converges in law in $L^2(\mathbb{R}^{2M})$ to $(\calX_{\omega_1}(L),\dots,\calX_{\omega_M}(L))$. Here, $\calX_{\omega}$ is the unique pathwise solution in $\hat{W}^{\alpha}_\infty(0,L)$, for all $\alpha\in(1-H,1/2)$, of 
\be \label{fracit}
\calX_\omega(z,\kappa)= \phi^0_\omega(\kappa)+ik_\omega \sigma_H\int_{\calS} m(dq) \int_0^z du    e^{-i(\vert \kappa-q\vert ^2-\vert \kappa \vert^2)u/(2k_\omega)}\calX_\omega(u,\kappa-q) dB_H(u,q),
 \ee
where  $B_H$ is defined by \eqref{FBMB} and $\sigma_H$ by \eqref{FBM}. Here, the stochastic integral is defined $\Pro$-almost surely pointwise in $\kappa$ and $q$. Moreover, $\calX_{\omega}\in \hat{\mathcal{C}}^{H-\theta}_\infty(0,L)$ for all $\theta\in(0,H-1/2)$, and
\be \label{conservX}
\| \calX_\omega(z)\|_{L^2(\Rd)}=\| \calX_\omega(0)\|_{L^2(\Rd)}=\frac{1}{2}\| \hat{f}_0(\omega,\cdot) \|_{L^2(\Rd)}.\ee
\item We have for all $(M_1,M_2)\in \mathbb{N}^2$, all frequencies $(\omega_{ij})_{(i,j)\in \{1,2\}\times\{1,\dots,M_i\}}$, and test function $\varphi \in L^2(\mathbb{R}^{2(M_1+M_2)})$,
\[\begin{split}
\lim_{\e\to 0}\E\Big[\Big<\prod_{j_1=1}^{M_1}& \calX^{\e}_{\omega_{1,j_1}}(L)\prod_{j_2=1}^{M_2} \overline{\calX^{\e}_{\omega_{2,j_2}}(L)},\varphi\Big>_{L^2(\mathbb{R}^{2(M_1+M_2)})} \Big]\\
&=\E\Big[\Big<\prod_{j_1=1}^{M_1} \calX_{\omega_{1,j_1}}(L)\prod_{j_2=1}^{M_2} \overline{\calX_{\omega_{2,j_2}}(L)},\varphi\Big>_{L^2(\mathbb{R}^{2(M_1+M_2)})} \Big].
\end{split}\]
\item Therefore, the process defined in the Fourier domain by 
\[\hat{\Psi}_\omega(z,\kappa):=e^{-i\vert \kappa\vert^2z/(2k_\omega)}\calX_\omega(z,\kappa)\]
 satisfies all the requirements of Theorem \ref{mainth2} with $W_H$ defined by \eqref{FBM}.
\end{enumerate}
\end{prop}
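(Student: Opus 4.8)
The plan is to handle the three statements together through a Born (Duhamel) series expansion of $\calX^\e_\omega$, a term-by-term passage to the limit resting on the moment estimates of Section \ref{RP}, and a resummation whose uniformity in $\e$ is supplied by the estimates of Section \ref{exist}. First I would settle the limit object: existence and uniqueness of a pathwise solution of \eqref{fracit} in $\hat W^\alpha_\infty(0,L)$ for $\alpha\in(1-H,1/2)$. Since \eqref{fracit} is linear, the Picard iterates are $n$-fold iterated Z\"ahle integrals over the simplices $\Delta_n(z)$, and the continuity bound $|\int_0^L f\,dg|\le\|f\|_{\alpha,1}\Lambda_\alpha(g)$ together with the integrability of $\E[\sup_x\Lambda_\alpha(W_H(x))]$ from Lemma \ref{techBA} gives a factorial gain in $n$, hence convergence of the series in $\hat W^\alpha_\infty(0,L)$ both $\Pro$-a.s. and in every $L^p$. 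The H\"older regularity $\calX_\omega\in\hat{\mathcal C}^{H-\theta}_\infty(0,L)$, $\theta\in(0,H-1/2)$, follows from the regularity of the generalized Stieltjes integral against the $(H-\theta)$-H\"older paths of $B_H$ and the embedding \eqref{inclusion}; and since the pathwise integral obeys the ordinary Leibniz rule while $m^\ast(dq)=m(-dq)$ makes the potential real, the standard Schr\"odinger computation $\tfrac{d}{dz}\|\calX_\omega(z)\|_{L^2}^2=0$ yields \eqref{conservX}.

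\textbf{Expansion and uniform tail control.} Iterating \eqref{eqdiff0} gives $\calX^\e_\omega=\sum_{n\ge0}\calX^{\e,n}_\omega$, where $\calX^{\e,n}_\omega(z,\kappa)$ is an integral over $\Delta_n(z)$ against $m(dq_1)\cdots m(dq_n)$ of the product $\prod_{m=1}^n \e^{s-2}\Theta\big(\BB_\mathfrak H(u_m/\e,q_m)\big)$ times oscillatory factors $e^{ik_\omega(\lambda^r_{\e,\omega}(Q_m)-\lambda^r_{\e,\omega}(Q_{m-1}))u_m/\e^2}$ applied to $\phi^\e_\omega(Q_n)$, with $Q_m=\kappa-q_1-\cdots-q_m$. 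Uniform control of the tail $\sum_{n>N}$ in $\e$ is obtained by combining the $L^2(\Rd)$ and higher-moment estimates of Section \ref{exist} for $\calX^\e_\omega$ with the estimates of Section \ref{RP} for $\E[\prod_m\Theta(\BB_\mathfrak H(u_m/\e,p_m))]$: projecting $\Theta$ onto Hermite polynomials in the spirit of \cite{taqqu} shows that the naively diverging prefactor $\e^{n(s-2)}=\e^{-n\mathfrak H/2}$ is exactly compensated, after the rescaling $u_m/\e$, the non-integrable decay \eqref{LRr}, and integration over $\Delta_n(z)$, leaving an $O(1)$ contribution with a factorial gain in $n$. A further regularization, truncating the spectral representations \eqref{spectralB} of the $W_{H,n}$ at level $A$ as in Theorem \ref{mainth2}, produces processes $\calX^{A}_\omega$ with $\calX_\omega=\lim_{A\to\infty}\calX^{A}_\omega$ in the relevant norms, so that the moment computations can be carried out on smooth regularized objects.

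\textbf{Term-by-term convergence, assembly, and moments.} For each fixed $n$ I would show $\calX^{\e,n}_\omega(L)\to\calX^{n}_\omega(L)$ in the sense of all joint moments (over finitely many frequencies, against $L^2$ test functions), where $\calX^n_\omega$ is the $n$-th term of the series for \eqref{fracit}, i.e. the object whose conjugate $e^{-i|\kappa|^2z/(2k_\omega)}\calX^n_\omega$ is $\hat\Psi^{n,A}_\omega$ of Theorem \ref{mainth2} in the limit $A\to\infty$. Two limits combine on each $\Delta_n(z)$: the expansion $\lambda^r_{\e,\omega}(q)=1-\e^2|q|^2/(2k_\omega^2)+O(\e^4)$ turns the phases into $e^{iG_n(\mathbf u^{(n)},\mathbf Q^{(n)})}+o(1)$, matching the Schr\"odinger phase of \eqref{fracit}; and the results of Section \ref{RP} give that $\Theta(\BB_\mathfrak H)$ asymptotically behaves Gaussianly and that $\e^{-\mathfrak H/2}\int_0^\cdot\BB_\mathfrak H(u/\e,q)\,du$ converges in moments to the fractional Brownian field whose covariance is dictated by \eqref{LRr}, so the limiting iterated integral is exactly the fractional--Wiener integral defining $\calX^n_\omega$ via \eqref{defw}; dominated convergence on $\Delta_n(z)$, justified by Step 2, closes it. Combining this with the uniform tail bounds (finitely many terms converge; the tails are uniformly small in $\e$ and vanish as $N\to\infty$, likewise for $A\to\infty$) gives $\calX^\e_\omega(L)\to\calX_\omega(L)$; the same bookkeeping applied to $\prod_{j_1}\calX^\e_{\omega_{1,j_1}}(L)\prod_{j_2}\overline{\calX^\e_{\omega_{2,j_2}}(L)}$ yields statement (2). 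By the conservation \eqref{conservX}, each $\calX_\omega(L)$ lies a.s.\ on the sphere of radius $\tfrac12\|\hat f_0(\omega,\cdot)\|_{L^2}$ in $L^2(\Rd)$, so every $\langle\calX_\omega(L),\varphi\rangle$ is a bounded random variable and the limiting family is determined by its joint moments; together with tightness in $L^2(\mathbb{R}^{2M})$ (from the estimates of Section \ref{exist}) and the H\"older estimates for the $z$-dependence, this upgrades (2) to the convergence in law of statement (1). Statement (3) is then immediate: with $\hat\Psi_\omega(z,\kappa)=e^{-i|\kappa|^2z/(2k_\omega)}\calX_\omega(z,\kappa)$, conjugating \eqref{fracit} by the free Schr\"odinger semigroup (which commutes with the pathwise integral since it acts only in $\kappa$) produces \eqref{ItoSch0} and transports existence/uniqueness, \eqref{conservX}, the Born approximation, and the moment formulas to the form stated in Theorem \ref{mainth2}.

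\textbf{Main obstacle.} The delicate point is the interplay of Steps 2 and 3: the per-scattering prefactor $\e^{s-2}=\e^{-\mathfrak H/2}$ diverges, and only the cancellation uncovered by the Hermite decomposition of $\Theta$, together with the decaying-but-nonintegrable correlation \eqref{LRr} and the simplex averaging, renders each series term $O(1)$ with summable bounds. Making these estimates uniform in $\e$ — so that $\lim_\e$ may be exchanged both with the infinite sum over $n$ and with the $A\to\infty$ regularization — is the analytic heart of the argument, and it is precisely why the detailed moment computations of Section \ref{RP} and the (non-uniform but suitably tamed) energy estimates of Section \ref{exist} are needed rather than soft compactness arguments.
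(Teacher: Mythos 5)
Your proposal is correct and follows essentially the same route as the paper: Born/Duhamel expansion of $\calX^\e_\omega$, the Taqqu-style Hermite estimates of Proposition \ref{LRM} to compensate the diverging prefactor $\e^{-n\mathfrak{H}/2}$ and justify term-by-term limits with uniform tail control, the spectral truncation $\calX^A_\omega$ to legitimize the moment computations, and determinacy of the limit law by its moments via the $L^2$ conservation, with tightness and a Skorohod argument upgrading weak to strong convergence. The one point you gloss over that the paper must handle explicitly is the loss of two $\kappa$-derivatives at each application of the pathwise integral (Lemma \ref{techXnA}), which prevents a naive contraction in a fixed $\calH_k$ and is rescued only because the iterates stay compactly supported in $\kappa$ (support growing by $r_\calS$ per scattering), so that the $\calH_{k+2n}$ norms appearing after $n$ iterations are controlled by $C^n$ times a fixed $\calH_k$ norm of the data.
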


In order to identify the moments of $\calX_{\omega}$ with the limits of those of $\calX^{\e}_{\omega}$, we will identify the moments of each term in a Duhamel expansion of $\calX_{\omega}$. The main technical difficulty is to handle the fractional stochastic integral and to justify the calculations, in particular exchange of expectation, limit, and integration. We will for this proceed by regularization, and start by constructing an approximate solution $\calX^A_{\omega}$ that solves \fref{fracit} with $B_H$ replaced by $B_H^A$ defined by, for $q\in\calS$,
\[ B^A_H(u,q):=C^{1/2}_H  \sum_{n\geq 1}  \sqrt{\beta_n} e_n(q)\int_{-A}^{A} \frac{e^{iru}-1}{ir \vert r \vert^{H-1/2}} w_n(dr),\]
with $C_H=H\Gamma(2H)\sin(\pi H)/\pi$, and where $(w_n(dr))_{n\geq 1}$ is the family of independent complex Gaussian random measure given in the spectral representation \eqref{spectralB}. Since $B^A_H$ is $\calC^\infty$ w.r.t. to $u$, the integral in \fref{fracit} is now simply a Lebesgue integral, and computations can be easily justified. We then pass to the limit $A \to+ \infty$ in order to construct and characterize solutions to \fref{fracit}. These points are addressed in the proof of Proposition \ref{ItoSc} in Section \ref{proofItoSc}, and the proofs of Theorems \ref{mainth1} and \ref{mainth2} are then straightforward owing the previous propositions.

\section{Existence theory and  estimates} \label{exist}

This section is devoted to the existence of solutions to \fref{helm1}, their regularity, and to the derivation of some important estimates that will be used throughout the proof.  We introduce first the Green's function
\[
G^\eps_\omega(z,x)=\frac{e^{i k_\omega^\eps |\bx|}}{4 \pi |\bx|}, \qquad \textrm{with} \quad\bx=(z,x) \quad \textrm{and} \quad  k_\omega^\eps = k_\omega \sqrt{1+ i \alpha_\eps \eps^4 /k_\omega^2}.
\]
We then recast \fref{helm1} into the integral form
\be \label{intform}
\check{p}^\e_\omega-T_\eps \check{p}^\e_\omega =u_{0,\eps},
\ee
where
\begin{align*}
&T_\eps u(z,x)=\frac{k_\omega^2 \eps^{s-4}}{4\pi}\int_{(0,L)\times \Rm^2} G_\omega^\eps(z-z',x-x')V\Big(\frac{ z'}{\e},\frac{x'}{\e} \Big) u(z',x') dz' dx'\\
&u_{0,\eps}(\omega,z,x)=\int_{\Rm^2}  L_{0,\eps}(\omega,z,x,x') \check{f}_0\Big(\omega,\frac{x'}{\e} \Big)dx'
\end{align*}
and 
\[
L_{0,\eps}(\omega,z,x,x')=(z-L_S) \frac{e^{i k_\omega^\eps \sqrt{|z-L_S|^2+|x-x'|^2}}}{4 \pi (|z-L_S|^2+|x-x'|^2)^\frac{3}{2}}\left( i k^\eps_\omega\left(|z-L_S|^2+|x-x'|^2\right)^\frac{1}{2} -1\right).
\]
We have then the following lemma:
\begin{lem} \label{lemex} For all $\eps>0$ and all $k_\omega \in \Rm$, \fref{intform} admits a unique solution $\check{p}^\e_\omega$ in
\[ W^{2,p}((-\infty,L_S) \times \Rm^2) \cap W^{2,p}((L_S,\infty) \times \Rm^2)
\]
that satisfies \fref{helm1} in the distribution sense and almost surely.
\end{lem}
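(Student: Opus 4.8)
The plan is to solve the integral equation \fref{intform} by a Fredholm/Banach-fixed-point argument in $L^p$-type spaces and then upgrade the regularity via elliptic (Calderón–Zygmund) estimates for the Helmholtz operator with absorption. First I would fix $\eps>0$ and $k_\omega$ and observe that the absorption $\alpha_\eps>0$ makes $k_\omega^\eps$ genuinely complex with positive imaginary part, so the Green's function $G^\eps_\omega(\bx)=e^{ik_\omega^\eps|\bx|}/(4\pi|\bx|)$ decays exponentially at infinity; hence $G^\eps_\omega \in L^1(\Rm^3)\cap L^{3/2-\delta}_{\mathrm{loc}}$ and, crucially, the Newtonian-type convolution operator $u\mapsto G^\eps_\omega \ast (\mathbf{1}_{(0,L)\times\Rm^2}\, V(\cdot/\eps)\,u)$ maps $L^p(\Rm^3)$ to $W^{2,p}(\Rm^3)$ for any $p\in(1,\infty)$, by the classical mapping properties of the volume (Newtonian) potential together with the fact that $V$ is bounded (by \fref{hypmomentm}, $|V|\le C_m$ a.s.) and compactly supported in $z$. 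The source term $u_{0,\eps}$ is smooth and exponentially decaying away from the plane $z=L_S$ because $\check f_0(\omega,\cdot)$ has compact support; in particular $u_{0,\eps}\in W^{2,p}$ on each half-space $(-\infty,L_S)\times\Rm^2$ and $(L_S,\infty)\times\Rm^2$, but only $C^0$ (or $W^{1,p}$) across $z=L_S$, which is exactly why the conclusion is stated on the two half-spaces separately.

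The key steps, in order: (i) show $T_\eps$ is a bounded operator on $L^p((0,L)\times\Rm^2)$, and in fact compact there, using the exponential decay and local integrability of $G^\eps_\omega$ together with the uniform bound on $V$; (ii) establish injectivity of $I-T_\eps$ on $L^p$: if $(I-T_\eps)u=0$ then $u$ solves the homogeneous Helmholtz equation \fref{helm1} with $\alpha_\eps>0$ on all of $\Rm^3$ and with the Sommerfeld-type decay built into convolution with $G^\eps_\omega$, so an energy identity (multiply by $\bar u$, integrate, take imaginary part) forces $\alpha_\eps\|u\|_{L^2}^2=0$, hence $u\equiv0$ — here the sign of the absorption and of $\mathrm{Im}\,k_\omega^\eps$ is what makes the argument work; (iii) invoke the Fredholm alternative to get a unique $L^p$ solution $\check p^\e_\omega$ of \fref{intform}; (iv) bootstrap: since $V(\cdot/\eps)\check p^\e_\omega\in L^p$, the volume potential $T_\eps\check p^\e_\omega$ lies in $W^{2,p}_{\mathrm{loc}}$, and adding $u_{0,\eps}$ (which is $W^{2,p}$ on each half-space) gives $\check p^\e_\omega\in W^{2,p}((-\infty,L_S)\times\Rm^2)\cap W^{2,p}((L_S,\infty)\times\Rm^2)$; (v) verify that the integral equation \fref{intform} is equivalent to \fref{helm1} in $\calD'$ — one direction is the jump/distributional identity $(\Delta + (k_\omega^\eps)^2)G^\eps_\omega=-\delta$, the reverse uses uniqueness. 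Finally, the "almost surely" part is automatic once one checks the construction is measurable in $\omega\in\Omega$, which follows because $V$ is a.s. bounded with the deterministic bound $C_m$, so all the operator norms and Fredholm constants above are controlled uniformly in the randomness.

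The main obstacle I expect is step (ii), the injectivity/uniqueness of $I-T_\eps$: one must rule out nontrivial $L^p$ solutions of the homogeneous Helmholtz equation, and although the strictly positive absorption $\alpha_\eps$ makes this morally a standard "limiting absorption" uniqueness statement, making it rigorous in $L^p$ rather than $L^2$ requires care — either proving that any $L^p$ solution arising as $T_\eps u$ actually has enough decay/integrability (via the exponential decay of $G^\eps_\omega$ and Young's inequality) to belong to $L^2\cap H^1$ so the energy identity applies, or arguing directly on the convolution structure. A secondary technical point is keeping track of the singular source $\delta'(z-L_S)$: one should note that $u_{0,\eps}=L_{0,\eps}\ast_{x}\check f_0$ is precisely the convolution of $G^\eps_\omega$ with $\check f_0(\omega,x/\eps)\delta'(z-L_S)$, whose $z$-regularity drops by one order across $z=L_S$, which is harmless for the fixed-point argument (done on the bounded slab $(0,L)$, where $L_S<0$ does not enter) but forces the half-space formulation in the statement.
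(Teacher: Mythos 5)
Your proposal follows essentially the same route as the paper: compactness of $T_\eps$ on $L^p$ (the paper invokes the Riesz compactness criterion), the Fredholm alternative for existence and uniqueness in $L^p$, and elliptic regularity to upgrade to $W^{2,p}$ on each half-space. The paper actually leaves the injectivity of $I-T_\eps$ implicit in its appeal to the Fredholm alternative, whereas you correctly single it out as the step requiring the absorption $\alpha_\eps>0$, and your energy-identity argument (after checking that the homogeneous solution has enough decay via the exponentially decaying Green's function) is the natural way to close that gap.
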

\begin{proof} First of all, since the potential $V$ is bounded, it follows from Riesz compactness criterion (see \cite[Theorem XIII.66 pp. 248]{RS-80-4}) that the operator $T_\eps$ is compact in $L^p(\Rm^3)$, $1\leq p<\infty$. Moreover, application of the Young inequality show that $u_{0,\eps} \in L^q((0,L) \times \Rm^2)$, $1\leq q  \leq \infty$. The existence and uniqueness of a solution to \fref{intform} in $L^p((0,L) \times \Rm^2)$ is then a consequence of the Fredholm alternative. Remarking further that $u_{0,\eps} \in L^p((-\infty,L_S) \times \Rm^2) \cap L^p((L_S,\infty) \times \Rm^2)$, the solution $\check{p}^\e_\omega$ belongs to the latter space and satisfies \fref{helm1} in the distribution sense. Standard elliptic regularity finally yields $\check{p}^\e_\omega \in W^{2,p}((-\infty,L_S) \times \Rm^2) \cap W^{2,p}((L_S,\infty) \times \Rm^2)$.
\end{proof}

A first consequence of the latter lemma is that the Helmholtz equation is satisfied almost everywhere for $z >L_S$ and $z<L_S$. A second consequence is that $\hat{p}^\e_\omega$ and $\partial_z \hat{p}^\e_\omega$ both admit limits as $z \to L_S^\pm$. The Helmholtz equation then yields the following jump conditions  across the plane $z=L_S$, 
\be \label{jump}
\hat{p}^\e_\omega(L_S^+,\kappa)-\hat{p}^\e_\omega(L_S^-,\kappa)=\hat{f}_0(\omega,\kappa)\quad\text{and}\quad \partial_z\hat{p}^\e_\omega(L_S^+,\kappa)-\partial_z\hat{p}^\e_\omega(L_S^-,\kappa)=0.
\ee 
We then use these relations to solve the Helmholtz equation for $z<L_S$, for $z\in(L_S,0)$, and for $z>L$. This will allow us to derive boundary conditions at $z=0$ and $z=L$ that will lead to some estimates on $\hat{p}^\e_\omega$. We will need for this some properties of the $\lambda_{\eps,\omega}$ defined in \fref{deflam}, which follows from some of the principal square root. For a complex number $z=u+iv$, with $v\neq 0$, the principal square root admits the expression
\be \label{expsqrt}
\sqrt{z}=\frac{1}{\sqrt{2}}\Big(\sqrt{\sqrt{u^2+v^2}+u}+i\,sign(v)\sqrt{\sqrt{u^2+v^2}-u} \Big).
\ee
As a consequence,
\be \label{propsqrt}
\textrm{Re}(\lambda_{\e,\omega}(\kappa)) \geq 0, \quad \forall \kappa \in \Rm^2, \quad \textrm{and} \quad \textrm{Im}(\lambda_{\e,\omega}(\kappa)) \leq C \sqrt{\alpha_{\omega,\e}}, \quad \textrm{for} \quad 1 -\e^2 \vert \kappa \vert ^2/k^2_\omega \geq 0.
\ee
For the second inequality above, we used the fact that the square root is of H\"older regularity $1/2$. We will also need the following expressions, that are consequences of \fref{lrmode}-\fref{lrmode2}:
\begin{equation}\label{formulea}  \hat{a}^\e_\omega(z,\kappa)=\frac{ \sqrt{\lambda_{\e,\omega}(\kappa)}e^{-ik_{\omega}\lambda_{\e,\omega}(\kappa)z/\e^2}}{2} \Big[\hat{p}^\e_\omega(z,\kappa)+\frac{\e^2}{ik_{\omega}\lambda_{\e,\omega}(\kappa)}\partial_z\hat{p}^\e_\omega(z,\kappa)\Big]  \end{equation}
and
\begin{equation}\label{formuleb} \hat{b}^\e_\omega(z,\kappa)=\frac{\sqrt{\lambda_{\e,\omega}(\kappa)}e^{ik_{\omega}\lambda_{\e,\omega}(\kappa)z/\e^2}}{2} \Big[\hat{p}^\e_\omega(z,\kappa)-\frac{\e^2}{ik_{\omega}\lambda_{\e,\omega}(\kappa)}\partial_z\hat{p}^\e_\omega(z,\kappa)\Big].\end{equation}

\paragraph{Solution for $ z\in (-\infty,L_S) \cup (L_S,0)$.} Since there are no sources at $z=-\infty$, there are no right-traveling waves in $z<L_S$ and therefore $\hat{a}^\e_\omega(z,\kappa)=0$ for $z<L_S$. Moreover, since the medium is homogeneous in $(-\infty,0)$, the coefficients $\hat{a}^\e_\omega(z,\kappa)$ and $\hat{b}^\e_\omega(z,\kappa)$ are constant in $(-\infty,L_S) \cup (L_S,0)$. Using the jump conditions \fref{jump}, we find 
\be \label{expa}
\hat{a}^\e_{\omega}(L_S^+,\kappa)=\frac{\sqrt{\lambda_{\e,\omega}(\kappa)}}{2}\hat{f}_0(\omega,\kappa)e^{-i k_\omega \lambda_{\e,\omega}(\kappa) L_S/\e^2}.
\ee
The coefficient $\hat{b}^\e_{\omega}$ is unknown at this point in $(L_S,0)$. We then eliminate it in the expression of $\hat{p}^\e_\omega(z,\kappa)$, which leads after direct manipulations to the following boundary condition at $z=0$:
\be \label{CL1}
\partial_z \hat{p}^\e_\omega(0,\kappa)+ \frac{i k_\omega \lambda_{\e,\omega}(\kappa)}{\e^2}\hat{p}^\e_\omega(0,\kappa)=\frac{2 i k_\omega \sqrt{\lambda_{\e,\omega}(\kappa)} }{\e^2}\hat{a}^\e_{\omega}(L_S^+,\kappa), \qquad \forall \kappa \in \Rd.
\ee
Note that we used here the fact that $\partial_z \hat{p}^\e_\omega(z,\kappa)$ and $\hat{p}^\e_\omega(z,\kappa)$ are continuous in $z$, $\kappa$ a.e., according to the regularity of Lemma \ref{lemex}.
\paragraph{Solution for $ z\in (L,+\infty)$.} As in the previous case, there are no sources at $z=+\infty$, and therefore $\hat{b}^\e_\omega(z,\kappa)=0$ for $z \geq L$. Since $\hat{a}^\e_\omega(z,\kappa)$ is constant in $(L,+\infty)$ and unknown, we can eliminate it in the same fashion as above to obtain the following boundary condition at $z=L$: \be \label{CL2}
\partial_z \hat{p}^\e_\omega(L,\kappa)= \frac{i k_\omega \lambda_{\e,\omega}(\kappa)}{\e^2}\hat{p}^\e_\omega(L,\kappa), \qquad \forall \kappa \in \Rd.
\ee
We then use the boundary conditions \fref{CL1}--\fref{CL2} to arrive at the following result:
\begin{lem}\label{borneL2} The wavefield $\hat{p}^\e_\omega$ satisfies the following estimates:
\begin{align}
\label{L21}&\int_{\Rd}\textrm{Re}(\lambda_{\e,\omega}(\kappa))|\hat{p}^\e_\omega(L,\kappa)|^2 d\kappa+\int_{\Rd}\textrm{Re}(\lambda_{\e,\omega}(\kappa))|\hat{p}^\e_\omega(0,\kappa)|^2 d\kappa+ \frac{\alpha_{\e}\eps^2}{k_\omega}\|\hat{p}^\e_\omega\|^2_{L^2((0,L)\times \Rd)}\\
& \hspace{2cm} \leq C \|\hat f_0(\omega,\cdot)\|^2_{L^2(\Rm^2)}, \nonumber\\
\label{L22}&\int_{\Rd}\textrm{Im}(\lambda_{\e,\omega}(\kappa))|\hat{p}^\e_\omega(L,\kappa)|^2 d\kappa+\int_{\Rd}\textrm{Im}(\lambda_{\e,\omega}(\kappa))|\hat{p}^\e_\omega(0,\kappa)|^2 d\kappa+\frac{\eps^2}{k_\omega}\| \nabla \check{p}^\e_\omega\|^2_{L^2((0,L)\times \Rd)} \\
& \nonumber \hspace{2cm}\leq  C \|\hat f_0(\omega,\cdot)\|^2_{L^2(\Rm^2)} \left( 1+\frac{k_\omega^2}{\alpha_\eps\eps^4 }\right),\\
&\label{L23}\|\hat p_\omega^\eps(L,\cdot)\|_{L^2(\Rm^2)}+\|\hat p_\omega^\eps(0,\cdot)\|_{L^2(\Rm^2)}\leq  C \|\hat f_0(\omega,\cdot)\|_{L^2(\Rm^2)}.
\end{align}
We have moreover
\be\label{L24}
\overline{\lim_{\e\to0}}\|\hat p_\omega^\eps(L,\cdot)\|_{L^2(\Rm^2)}\leq \frac{1}{2} \|\hat f_0(\omega,\cdot)\|_{L^2(\Rm^2)}.
\ee
\end{lem}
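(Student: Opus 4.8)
The plan is to derive all four estimates from a single global energy identity on the slab $(0,L)$, obtained by testing the Helmholtz equation against the conjugate wavefield. Since $L_S<0$, on $(0,L)$ equation \fref{helmholtzF} has no source and reads $\partial_z^2\hat p^\e_\omega+\frac{k_\omega^2}{\e^4}\lambda_{\e,\omega}^2\hat p^\e_\omega+\e^{s-4}k_\omega^2\int_\calS m(dq)\hat V(\cdot/\e,q)\hat p^\e_\omega(\cdot,\kappa-q)=0$, where $\frac{k_\omega^2}{\e^4}\lambda_{\e,\omega}(\kappa)^2=\frac{k_\omega^2}{\e^4}(1-\e^2|\kappa|^2/k_\omega^2)+i\alpha_\e$ by \fref{deflam}. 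Taking $p=2$ in Lemma \ref{lemex} guarantees that $\hat p^\e_\omega$ and $\partial_z\hat p^\e_\omega$ have $L^2(\Rd)$ traces at $z=0$ and $z=L$ (and that every integral below converges), so I would multiply this equation by $\overline{\hat p^\e_\omega}$, integrate over $(0,L)\times\Rd$, and integrate by parts in $z$, substituting the boundary terms from \fref{CL1}--\fref{CL2}. The convolution term, rewritten by Plancherel and the $x$-rescaling as a multiple of $\e^{s-4}k_\omega^2\int_0^L\!\!\int_{\Rm^2}V(z/\e,x)\,|\check p^\e_\omega(z,\e x)|^2\,dx\,dz$, is real because $V$ is real. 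This produces one complex identity relating the boundary quadratic forms $\int_\Rd\lambda_{\e,\omega}(\kappa)|\hat p^\e_\omega(L,\kappa)|^2d\kappa$ and $\int_\Rd\lambda_{\e,\omega}(\kappa)|\hat p^\e_\omega(0,\kappa)|^2d\kappa$, the cross term $\int_\Rd\sqrt{\lambda_{\e,\omega}(\kappa)}\,\hat a^\e_\omega(0,\kappa)\overline{\hat p^\e_\omega(0,\kappa)}d\kappa$ (recall $\hat a^\e_\omega(0,\cdot)=\hat a^\e_\omega(L_S^+,\cdot)$), $\|\partial_z\hat p^\e_\omega\|^2_{L^2((0,L)\times\Rd)}$, $\frac{k_\omega^2}{\e^4}\int_0^L\!\!\int_\Rd\lambda_{\e,\omega}^2|\hat p^\e_\omega|^2$, and the real convolution term.

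Taking the imaginary part, $\|\partial_z\hat p^\e_\omega\|^2$ and the convolution term drop out; since $\textrm{Im}(\lambda_{\e,\omega}^2)=\alpha_{\omega,\e}$ and $\frac{k_\omega^2}{\e^4}\alpha_{\omega,\e}=\alpha_\e$, multiplying through by $\e^2/k_\omega$ yields $\int_\Rd\textrm{Re}(\lambda_{\e,\omega})(|\hat p^\e_\omega(L)|^2+|\hat p^\e_\omega(0)|^2)+\frac{\alpha_\e\e^2}{k_\omega}\|\hat p^\e_\omega\|^2_{L^2((0,L)\times\Rd)}=2\,\textrm{Re}\!\int_\Rd\sqrt{\lambda_{\e,\omega}}\,\hat a^\e_\omega(0)\overline{\hat p^\e_\omega(0)}$. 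By \fref{expa}, $\hat a^\e_\omega(0,\cdot)$ is a fixed multiple of $\hat f_0(\omega,\cdot)$, which is compactly supported in $\kappa$; on that support $\lambda_{\e,\omega}(\kappa)\to1$ uniformly as $\e\to0$ (so $|\sqrt{\lambda_{\e,\omega}}|\le C$, $\textrm{Re}(\lambda_{\e,\omega})\ge c>0$) and $|e^{-ik_\omega\lambda_{\e,\omega}L_S/\e^2}|\le e^{C\sqrt{\alpha_\e}|L_S|}\le C$, so a weighted Young inequality absorbs the right-hand side into $\frac12\int\textrm{Re}(\lambda_{\e,\omega})|\hat p^\e_\omega(0)|^2+C\|\hat f_0(\omega,\cdot)\|^2_{L^2}$. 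This is \fref{L21}. Taking instead the real part, using $\textrm{Re}(\lambda_{\e,\omega}^2)=1-\e^2|\kappa|^2/k_\omega^2$ and $\textrm{Im}(\lambda_{\e,\omega})\ge0$, I obtain an identity whose left side is a sum of nonnegative terms, $\|\partial_z\hat p^\e_\omega\|^2_{L^2((0,L)\times\Rd)}+\frac1{\e^2}\int\int|\kappa|^2|\hat p^\e_\omega|^2+\frac{k_\omega}{\e^2}\int\textrm{Im}(\lambda_{\e,\omega})(|\hat p^\e_\omega(L)|^2+|\hat p^\e_\omega(0)|^2)$, equal to $\frac{k_\omega^2}{\e^4}\|\hat p^\e_\omega\|^2_{L^2((0,L)\times\Rd)}$ plus the cross term plus the real convolution term. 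I would bound the first term on the right by \fref{L21} (which gives $\|\hat p^\e_\omega\|^2_{L^2((0,L)\times\Rd)}\le Ck_\omega\|\hat f_0\|^2_{L^2}/(\alpha_\e\e^2)$), the cross term again by the compact-support argument together with \fref{L21}, and the convolution term by $|\Pi|\le C\e^{s-4}k_\omega^2\|\hat p^\e_\omega\|^2_{L^2((0,L)\times\Rd)}$ and $\e<1$; multiplying by $\e^2/k_\omega$ and identifying $\frac{\e^2}{k_\omega}\|\nabla\check p^\e_\omega\|^2_{L^2((0,L)\times\Rd)}\lesssim\frac{\e^2}{k_\omega}\|\partial_z\hat p^\e_\omega\|^2_{L^2((0,L)\times\Rd)}+\frac1{k_\omega}\int\int|\kappa|^2|\hat p^\e_\omega|^2$ via Plancherel and the definition $\hat p^\e_\omega(z,\kappa)=\frac1{(2\pi)^2}\int\check p^\e_\omega(z,\e x)e^{i\kappa x}dx$ then yields \fref{L22} with the factor $1+k_\omega^2/(\alpha_\e\e^4)$.

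The genuinely delicate estimates are \fref{L23} and \fref{L24}. For \fref{L23}, \fref{L21} directly controls $\int_{|\kappa|\le(1-\delta)|k_\omega|/\e}|\hat p^\e_\omega(L,\kappa)|^2d\kappa$ and the same at $z=0$, since there $\textrm{Re}(\lambda_{\e,\omega}(\kappa))\ge c(\delta)>0$; the missing piece is an $\e$-uniform bound on the part with $|\kappa|\gtrsim|k_\omega|/\e$, which the energy identity does not deliver because the weight $\textrm{Re}(\lambda_{\e,\omega})$ degenerates there and \fref{L22} only offers the blowing-up constant $k_\omega^2/(\alpha_\e\e^4)$. I would obtain it directly from \fref{helmholtzF}: the source $\hat f_0$ has no transverse frequency of that size, so such modes are excited only through the convolution with the medium, whose transverse spectrum is confined to the fixed bounded set $\calS$; combining this with the exponential decay in $z$ of the evanescent kernel away from the transition and with $\hat b^\e_\omega(L)=0$ shows the high-frequency part of both traces is negligible as $\e\to0$, an argument parallel to the treatment of evanescent modes in Section \ref{secevaprop}. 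Finally, \fref{L24} comes from the flux $F(z):=\textrm{Im}\int_\Rd\overline{\hat p^\e_\omega(z,\kappa)}\,\partial_z\hat p^\e_\omega(z,\kappa)\,d\kappa$: the equation on $(0,L)$ and self-adjointness of the potential give $F'(z)=-\alpha_\e\|\hat p^\e_\omega(z)\|^2_{L^2(\Rd)}\le0$, so $F(L)\le F(0)$; using \fref{CL1}--\fref{CL2} ($\hat b^\e_\omega(L)=0$ makes $F(L)=\frac{k_\omega}{\e^2}\int\textrm{Re}(\lambda_{\e,\omega})|\hat p^\e_\omega(L)|^2$, and completing the square in the $z=0$ cross term via \fref{expa}) one gets $\int\textrm{Re}(\lambda_{\e,\omega})|\hat p^\e_\omega(L,\kappa)|^2d\kappa\le\int\frac{|\lambda_{\e,\omega}(\kappa)|^2}{4\,\textrm{Re}(\lambda_{\e,\omega}(\kappa))}|e^{-ik_\omega\lambda_{\e,\omega}(\kappa)L_S/\e^2}|^2|\hat f_0(\omega,\kappa)|^2d\kappa$; since $\lambda_{\e,\omega}\to1$ and $|e^{-ik_\omega\lambda_{\e,\omega}L_S/\e^2}|\to1$ on $\textrm{supp}\,\hat f_0$ (using $\alpha_\e=o(\e^2)$) and the evanescent remainder of $\|\hat p^\e_\omega(L,\cdot)\|^2_{L^2}$ is negligible by the previous step, the right side tends to $\frac14\|\hat f_0(\omega,\cdot)\|^2_{L^2}$, which gives \fref{L24}. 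I expect the main obstacle to be precisely the $\e$-uniform high-transverse-frequency trace bound in \fref{L23}, the only point where one must leave the energy identity and exploit the structure of the equation and the bounded transverse bandwidth of the medium.
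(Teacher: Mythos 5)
Your derivation of \fref{L21} and \fref{L22} is exactly the paper's argument: multiply \eqref{helmholtzF} by $\overline{\hat{p}^\e_\omega}$, integrate over $(0,L)\times\Rd$, insert the boundary conditions \fref{CL1}--\fref{CL2}, take imaginary and real parts, and absorb the cross term on the compact $\kappa$-support of $\hat f_0$, where $\textrm{Re}(\lambda_{\e,\omega})$ is bounded below. That part is correct.

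The gap is in \fref{L23}, and it propagates to \fref{L24}. You rightly observe that \fref{L21} only controls the traces where $\textrm{Re}(\lambda_{\e,\omega}(\kappa))$ is bounded away from zero, but the region you are missing is not just $\vert\kappa\vert>\vert k_\omega\vert/\e$: it is the whole neighborhood of the transition $\vert\kappa\vert\sim\vert k_\omega\vert/\e$, on both sides. On the propagative side $\textrm{Re}(\lambda_{\e,\omega})\sim\sqrt{\alpha_{\omega,\e}}\to0$ there, so \fref{L21} gives nothing, and on the evanescent side $\textrm{Im}(\lambda_{\e,\omega})$ is arbitrarily small, so the exponential decay you want to invoke is unavailable --- this is precisely the difficulty the paper flags in Section \ref{outline}. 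Moreover, the machinery of Section \ref{secevaprop} that you propose to imitate only yields convergence in probability tested against fixed compactly supported functions, and it rests on \fref{E1}--\fref{E2}, which are themselves consequences of the present lemma; using it here would be circular, and in any case no quantitative $\e$-uniform trace bound is produced. The paper sidesteps all of this with one extra multiplier: testing \eqref{helmholtzF} against $\overline{\hat{p}^\e_\omega}\,\lambda_{\e,\omega}(\kappa)^{-1}$ instead of $\overline{\hat{p}^\e_\omega}$ turns the boundary terms into the \emph{unweighted} norms and yields $\|\hat p_\omega^\eps(L,\cdot)\|^2_{L^2}+\|\hat p_\omega^\eps(0,\cdot)\|^2_{L^2}\leq \|\hat{f}_0(\omega,\cdot)e^{\vert k_\omega\vert \textrm{Im}(\lambda_{\e,\omega}(\cdot))L_S/\e^2}\|_{L^2}\,\|\hat p_\omega^\eps(0,\cdot)\|_{L^2}$, from which \fref{L23} follows by Young's inequality and \fref{L24} by maximizing the right-hand side over $\|\hat p_\omega^\eps(0,\cdot)\|_{L^2}$ (the prefactor tends to $\|\hat f_0(\omega,\cdot)\|_{L^2}$ on the support of $\hat f_0$). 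Your flux identity for \fref{L24} is equivalent to the paper's imaginary-part identity and does correctly bound $\int_{\Rd}\textrm{Re}(\lambda_{\e,\omega})\vert\hat p^\e_\omega(L,\kappa)\vert^2d\kappa$ by $\tfrac14\|\hat f_0(\omega,\cdot)\|^2_{L^2}+o(1)$, but upgrading that weighted bound to $\|\hat p^\e_\omega(L,\cdot)\|^2_{L^2}$ requires exactly the near-transition control you have not supplied.
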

\begin{proof}
We start by multiplying \eqref{helmholtzF} by $\overline{\hat{p}^\e_\omega(z,\kappa)}$. Integrating in $(z,\kappa)$ over $[0,L]\times \Rd$, using boundary conditions \fref{CL1}--\fref{CL2}, and taking first the imaginary part leads to
\begin{align*}
&\int_{\Rd}\textrm{Re}(\lambda_{\e,\omega}(\kappa))|\hat{p}^\e_\omega(L,\kappa)|^2 d\kappa+\int_{\Rd}\textrm{Re}(\lambda_{\e,\omega}(\kappa))|\hat{p}^\e_\omega(0,\kappa)|^2 d\kappa+ \frac{\alpha_{\e}\eps^2}{k_\omega} \|\hat{p}^\e_\omega\|^2_{L^2((0,L)\times \Rd)}\\
&\hspace{2cm}=2 \textrm{Re}\left( \int_{\Rd}\sqrt{\lambda_{\e,\omega}(\kappa)} \hat{a}^\e_{\omega}(L_S^+,\kappa)\overline{\hat{p}^\e_\omega(0,\kappa)} d\kappa\right).
\end{align*}
Since Re$(\lambda_{\e,\omega}(\kappa)) \geq 0$ for all $\kappa$ according to \fref{propsqrt}, we then find 
\[
\int_{K} \textrm{Re}(\lambda_{\e,\omega}(\kappa))|\hat{p}^\e_\omega(0,\kappa)|^2  d\kappa\leq 2 \left(\int_{K} |\lambda_{\e,\omega}(\kappa)||\hat{a}^\e_{\omega}(L_S^+,\kappa)|^2 d\kappa \right)^{1/2} \left(\int_{K} |\hat{p}^\e_\omega(0,\kappa)|^2 d\kappa \right)^{1/2},
\]
where $K$ is the support in the $\kappa$ variable of $\hat f_0$. Since there are constants $C_1$, $C_2$ and $C_3$ such that (the last inequality follows from \fref{propsqrt}),
\[
0<C_1 \leq \textrm{Re}(\lambda_{\e,\omega}(\kappa)), \qquad |\lambda_{\e,\omega}(\kappa)| \leq C_2, \qquad e^{ \vert k_\omega \vert \textrm{Im}(\lambda_{\e,\omega}(\kappa))L_S/\eps^2}\leq C_3,\qquad \forall \kappa \in K,
\]
we deduce from \fref{expa} that 
\be \label{estP0}
\int_{K} |\hat{p}^\e_\omega(0,\kappa)|^2 d\kappa \leq C  \int_{\Rd} |\hat f_0(\omega,\kappa)|^2 d\kappa,
\ee
and therefore \fref{L21}. For the second estimate, we take now the real part, and obtain
\begin{align*}
&\frac{\eps^2}{k_\omega} \int_{(0,L)\times \Rd} |\nabla \check{p}^\e_\omega(z,x)|^2 dzdx+\int_{\Rd}\textrm{Im}(\lambda_{\e,\omega}(\kappa))|\hat{p}^\e_\omega(L,\kappa)|^2 d\kappa+\int_{\Rd}\textrm{Im}(\lambda_{\e,\omega}(\kappa))|\hat{p}^\e_\omega(0,\kappa)|^2 d\kappa\\
&\hspace{0.3cm}=\frac{k_\omega}{\e^2}\int_{(0,L)\times \Rd} \Big( 1+\e^{s} V\Big(\frac{ z}{\e},\frac{x}{\e} \Big)\Big)|\check{p}^\e_\omega (z,x)|^2 dzdx+2 \textrm{Im}\left( \int_{\Rd}\sqrt{\lambda_{\e,\omega}(\kappa)} \hat{a}^\e_{\omega}(L_S^+,\kappa)\overline{\hat{p}^\e_\omega(0,\kappa)} d\kappa\right).
\end{align*}
Using the fact that $\textrm{Im}(\lambda_{\e,\omega}(\kappa)) \geq 0$, that $V$ is bounded uniformly in $(z,x)$, expression \fref{expa}, estimates \fref{estP0} and \fref{L21}, the estimate \fref{L22} follows directly. Estimates \fref{L23} and \fref{L24} are obtained in a similar manner as \fref{L21}, we instead multiply \eqref{helmholtzF} by $\overline{\hat p^\eps_\omega(z,\kappa)}\lambda_{\e,\omega}(\kappa)^{-1}$. In that case, we obtain
 \[\|\hat p_\omega^\eps(L,\cdot)\|^2_{L^2(\Rm^2)}+\|\hat p_\omega^\eps(0,\cdot)\|^2_{L^2(\Rm^2)} \leq \|\hat{f}_0(\omega,\cdot)e^{\vert k_\omega\vert Im(\lambda_{\e,\omega}(\cdot))L_S/\e^2}\|_{L^2(\Rm^2)} \|\hat p_\omega^\eps(0,\cdot)\|_{L^2(\Rm^2)},\]
 and we conclude using Young's inequality. This ends the proof.
\end{proof}

Following definitions \fref{formulea}-\fref{formuleb}, it is then direct to estimates $\hat{a}^\e_{\omega}$ and $\hat{b}^\e_{\omega}$ from Lemma \ref{borneL2}:
\begin{cor} The following estimates are satisfied:
\begin{align}
\label{E1}&\int_{(0,L)\times \{\vert \kappa \vert <\vert k_\omega \vert /\e\}} \Big(|\hat{a}^\e_{\omega}(z,\kappa)|^2+|\hat{b}^\e_{\omega}(z,\kappa)|^2 \Big)dzd \kappa \leq \frac{C}{\alpha_\eps^{\frac{3}{2}} \eps^4} \|\hat f_0(\omega,\cdot)\|^2_{L^2(\Rm^2)}\\
\label{E2}&\int_{\{|\kappa| < \vert k_\omega\vert /\e\}} |\hat{b}^\e_{\omega}(0,\kappa)|^2d \kappa \leq C \|\hat f_0(\omega,\cdot)\|^2_{L^2(\Rm^2)}.
\end{align}
\end{cor}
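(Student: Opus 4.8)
The plan is to combine the explicit representations \eqref{formulea}--\eqref{formuleb} of $\hat{a}^\e_\omega,\hat{b}^\e_\omega$ with the $L^2$ bounds of Lemma \ref{borneL2}. As a preliminary I would record the facts about $\lambda_{\e,\omega}$ needed below: from \eqref{deflam} and \eqref{expsqrt}, on the propagating set $\{|\kappa|<|k_\omega|/\e\}$ one has $|\lambda_{\e,\omega}(\kappa)|\le C$ and $|\lambda_{\e,\omega}(\kappa)|\ge\sqrt{\alpha_{\omega,\e}}=\sqrt{\alpha_\eps}\,\e^2/|k_\omega|$ (the modulus being smallest at the transition, where $\lambda_{\e,\omega}^2=i\alpha_{\omega,\e}$), together with $0\le\textrm{Im}(\lambda_{\e,\omega}(\kappa))\le C\sqrt{\alpha_{\omega,\e}}$ by \eqref{propsqrt}. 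Since $z\in[0,L]$ and $\sqrt{\alpha_{\omega,\e}}\,z/\e^2\le L\sqrt{\alpha_\eps}/|k_\omega|\to0$, all the exponentials $e^{\pm ik_\omega\lambda_{\e,\omega}(\kappa)z/\e^2}$ entering \eqref{formulea}--\eqref{formuleb}, and likewise $e^{-ik_\omega\lambda_{\e,\omega}(\kappa)L_S/\e^2}$ on $\mathrm{supp}_\kappa\hat f_0(\omega,\cdot)$, are bounded uniformly in $(z,\kappa,\e)$.

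For \eqref{E1}, plugging this into \eqref{formulea}--\eqref{formuleb} gives the pointwise bound $|\hat{a}^\e_\omega(z,\kappa)|^2+|\hat{b}^\e_\omega(z,\kappa)|^2\le C\big(|\hat{p}^\e_\omega(z,\kappa)|^2+\e^4k_\omega^{-2}|\lambda_{\e,\omega}(\kappa)|^{-1}|\partial_z\hat{p}^\e_\omega(z,\kappa)|^2\big)\le C\big(|\hat{p}^\e_\omega(z,\kappa)|^2+\e^2|k_\omega|^{-1}\alpha_\eps^{-1/2}|\partial_z\hat{p}^\e_\omega(z,\kappa)|^2\big)$. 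Integrating over $(0,L)\times\{|\kappa|<|k_\omega|/\e\}$, the $|\hat{p}^\e_\omega|^2$ contribution is estimated by $\|\hat{p}^\e_\omega\|^2_{L^2((0,L)\times\Rd)}\lesssim(\alpha_\eps\e^2)^{-1}\|\hat f_0(\omega,\cdot)\|^2_{L^2}$ from \eqref{L21}. For the $|\partial_z\hat{p}^\e_\omega|^2$ contribution one needs $\int_0^L\|\partial_z\hat{p}^\e_\omega(z,\cdot)\|^2_{L^2(\Rd)}dz$; by Plancherel and the rescaling $x\mapsto\e x$ this equals, up to a constant, $\e^{-2}\|\partial_z\check p^\e_\omega\|^2_{L^2((0,L)\times\Rd)}$, and I would bound the latter by reusing the energy identity behind \eqref{L21}--\eqref{L22}: multiplying \eqref{helmholtzF} by $\overline{\hat{p}^\e_\omega}$, integrating over $[0,L]\times\Rd$, inserting the boundary conditions \eqref{CL1}--\eqref{CL2} and taking the real part expresses $\int_0^L\|\partial_z\hat{p}^\e_\omega(z,\cdot)\|^2_{L^2}dz$ in terms of $k_\omega^2\e^{-4}\|\hat{p}^\e_\omega\|^2_{L^2((0,L)\times\Rd)}$ plus boundary and lower-order terms already estimated in the course of proving \eqref{L21}--\eqref{L22}, whence $\int_0^L\|\partial_z\hat{p}^\e_\omega(z,\cdot)\|^2_{L^2}dz\lesssim(\alpha_\eps\e^6)^{-1}\|\hat f_0(\omega,\cdot)\|^2_{L^2}$ via \eqref{L21}. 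Collecting the powers then yields \eqref{E1}.

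For \eqref{E2}, I would evaluate \eqref{formuleb} at $z=0$ and eliminate $\partial_z\hat{p}^\e_\omega(0,\kappa)$ by means of \eqref{CL1}; a short manipulation collapses the result to the clean identity $\hat{b}^\e_\omega(0,\kappa)=\sqrt{\lambda_{\e,\omega}(\kappa)}\,\hat{p}^\e_\omega(0,\kappa)-\hat{a}^\e_\omega(L_S^+,\kappa)$. The first term is bounded in $L^2(\Rd)$ by $\|\hat{p}^\e_\omega(0,\cdot)\|_{L^2}\lesssim\|\hat f_0(\omega,\cdot)\|_{L^2}$ from \eqref{L23} and $|\lambda_{\e,\omega}|\le C$; the second equals $\tfrac12\sqrt{\lambda_{\e,\omega}(\kappa)}\,\hat f_0(\omega,\kappa)e^{-ik_\omega\lambda_{\e,\omega}(\kappa)L_S/\e^2}$ by \eqref{expa}, which is pointwise $\le C|\hat f_0(\omega,\kappa)|$ by the first step. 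Integrating the square over $\{|\kappa|<|k_\omega|/\e\}$ then gives \eqref{E2}.

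The routine but slightly delicate point is the power bookkeeping in \eqref{E1}: one must play the $|\lambda_{\e,\omega}|^{-1}$ singularity of \eqref{formulea}--\eqref{formuleb} at the propagating/evanescent transition against the $L^2$ control of $\partial_z\hat{p}^\e_\omega$, which is only available through Lemma \ref{borneL2} and the $x\mapsto\e x$ rescaling, and check that the product comes out of order $\alpha_\eps^{-3/2}\e^{-4}$ and not worse; \eqref{E2} and everything else are immediate once the explicit formulas and the boundary condition \eqref{CL1} are used.
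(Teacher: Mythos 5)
Your argument is correct and is essentially the paper's own proof, which simply cites \fref{formulea}--\fref{formuleb}, Lemma \ref{borneL2} and the bounds $\sqrt{\alpha_{\omega,\e}}\le|\lambda_{\e,\omega}(\kappa)|\le C$ on the propagating set; in particular your identity $\hat b^\e_\omega(0,\kappa)=\sqrt{\lambda_{\e,\omega}(\kappa)}\,\hat p^\e_\omega(0,\kappa)-\hat a^\e_\omega(L_S^+,\kappa)$ is exactly what the paper extracts from \fref{lrmode} and \fref{expa}. Your choice to go back to the energy identity to obtain $\int_0^L\|\partial_z\hat p^\e_\omega(z,\cdot)\|^2_{L^2(\Rd)}\,dz\lesssim(\alpha_\e\e^6)^{-1}\|\hat f_0(\omega,\cdot)\|^2_{L^2(\Rm^2)}$, rather than quoting the displayed form of \fref{L22} verbatim, is the right bookkeeping and is precisely what makes the powers come out to $\alpha_\e^{-3/2}\e^{-4}$ in \fref{E1}.
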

\begin{proof} The bound \fref{E1} is a consequence of \fref{formulea}-\fref{formuleb}-\fref{L21}-\fref{L22} and the relation
\[
\sqrt{\alpha_{\omega,\e}} \leq |\lambda_{\e,\omega}| \leq \frac{1}{2} (1+|\lambda_{\e,\omega}|^2)\leq \frac{1}{2} (1+\alpha_{\omega,\e}+(1+\eps^2|\kappa|^2/k_\omega^2)).
\]
For \fref{E2}, we use \fref{lrmode} at $z=0$, together with \fref{L21}, along with the fact that $ \sqrt{2} Re(\lambda_{\e,\omega}(\kappa)) \geq |\lambda_{\e,\omega}(\kappa)|$ when $|\kappa| \leq k_\omega /\e$, and the calculation below:
\bee
\nonumber \int_{\{|\kappa| < \vert k_\omega\vert /\e\}} |{b}^\e_{\omega}(0,\kappa)|^2dzd \kappa & \leq &C_1 \int_{\{\vert \kappa\vert <\vert k_\omega\vert /\e\}}d\kappa \vert \lambda_{\e,\omega}(\kappa)\vert \left(\vert \hat{p}^\e_\omega(0,\kappa)\vert^2 + \vert \hat{f}_0(\omega,\kappa)\vert ^2\right)  \\
\nonumber &\leq &C_2 \int_{\{\vert \kappa\vert <\vert k_\omega\vert /\e\}}d\kappa Re(\lambda_{\e,\omega}(\kappa)) \vert \hat{p}^\e_\omega(0,\kappa)\vert^2 + C_2 \|\hat{f}_0(\omega,\cdot)\|^2_{L^2(\Rd)}\\
& \leq& C_3 \|\hat{f}_0(\omega,\cdot)\|^2_{L^2(\Rd)}.
\eee
This ends the proof.
\end{proof}

Let us remark that the absorption term $\alpha_\eps$ is introduced precisely in order to obtain \fref{E1}, which allows us to control $\hat{a}^\e_{\omega}$ and $\hat{b}^\e_{\omega}$ on $(0,L)$ and not just at $z=0$ and $z=L$. The estimate is used in the proof of the fact that the coupling with evanescent modes in negligible, see section \ref{secevaprop}.

\section{Technical results on iterated integrals}\label{RP}

This section is devoted to crucial technical results that will be used throughout the paper. The following fact is of importance: for $f(u_1,\dots,u_n)$ an integrable function, invariant with respect to any permutation $\sigma$, that is
\[
f(u_1,\dots,u_n)=f(u_{\sigma(1)},\dots,u_{\sigma(n)}),
\]
we have
\be \label{fact}
\int_{\Delta_n(z)}f(u_1,\dots,u_n) du_1\dots du_n=\frac{1}{n!}\int_{[0,z]^n}f(u_1,\dots,u_n) du_1\dots du_n,
\ee
where $\Delta_n(z)$ is the simplex defined by
\be \label{simplexe}\Delta_n(z):=\big\{(u_1,\dots,u_n)\in[0,z]^n,\quad\text{s.t.}\quad 0\leq u_j \leq u_{j-1}\quad \forall j\in\{2,\dots,n\}\big\}.
\ee The next proposition shows that the random field $V$ satisfies a long-range property in the $z$ direction.

\begin{prop}\label{LRV} For all $z_0\in \mathbb{R}$ and $(x,y)\in \Rd\times \Rd$, we have
\[\E[V(z+z_0,x)V(z_0,y)]\underset{z\to +\infty}{\sim} \frac{C_\mathfrak{H}}{z^{\mathfrak{H}}} R_0(x-y) \quad\text{with}\quad C_\mathfrak{H}:=\frac{c_\mathfrak{H}}{2\pi}\Big(\int_{-\infty}^{+\infty}  u \Theta(u)  e^{-u^2/2}du\Big)^2.\]
The correlation function $R_0$ is defined in \fref{defR}.
\end{prop}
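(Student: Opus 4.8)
The plan is to follow the approach of \cite[Lemma 1]{marty}: first reduce the computation to a single integral over the transverse frequencies by using the independence of $m$ and $\BB_\mathfrak{H}$ together with the built-in symmetries, then read off the slow decay from the Hermite expansion of $\Theta$ and Mehler's formula.

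First I would insert the representation \fref{Vtheta} of $V$ into $\E[V(z+z_0,x)V(z_0,y)]$. Since $V$ is bounded, Fubini's theorem lets us carry the expectation inside the resulting double integral over $\calS\times\calS$, and the independence of the random measure $m$ and the Gaussian field $\BB_\mathfrak{H}$ makes the expectation factor. Using the stationarity of $m$ together with the symmetries $m^\ast(dq)=m(-dq)$, $\BB_\mathfrak{H}(z,q)=\BB_\mathfrak{H}(z,-q)$, $\hat R(q_1,q_2)=\hat R(q_2,q_1)$, and the fact that $\Theta(\BB_\mathfrak{H})$ is real, the double integral collapses to an integral over the diagonal $q_1=q_2=q$ carrying the oscillatory factor $e^{-iq\cdot(x-y)}$, so that
\[
\E\big[V(z+z_0,x)V(z_0,y)\big]=\int_\calS \mathfrak{m}(dq)\, e^{-iq\cdot(x-y)}\,\gamma_q(z),\qquad \gamma_q(z):=\E\big[\Theta(\BB_\mathfrak{H}(z+z_0,q))\,\Theta(\BB_\mathfrak{H}(z_0,q))\big],
\]
where $\gamma_q$ is independent of $z_0$ by stationarity of $\BB_\mathfrak{H}$ in $z$.

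Next I would expand $\Theta=\sum_{k\geq 1}c_k H_k$ in $L^2$ with respect to the standard Gaussian measure on $\Rm$, where the $H_k$ are the probabilists' Hermite polynomials normalised so that $\E[H_j(X)H_k(X)]=\delta_{jk}\,k!$ for $X$ standard Gaussian. Because $\Theta$ is odd, $c_0=0$ and $c_k=0$ for even $k$, while the boundedness of $\Theta$ by $C_\Theta$ (take $l=0$ in \fref{hyptheta}) gives $\sum_{k\geq 1}c_k^2\,k!=\E[\Theta(X)^2]\leq C_\Theta^2<\infty$. For each fixed $q$ the variables $\BB_\mathfrak{H}(z+z_0,q)$ and $\BB_\mathfrak{H}(z_0,q)$ are standard Gaussian (their common variance is $\hat R(q,q)=1$) with correlation $r_\mathfrak{H}(z)\hat R(q,q)$, so Mehler's formula, equivalently $\E[H_j(X)H_k(Y)]=\delta_{jk}\,k!\,\rho^{k}$ for standard jointly Gaussian $X,Y$ with correlation $\rho$ (the series being absolutely convergent since $|\rho|\leq 1$), yields $\gamma_q(z)=\sum_{k\geq 1}c_k^2\,k!\,(r_\mathfrak{H}(z)\hat R(q,q))^{k}$. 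As $r_\mathfrak{H}(z)\to 0$ when $z\to+\infty$ by \fref{LRr}, this gives, uniformly in $q\in\calS$,
\[
\big|\gamma_q(z)-c_1^2\, r_\mathfrak{H}(z)\,\hat R(q,q)\big|\leq r_\mathfrak{H}(z)^{3}\sum_{k\geq 3}c_k^2\,k!\leq C_\Theta^2\, r_\mathfrak{H}(z)^{3}=o\big(r_\mathfrak{H}(z)\big).
\]

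Inserting this bound into the formula of the first step and recalling that $R_0(x)=\int_\calS \mathfrak{m}(dq)\,\hat R(q,q)\,e^{-iq\cdot x}$ (see \fref{defR}) with $\mathfrak{m}(\calS)<\infty$, we obtain $\E[V(z+z_0,x)V(z_0,y)]=c_1^2\, r_\mathfrak{H}(z)\,R_0(x-y)+o(r_\mathfrak{H}(z))$; since $c_1=\E[X\Theta(X)]=\frac{1}{\sqrt{2\pi}}\int_\Rm u\,\Theta(u)\,e^{-u^2/2}\,du$ and therefore $c_1^2\, c_\mathfrak{H}=C_\mathfrak{H}$, the equivalence \fref{LRr} gives $\E[V(z+z_0,x)V(z_0,y)]\sim C_\mathfrak{H}\,z^{-\mathfrak{H}}R_0(x-y)$ (to be read as $\E[V(z+z_0,x)V(z_0,y)]=o(z^{-\mathfrak{H}})$ in the degenerate case $c_1=0$). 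The Hermite/Mehler step and the tail estimate are routine; the main obstacle is the reduction of the first step, namely carefully carrying the expectation through the random measure $m$ and bookkeeping all the symmetries so as to collapse the double $\calS$-integral to the diagonal and produce exactly the phase $e^{-iq\cdot(x-y)}$ that rebuilds $R_0(x-y)$ --- one should also check that the exchanges of expectation, integration, summation and limit are legitimate, which is ensured by the boundedness of $V$, the uniform bound $|\gamma_q(z)|\leq C_\Theta^2$ and the finiteness of $\mathfrak{m}(\calS)$.
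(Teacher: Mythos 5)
Your proposal is correct and follows essentially the same route as the paper: reduce to $\int_\calS \mathfrak{m}(dq)\,e^{-iq\cdot(x-y)}\,\E[\Theta(\BB_\mathfrak{H}(z+z_0,q))\Theta(\BB_\mathfrak{H}(z_0,q))]$, expand $\Theta$ in Hermite polynomials, apply Mehler's formula, and check that the $k\geq 2$ (here $k\geq 3$, since $\Theta$ is odd) terms are $o(r_\mathfrak{H}(z))$. The only cosmetic differences are the normalisation of the Hermite coefficients and your explicit tail bound $C_\Theta^2 r_\mathfrak{H}(z)^3$ in place of the paper's dominated convergence for series.
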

The proof of this proposition follows the lines of \cite[Lemma 1]{marty}. We give its proof below as a preliminary to the proof of Proposition \ref{LRM} further. 

\begin{proof}
Let us first note that
\[\begin{split}
\E[V(z&+z_0,x)V(z_0,y)]=\int_\calS \mathfrak{m}(dq) e^{-iq\cdot (x-y)}\E[\Theta(\mathcal{B}_\mathfrak{H}(z+z_0,q))\Theta(\mathcal{B}_\mathfrak{H}(z_0,q))],
\end{split}\]
so that we just need to investigate the term $\E[\Theta(\mathcal{B}_\mathfrak{H}(z+z_0,q))\Theta(\mathcal{B}_\mathfrak{H}(z_0,q))]$. The analysis is based on the Hermite polynomials defined by
\be \label{defHerm}
H_l(u):=(-1)^l \frac{g^{(l)}(u)}{g(u)}, \qquad\text{with}\qquad g(u):=\frac{e^{-u^2/2}}{\sqrt{2\pi}},\ee
which form an orthogonal basis of $L^2(\mathbb{R}, g(u)du)$: 
\be \label{ortho} \big<H_l,H_m\big>_{L^2(\mathbb{R}, g(u)du)}=l! \delta_{lm}.\ee
Decomposing $\Theta$ with respect to this basis, we have
\[\Theta(u)=\sum_{l\geq 1}\frac{\Theta_l}{l!} H_l(u)\qquad\text{where}\qquad \Theta_l:=\big<H_l,\Theta\big>_{L^2(\mathbb{R}, g(u)du)}.\]
We will also use Mehler's formula which, for two centered Gaussian random variables such that $\E[X_1^2]=\E[X_2^2]=1$, yields
\[\E[H_{l}(X_1)H_m(X_2)]= l! \E[X_1X_2]^l\delta_{lm}.\]
Then, we obtain
\[\begin{split}
\E[\Theta\big(\BB_\mathfrak{H}(z+z_0,q)\big)\Theta\big(\BB_\mathfrak{H}(z_0,q)\big)]&=\sum_{l,m\geq 1} \frac{\Theta_{l}\Theta_{m}}{l!m!}\E[H_{l}(\BB_\mathfrak{H}(z+z_0,q))H_m(\BB_\mathfrak{H}(z_0,q))]\\
&=\sum_{l\geq 1} \frac{\Theta_l^2}{l!} r^l_\mathfrak{H}(z) \hat R^l(q,q)\\
&=\Theta_1^2 r_\mathfrak{H}(z)+\sum_{l\geq 2} \frac{\Theta_l^2}{l!} r^l_\mathfrak{H}(z).
\end{split}\] 
Moreover, following \eqref{LRr}, we have $z^\mathfrak{H} r^l_\mathfrak{H}(z)\to 0$ as $z\to +\infty$ for $l\geq 2$, and also
\[\sum_{l\geq 2}\Big\vert \frac{\Theta_l^2}{l!} r^l_\mathfrak{H}(z)\Big\vert\leq C\sum_{l\geq 2}\frac{\Theta_l^2}{l!} \leq C\big<\Theta,\Theta\big>_{L^2(\mathbb{R}, g(u)du)}<+\infty,\]
 for $z$ large enough. As a result, using dominated convergence for series, we obtain
 \[z^{\mathfrak{H}}\E[\Theta\big(\BB_\mathfrak{H}(z+z_0,q)\big)\Theta\big(\BB_\mathfrak{H}(z_0,q)\big)]\underset{z\to +\infty}{\sim}  c_\mathfrak{H} \Theta_1^2,\]
which concludes the proof of Proposition \ref{LRV}.
\end{proof}

Since the proof of Theorem \ref{ItoSc} is based on a moment technique, we will be required to compute moments of the form
\[\frac{1}{\e^{n(2-s)}}\int_{\Delta_n(z)}\E\Big[ \prod_{j=1}^n \Theta\Big(\BB_\mathfrak{H}\big(\frac{u_j}{\e},p_j\big) \Big) \Big]\varphi_\e(z,u_1,\dots,u_n)du_1\dots du_n,\]
where $n$ is an even number (otherwise this moment is 0 by symmetry), and $\varphi_\e$ is a bounded function. The following result is extensively used in the forthcoming sections up to simple modifications. It provides us with crucial uniform (in $\eps$) bounds as well as with an important convergence result. 
\begin{prop}\label{LRM} 
For all even number $n\geq 2$ and $s=2-\mathfrak{H}/2$, there exists a constant $C>0$ such that
\[
 \sup_{\e\in(0,1)}\sup_{p_1,\dots,p_n}\frac{1}{\e^{n(2-s)}}\int_{[0,z]^n}\Big\vert \E\Big[\prod_{j=1}^n \Theta\Big(\BB_\mathfrak{H}\Big(\frac{u_j}{\e},p_{j}\Big)\Big) \Big] \Big\vert du_1\dots du_n \leq  C^n n^{n/2},
\]
and, for the $C_\mathfrak{H}$ of Proposition \ref{LRV},
\[\begin{split}
\lim_{\e\to0}\frac{1}{\e^{n(2-s)}}\int_{\Delta_n(z)}&\E\Big[\prod_{j=1}^n \Theta\Big(\BB_\mathfrak{H}\Big(\frac{u_j}{\e},p_j\Big)\Big) \Big] \varphi_\e(z,u_1,\dots,u_n)du_1\dots du_n  \\
&= \lim_{\e\to0} C_\mathfrak{H}^{n/2}\int_{\Delta_n(z)} \sum_{\mathcal{F}}  \prod_{(\alpha,\beta)\in\mathcal{F}}  \frac{\hat{R}(p_\alpha,p_\beta)}{\vert u_\alpha-u_\beta\vert^\mathfrak{H}}\,\varphi_\e(z,u_1,\dots,u_n)du_1\dots du_n,
\end{split}\]
where $\varphi_\e$ is a uniformly bounded function in $\e$. Here, the sum runs over the pairings $\mathcal{F}$ of $\{1,\dots,n\}$, and the limit $\e\to0$ is uniform with respect to $(p_1,\dots,p_n)$. A pairing over vertices of $ \{1,\dots,n\}$ is a partition of this set made of $n/2$ pairs of couples $(\alpha,\beta)$, for which $\alpha<\beta$ and such that all the elements of  $\{1,\dots,n\}$ appear in only one of the pairs. Note that the number of pairings behaves like $n^{n/2}$, which appears in the estimate above.
\end{prop}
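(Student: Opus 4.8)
The plan is to combine the Hermite-expansion and diagram-formula machinery already used in the proof of Proposition~\ref{LRV} with a power-counting argument on the resulting sum over graphs. Set $X_j:=\BB_\mathfrak{H}(u_j/\e,p_j)$, so that $(X_1,\dots,X_n)$ is Gaussian with $\E[X_j^2]=1$ and $\E[X_iX_j]=r_\mathfrak{H}((u_i-u_j)/\e)\hat R(p_i,p_j)$. Expanding $\Theta(X_j)=\sum_{l\geq1}(\Theta_l/l!)H_l(X_j)$ (only odd $l$ occur, since $\Theta$ is odd and hence $\Theta_l=0$ for even $l$) and using the Wick/diagram formula for the expectation of a product of Hermite polynomials of a Gaussian vector, one gets
\[
\E\Big[\prod_{j=1}^n\Theta(X_j)\Big]=\sum_{G}\Big(\prod_{j=1}^n\Theta_{\deg_G(j)}\Big)\prod_{e\in E(G)}\frac{1}{m_e!}\Big(r_\mathfrak{H}\Big(\frac{u_{i(e)}-u_{j(e)}}{\e}\Big)\hat R(p_{i(e)},p_{j(e)})\Big)^{m_e},
\]
where $G$ ranges over loopless multigraphs on $\{1,\dots,n\}$ with every degree odd (hence $\geq1$), $e$ has endpoints $i(e),j(e)$ and multiplicity $m_e$, and the coefficient comes from counting the stub-pairings realizing $G$. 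The two quantitative inputs are $|\Theta_l|\leq\sup_u|\Theta^{(l)}(u)|\leq C_\Theta^l$ (write $\Theta_l=\int\Theta^{(l)}g$, integrating by parts $l$ times and using \eqref{hyptheta}) and $|r_\mathfrak{H}(v)|\leq\min(1,C|v|^{-\mathfrak{H}})\leq C|v|^{-\mathfrak{H}}$ for $v\neq0$; recall also $2-s=\mathfrak{H}/2$, so $\e^{-n(2-s)}=\e^{-n\mathfrak{H}/2}$.

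I would then separate the perfect matchings from the rest. A perfect matching $\mathcal{F}$ occurs only when $l_1=\cdots=l_n=1$, with coefficient $\Theta_1^n$, producing exactly $\Theta_1^n\sum_{\mathcal{F}}\prod_{(\alpha,\beta)\in\mathcal{F}}r_\mathfrak{H}((u_\alpha-u_\beta)/\e)\hat R(p_\alpha,p_\beta)$. Since $\e^{-\mathfrak{H}}r_\mathfrak{H}(w/\e)\to c_\mathfrak{H}|w|^{-\mathfrak{H}}$ pointwise with $\e^{-\mathfrak{H}}|r_\mathfrak{H}(w/\e)|\leq C|w|^{-\mathfrak{H}}$, and since $\prod_{(\alpha,\beta)\in\mathcal{F}}|u_\alpha-u_\beta|^{-\mathfrak{H}}$ factorizes over the pairs and is integrable on $[0,z]^n$ (because $\mathfrak{H}<1$), dominated convergence shows that, for each $\mathcal{F}$ and each uniformly bounded $\varphi_\e$, the difference between $\e^{-n\mathfrak{H}/2}\Theta_1^n\int_{\Delta_n(z)}\prod_{(\alpha,\beta)\in\mathcal{F}}r_\mathfrak{H}((u_\alpha-u_\beta)/\e)\hat R(p_\alpha,p_\beta)\varphi_\e\,du_1\dots du_n$ and $C_\mathfrak{H}^{n/2}\int_{\Delta_n(z)}\prod_{(\alpha,\beta)\in\mathcal{F}}|u_\alpha-u_\beta|^{-\mathfrak{H}}\hat R(p_\alpha,p_\beta)\varphi_\e\,du_1\dots du_n$ tends to $0$, uniformly in $(p_1,\dots,p_n)$; here $\Theta_1^nc_\mathfrak{H}^{n/2}=C_\mathfrak{H}^{n/2}$ because $\Theta_1=\int u\Theta(u)g(u)\,du$. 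Summing over the $(n-1)!!\leq n^{n/2}$ matchings gives the claimed limit, and $\int_{[0,z]^2}|u-v|^{-\mathfrak{H}}\,du\,dv\leq Cz^{2-\mathfrak{H}}$ yields $(n-1)!!\,|\Theta_1|^n(Cz^{2-\mathfrak{H}})^{n/2}\leq C^nn^{n/2}$, i.e. the matching contribution to the uniform bound.

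For a graph $G$ that is not a perfect matching I would bound $\int_{[0,z]^n}\prod_e|r_\mathfrak{H}((u_{i(e)}-u_{j(e)})/\e)|^{m_e}\,du$ component by component: on a connected component with $k$ vertices, fix a spanning tree, change variables to one ``root'' coordinate and the $k-1$ tree edge-differences (Jacobian one), bound every non-tree factor by $1$, and use $\int_{-z}^z|r_\mathfrak{H}(w/\e)|^m\,dw\leq C_m\,\e^{\min(m\mathfrak{H},1)}z^{\max(1-m\mathfrak{H},0)}$ (with a harmless logarithm when $m\mathfrak{H}=1$) on the $k-1$ tree edges. Since every vertex has degree $\geq1$, each component has $\geq2$ vertices, so $G$ has at most $n/2$ components; a short count then shows the total power of $\e$ produced is at least $(n-m)\mathfrak{H}+\delta\,(m_2-m_2')$, where $m$ is the number of components, $m_2$ the number of size-$2$ components, $m_2'$ those among them with multiplicity $1$, and $\delta:=\min(2\mathfrak{H},1-\mathfrak{H})>0$. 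After multiplying by $\e^{-n\mathfrak{H}/2}$ one is left with $\e^{(n/2-m)\mathfrak{H}+\delta(m_2-m_2')}$, whose exponent vanishes precisely for perfect matchings and is $\geq\min(\mathfrak{H},\delta)>0$ otherwise; combined with $|\Theta_{\deg_G(j)}|\leq C_\Theta^{\deg_G(j)}$ and the factors $1/m_e!$, the series over all non-matching $G$ is dominated by $C^nn^{n/2}\e^{\min(\mathfrak{H},\delta)}$, which both tends to $0$ (giving the convergence statement) and is uniformly $\leq C^nn^{n/2}$ (completing the uniform bound).

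The main obstacle will be organizing that last sum rigorously: the multiplicities $m_e$ are unbounded, so there are infinitely many multigraphs, and one must check that the per-graph power counting --- in particular the step of discarding non-tree and excess-multiplicity factors by bounding them by $1$ --- assembles into a genuinely convergent series whose total has only the advertised $C^nn^{n/2}$ growth. This is where the decay of $\Theta_l/l!$ and the explicit coefficient $\prod_j\Theta_{\deg_G(j)}/\prod_e m_e!$ must be exploited with care, along the combinatorial lines of \cite{taqqu}. A minor secondary point is that all estimates are to be uniform in the transverse frequencies $(p_1,\dots,p_n)$, which is automatic here since these enter only through the bounded factors $\hat R(p_i,p_j)$, $|\hat R|\leq1$.
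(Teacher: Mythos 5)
Your overall strategy---expand $\Theta$ in Hermite polynomials, apply the diagram formula, isolate the perfect matchings as the leading contribution, and kill everything else by power counting in $\e$---is exactly the strategy of the paper, and your treatment of the second statement (the limit, for fixed $n$) is essentially equivalent to the paper's: the paper also identifies the remainder as the terms with $q\geq n/2+1$ correlation factors and gains an extra $\e^{\mathfrak{H}\wedge(1-\mathfrak{H})}\log(1/\e)$ from the one variable that is forced to appear twice, which is a slightly cruder version of your spanning-tree count. For fixed $n$ the summability over diagrams is harmless, so that half of the proposition is fine.

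The genuine gap is in the first statement, and it is precisely the obstacle you flag at the end without resolving. The issue is quantitative: the bound must be $C^n n^{n/2}$, not merely finite, because downstream it is divided by $n!$ in the Duhamel series and summed over $n$. Your ingredients---$\vert\Theta_l\vert\leq C_\Theta^l$, the stub-pairing count, and a per-diagram integral bound $C^n\e^{n\mathfrak{H}/2}$ that is uniform in the multiplicities---yield at best
\[
\frac{1}{\e^{n\mathfrak{H}/2}}\int_{[0,z]^n}\Big\vert \E\Big[\prod_{j=1}^n \Theta\Big(\BB_\mathfrak{H}\Big(\frac{u_j}{\e},p_{j}\Big)\Big) \Big] \Big\vert \,d\mathbf{u}
\;\leq\; C^n\sum_{q\geq n/2}\frac{(2q)!}{2^q q!}\sum_{l_1+\cdots+l_n=2q}\prod_{j}\frac{C_\Theta^{l_j}}{l_j!}
\;\leq\; C^n\sum_{q\geq n/2}\frac{(n^2C_\Theta^2/2)^q}{q!}\;\leq\; C^n e^{cn^2},
\]
and the terms with $q\sim n^2$ really do saturate this upper bound for the method as described: bounding excess edge factors by $1$ and counting diagrams by $(2q-1)!!$ cannot see any decay in $q$ beyond $\prod_j 1/l_j!$, and $e^{cn^2}/n!$ is not summable. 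Even upgrading the diagram count to Taqqu's Lemma 3.1, $\E[\vert\prod_j H_{l_j}(X)\vert]\leq\prod_j(n-1)^{l_j/2}\sqrt{l_j!}$, still leaves a series $\big(\sum_{l}\vert\Theta_l\vert(n-1)^{l/2}/\sqrt{l!}\big)^n\sim e^{cn^2}$. The paper's resolution is a specific device you are missing: expand $\Theta(\lambda_n^{-1}\,\cdot)$ with $\lambda_n=(n-2)^{-1/2}$ rather than $\Theta$ itself, use the Hermite multiplication theorem to convert $H_l(\lambda_n u)$ back into $H_{l-2k}(u)$, so that the factor $\lambda_n^{l}=(n-2)^{-l/2}$ cancels the $(n-1)^{l/2}$ growth from Taqqu's bound, and then control the resulting coefficient series $\sum_l\vert\Theta_{n,2l+1}\vert/l!$ by splitting it at $l\sim nM$ with two different integration-by-parts bounds on $\Theta_{n,l}$ (one integration by parts for small $l$, $2l$ of them for large $l$, using \eqref{hyptheta}). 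Without this rescaling step, or a substitute for it, the uniform bound $C^n n^{n/2}$ does not follow from your argument.
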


\begin{proof} 
The proof follows some of the ideas of \cite{taqqu}.  For the first result of the proposition, we decompose $\Theta( \lambda^{-1}_n\,\cdot)$ over the Hermite polynomials (with resulting coefficients $\Theta_{n,l}$), and obtain
\[\begin{split}
\E\Big[ \prod_{j=1}^n \Theta\Big(\BB_\mathfrak{H}\Big(\frac{u_j}{\e},p_j\Big)\Big) \Big]&=\E\Big[ \prod_{j=1}^n \Theta\Big( \frac{1}{\lambda_n} \lambda_n \BB_\mathfrak{H}\Big(\frac{u_j}{\e},p_j\Big)\Big) \Big]\\
&=\sum_{\substack{ l_\beta\geq 1\\ \beta\in\{1,\dots,n\}}} \left(\prod_{j=1}^n\frac{\Theta_{n,l_j}}{l_j!} \right)\E\Big[\prod_{j=1}^n H_{l_j}\Big( \lambda_n\BB_\mathfrak{H}\Big(\frac{u_j}{\e^s},p_j\Big)\Big) \Big].
\end{split}\]
Note that we introduce the factor  
\[\lambda_n :=\frac{1}{(n-2)^{1/2}}\]
in order to force the convergence of a series, as will be explicit further. This is a key point of the proof. We want to use now  \cite[Lemma 3.2]{taqqu}, which states that for $n \geq 2$, and a $(X_1,\dots,X_n)$ mean zero Gaussian vector such that
\[\E[X_j^2]=1 \qquad \text{and}\qquad \vert \E[X_jX_l]\vert \leq 1\qquad \forall(j,l)\in\{1,\dots,n\}^2\quad\text{with}\quad j\neq l,\]
we have
\be \label{taq32}
\E\Big[ \prod_{j=1}^n H_{l_j}(X_j)\Big]=\left\{
\begin{array}{l}
\displaystyle\frac{l_1! \cdots l_n!}{2^q (q!)} \sum_{I(l_1,\dots,l_n)} r_{i_1 j_1}r_{i_2 j_2} \cdots r_{i_q j_q}\\
\displaystyle\qquad\textrm{if}\quad l_1+\cdots+l_n=2q \textrm{ and } 0\leq l_1, \dots, l_n \leq q\\
\displaystyle0 \quad\textrm{ otherwise}
\end{array} \right.
\ee
where $r_{ij}= \E [X_i X_j]$, and 
\[\begin{split}
I(l_1,\dots,l_n)=\big\{ (i_1,j_1,\dots,i_q,j_q)&\in \{1,\dots,n\}^{2q},\quad\text{s.t.}\quad i_\beta \neq j_\beta\quad\forall \beta\in\{1,\dots,q\}\\
& \text{and all index}\quad r \in\{1,\dots,n\}\quad\text{appears } l_r\text{ times}\big\}.
\end{split}\]
Above such a Gaussian vector is said to be standard. Nevertheless, because of the factor $\lambda_n$ we cannot apply \eqref{taq32} directly, we first have to make use of the following multiplication theorem \cite{Feldheim}:
\[H_l(\lambda_n u)= \lambda^l_n \sum_{k=0}^{ [l/2]} (1-\lambda_n^{-2})^k  \frac{l!}{2^k(l-2k)! \, k!} H_{l-2k}(u).
\]
Specializing \eqref{taq32} to our case, we find
\[\begin{split}
\E\Big[\prod_{j=1}^n & H_{l_j-2k_j}\Big( \BB_\mathfrak{H}\Big(\frac{u_j}{\e},p_j\Big)\Big)\Big]\\
&=\left\{\begin{array}{l}
\displaystyle\frac{\tilde{l}_1!\cdots \tilde{l}_n!}{2^q q! }\sum_{ I(\tilde{l}_1,\dots,\tilde{l}_n)} \prod_{\beta=1}^q r_\mathfrak{H}\Big(\frac{u_{i_\beta}-u_{j_\beta}}{\e}\Big)\hat{R}(p_{i_\beta},p_{j_\beta})\\
\displaystyle\qquad\text{if}\quad \tilde{l}_1+\dots+\tilde{l}_n=2q  \textrm{ and } 0\leq \tilde{l}_1, \dots, \tilde{l}_n \leq q\quad \text{with}\quad \tilde{l}_j:=l_j-2k_j,\\
\displaystyle0\quad \text{otherwise}.
\end{array}\right.
\end{split}\]
Let us remark that all the indices $l$ are odd since $\Theta$ is assumed to be odd ($\Theta_{n,l}=0$ for $l$ even). Hence, $\tilde{l}_j=l_j-2k_j \geq 1$ for all $j=1,\dots,n$, so that $q \geq n/2$.
Consider now the term
\[
A_{q,n}:=\int_{[0,z]^n} \left|\prod_{m=1}^q r_\mathfrak{H}\Big(\frac{u_{i_m}-u_{j_m}}{\e}\Big)\hat{R}(p_{i_m},p_{j_m}) \right|du_1\dots du_n.
\]
We deduce from the definition of $I(\tilde{l}_1,\dots,\tilde{l}_n)$ that each of the $u_1, \dots, u_n$ appear at least once in the product above. Keeping $n/2$ of them for integrating $r_\mathfrak{H}$, and bounding $r_\mathfrak{H}$ by $\sup_u |r_\mathfrak{H}(u)|=1$ for the others, and using the fact that $r_\mathfrak{H}(u)$ is even, we find
\be \label{Aqn}
A_{q,n} \leq  (2z)^{n/2} \left(\sup_u |r_\mathfrak{H}(u)|\right)^{q-n/2} \Big(\underbrace{\sup_{p_1,p_2} |\hat{R}(p_1,p_2)|}_{\leq 1}\Big)^{q} \left(\int_0^z \Big| r_\mathfrak{H}\Big(\frac{u}{\e}\Big)\Big|du \right)^{n/2}.
\ee
We now need to estimate the cardinal of $I(\tilde{l}_1,\dots,\tilde{l}_n)$. For this, we use again \fref{taq32} with $X_1=\cdots=X_n=X$ where $X\sim \mathcal{N}(0,1)$, and find, with now $r_{i_m j_m}=1$, together with \fref{Aqn},
\[\begin{split}
\int_{[0,z]^n}\Big\vert \E\Big[\prod_{j=1}^n  H_{l_j-2k_j}&\Big( \BB_\mathfrak{H}\Big(\frac{u_j}{\e},p_j\Big)\Big)\Big] \Big\vert du_1\dots du_n \\
&\leq  C^n \left(\int_0^z\Big\vert r_{\mathfrak{H}}\Big(\frac{u}{\e}\Big)\Big\vert du\right)^{n/2} \E\Big[\Big\vert \prod_{j=1}^n H_{l_j-2k_j}(X)\Big\vert \Big].
\end{split}\]
Moreover, we have
\be \label{EH}\E\Big[\Big\vert \prod_{j=1}^n H_{r_j}(X)\Big\vert \Big]\leq \prod_{j=1}^n (n-1)^{r_j/2}\sqrt{r_j!},
\ee
according to \cite[Lemma 3.1]{taqqu}, which yields
\[\begin{split}
\int_{[0,z]^n}\E\Big[\prod_{j=1}^n H_{l_j}\Big( \lambda_n\BB_\mathfrak{H}\Big(\frac{u_j}{\e^s},&p_j\Big)\Big) \Big]du_1\dots du_n\\
&\leq  C^n \left(\int_0^z \Big\vert r_{\mathfrak{H}}\Big(\frac{u}{\e}\Big)\Big\vert du\right)^{n/2} \prod_{j=1}^n  \frac{\lambda_n^{l_j} l_j!}{[l_j/2]!} \\
&\quad \times\sum_{\substack{j=1,\dots,n \\ k_j=0,\dots, [l_j/2]}} \prod_{j=1}^n (n-1)^{l_j/2-k_j}(\lambda_n^{-2}-1)^{k_j}\frac{[l_j/2]!}{2^{k_j}k_j! \sqrt{(l_j-2k_j)!} }.
\end{split}\]
After standard computations, we find for $l_j$ odd,
\[\sqrt{(l_j-2k_j)!} \geq 2^{[l_j/2]-k_j} ([l_j/2]-k_j)!,\qquad \text{and}\qquad (n-1)^{l_j/2-k_j}\leq n^{1/2}(n-1)^{[l_j/2]-k_j},\]
and then, with the binomial theorem,
\[\begin{split}
\sum_{k_j=0}^{ [l_j/2]} & (n-1)^{l_j/2-k_j}(\lambda_n^{-2}-1) ^{k_j}\frac{[l_j/2]!}{2^{k_j}k_j! \sqrt{(l_j-2k_j)!} }\\
&\leq \frac{n^{1/2}}{2^{[l_j/2]}} \sum_{k_j=0}^{ [l_j/2]}  (n-1)^{[l_j/2]-k_j}(\lambda_n^{-2}-1)^{k_j}\frac{[l_j/2]!}{k_j! ([l_j/2]-k_j)! }\\
&\leq \frac{n^{1/2}}{2^{[l_j/2]}}  ( n+\lambda_n^{-2}-2)^{[l_j/2]}.
\end{split}\]
Hence, using again that all the indices $l_j$ are odds, we obtain
\[\begin{split}
\int_{[0,z]^n}\E\Big[ \prod_{j=1}^n &\Theta\Big(\BB_\mathfrak{H}\Big(\frac{u_j}{\e},p_j\Big)\Big) \Big]du_1\dots d u_n\\
&\leq n^{n/2}C^n \left(\int_0^z\Big\vert r_{\mathfrak{H}}\Big(\frac{u}{\e}\Big)\Big\vert du\right)^{n/2} \sum_{\substack{ l_\beta\geq 1\\ \beta\in\{1,\dots,n\}}}\prod_{j=1}^n \frac{\lambda_n^{l_j}\vert\Theta_{n,l_j}\vert}{2^{[l_j/2]}[l_j/2]!} ( n+\lambda_n^{-2}-2)^{[l_j/2]}\\
&\leq (\lambda_n n^{1/2})^nC^n \left(\int_0^z\Big\vert r_{\mathfrak{H}}\Big(\frac{u}{\e}\Big)\Big\vert du\right)^{n/2} \left( \sum_{ l\geq 0} \frac{\vert\Theta_{n,2l+1}\vert}{l!}\right)^n.
\end{split}\]
Now, let us consider
\[\begin{split}
 \sum_{ l\geq 0} \frac{\vert\Theta_{n,2l+1}\vert}{ l!} &= \left(\sum_{ l= 0}^{ [nM]-1}+\sum_{ l= [nM]}^{+\infty}\right) \frac{\vert\Theta_{n,2l+1}\vert}{ l!} \\
 &:=I+II,
\end{split}\]
where $M$ is independent of $n$ and will be specified later. In what follows, we just work with $l\geq1$ since the bound is direct for $l=0$ . For the first term, we perform an integration by parts in $\Theta_{n,2l+1}$ using definition \fref{defHerm}, and obtain
\[
\Theta_{n,2l+1}= \lambda_n^{-1} (-1)^{2l}\int  \Theta^{(1)} (\lambda_n^{-1} u) g^{(2l)}(u)=\lambda_n^{-1}\Theta^{(1)}_{n,2l},
\]
and according to \fref{ortho},
we have
\[\vert \Theta_{n,2l+1}\vert \leq \lambda_n^{-1} \| \Theta^{(1)}(u) \|_{L^2(\Rm,g(u)du)} \|H_{2l}\|_{L^2(\Rm,g(u)du)} \leq \lambda_n^{-1} \sup_u\vert \Theta^{(1)}(u)\vert \sqrt{(2l)!}.\]
As a result, using that $(2l)!\leq 2^{2l} (l!)^2$ we obtain
\[
I \leq C_1+C_2 n^{1/2} \sum_{ l= 1}^{ [nM]-1}  2^l \leq C_1+n^{1/2}C^{nM}. 
\]
For the second term $II$, we have after $2l$ integration by parts,
\[
\Theta_{n,2l+1}= \lambda_n^{-2l} (-1)^{1}\int  \Theta^{(2l)} (\lambda_n^{-1} u) g^{(1)}(u)du,
\]
and therefore, according to \eqref{hyptheta}, using that $l! \geq e (l/e)^l$,
\[\sum_{l\geq [nM]}\frac{\vert\Theta_{n,2l+1}\vert}{ l!} \leq C \sum_{l\geq [nM]} \frac{\lambda^{-2l}_n}{ l!} C^{2l}_{\Theta}\leq   C \sum_{l\geq [nM]} \Big(\frac{n}{l}\Big)^l (e C^2_{\Theta})^l.
\]
Then, setting $M>eC^2_{\Theta}$, we have $ II \leq C$. Hence,
\[ \int_{[0,z]^n}\E\Big[ \prod_{j=1}^n \Theta\Big(\BB_\mathfrak{H}\Big(\frac{u_j}{\e},p_j\Big)\Big) \Big]du_1\dots d u_n\leq n^{n/2}C^n \left(\int_0^z\Big\vert r_{\mathfrak{H}}\Big(\frac{u}{\e}\Big)\Big\vert du\right)^{n/2}.\]
We finally conclude by estimating the term involving $r_\mathfrak{H}$: following \eqref{LRr}, there exists $z_e$ such that for all $z>z_e$, we have $\vert r_\mathfrak{H}(z)\vert \leq C\vert z \vert^{-\mathfrak{H}}$, and therefore, for all $z>z_e$,
\begin{equation}\label{calcH}
\int_0^z \Big\vert r_{\mathfrak{H}}\Big(\frac{u}{\e}\Big)\Big\vert du \leq C\Big(\e+\e^{\mathfrak{H}} \int_{\e z_e}^z \vert u\vert^{-\mathfrak{H}}du\Big)\leq C\e^{2(s-2)},
\end{equation}
with $s=2-\mathfrak{H}/2$.

For the second result of the proposition, we decompose $\Theta$ itself over the Hermite polynomials to obtain
\[\begin{split}
\E\Big[ \prod_{j=1}^n \Theta\Big(\BB_\mathfrak{H}\Big(\frac{u_j}{\e},p_j\Big)\Big) \Big]&=\sum_{\substack{ l_\alpha\geq 1\\ \alpha\in\{1,\dots,n\}}} \left(\prod_{j=1}^n\frac{\Theta_{l_j}}{l_j!} \right)\E\Big[\prod_{j=1}^n H_{l_j}\Big(\BB_\mathfrak{H}\Big(\frac{u_j}{\e},p_j\Big)\Big) \Big]\\
&= \Theta_1^n \sum_{\mathcal{F}} \prod_{(\alpha,\beta)\in\mathcal{F}}  r_{\mathfrak{H}}\Big(\frac{u_\alpha-u_\beta}{\e}\Big)\hat{R}(p_\alpha,p_\beta)+ R^\e_n(u_1,\dots,u_n),
\end{split}\]
with 
\[
R^\e_n(u_1,\dots,u_n)= \sum_{j=1}^n \sum_{S_j(l_1,\dots,l_n)} \left(\prod_{j=1}^n\frac{\Theta_{l_j}}{l_j!} \right)\E\Big[\prod_{j=1}^n H_{l_j}\Big(\BB_\mathfrak{H}\Big(\frac{u_j}{\e},p_j\Big)\Big) \Big].
\]
and
\[
S_j(l_1,\dots,l_n)=\{l_k=1 \textrm{ for } k<j;\quad l_j \in \{2,\dots,n\}, \quad l_k \in \{1,\dots,n\} \textrm{ for } k>j\}.
\]
According to \fref{taq32}, $R^\e_n$ can be recast as
\[
R^\e_n(u_1,\dots,u_n)=\sum_{j=1}^n\sum_{q\geq n/2+1}\sum_{\tilde{S}_{j,q}(l_1,\dots,l_n)}\prod_{m=1}^n \left(\frac{\Theta_{l_m}}{l_m!}\right)\E\Big[\prod_{m=1}^n H_{l_m}\big(\BB_\mathfrak{H}\big(\frac{u_m}{\e},p_m\big)\big)\Big],
\]
where $\tilde{S}_{j,q}(l_1,\dots,l_n)=S_j(l_1,\dots,l_n) \cap \{l_1+\cdots+l_n=2q\}$. Let us emphasize the fact that $q\geq n/2+1$ since there is at least one index $l_j$ greater than $2$ and $n$ is even. This is what will allow us to gain some extra powers of $\eps$  to obtain the convergence to the leading term. We need to estimate for this the term $A_{q,n}$ for $q\geq n/2+1$ in the same way as before. Since $r_\mathfrak{H}$ and $\hat{R}$ are bounded by one, we directly find, for all $ (i_1,j_1,\dots,i_q,j_q)\in I(l_1,\dots,l_n)$,
\[\int_{[0,z]^n} \prod_{m=1}^q \big\vert r_\mathfrak{H}\big(\frac{u_{i_m}-u_{j_m}}{\e}\big)\hat{R}(p_{i_m},p_{j_m})\big\vert du_1\dots du_n \leq \int_{[0,z]^n} \prod_{m=1}^{n/2+1}  \big\vert r_\mathfrak{H}\big(\frac{u_{i'_m}-u_{j'_m}}{\e}\big)\big\vert  du_1\dots du_n,\]
where $(i'_1,j'_1,\dots,i'_{n/2+1},j'_{n/2+1})$ repeat $j$ twice. Since $n/2+1$ is odd, only one other index, denoted by $j'$, appears twice. In that context, two cases are possible. In the first case, we have a term of the form  $r^2_\mathfrak{H}((u_{j}-u_{j'})/\e)$ (if any there is only one), and 
\[
\int_0^z du_j\int_0^z du_{j'} r^2_\mathfrak{H}\big(\frac{u_{j}-u_{j'}}{\e}\big)\leq \left\{\begin{array}{ccl}
C_1\e^{2\mathfrak{H}} &\text{if}& \mathfrak{H}\in(0,1/2),\\
C'_1\e \log(1/\e)&\text{if}& \mathfrak{H}=1/2,\\
C''_1\e &\text{if}& \mathfrak{H}\in(1/2,1).
\end{array}\right.
\]
Using then \eqref{calcH},
\[\begin{split}
\int_{[0,z]^n} \prod_{m=1}^{n/2+1}  \big\vert r_\mathfrak{H}\big(\frac{u_{i'_m}-u_{j'_m}}{\e}\big)\big\vert  du_1\dots du_n&=\left(\int_0^z du\int_0^z dv \big\vert r_\mathfrak{H}\big(\frac{u-v}{\e}\big)\big\vert  dudv\right)^{n/2-1}\\
&\qquad \times \int_0^z du\int_0^z dv \; r^2_\mathfrak{H}\big(\frac{u-v}{\e}\big)  dudv\\
&\leq C  \e^{\mathfrak{H}n/2}\e^{\mathfrak{H}\wedge(1-\mathfrak{H})}\log(1/\e).
\end{split}\]
If we are not in the first case, we have a term of the form $r_\mathfrak{H}((u_{j}-u_{j'})/\e)r_\mathfrak{H}((u_{j}-u_{k})/\e)$, $k\neq j'$. Using then the Cauchy-Schwarz' inequality with respect to $u_j$, a change of variable, the fact that $r_\mathfrak{H}$ is even, and again \eqref{calcH} leads to
\[\begin{split}
\int_{[0,z]^n} \prod_{m=1}^{n/2+1}  \big\vert r_\mathfrak{H}\big(\frac{u_{i'_m}-u_{j'_m}}{\e}\big)\big\vert  du_1\dots du_n&\leq C \left(\int_0^z du\int_0^z dv \big\vert r_\mathfrak{H}\big(\frac{u-v}{\e}\big)\big\vert  dudv\right)^{n/2-1}\\
&\qquad \times \int_0^{2z} du  \; r^2_\mathfrak{H}\left(\frac{u}{\e}\right)\\
&\leq C  \e^{\mathfrak{H}n/2}\e^{\mathfrak{H}\wedge(1-\mathfrak{H})}\log(1/\e).
\end{split}\]
As a result, bounding the cardinal of $I(l_1,\dots,l_n)$ in the same way as before, we obtain
\[\begin{split}
\int_{[0,z]^n}\sup_{p_1,\dots,p_n}\Big\vert \E\Big[\prod_{m=1}^n &H_{l_m}\big(\BB_\mathfrak{H}\big(\frac{u_m}{\e},p_m\big)\big)\Big]\Big\vert  du_1\dots du_n\\
&\leq  C  \e^{\mathfrak{H}n/2}\e^{\mathfrak{H}\wedge(1-\mathfrak{H})} \log(1/\e) \Big\vert \E\Big[\prod_{m=1}^n H_{l_m}(X)\Big] \Big\vert\\ 
\end{split}\]
and therefore, using \fref{EH},
\[\begin{split} 
\frac{1}{\e^{n(2-s)}} \int_{[0,z]^n} &\sup_{p_1,\dots,p_n} \Big\vert \E\Big[ R^\e_n(u_1,\dots,u_n) \Big] \Big\vert du_1\dots du_n\\
&\leq C  \e^{\mathfrak{H}\wedge(1-\mathfrak{H})}\log(1/\e) \sum_{j=1}^n\sum_{q \geq n/2+1}\sum_{\tilde S_{j,q}(l_1,\cdots,l_n)}\prod_{m=1}^n \frac{\vert \Theta_{l_m}\vert(n-1)^{l_m}}{\sqrt{l_m!}}\\
&\leq   \e^{\mathfrak{H}\wedge(1-\mathfrak{H})}\log(1/\e) n C^n \left(\sum_{l \geq 1}\frac{\vert \Theta_{l}\vert(n-1)^{l}}{\sqrt{l!}} \right)^n.
\end{split} \]
According to \eqref{hyptheta}, we have 
\[\vert \Theta_l\vert =\Big\vert \int \Theta^{(l)}(u)g(u)du\Big\vert \leq C C^l_{\tilde{\Theta}},\]
so that the sum above is finite, and which shows that for $n$ fixed, the error term $R_n^\e$ converges to zero as $\eps \to 0$. It remains to treat the leading term. For this, we write
\[\begin{split}
 \sum_{\mathcal{F}}  \frac{1}{\e^{n(2-s)}}\int_{\Delta_n(z)}  \prod_{(\alpha,\beta)\in\mathcal{F}} & \Big\vert    r_{\mathfrak{H}}\Big(\frac{u_\alpha-u_\beta}{\e}\Big) - \frac{\e^{\mathfrak{H}}c_\mathfrak{H}}{\vert u_\alpha-u_\beta\vert ^\mathfrak{H}} \Big\vert du_1\dots du_n\\
 & \leq  \frac{(n-1)!!}{n! } \Big[ \frac{1}{\e^{2(2-s)} }  \int_0^z\int_0^z\Big\vert   r_{\mathfrak{H}}\Big(\frac{u-v}{\e}\Big) - \frac{\e^{\mathfrak{H}}c_\mathfrak{H}}{\vert u-v\vert ^\mathfrak{H}} \Big\vert dudv \Big]^{n/2},
\end{split}\]
where $(n-1)!!=n!/(2^{n/2}(n/2)!)$ is the number of pairings of $\{1,\dots,n\}$. According to \eqref{LRr}, for any $\eta>0$ and $z_e$ such that $z>z_e$, we have $\vert r_\mathfrak{H}(z)-c_\mathfrak{H}\vert z\vert^{-\mathfrak{H}}\vert \leq \eta c_\mathfrak{H}\vert z \vert^{-\mathfrak{H}}$, and as a result,
\[\begin{split}
  \frac{1}{\e^{2(s-2)} } \int_0^z\int_0^z\Big\vert   r_{\mathfrak{H}}\Big(\frac{u-v}{\e}\Big) - \frac{\e^{\mathfrak{H}}c_\mathfrak{H}}{\vert u-v\vert ^\mathfrak{H}} \Big\vert dudv & \leq \eta c_{\mathfrak{H}} \int_{\vert u -v \vert > \e z_e } \vert u-v\vert ^{-\mathfrak{H} }du dv \\
 &\qquad +\e \int_{\vert u -v \vert \leq z_a } r_\mathfrak{H} (u-v)dudv\\
 &\qquad +c_{\mathfrak{H}} \int_{\vert u-v \vert \leq \e z_a } \vert u-v\vert^{-\mathfrak{H}}dudv,
\end{split}\]
This finally yields, for all $\eta>0$, 
\[\overline{\lim_{\e\to0}}    \frac{1}{\e^{2(s-2)} } \int_0^z\int_0^z\Big\vert   r_{\mathfrak{H}}\Big(\frac{u-v}{\e}\Big) - \frac{\e^{\mathfrak{H}}c_\mathfrak{H}}{\vert u-v\vert ^\mathfrak{H}} \Big\vert dudv \leq  \eta c_{\mathfrak{H}} \int_0^z\int_0^z \vert u-v\vert ^{-\mathfrak{H} }du dv.\] 
The proof of the proposition is complete.
\end{proof}

\section{Proof of Proposition \ref{evaprop}}\label{secevaprop}

Let $\phi=(\phi_1,\phi_2) \in  \calC^\infty_0(\Rd)\times  \calC^\infty_0(\Rd)$ be a test function such that $supp(\phi) \subset B(0,r_\phi)$. In order to control the transition between propagative and evanescent modes, we introduce the following integer 
\begin{align*}
&n_\e:=\inf\Big( n\geq 0 \quad \text{s.t.} \quad \{Q_n \in \calQ^n_{\omega,\e} \quad \text{s.t.} \quad 1<\e \vert Q_n\vert^2/k^2_\omega  \}\neq \emptyset \Big)
\end{align*}
where $\calQ^n_{\omega,\e}$ is the set
\[\calQ^n_{\omega,\e}=\{Q_n=\kappa-q_1-\dots-q_n \quad \textrm{s.t.} \quad (\kappa,q_1,\dots,q_n)\in B(0,r_\phi)\times \calS^n_{\omega,\e}(\kappa) \}.
\]
Above, $\calS^n_{\omega,\e}(\kappa)$ is defined by
\begin{equation}\label{defS}
\calS^n_{\omega,\e}(\kappa):=\Big\{(q_1,\dots,q_n)\in \calS^n \quad \text{s.t.}\quad\vert Q_l\vert<\vert k_\omega \vert/\e\quad \forall l\in\{1,\dots,n\}\Big\}.
\end{equation}
Let us remark that with this definition, we have, for all $ \e \leq \e_0$,
\be \label{low}n_\e>\frac{1}{r_\calS}\Big( \frac{\vert k_\omega \vert}{ \sqrt{\e}}-r_\phi \Big)\qquad\text{and}\qquad \vert \lambda_{\e,\omega}(Q_n) \vert >\eta:=\sqrt{1-\e_0},
\ee
for all $Q_n \in \calQ^n_{\omega,\e}$ with $n< n_\e$. The integer $n_\eps$ measures the number of iterations it takes for the momentum $Q_n$ to be at least of order $1/\sqrt{\e}$. Note that this order is arbitrary, any order of the form $\eps^{-\alpha}$, $\alpha>0$ would work just fine.  Integrating then \eqref{cmode} in $z$ and iterating $n_\e-1$ times this relation, we obtain
\[\begin{split}
\begin{bmatrix} \hat{a}^\e_\omega(z,\kappa)\\ \hat{b}^\e_\omega(z,\kappa) \end{bmatrix}&=\sum_{n=0}^{n_\e-1} \int_{\Delta_n(z)}d\mathbf{u}^{(n)} \int_{\calS^n_{\omega,\e}(\kappa)}\mathbf{m}(d\mathbf{q}^{(n)}) \prod_{j=1}^n H^{\e}_\omega(u_j,Q_{j-1},Q_j)   \begin{bmatrix} \hat{a}^\e_\omega(0,Q_n)\\ \hat{b}^\e_\omega(0,Q_n) \end{bmatrix} \\
&\qquad +\int_{\Delta_{n_\e}(z)}d\mathbf{u}^{(n_\e)} \int_{\calS^{n_\e}_{\omega,\e}(\kappa)}\mathbf{m}(d\mathbf{q}^{(n_\e)}) \prod_{j=1}^{n_\e} H^{\e}_\omega(u_j,Q_{j-1},Q_j)   \begin{bmatrix} \hat{a}^\e_\omega(u_{n_\e},Q_{n_\e})\\ \hat{b}^\e_\omega(u_{n_\e},Q_{n_\e}) \end{bmatrix}\\
&:=I^\e(z,\kappa)+J^\e(z,\kappa),
\end{split}\]
where $H^\e_\omega$ and $\Delta_n(z)$ are defined by \eqref{defH} and \fref{simplexe}, $\mathbf{m}(d\mathbf{q}^{(n)}):=m(dq_1)\dots m(dq_n)$, and $Q_j:=\kappa-q_1-\dots-q_j$, and $Q_0:=\kappa$, . Let us point out the important fact that iterations are stopped before evanescent modes appear in the series (that is before the second term in the r.h.s of \fref{cmode} enters the expansion), and this gives rise to only the  two terms $I^\e(z,\kappa)$ and $J^\e(z,\kappa)$. The first one is the driving term, and the second one is a reminder. The latter will be shown to be negligible, the main argument being that it is essentially of order $1/n_\eps !$ after integration over the simplex $\Delta_{n_\eps}(z)$. Writing a complete series expansion, i.e. choosing $n_\eps=+\infty$ (which then sets $J^\e$ to zero), requires to handle the transition propagative-evanescent modes which is more difficult than treating the reminder $J^\e$. Note also that in Proposition \ref{evaprop}, we are only interested in the convergence of $\hat{a}^\e_\omega(z,\kappa)$ and $\hat{b}^\e_\omega(z,\kappa)$ for $\kappa$ in the support of $\phi$, and we can therefore only consider the above equation for $|\kappa|<\vert k_\omega \vert/\eps$. Thus, we set $I^\e(z,\kappa)=J^\e(z,\kappa)=0$ for $\vert \kappa \vert >\vert k_\omega\vert /\e$. We then have the following two lemmas, whose proofs are postponed to the end of the section.
\begin{lem}\label{error} For all $z\in[0,L]$, we have, for all $\mu>0$,
\[\lim_{\e\to 0}\Pro\Big( \big\vert \big<J^\e(z),\phi \big>_{L^2(\Rd)\times L^2(\Rd)} \big\vert>\mu\Big)=0.\]
\end{lem}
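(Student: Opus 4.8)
The plan is to bound $|\langle J^\e(z),\phi\rangle_{L^2(\Rd)\times L^2(\Rd)}|$ pathwise by a deterministic quantity that tends to $0$; this in fact yields almost sure convergence and a fortiori the statement. Three ingredients enter: a crude entrywise bound on each factor $H^\e_\omega$; the volume $z^{n_\e-1}/(n_\e-1)!$ of the truncated simplex $\Delta_{n_\e}(z)$; and the estimate \eqref{E1}, which is the only available control of $\hat a^\e_\omega,\hat b^\e_\omega$ on the interior $(0,L)$ and is precisely why the artificial absorption $\alpha_\eps$ was introduced. The arithmetic heart of the argument is that $n_\e\gtrsim\eps^{-1/2}$ by \eqref{low}, so that the factorial $1/(n_\e-1)!$ defeats the per-factor loss $\eps^{s-2}=\eps^{-\mathfrak{H}/2}$ exactly because $\eps^{-\mathfrak{H}/2}\ll\eps^{-1/2}$, i.e. because $\mathfrak{H}<1$.

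First I would estimate the matrix factors. For every $\kappa\in\mathrm{supp}\,\phi\subset B(0,r_\phi)$ and every chain $(q_1,\dots,q_{n_\e})\in\calS^{n_\e}_{\omega,\e}(\kappa)$, minimality of $n_\e$ forces $|Q_l|\le |k_\omega|/\sqrt\e+r_\calS$ for all $l\le n_\e$, hence $\eps^2|Q_l|^2/k_\omega^2\to0$ and $|\lambda_{\e,\omega}(Q_l)|\ge\eta$ for $\e$ small, in the spirit of \eqref{low}. Combined with the boundedness of $\Theta$ (hence of $\hat V$) from \eqref{hyptheta}, and with $\alpha_\eps\to0$ — which through \eqref{propsqrt} makes all the exponential factors in \eqref{defH} bounded by a fixed constant — this gives $|(H^\e_\omega(u,\cdot,\cdot))_{ij}|\le C_0\eps^{s-2}$ uniformly, so that after $n_\e$ matrix multiplications $|(\prod_{j=1}^{n_\e}H^\e_\omega(u_j,Q_{j-1},Q_j))_{ij}|\le (C_1\eps^{s-2})^{n_\e}$.

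Next I would carry out the integrations. After that bound the integrand no longer depends on $u_1,\dots,u_{n_\e-1}$, so integration over $\Delta_{n_\e}(z)$ produces $z^{n_\e-1}/(n_\e-1)!$ times $\int_0^z(|\hat a^\e_\omega(u,Q_{n_\e})|+|\hat b^\e_\omega(u,Q_{n_\e})|)\,du$; Cauchy--Schwarz in $u$ replaces this by $\sqrt z\,g^\e(Q_{n_\e})$ with $g^\e(Q):=\big(\int_0^L|\hat a^\e_\omega(u,Q)|^2du\big)^{1/2}+\big(\int_0^L|\hat b^\e_\omega(u,Q)|^2du\big)^{1/2}$. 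Pairing against $\phi$, using $|m|(\calS)\le C_m$ for the $q_j$ integrations, then Tonelli together with the translation $\kappa\mapsto\kappa-\sum_jq_j=Q_{n_\e}$ and Cauchy--Schwarz on the $\kappa$ integral (whose support has measure $\pi r_\phi^2$), and finally \eqref{E1} in the form $\|g^\e\|_{L^2(\Rd)}^2\le\frac{C}{\alpha_\eps^{3/2}\eps^4}\|\hat f_0(\omega,\cdot)\|^2$ almost surely, leads to
\[
|\langle J^\e(z),\phi\rangle_{L^2(\Rd)\times L^2(\Rd)}|\ \le\ K_\phi\,\frac{(K\eps^{s-2})^{n_\e}}{(n_\e-1)!}\,\alpha_\eps^{-3/4}\eps^{-2}\qquad\text{a.s.},
\]
with $K,K_\phi$ deterministic (depending on $L$, $\phi$, $C_m$, $\|\hat f_0(\omega,\cdot)\|$).

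To conclude I would substitute $s-2=-\mathfrak{H}/2$ and use $n_\e-1\ge c_0\eps^{-1/2}$ from \eqref{low} together with $(n_\e-1)!\ge((n_\e-1)/e)^{n_\e-1}$: for $\e$ small,
\[
\frac{(K\eps^{-\mathfrak{H}/2})^{n_\e}}{(n_\e-1)!}\ \le\ K\eps^{-\mathfrak{H}/2}\Big(\frac{Ke}{c_0}\eps^{(1-\mathfrak{H})/2}\Big)^{n_\e-1}\ \le\ K\eps^{-\mathfrak{H}/2}\,2^{-c_0/\sqrt\e+1},
\]
since $(1-\mathfrak{H})/2>0$. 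The factor $2^{-c_0/\sqrt\e}$ decays faster than any power of $\e$ and hence dominates the polynomial factor $\eps^{-\mathfrak{H}/2}\alpha_\eps^{-3/4}\eps^{-2}$ (taking $\alpha_\eps$ polynomially small in $\e$, as we may), so $\langle J^\e(z),\phi\rangle\to0$ almost surely, hence in probability. The one genuinely delicate point is the uniform lower bound $|\lambda_{\e,\omega}(Q_l)|\ge\eta$: it holds precisely because the iteration is stopped at level $n_\e$, before any momentum reaches the scale $\eps^{-1/2}$ at which $\lambda_{\e,\omega}$ degenerates — and that cut-off is exactly what makes the otherwise brutal per-factor bound affordable through the compensating factorial.
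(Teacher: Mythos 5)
Your argument is correct, but it takes a genuinely different route from the paper's. The paper does \emph{not} bound $\langle J^\e(z),\phi\rangle$ pathwise: it estimates $\E\big[|\langle J^\e(z),\phi\rangle|\big]$, using Cauchy--Schwarz together with the second-moment bound \fref{defTV} (the first part of Proposition \ref{LRM} applied to the product of the $\hat V$'s) to absorb the factor $\e^{(s-2)n_\e}$ \emph{statistically}, which leaves only $\e^{s-4}\alpha_\eps^{-3/4}C^{n_\e}/\sqrt{(n_\e-1)!}$ and then concludes by Markov. You instead accept the full pathwise loss $(C\e^{-\mathfrak{H}/2})^{n_\e}$ (legitimate, since $\Theta$, $|m|(\calS)$ and the relevant $\lambda_{\e,\omega}^{-1/2}$'s are bounded a.s.\ by deterministic constants, and \fref{E1} is itself an almost sure bound) and defeat it with Stirling, using that the simplex volume contributes $1/(n_\e-1)!\lesssim (e\sqrt{\e}/c_0)^{n_\e-1}$ while the per-step loss is only $\e^{-\mathfrak{H}/2}$ with $\mathfrak{H}/2<1/2$. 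What each approach buys: yours is more elementary (no moment machinery from Section \ref{RP} is needed, and you get almost sure convergence rather than convergence in $L^1(\Pro)$); the paper's is more robust with respect to the cut-off defining $n_\e$ --- the remark after \fref{low} that any momentum threshold $\e^{-\alpha}$, $\alpha>0$, would do is true for the paper's statistical argument but not for yours, which requires $\alpha>\mathfrak{H}/2$ so that $\e^{\alpha-\mathfrak{H}/2}\to0$. Two minor points you handled correctly but that deserve the emphasis you gave them: the uniform lower bound $|\lambda_{\e,\omega}(Q_l)|\geq\eta$ does extend to $l=n_\e$ (since $|Q_{n_\e}|\leq |k_\omega|/\sqrt{\e}+r_\calS$), and the final domination of the polynomial factor $\e^{-\mathfrak{H}/2-2}\alpha_\eps^{-3/4}$ requires choosing $\alpha_\eps$ at most polynomially small, a freedom the paper also exploits implicitly.
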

This first lemma shows that $J^\e$ gives a negligible contribution, and therefore that $I^\e$ is the leading term. In the second lemma below, we introduce an auxiliary process $\tilde{I}^\e$ that approximates $I^\e$ by letting the absorption $\alpha_\eps$ vanish in the complex exponentials and by replacing the $\lambda_{\e,\omega}$ in the denominator by one.
\begin{lem}\label{terme1}
For all $z\in[0,L]$, we have, for all $\mu>0$
\[\lim_{\e\to 0}\Pro\big( \big\vert \big<I^\e(z)-\tilde{I}^\e(z),\phi \big>_{L^2(\Rd)\times L^2(\Rd)} \big\vert>\mu\big)=0,\]
where
\[\tilde{I}^\e(z,\kappa):=\sum_{n=0}^{n_\e-1} \int_{\Delta_n(z)}d\mathbf{u}^{(n)} \int_{\calS^n_{\omega,\e}(\kappa)}\mathbf{m}(d\mathbf{q}^{(n)}) \prod_{j=1}^n \tilde{H}^{\e}_\omega(u_j,Q_{j-1},Q_j)   \begin{bmatrix} \hat{a}^\e_\omega(0,Q_n)\\ \hat{b}^\e_\omega(0,Q_n) \end{bmatrix},\]
where $\tilde{H}^\e_\omega$ is defined by \eqref{defHt}, and $\tilde{I}^\e(z,\kappa)=0$ for $\vert \kappa \vert >\vert k_\omega\vert /\e$.
\end{lem}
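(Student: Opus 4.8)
The plan is to reduce the statement to a first‑moment estimate, to isolate in each iterate the single slot where $H^\e_\omega$ has been replaced by $\tilde H^\e_\omega$, to show that this replacement costs exactly one extra power of $\e$ uniformly over the momenta reached in fewer than $n_\e$ scatterings, and then to control the resulting iterated integrals by Proposition~\ref{LRM}. By Markov's inequality it suffices to prove $\E\big[\big|\langle I^\e(z)-\tilde I^\e(z),\phi\rangle_{L^2(\Rd)\times L^2(\Rd)}\big|\big]\to 0$. Writing $I^\e-\tilde I^\e=\sum_{n=0}^{n_\e-1}T^\e_n$ according to the level $n$ of the iteration, and using inside each $T^\e_n$ the telescoping identity
\[
\prod_{j=1}^nH^\e_\omega(u_j,Q_{j-1},Q_j)-\prod_{j=1}^n\tilde H^\e_\omega(u_j,Q_{j-1},Q_j)=\sum_{m=1}^n\Big(\prod_{j<m}\tilde H^\e_\omega\Big)\big(H^\e_\omega-\tilde H^\e_\omega\big)_{j=m}\Big(\prod_{j>m}H^\e_\omega\Big)
\]
(matrix products ordered along the chain $Q_0=\kappa$, $Q_j=Q_{j-1}-q_j$), each term of $T^\e_n$ carries exactly one ``error factor'' $H^\e_\omega-\tilde H^\e_\omega$ and $n-1$ ordinary ones.

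For the pointwise control of the error factor, observe that since the iteration stops before level $n_\e$, every momentum $Q$ that appears satisfies $1-\e^2|Q|^2/k_\omega^2\ge 1-\e_0=\eta^2$ by \eqref{low}; hence $\lambda^r_{\e,\omega}(Q)\in[\eta,1]$, $|\lambda_{\e,\omega}(Q)|\ge\eta$, and $\mathrm{Im}\,\lambda_{\e,\omega}(Q)\le C\sqrt{\alpha_{\omega,\e}}$ by \eqref{propsqrt}. Using $\alpha_{\omega,\e}=\alpha_\e\e^4/k_\omega^2$ and $\alpha_\e=o(\e^2)$, three facts follow: \emph{(i)} all entries of $H^\e_\omega$ and $\tilde H^\e_\omega$ are bounded in modulus by $C\e^{s-2}|\hat V(u/\e,\cdot)|$, since the complex exponentials have modulus $\le e^{|k_\omega|\,\mathrm{Im}\,\lambda_{\e,\omega}(Q)L/\e^2}\le e^{C\sqrt{\alpha_\e}L/|k_\omega|}\to1$; \emph{(ii)} $\big|(\lambda_{\e,\omega}(\kappa)\lambda_{\e,\omega}(q))^{-1/2}-1\big|\le C\big(\e^2(|\kappa|^2+|q|^2)/k_\omega^2+\alpha_{\omega,\e}\big)\le C\e$, because $\e^2|Q|^2/k_\omega^2\le\e$; \emph{(iii)} the square root being Lipschitz away from the origin, $|\lambda_{\e,\omega}(Q)-\lambda^r_{\e,\omega}(Q)|\le C\alpha_{\omega,\e}$, whence the phases of $H^\e_\omega$ and $\tilde H^\e_\omega$ differ by at most $C|k_\omega|\alpha_{\omega,\e}L/\e^2=o(\e^2)$. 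Combining these, $\|H^\e_\omega(u,\kappa,q)-\tilde H^\e_\omega(u,\kappa,q)\|\le C\,\e^{s-1}|\hat V(u/\e,\cdot)|$: the error factor is smaller than an ordinary one by exactly one power of $\e$.

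Finally I would estimate each $T^\e_n$. Transposing the chain and peeling off $\|[\hat a^\e_\omega(0,\cdot);\hat b^\e_\omega(0,\cdot)]\|_{L^2}$ — bounded almost surely by a deterministic constant thanks to \eqref{defa0} and \eqref{E2} — reduces $\E[|T^\e_n|]$, via Cauchy--Schwarz, to the square root of the second moment of the $L^2_\kappa$‑norm of the transposed chain tested against $\hat\phi$, an object that depends only on $\hat V=\Theta(\BB_\mathfrak{H})$ and on the compactly supported $\phi$. Expanding that second moment yields an iterated integral over $\Delta_n(z)\times\Delta_n(z)$ of $\E\big[\prod_{j=1}^{2n}\Theta(\BB_\mathfrak{H}(w_j/\e,p_j))\big]$ against bounded, compactly supported kernels built from matrix entries of $H^\e_\omega$, $\tilde H^\e_\omega$ and from $\hat\phi$; Proposition~\ref{LRM} with $2n$ in place of $n$ controls it, the deterministic prefactor being $C^{2n}\e^{2n(s-2)+2}$ (two error factors $\e^{s-1}$ and $2(n-1)$ ordinary ones $\e^{s-2}$). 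Since $s-2=-\mathfrak{H}/2$, the $\e$‑powers coming from the $2n$ scattering factors cancel exactly those supplied by Proposition~\ref{LRM}, so each level is left with a net gain $\e^2$, times polynomial‑in‑$n$ factors coming from the $\kappa$‑integration over a ball of radius $\lesssim n\,r_\calS$ and from the two telescoping sums. The remaining — and, I expect, heaviest — point is the summation over $0\le n\le n_\e-1$ with $n_\e\to+\infty$: a crude term‑by‑term bound is not summable because of the combinatorial factor produced by the Gaussian pairings. This is handled exactly as in the analysis of $I^\e$ itself and of the remainder $J^\e$ (Lemma~\ref{error}): one keeps the oscillatory phases $e^{iG_n}$ and the simplex structure inside the function $\varphi_\e$ of Proposition~\ref{LRM} and invokes its second statement rather than the crude bound, which renders the partial sums defining $I^\e$ and $\tilde I^\e$ bounded uniformly in $\e$; the extra $o(1)$ carried by the single error factor in each term then upgrades that uniform bound into the claimed convergence to $0$.
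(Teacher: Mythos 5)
Your decomposition and estimates coincide with the paper's own proof: the telescoping over the position of the single $H^{\e}_\omega-\tilde{H}^{\e}_\omega$ factor is exactly the paper's family of hybrid terms $\tilde{I}^\e_{l,n}$ (first $l$ slots regularized, the rest not), your pointwise bounds (i)--(iii) reproduce the paper's gain factor $\gamma_\e$ (the paper gets $C\alpha_{\e,\omega}^{1/2}/\eta^2+C\sqrt{\e}/\eta$ for the amplitude and $C\alpha_\e^{1/2}$ for the phases, using \fref{estre}, \fref{propsqrt} and \fref{low}; your $O(\e)$ for the amplitude part is if anything slightly sharper, and both are $o(1)$, which is all that is used), and the reduction by Markov and Cauchy--Schwarz --- peeling off the deterministically bounded data $(\hat{a}^\e_\omega(0),\hat{b}^\e_\omega(0))$ via \fref{expa} and \fref{E2} and controlling the second moment of the transposed chain by the bound $T_{n,\e}$ of \fref{defTV} --- is verbatim the paper's route.

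The one place you go astray is the final paragraph. The crude term-by-term bound \emph{is} summable over $n$, and no appeal to the oscillatory phases $e^{iG_n}$ or to the second statement of Proposition \ref{LRM} is needed (nor would it help as stated: that second statement is an asymptotic identification for \emph{fixed} $n$ and carries no uniform-in-$n$ control, whereas what you need here is a bound uniform in $\e$ and summable in $n$). The point you are missing is that the double integration over $\Delta_n(z)\times\Delta_n(z)$ contributes a factor $1/(n!)^2$ by \fref{fact}, while the pairing combinatorics hidden in $T_{n,\e}\leq (n^{1/2}C)^{2n}$ only grows like $C^{2n}n^{n}$; since $n^{n}/(n!)^2\leq (Ce)^{n}/n!$, each telescoped term at level $n$ has second moment bounded by $(\gamma_\e^2+\alpha_\e)\,C^{2n}(nr_\calS+r_\phi)^2/n!$, and the resulting sum
\[
\frac{\gamma_\e}{\mu}\sum_{n=1}^{n_\e-1}\frac{C^{n}}{\sqrt{n!}}\,n\,(nr_\calS+r_\phi)
\]
converges, uniformly in $\e$. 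The whole expression is therefore $O(\gamma_\e)=o(1)$, which is exactly how the paper closes the argument. Your proof is complete once the last paragraph is replaced by this observation.
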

Now, with the notation
\[L^\e(z,\kappa):=\begin{bmatrix} \hat{A}^\e_\omega(z,\kappa)\\ \hat{B}^\e_\omega(z,\kappa) \end{bmatrix} \qquad
\textrm{with} \qquad L^\e(z,\kappa)=0 \quad \textrm{for} \quad \vert \kappa \vert >\vert k_\omega\vert /\e, \]
we have
\[\begin{split}
 \Pro\Big( \Big\vert \Big<\begin{bmatrix} \hat{a}^\e_\omega(z)\\ \hat{b}^\e_\omega(z) \end{bmatrix}&-\begin{bmatrix} \hat{A}^\e_\omega(z)\\ \hat{B}^\e_\omega(z) \end{bmatrix} ,\phi \Big>_{L^2(\Rd)\times L^2(\Rd)} \Big\vert>\mu\Big) \\
&\leq\Pro\big( \big\vert \big<I^\e(z)-\tilde{I}^\e(z),\phi \big>_{L^2(\Rd)\times L^2(\Rd)} \big\vert>\mu/3\big)\\
 &\qquad+\Pro\big( \big\vert \big<\tilde{I}^\e(z)-L^\e(z),\phi \big>_{L^2(\Rd)\times L^2(\Rd)} \big\vert>\mu/3\big) +\Pro\Big( \big\vert \big<J^\e(z),\phi \big>_{L^2(\Rd)\times L^2(\Rd)} \big\vert>\mu/3\Big)
\end{split}\]
and owing Lemmas \ref{error} and \ref{terme1}, it just remains to prove that, $\forall \mu'>0$,
\be \label{IL}\lim_{\e\to 0}\Pro\big( \big\vert \big<\tilde{I}^\e(z)-L^\e(z),\phi \big>_{L^2(\Rd)\times L^2(\Rd)} \big\vert>\mu'\big)=0.\ee
This follows from the calculation below. After straightforward algebra and the change of variable $\kappa\to \kappa+q_1+\dots+q_n$, we have
\[ \big<\tilde{I}^\e(z)-L^\e(z),\phi \big>_{L^2(\Rd)\times L^2(\Rd)} = \sum_{n\geq n_\e} \Big<\begin{bmatrix} \hat{a}^\e_\omega(0)\\ \hat{b}^\e_\omega(0) \end{bmatrix} , \tilde{R}_n^\e\Big>_{L^2(\Rd)\times L^2(\Rd)},\]
where $\tilde{R}_n^\e (\kappa')=0$ for $\vert \kappa'\vert >\vert k_\omega\vert /\e$, and is given by, for $\vert \kappa'\vert <\vert k_\omega\vert /\e$,
\[\tilde{R}_n^\e (\kappa'):= \int_{\Delta_n(z)}d\mathbf{u}^{(n)} \int_{\tilde{\calS}^n_{\omega,\e}(\kappa')}\mathbf{m}(d\mathbf{q}^{(n)}) \prod_{j=1}^{n} \tilde{H}^{\e}_\omega(u_j,\tilde{Q}_{j-1},\tilde{Q}_j) \begin{bmatrix} \overline{\phi_1}(\tilde{Q}_0)\\ \overline{\phi_2}(\tilde{Q}_0)\end{bmatrix},\]
where $\tilde{Q}_j=\kappa'+q_n+\dots+q_{j+1}$, and
\[\tilde{\calS}^n_{\omega,\e}(\kappa'):=\Big\{(q_1,\dots,q_n)\in \calS^n \quad \text{s.t.}\quad\vert \tilde{Q}_l\vert<\vert k_\omega \vert/\e,\quad \forall l\in\{1,\dots,n\}\Big\}.\]
We need now to bound $R_n^\eps$. For this, the following estimate, which is a consequence of the first result of Proposition \ref{LRM}, will be used several times in the course of the proof of the proposition:
\be \label{defTV} 
T_{n,\eps}:=\frac{1}{\eps^{2n(2-s)}} \sup_{\mathbf{q}_1^{(n)},\mathbf{q}_2^{(n)}} \int_{[0,L]^{2n}} d\mathbf{u}^{(2n)} \vert \E\Big[\prod_{j=1}^n \hat{V}\Big(\frac{u_{1,j}}{\e},q_{1,j}\Big)\hat{V}\Big(\frac{u_{2,j}}{\e},q_{2,j}\Big) \Big] \vert \leq (n^{1/2}C)^{2n}.
\ee
Since $\tilde Q_0 \in supp(\phi)$, the support of $R_n^\e$ is included in $K_{n}=\{\vert \kappa' \vert \leq nr_\calS +r_{\phi}\}$, and then, according to \fref{fact},
\bee
\E\left[ \| \tilde{R}_n^\e \|^2_{L^2(K_n)\times L^2(K_n)}\right ] &\leq&  C^{2n} \E [|m(\calS)|^{2n}]\;|K_n|\; (\sup_{\substack{j=1,2\\ u\in\Rd}}\vert \phi_j(u) \vert)^2 \; T_{n,\e}/(n!)^2\\
&\leq& \frac{(n^{1/2}C)^{2n}}{(n!)^2}(nr_\calS+r_\phi)^2.
\eee
Using finally \fref{expa} and estimate \fref{E2} in order to bound $ \hat{a}^\e_\omega(0)$ and $ \hat{b}^\e_\omega(0)$, as well as the Markov' and Cauchy-Schwarz' inequalities, we find, 
\[\begin{split}
\Pro\big( \big\vert \big<\tilde{I}^\e(z)-&L^\e(z),\phi \big>_{L^2(\Rd)\times L^2(\Rd)} \big\vert>\mu' \big)\\
&\leq C \frac{ \|\hat{f}_0(\omega,\cdot)\|_{L^2(\Rd)}}{\mu'}\sum_{n\geq n_\e}\left(\E\left[ \| \tilde{R}_n^\e \|^2_{L^2(K_n)\times L^2(K_n)}\right ]\right)^{1/2}\\
&\leq \sum_{n\geq n_\e} \frac{C^{n}}{\sqrt{n!}} (nr_\calS+r_\phi),
\end{split}\]
which concludes the proof of Proposition \ref{evaprop}. We end this section with the proofs of Lemmas \ref{error} and \ref{terme1}.

\begin{proof}[Proof of Lemma \ref{error}] The proof is very similar to the one above, and we only detail the differences. The main ingredient is the fact that $J_\e$ is proportional to $1/n_\eps !$ after integration. After the change of variable $\kappa\to \kappa+q_1+\dots+q_n$, we have 
\[ \begin{split}
\big<J^\e(z),\phi \big>_{L^2(\Rd)\times L^2(\Rd)}&= \int_0^z du_{n_\e}\Big<\begin{bmatrix} \hat{a}^\e_\omega(u_{n_\e})\\ \hat{b}^\e_\omega(u_{n_\e}) \end{bmatrix} , \tilde{H}_{n_\e}(u_{n_\e})\Big>_{L^2(\Rd)\times L^2(\Rd)}
\end{split}\]
with $\tilde{H}_n(u_n,\kappa')=0$ for $\vert\kappa'\vert>\vert k_\omega \vert /\e$, and for $\vert\kappa'\vert<\vert k_\omega \vert /\e$,
\[\begin{split}
\tilde{H}_n(u_n,\kappa'):=& \int_{\tilde{\Delta}_n(z,u_n)} d\mathbf{u}^{(n-1)} \int_{\tilde{\calS}^n_{\omega,\e}(\kappa')}\mathbf{m}(d \mathbf{q}^{(n)} ) \prod_{j=1}^{n} H^{\e}_\omega(u_j,\tilde{Q}_{j-1},\tilde{Q}_j)\begin{bmatrix}\overline{\phi_1}(\tilde{Q}_0) \\ \overline{\phi_2}(\tilde{Q}_0) \end{bmatrix}.
\end{split}\]
Here, $\tilde{Q}_j$ is as before, and
\[\tilde{\Delta}_n(z,u_n):=\big\{(u_1,\dots,u_{n-1})\in [0,z]^{n-1},\quad\text{s.t.}\quad u_{j+1}\leq u_{j}\quad\forall j\in\{1,\dots,n-1\} \big\}.\]
As a result, using the Cauchy-Schwarz' inequality, \fref{defTV} with $n=n_\e-1$, as well as estimate \fref{E1}, we find    
\[\begin{split}
\E\Big[\vert \big<J^\e(z),\phi \big>_{L^2(\Rd)\times L^2(\Rd)}\vert \Big]&\leq \frac{\e^{(s-2)}C^{n_\e-1} }{\sqrt{(n_\e-1)!}} \E\left[\left(\int_0^L du \int_{\{\vert \kappa \vert <\vert k_\omega \vert /\e\}}d\kappa(\vert  \hat{a}^\e_\omega(u,\kappa) \vert^2+\vert  \hat{b}^\e_\omega(u,\kappa) \vert^2)\right)^{1/2}\right]\\
&\leq \frac{\e^{(s-2)-2}}{\alpha_\eps^{3/4} \sqrt{(n_\e-1)!}} C^{n_\e}.
\end{split}\]
According to the bound from below for $n_\e$ given in \fref{low}, and the fact that $n!>C n^{n+1/2} e^{-n}$,
we find
\[
\frac{\e^{s-4}}{\alpha_\eps^{3/4}\sqrt{(n_\e-1)!}} \leq \frac{C e^{n_\e}}{\alpha_\eps^{3/4}} e^{(8+1/2-2s-n_\e/2)\log(n_\e)}.
\]
This concludes the proof of the lemma.
\end{proof}

\begin{proof}[Proof of Lemma \ref{terme1}] The proof simply consists in sending the absorption $\alpha_\e$ to zero and using the fact that $\eps^2 |Q_n|^2/k_\omega^2 \to 0$ as $\eps \to 0$ for $n<n_\eps$. Let us for this introduce, for $l\in\{0,\dots,n\}$,
\begin{align*}&\tilde{I}^\e_{l,n}(z,\kappa):= \int_{\Delta_n(z)}d\mathbf{u}^{(n)} \int_{\calS^n_{\omega,\e}(\kappa)}\mathbf{m}(d\mathbf{q}^{(n)}) \\
& \hspace{3cm}\times \prod_{j=1}^l \tilde{H}^{\e}_\omega(u_j,Q_{j-1},Q_j)  \prod_{j=l+1}^n H^{\e}_\omega(u_j,Q_{j-1},Q_j)   \begin{bmatrix} \hat{a}^\e_\omega(0,Q_n)\\ \hat{b}^\e_\omega(0,Q_n) \end{bmatrix},
\end{align*}
so that
\[I^\e(z,\kappa)-\tilde{I}^\e(z,\kappa)=\sum_{n=1}^{n_\e-1} \sum_{l=0}^{n-1} \left(\tilde{I}^\e_{l,n}(z,\kappa)-\tilde{I}^\e_{l+1,n}(z,\kappa)\right).\]
After the usual change of variable $\kappa\to \kappa+q_1+\dots+q_n$, we have
\[\big< \tilde{I}^\e_{l,n}(z)-\tilde{I}^\e_{l+1,n}(z),\phi\big>_{L^2(\Rd)\times L^2(\Rd)}=\Big<\begin{bmatrix} \hat{a}^\e_\omega(0)\\ \hat{b}^\e_\omega(0) \end{bmatrix} , \mathbf{I}^\e_{l,n}\Big>_{L^2(\Rd)\times L^2(\Rd)},\]
with $\mathbf{I}^\e_{l,n}(\kappa')=0$ for $\vert\kappa'\vert>\vert k_\omega \vert /\e$, and for $\vert\kappa'\vert<\vert k_\omega \vert /\e$,
\[\begin{split}  
\mathbf{I}^\e_{l,n}(\kappa')&= \int_{\Delta_n(z)}d\mathbf{u}^{(n)} \int_{\tilde{\calS}^n_{\omega,\e}(\kappa')}\mathbf{m}(d\mathbf{q}^{(n)}) \prod_{j=1}^{l} \tilde{H}^{\e}_\omega(u_j,\tilde{Q}_{j-1},\tilde{Q}_j) \\
 &\hspace{1cm}\times [H^{\e}_\omega(u_{l+1},\tilde{Q}_{l},\tilde{Q}_{l+1})-\tilde{H}^{\e}_\omega(u_{l+1},\tilde{Q}_{l},\tilde{Q}_{l+1})] \prod_{j=l+2}^n H^{\e}_\omega(u_j,\tilde{Q}_{j-1},\tilde{Q}_j)   \begin{bmatrix} \overline{\phi_1}(\tilde{Q}_0)\\ \overline{\phi_2}(\tilde{Q}_0)\end{bmatrix},
\end{split}\]
where $\tilde{Q}_j=\kappa'+q_n+\dots+q_{j+1}$. Now, we deduce from \fref{expsqrt} and the fact that the square root is of H\"older regularity $1/2$, that for all $|q| < \vert k_\omega\vert /\e$,
\be \label{estre}
|Re(\lambda_{\e,\omega}(q))-\lambda^r_{\e,\omega}(q)|^2 \leq \frac{1}{2} \left( \sqrt{(1-\eps^2 |q|^2/k_\omega^2)^2+\alpha_{\omega,\e}^2}-(1-\eps^2 |q|^2/k_\omega^2)\right) \leq \frac{1}{2} \alpha_{\omega,\eps}.
\ee
With \fref{estre}, \fref{propsqrt}, the definition of $n_\e$ and \fref{low}, we then find, for $l<n_\eps$,
\[\begin{split}
\left| \frac{1}{\lambda_{\e,\omega}(\tilde{Q}_l)}-1\right|&\leq  \left|\frac{\lambda^r_{\e,\omega}(\tilde{Q}_l)-\lambda_{\e,\omega}(\tilde{Q}_l)}{\lambda_{\e,\omega}(\tilde{Q}_l)\lambda^r_{\e,\omega}(\tilde{Q}_l)}\right|+\left|\frac{\lambda^r_{\e,\omega}(\tilde{Q}_l)-1}{\lambda^r_{\e,\omega}(\tilde{Q}_l)}\right| \leq \frac{C \alpha^{1/2}_{\e,\omega}}{\eta^2}+\frac{C \sqrt{\e}}{\eta}:= \gamma_\eps,
\end{split}\]
and
\[\begin{split} 
\left| e^{ik_\omega(\lambda_{\e,\omega}(\tilde{Q}_l)-\lambda_{\e,\omega}(\tilde{Q}_{l-1})u_l/\e^2}\right.& -\left. e^{i(\lambda^r_{\e,\omega}(\tilde{Q}_l)- \lambda^r_{\e,\omega}(\tilde{Q}_{l-1}))u_l/(2k_\omega)}\right| \\
&= \left|e^{ik_\omega(\lambda_{\e,\omega}(\tilde{Q}_l)-\lambda^r_{\e,\omega}(\tilde{Q}_l)-(\lambda_{\e,\omega}(\tilde{Q}_{l-1})-\lambda^r_{\e,\omega}(\tilde{Q}_{l-1})))u_l/\e^2} -1 \right| \\
&\leq  C \left|e^{ik_\omega(Re(\lambda_{\e,\omega}(\tilde{Q}_l))-\lambda^r_{\e,\omega}(\tilde{Q}_l)-(Re(\lambda_{\e,\omega}(\tilde{Q}_{l-1}))-\lambda^r_{\e,\omega}(\tilde{Q}_{l-1})))u_l/\e^2} -1\right| \\
&\qquad+  \left| e^{-k_\omega Im(\lambda_{\e,\omega}(\tilde{Q}_l)-\lambda_{\e,\omega}(\tilde{Q}_{l-1}))u_l/\e^2}-1\right|\\
&\leq C \alpha^{1/2}_\e.
\end{split}\]
The rest of the proof is now  classical and follows from the same techniques as \fref{IL}: we first apply \fref{defTV} to obtain
\bee
\E\left[ \| \mathbf{I}^\e_{l,n} \|^2_{L^2(K_n)\times L^2(K_n)}\right ] &\leq& ( \gamma^2_\eps+\alpha_\e) \; C^{2n} \E [\vert m\vert (\calS)^{2n}]\; |K_n| \;  (\sup_{\substack{j=1,2\\ u\in\Rd}}\vert \phi_j(u) \vert)^2 \; T_{n,\eps} / ((n!)^2 \; \eta^{2(n-l-1)})\\
&\leq& \frac{(\gamma^2_\eps+\alpha_\e) C^{2n}}{n! }(nr_\calS+r_\phi)^2.
\eee
and then use \fref{expa} and estimate \fref{E2} in order to bound $ \hat{a}^\e_\omega(0)$ and $ \hat{b}^\e_\omega(0)$, as well as the Markov' and Cauchy-Schwarz' inequalities to arrive at
\[\begin{split}
 \Pro\big( \big\vert \big<I^\e(z)&-\tilde{I}^\e(z),\phi \big>_{L^2(\Rd)\times L^2(\Rd)} \big\vert>\mu\big)\\
 &\leq \frac{\gamma_\e C}{\mu} \sum_{n=1}^{n_\e-1} \sum_{l=0}^{n-1} \left(\E\left[\|\mathbf{I}^\e_{l,n}\|^2_{L^2(K_{n})\times L^2(K_{n})} \right]\right)^{1/2}\\
 &\leq  \frac{\gamma_\e}{\mu} \sum_{n=1}^{n_\e-1}\frac{ C^{n}}{\sqrt{n!}} n(nr_\calS +r_\phi)
\end{split}\]
This concludes the proof.
\end{proof}

\section{Proof of Proposition \ref{noback}}\label{proofnoback}

The proof is based on iteration techniques. We only prove the second point of the proposition since it is the most interesting one and the first point follows from similar calculations. We start by writing $\calP^{a,\e}_{\omega,\phi^\e_\omega}$ as an infinite series obtained by iterating \eqref{schroeq2}, that is
\begin{equation}\label{decP}
\calP^{a,\e}_{\omega,\phi^\e_\omega}(z,\kappa)=\sum_{n\geq 0}\calT^{n,\e}_\omega(z,\kappa),\qquad\text{with}\qquad\calT^{0,\e}_\omega(z,\kappa):=\phi^\e_\omega(\kappa),
\end{equation}
where,  for $n\geq 1$,
\[\begin{split}
\calT^{\e,n}_\omega(z,\kappa):= \int_{\Delta_n(z)} d\mathbf{u}^{(n)}  \int_{\calS^n_{\omega,\e}(\kappa)}\mathbf{m}(d\mathbf{q}^{(n)}) \mathbf{H}^\e_{\omega,1,1}\big(\mathbf{u}^{(n)},\mathbf{q}^{(n)}\big)\phi^\e_\omega(Q_n),
\end{split}\]
and
\[\mathbf{H}^\e_\omega(\mathbf{u}^{(n)},\mathbf{q}^{(n)}):=\prod_{j=1}^n \tilde{H}^{\e}_\omega(u_j,Q_{j-1},Q_j).\]
Here, $\calS^n_{\omega ,\e}(\kappa)$ is defined by \eqref{defS}, $\tilde{H}^{\e}_\omega$ by \eqref{defHt}, and $\mathbf{H}^\e_{\omega,1,1}$ denotes the $(1,1)$ entry of the matrix $\mathbf{H}^\e_{\omega}$. As we will see, the leading term in $\calT^{n,\e}_\omega$ is the one obtained by the product of the diagonal elements of the matrices $\tilde{H}^{\e}_\omega(u_j,Q_{j-1},Q_j)$. Any other term involving an off diagonal component introduces an oscillatory integral leading to a vanishing limit as $\eps \to 0$. This will be proved further.  The leading term is therefore
\begin{equation}\label{transferQ}
\calX^\e_\omega(z,\kappa):= \sum_{n\geq 0}\calX^{\e,n}_\omega(z,\kappa),\qquad \calX^{0,\e}_\omega(z,\kappa):=\phi^\e_\omega(\kappa),
\end{equation}
where, for $n\geq 1$,
\be \label{defXen}
\calX^{\e,n}_\omega(z,\kappa):=\Big(\frac{ik_\omega}{\e^{2-s}}\Big)^n \int_{\Delta_n(z)} d\mathbf{u}^{(n)} \int_{\calS^n_{\omega ,\e}(\kappa)}\mathbf{m}(d\mathbf{q}^{(n)} ) \Big(\prod_{j=1}^n \hat{V}(u_j/\e,q_j) \Big)e^{iG^\e_n(\mathbf{u}^{(n)},\mathbf{q}^{(n)})}\phi^\e_\omega(Q_n),
\ee
with
\[G^\e_n(\mathbf{u}^{(n)},\mathbf{q}^{(n)}):=\frac{k_\omega}{\e^2}\sum_{j=1}^n (\lambda^r_{\e,\omega}(Q_j)-\lambda^r_{\e,\omega}(Q_{j-1}))u_j.\]
Before getting to the core of the proof, we present some technical results that show that the two series above are well-defined, and that expectation and limits can be taken term by term. 

\begin{lem}\label{inversion1}
The series \eqref{decP} is well-defined, and we have, for all $\phi \in L^2(\Rm^2)$,
\[\E\left[\big<\calP^{a,\e}_{\omega,\phi^\e_\omega}(z),\phi\big>_{L^2(\Rd)}\right]=\E\big[\sum_{n=0}^{+\infty} \big<\calT^{n,\e}_{\omega}(z),\phi\big>_{L^2(\Rd)}\big]=\sum_{n=0}^{+\infty}\E\left[ \big<\calT^{n,\e}_{\omega}(z),\phi\big>_{L^2(\Rd)}\right],\]
and
\[\lim_{\e\to 0}\E\left[\big<\calP^{a,\e}_{\omega,\phi^\e_\omega}(z),\phi\big>_{L^2(\Rd)}\right]=\sum_{n=0}^{+\infty}\lim_{\e\to 0}\E\big[ \big<\calT^{n,\e}_{\omega}(z),\phi\big>_{L^2(\Rd)}\big].\]
Similar properties hold for \fref{transferQ}. 
\end{lem}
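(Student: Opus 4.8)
The plan is to reduce everything to one estimate: a bound on $\E\big[\|\calT^{n,\e}_\omega(z)\|_{L^2(K_n)}\big]$ (and its analogue for $\calX^{\e,n}_\omega$) that is summable in $n$ and \emph{uniform in} $\e\in(0,1)$. Here $K_n:=\{|\kappa|\le nr_\calS+r_0\}$ is the ball containing the $\kappa$-support of $\calT^{n,\e}_\omega(z,\cdot)$, which comes from the factor $\phi^\e_\omega(Q_n)=\phi^\e_\omega(\kappa-q_1-\cdots-q_n)$ with $q_j\in\calS\subset B(0,r_\calS)$ and $\hat f_0$ compactly supported uniformly in $\e$, so that $|K_n|=O(n^2)$. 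Once such a bound is available, the first equality of the lemma (exchange of $\E$ and $\sum$) is Tonelli applied to the dominating series; the fact that \fref{decP} converges to $\calP^{a,\e}_{\omega,\phi^\e_\omega}(z)$ follows because, for $\e$ fixed, the partial sums of \fref{decP} are exactly the iterates of the Volterra equation \fref{schroeq2} (whose kernel is bounded since $V$ is bounded) paired with $\phi^\e_\omega$, and the remainder is the tail of the same summable series; and the second equality (exchange of $\lim_{\e\to0}$ and $\sum$) is an $\eps/3$ argument — the tail is $o(1)$ as the truncation index $N\to\infty$ uniformly in $\e$, while for fixed $N$ the finitely many $\E[\langle\calT^{n,\e}_\omega(z),\phi\rangle]$ converge, which is the content of the oscillatory-integral analysis carried out later.

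To obtain the uniform bound I would go through the second moment. By Cauchy--Schwarz in $\kappa$ and Jensen, $\E[\|\calT^{n,\e}_\omega(z)\|_{L^2(K_n)}]\le|K_n|^{1/2}\big(\sup_\kappa\E[|\calT^{n,\e}_\omega(z,\kappa)|^2]\big)^{1/2}$. Expanding $|\calT^{n,\e}_\omega|^2$, each of the two factors $\mathbf H^\e_{\omega,1,1}$ splits into its $2^{n-1}$ matrix-product paths, each path being a product of $n$ entries of $\tilde H^\e_\omega$; every such entry is $\le C\e^{s-2}|\hat V(u_j/\e,q_j)|$ in modulus because the accompanying exponentials, built from the \emph{real} symbols $\lambda^r_{\e,\omega}$, have modulus one. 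Using independence of $m$ and $\BB_\mathfrak{H}$, the reality of $\hat V$, the bound $|m|(\calS)\le C_m$ a.s. (so $\E[|m|(\calS)^{2n}]\le C_m^{2n}$), and the boundedness of $\phi^\e_\omega$ uniformly in $\e$, one is led to $\E[|\calT^{n,\e}_\omega(z,\kappa)|^2]\le C^n\e^{2n(s-2)}\int_{\Delta_n(z)\times\Delta_n(z)}G_n\,d\mathbf{u}_1 d\mathbf{u}_2$, with $G_n(\mathbf{u}_1,\mathbf{u}_2):=\sup_{\mathbf{q}_1,\mathbf{q}_2\in\calS^n}\big|\E_{\BB_\mathfrak{H}}[\prod_{j=1}^n\hat V(u_{1,j}/\e,q_{1,j})\hat V(u_{2,j}/\e,q_{2,j})]\big|$. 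The key point is that after freezing the momenta by this supremum, $G_n$ becomes invariant under permutations of $\mathbf{u}_1$ and of $\mathbf{u}_2$ separately, so the symmetrization identity \fref{fact} applies in each block and gives $\int_{\Delta_n\times\Delta_n}G_n=\frac{1}{(n!)^2}\int_{[0,z]^{2n}}G_n$; Proposition \ref{LRM}, in the form \fref{defTV} (whose bound is uniform in the momenta), then yields $\int_{[0,L]^{2n}}G_n\le\e^{2n(2-s)}(Cn^{1/2})^{2n}$. The powers of $\e$ cancel exactly, $\e^{2n(s-2)}\e^{2n(2-s)}=1$, leaving $\sup_\e\E[|\calT^{n,\e}_\omega(z,\kappa)|^2]\le(Cn)^n/(n!)^2$, hence $\sup_\e\E[\|\calT^{n,\e}_\omega(z)\|_{L^2(K_n)}]\le Cn\,(Cn)^{n/2}/n!$, which by Stirling decays super-exponentially and is summable. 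The series \fref{transferQ} is handled identically — in fact more simply, since $\calX^{\e,n}_\omega$ carries no path sum — producing the same bound.

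The hard part, I expect, is conceptual rather than computational: the naive pointwise estimate $|\hat V|\le\|\Theta\|_\infty$ costs a diverging factor $\e^{n(s-2)}=\e^{-n\mathfrak{H}/2}$ and is therefore useless, so the whole estimate \emph{must} be routed through the second moment, where the subtle cancellations in the high moments of $\Theta(\BB_\mathfrak{H})$ — quantified by Proposition \ref{LRM} — become visible and produce precisely the compensating power $\e^{2n(2-s)}$. Three points need care in executing this: (i) the two-moment detour itself; (ii) the symmetrization step, which is what allows \fref{fact} to be invoked on the time simplices \emph{after} the momenta have been absorbed into a supremum, thereby recovering the decisive $1/(n!)^2$ that beats the combinatorial factor $n^n$ coming from the number of pairings in Proposition \ref{LRM}; and (iii) checking that the polynomial growth of $|K_n|$ (and of the shifted momentum $Q_n$) is harmless next to the factorial decay.
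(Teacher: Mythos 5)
Your proposal is correct and follows essentially the same route as the paper: the paper reduces the lemma to showing $\sum_{n}\sup_{\e}\big(\E[\|\calT^{n,\e}_{\omega}(z)\|^2_{L^2}]\big)^{1/2}<+\infty$ and obtains the bound $C^{2n}(nr_\calS+r_{\hat f_0})^2/n!$ by exactly the mechanism you describe — second moment, unimodular phases, the a.s.\ bound on $|m|(\calS)$, the compact support giving $|K_n|=O(n^2)$, the symmetrization identity \fref{fact} yielding $1/(n!)^2$, and the uniform-in-momenta estimate \fref{defTV} from Proposition \ref{LRM} cancelling the $\e^{2n(s-2)}$ exactly. The exchange-of-limit arguments you spell out (Tonelli and the $\eps/3$ truncation) are left implicit in the paper but are the intended content of the summability claim.
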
 

\begin{proof} We only consider \fref{decP}, and just need to show that
\[
\sum_{n\geq 1}\sup_{\e\in(0,1)}\left(\E\left[\big\| \calT^{n,\e}_{\omega}(z)\big\|^2_{L^2(\Rd)}\right]\right)^{1/2}<+\infty.
\]
This follows from a direct adaptation of the estimate on $\tilde{R}_n^\e$ in the proof of \fref{IL} in Proposition \ref{evaprop}, which yields 
\[\begin{split}
\E\big[\big\|&\calT^{n,\e}_{\omega}(z)\big\|^2_{L^2(\Rd)}\big]\leq \frac{C^{2n}}{n!} (nr_\calS+r_{\hat{f}_0})^2,
\end{split}\]
where $r_{\hat{f}_0}$ is such that $supp\hat{f}_0 \subset B(0,r_{\hat{f}_0})$. This concludes the proof.
\end{proof}
\medskip

Owing Lemma \ref{inversion1}, it is not difficult to show that $\calX^\e_\omega$ is the unique solution to \fref{eqdiff0} such that 
\[
\E\left[\big\|\calX^\e_\omega(z)\big\|^2_{L^2(\Rd)}\right]\leq C \| \hat f_0(\omega,\cdot)\|^2_{L^2(\Rd)}.
\]
We now proceed to the proof itself and write 
 \[\calP^{a,\e}_{\omega,\phi^\e_\omega}(z,\kappa)-\calX^\e_\omega(z,\kappa)=\sum_{n\geq 1} \tilde{\calT}^{n,\e}_{\omega}(z,\kappa),\]
 where
\[ \tilde{\calT}^{n,\e}_{\omega}(z,\kappa): = \sum_{(l_1,\dots,l_{n-1})\in \calL_n} \int_{\Delta_n(z)} d\mathbf{u}^{(n)}  \int_{\calS^n_{\omega ,\e}(\kappa)}\mathbf{m}(d\mathbf{q}^{(n)})\prod_{m=1}^n \tilde{H}^{\e}_{\omega,l_{m-1},l_m}(u_m,Q_{m-1},Q_m)\phi^\e_\omega(Q_n) \]
 with $l_0=l_n:=1$, and
 \begin{equation}\label{defindices}\begin{split}
 \calL_n:=\big\{\mathbf{l}^{(n)}=(l_1,\dots,l_{n-1})\in \{1,2\}^{n-1},\quad\text{s.t.}&\quad \exists m_0\in\{2,\dots,n-1\}\\
 &\text{with}\quad l_{m_0}=1\quad \text{and}\quad l_{m_0-1}=2 \big\}.
\end{split} \end{equation}
The set $\calL_n$ is such that there is at least one contribution of the off-diagonal of $\tilde{H}^{\e}_\omega$. Following  Lemma \ref{inversion1}, we can study the series above term by term, and in particular,
\[
\lim_{\e\to 0}\E\big[\|\calP^{a,\e}_{\omega,\phi^\e_\omega}(z)-\calX^\e_\omega(z) \|_{L^2(\Rd)} \big] \leq \sum_{n\geq 1} \lim_{\e\to 0} \E\big[\| \tilde{\calT}^{n,\e}_{\omega}(z) \|^2_{L^2(\Rd)} \big]^{1/2},
\]
where, using  the second point of Proposition \ref{LRM},
\[\begin{split}
 \lim_{\e\to 0}&\E\left[\| \tilde{\calT}^{n,\e}_{\omega}(z) \|^2_{L^2(\Rd)} \right]\\
 &= \Big(\frac{k_\omega C^{1/2}_{\mathfrak{H}}}{2}\Big)^{2n} \int_{\{\vert \kappa \vert \leq  n r_\calS+r_{\hat{f}_0}\}}d\kappa  \int_{\calS^n\times \calS^n} \E[\mathbf{m}(d\mathbf{q}^{(n)}_1) \mathbf{m}(d\mathbf{q}^{(n)}_2 )]\phi^0_\omega(Q_{1,n})\overline{\phi^0_\omega(Q_{2,n})}\\
 &\qquad \times  \sum_{\substack{(l_{1,1},\dots,l_{1,n-1})\in \calL_n \\ (l_{2,1},\dots,l_{2,n-1})\in \calL_n }} \sum_{\mathcal{F}_{2,n}} I_{\mathcal{F}_{2,n}}(\mathbf{l}^{(n)}_1,\mathbf{l}^{(n)}_2,\mathbf{q}^{(n)}_1,\mathbf{q}^{(n)}_2) \prod_{(\alpha,\beta)\in\mathcal{F}_{2,n}} \hat{R}(q_{\alpha},q_{\beta})  .
\end{split}\]
We will show that the limit is actually zero. Above, $r_{\hat f_0}$ is as in the proof of Lemma \ref{inversion1} and the second sum is taken over all the pairings $\mathcal{F}_{2,n}$ of $\{1,2\}\times\{1,\dots,n\}$, and $\phi^0_\omega$ is defined by \eqref{defphi0}. Here, we have introduced the notations
\[\begin{split}
 I_{\mathcal{F}_{2,n}}(\mathbf{l}^{(n)}_1,&\mathbf{l}^{(n)}_2,\mathbf{q}^{(n)}_1,\mathbf{q}^{(n)}_2)= \lim_{\e\to 0} \int_{\Delta_n(z)} d\mathbf{u}^{(n)}_1 \int_{\Delta_n(z)} d\mathbf{u}^{(n)}_2\\
 &\prod_{(\alpha,\beta)\in\mathcal{F}_{2,n}} \frac{\calE^\e_{l_{\alpha-(0,1)},l_\alpha}(\alpha(1),u_\alpha,Q_{\alpha-(0,1)},Q_\alpha)\calE^{\e}_{l_{\beta-(0,1)},l_\beta}(\beta(1),u_\beta,Q_{\beta-(0,1)},Q_\beta)}{\vert u_\alpha-u_\beta \vert^{\mathfrak{H}}}
 \end{split}\]
 with
\[
\calE^\e_{j,l}(1,u,p,q):=\left\{ \begin{array}{ccc}
e^{ik_\omega (\lambda_{\e,\omega}(q)-\lambda_{\e,\omega}(p))u/\e^2}&\text{if}& (j,l)=(1,1)\\
e^{-ik_\omega (\lambda_{\e,\omega}(q)+\lambda_{\e,\omega}(p))u/\e^2}&\text{if}& (j,l)=(1,2)\\
-e^{ik_\omega (\lambda_{\e,\omega}(q)+\lambda_{\e,\omega}(p))u/\e^2}&\text{if}& (j,l)=(2,1)\\
-e^{-ik_\omega (\lambda_{\e,\omega}(q)-\lambda_{\e,\omega}(p))u/\e^2}&\text{if}& (j,l)=(2,2),
\end{array}\right.\]
and $\calE^\e_{j,l}(2,u,p,q):=\overline{\calE^\e_{j,l}(1,u,p,q)}$. Now, let us consider a pairing $\mathcal{F}_{2,n}$ and $\mathbf{l}^{(n)}_1\in \calL_n$. Using the notation of \eqref{defindices}, take also a couple $(\alpha_0,\beta_0)$ such that $\alpha_0(1)=1$ and $\alpha_0(2)=m_0$ for instance. The other cases follow exactly the same lines and are omitted. Using the following relation
\[
\vert u-v\vert^{-\mathfrak{H}}=\tilde{c}_{\mathfrak{H}}\int\frac{e^{ir(u-v)}}{\vert r\vert^{1-\mathfrak{H}}}dr,
\]
where $\tilde{c}_{\mathfrak{H}}:=\Gamma(2H-1)\sin(\pi H)/\pi$ with $H=(2-\mathfrak{H})/2$, we single out the pairing $(\alpha_0,\beta_0)$ and obtain 
\[\begin{split}
 &I_{\mathcal{F}_{2,n}}(\mathbf{l}^{(n)}_1,\mathbf{l}^{(n)}_2,\mathbf{q}^{(n)}_1,\mathbf{q}^{(n)}_2)=\tilde{c}_{\mathfrak{H}} \lim_{\e\to 0}\int \frac{dr}{\vert r \vert^{1-\mathfrak{H}}}  \int_{\Delta_n(z)} d\mathbf{u}^{(n)}_1 \int_{\Delta_n(z)} d\mathbf{u}^{(n)}_2\\
& \hspace{1cm}\times\prod_{\substack{(\alpha,\beta)\in\mathcal{F}_{2,n}\\ (\alpha,\beta)\neq (\alpha_0,\beta_0)}} \frac{\calE^\e_{l_{\alpha-(0,1)},l_\alpha}(\alpha(1),u_\alpha,Q_{\alpha-(0,1)},Q_\alpha)\calE^\e_{l_{\beta-(0,1)},l_\beta}(\beta(1),u_\beta,Q_{\beta-(0,1)},Q_\beta)}{\vert u_\alpha-v_\beta\vert^{\mathfrak{H}}} \\
&\hspace{1cm}\times e^{ir u_{\alpha_0}}\calE^\e_{l_{\alpha_0-(0,1)},l_{\alpha_0}}(\alpha_0(1),u_{\alpha_0},Q_{\alpha_0-(0,1)},Q_{\alpha_0})\\
 &\hspace{1cm}\times e^{-ir u_{\beta_0}}\calE^\e_{l_{\beta_0-(0,1)},l_{\beta_0}}(\beta_0(1),u_{\beta_0},Q_{\beta_0-(0,1)},Q_{\beta_0}).
\end{split}\]
Integrating by parts (with respect to the variable $u_{\alpha_0}$) the function  $e^{iu_{\alpha_0}(r+k_\omega (\lambda_{\e,\omega}(Q_{\alpha_0})+\lambda_{\e,\omega}(Q_{\alpha_0-(0,1)}))/\e^2)}$, with antiderivative
\[\frac{e^{iu_{\alpha_0}(r+k_\omega (\lambda_{\e,\omega}(Q_{\alpha_0})+\lambda_{\e,\omega}(Q_{\alpha_0-(0,1)}))/\e^2)}-1}{i(r+k_\omega (\lambda_{\e,\omega}(Q_{\alpha_0})+\lambda_{\e,\omega}(Q_{\alpha_0-(0,1)}))/\e^2)},\]
we find, using dominated convergence,
\[\begin{split}
&\vert  I_{\mathcal{F}_{2,n}}(\mathbf{l}^{(n)}_1,\mathbf{l}^{(n)}_2,\mathbf{q}^{(n)}_1,\mathbf{q}^{(n)}_2)\vert\\[3mm]
& \hspace{1cm}\leq C_1 \int_{\Delta_{n-1}(z)} d\mathbf{u}^{(n)} _{\alpha_0}\int_{\Delta_n(z)} d\mathbf{u}^{(n)}_2  \prod_{\substack{(\alpha,\beta)\in\mathcal{F}_{2,n}\\ (\alpha,\beta)\not= (\alpha_0,\beta_0)}}  \vert u_\alpha-u_\beta \vert^{-\mathfrak{H}} \\
 &\hspace{2.5cm}\times  \lim_{\e\to0}\int  \Big\vert \frac{e^{iu_{\alpha_0-(0,1)}(r+k_\omega (\lambda_{\e,\omega}(Q_{\alpha_0})+\lambda_{\e,\omega}(Q_{\alpha_0-(0,1)}))/\e^2)}-1}{i(r+k_\omega (\lambda_{\e,\omega}(Q_{\alpha_0})+\lambda_{\e,\omega}(Q_{\alpha_0-(0,1)}))/\e^2)}\Big\vert \frac{dr}{\vert r\vert^{1-\mathfrak{H}}}\\
 &\hspace{1.5cm}+C_2   \int_{\Delta_{n-1}(z)} d\mathbf{u}^{(n)} _{\alpha_0+(0,1)}\int_{\Delta_n(z)} d\mathbf{u}^{(n)}_2  \prod_{\substack{(\alpha,\beta)\in\mathcal{F}_{2,n}\\ (\alpha,\beta)\not= (\alpha_0+(0,1),\beta_0)}}  \vert u_\alpha-u_\beta \vert^{-\mathfrak{H}} \\
 &\hspace{2.5cm}\times \lim_{\e\to0} \int  \Big\vert\frac{e^{iu_{\alpha_{0}}(r+k_\omega (\lambda_{\e,\omega}(Q_{\alpha_0})+\lambda_{\e,\omega}(Q_{\alpha_0-(0,1)}))/\e^2)}-1}{i(r+k_\omega (\lambda_{\e,\omega}(Q_{\alpha_0})+\lambda_{\e,\omega}(Q_{\alpha_0-(0,1)}))/\e^2)}\Big\vert\frac{dr}{\vert r\vert^{1-\mathfrak{H}}}
 \end{split}\]
 with $\mathbf{u}^{(n)}_{\alpha_0}:=(u_{1,1},\dots,u_{1,m_0-1},u_{1,m_0+1},\dots,u_{1,n})$, and where $\Delta_{n-1}(z)$ is the same simplex as \fref{simplexe} with now $n-1$ elements.  Let us remark that we are working here with fixed $\mathbf{q}^{(n)}_1$ and $\mathbf{q}^{(n)}_2$, so that
 \[\lim_{\e\to 0}\lambda_{\e,\omega}(Q_{\alpha_0})+\lambda_{\e,\omega}(Q_{\alpha_0-(0,1)})=2. \]
 Therefore, together with the Markov inequality and dominated convergence, the following lemma concludes the proof of Proposition \ref{noback}.
  \begin{lem} \label{fastphase}
  For all $a\not=0$ and $u\not=0$, we have
  \[\lim_{\e\to 0}\int \frac{\vert e^{iu(r-a/\e^s)}-1\vert}{\vert r-a/\e^s\vert \vert r\vert^{1-\mathfrak{H}} } dr=0.\]
  \end{lem}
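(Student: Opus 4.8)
The plan is to remove the $\eps$-dependence from one of the singular factors by the substitution $t = r - a/\eps^{s}$. Writing $M := a/\eps^{s}$, the integral becomes
\[
I(M) := \int_{\Rm} \frac{\vert e^{iut}-1\vert}{\vert t\vert\,\vert t+M\vert^{1-\mathfrak{H}}}\,dt ,
\]
and since $s>0$ and $a\neq 0$ we have $\vert M\vert \to +\infty$ as $\eps\to 0$. Hence it suffices to show $I(M)\to 0$ as $\vert M\vert\to\infty$. Moreover, under $t\mapsto -t$ the integral $I(M)$ becomes $I(-M)$ with $\vert e^{iut}-1\vert$ unchanged, so one may argue with $\vert M\vert$ throughout and need not distinguish the sign of $a$. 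Note also that $I(M)$ is finite for each fixed $M\neq 0$: the integrand is $O(\vert t\vert^{-1})$ times a locally integrable weight near $t=0$ and near $t=-M$, and decays like $\vert t\vert^{-(2-\mathfrak{H})}$ at infinity with $2-\mathfrak{H}>1$.

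The elementary input is $\vert e^{iut}-1\vert \le \min(2,\vert u\vert\,\vert t\vert)$, so that $\vert e^{iut}-1\vert/\vert t\vert \le \min(2/\vert t\vert,\vert u\vert)$; this quantity is bounded by $\vert u\vert$ and decays like $2/\vert t\vert$ at infinity. I would then split $\Rm$ into the three ranges $\{\vert t\vert\le \vert M\vert/2\}$, $\{\vert M\vert/2<\vert t\vert\le 2\vert M\vert\}$, and $\{\vert t\vert>2\vert M\vert\}$, and bound each contribution to $I(M)$ separately.

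On $\{\vert t\vert\le \vert M\vert/2\}$ one has $\vert t+M\vert\ge \vert M\vert/2$, so the contribution is at most $(2/\vert M\vert)^{1-\mathfrak{H}}\int_{\vert t\vert\le \vert M\vert/2}\min(2/\vert t\vert,\vert u\vert)\,dt \le C_u\,(1+\log(1+\vert M\vert))\,\vert M\vert^{-(1-\mathfrak{H})}$, which tends to $0$ because $1-\mathfrak{H}>0$. On $\{\vert t\vert>2\vert M\vert\}$ one has $\vert t+M\vert\ge \vert t\vert/2$, hence $\vert t\vert^{-1}\vert t+M\vert^{-(1-\mathfrak{H})}\le 2^{1-\mathfrak{H}}\vert t\vert^{-(2-\mathfrak{H})}$; using $\vert e^{iut}-1\vert\le 2$ and $2-\mathfrak{H}>1$ the contribution is $\le C\,\vert M\vert^{-(1-\mathfrak{H})}\to 0$. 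On the intermediate range I would use $\vert e^{iut}-1\vert/\vert t\vert\le 2/\vert t\vert\le 4/\vert M\vert$ and then, after the substitution $s=t+M$ (under which this range is contained in $\{\vert s\vert\le 3\vert M\vert\}$), bound the remaining factor by $\int_{\vert s\vert\le 3\vert M\vert}\vert s\vert^{-(1-\mathfrak{H})}\,ds = 2(3\vert M\vert)^{\mathfrak{H}}/\mathfrak{H}$, which is finite since $\mathfrak{H}>0$; this gives a contribution $\le C\,\vert M\vert^{\mathfrak{H}-1}\to 0$ since $\mathfrak{H}<1$. Adding the three bounds yields $I(M)\to 0$, hence the claimed limit.

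The calculation is routine; the only delicate point is the intermediate region, where $\vert t\vert$ is comparable to $\vert M\vert$ — equivalently the running variable $r$ is comparable to the large frequency $a/\eps^{s}$ while the weight $\vert t+M\vert^{-(1-\mathfrak{H})}$ sits near its singularity. There the full integrand without any oscillatory cancellation behaves like $\vert t\vert^{-(2-\mathfrak{H})}$, which is only borderline integrable, so one genuinely has to combine the $1/\vert t\vert$ decay coming from $\vert e^{iut}-1\vert/\vert t\vert$ with the fact that the singular weight integrates to a quantity growing only like $\vert M\vert^{\mathfrak{H}}$, a power strictly below $\vert M\vert$. That trade-off, valid precisely because $0<\mathfrak{H}<1$, is what forces every piece to vanish in the limit.
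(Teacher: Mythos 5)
Your proof is correct and follows essentially the same strategy as the paper's: after centering the oscillatory factor, both arguments split the line into three regions according to the distance to the singularity at $r=a/\e^{s}$, use $\vert e^{i\theta}-1\vert\le\min(2,\vert\theta\vert)$ together with the local integrability of $\vert r\vert^{-(1-\mathfrak{H})}$ and the decay $\vert r\vert^{-(2-\mathfrak{H})}$ at infinity, and arrive at the same rate $\vert M\vert^{-(1-\mathfrak{H})}$ up to a logarithm (which the paper writes as $\e^{s(1-\mathfrak{H})}\log(1/\e)$). The only difference is cosmetic: your cut points are all at scale $\vert M\vert=\vert a\vert\e^{-s}$, whereas the paper isolates a fixed window $\vert r-a/\e^{s}\vert<\eta$ around the oscillation-tamed singularity and an intermediate window of size $\mu\e^{-s}$.
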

  
 \begin{proof} Let $\mu>0$ and $\eta>0$ be small parameters, and let us decompose the integral into three parts as follows:
 \[\int \frac{\vert e^{iu(r-a/\e^s)}-1\vert}{\vert r-a/\e^s\vert \vert r\vert^{1-\mathfrak{H}} } dr = \Big(\int_{\vert r-a/\e^s\vert>\mu/\e^s}+\int_{\eta<\vert r-a/\e^s\vert<\mu/\e^s}+\int_{\vert r-a/\e^s\vert<\eta}  \Big)\frac{\vert e^{iu(r-a/\e^s)}-1\vert}{\vert r-a/\e^s\vert \vert r\vert^{1-\mathfrak{H}} }dr.\]
We treat the last integral first and  make the change of variable $r\to r+a/\e^s r$ to obtain
\[\begin{split}
\int_{\vert r-a/\e^s\vert<\eta}\frac{\vert e^{iu(r-a/\e^s)}-1\vert}{\vert r-a/\e^s\vert \vert r\vert^{1-\mathfrak{H}} }dr&= \int_{\vert r \vert<\eta}\frac{\vert e^{iur}-1\vert}{\vert r\vert \vert r+a/\e^s\vert^{1-\mathfrak{H}} }dr \\
&\leq \vert u \vert  \int_{\vert r \vert<\eta} \frac{dr}{\vert r+a/\e^s\vert^{1-\mathfrak{H}} }\\
&\leq  \vert u \vert \e^{s(1-\mathfrak{H})} \int_{\vert r \vert<\eta} \frac{dr}{\vert \vert a \vert -\e^{s}\eta \vert^{1-\mathfrak{H}} }\\
&\leq C \e^{s(1-\mathfrak{H})}.
\end{split}\]
 For the second integral, we have, with the change of variable $r\to r/\e^s$,
\[\begin{split}
\int_{\eta<\vert r-a/\e^s\vert<\mu/\e^s}\frac{\vert e^{iu(r-a/\e^s)}-1\vert}{\vert r-a/\e^s\vert \vert r\vert^{1-\mathfrak{H}} }dr &=\e^{s} \int_{\e^s \eta<\vert \e^s r-a\vert<\mu}\frac{\vert e^{iu(\e^s r-a)/\e^s}-1\vert}{\vert \e^s r-a\vert \vert r\vert^{1-\mathfrak{H}} }dr \\
&\leq \frac{\e^{s(1-\mathfrak{H})}}{(\vert a\vert-\e^s\eta)^{1-\mathfrak{H}}} \int_{\e^s\eta<\vert  r-a\vert<\mu}\frac{dr}{\vert  r-a\vert  }\\
&\leq C \e^{s(1-\mathfrak{H})} \log(1/\e).
\end{split}\]
 For the last term, we find, after the change of variable $r\to r/\e^s$,
 \[\begin{split}
 \int_{\vert r-a/\e^s\vert>\mu/\e^s} \frac{\vert e^{iu(r-a/\e^s)}-1\vert}{\vert r-a/\e^s\vert \vert r\vert^{1-\mathfrak{H}} }dr&=\e^{s(1-\mathfrak{H})}   \int_{\vert r-a\vert>\mu} \frac{\vert e^{iu(r-a)/\e^s}-1\vert}{\vert r-a\vert \vert r\vert^{1-\mathfrak{H}} }dr \\
& \leq \e^{s(1-\mathfrak{H})} \int_{\vert r-a\vert>\mu}\frac{dr}{\vert r-a\vert  \vert r\vert^{1-\mathfrak{H}}  },
 \end{split}\]
 which concludes the proof of Lemma \ref{fastphase}.
\end{proof}

\section{Proof of Proposition \ref{ItoSc}}\label{proofItoSc}

The proof is split into two steps. We start by constructing solutions to the fractional It\^o-Schr\"odinger equation \fref{fracit} in the Fourier form. We prove the announced pathwise regularity in $\hat W^\alpha_\infty(0,L)$ and in $\hat \calC ^{H-\theta}_\infty(0,L)$, and show that, up to a phase shift, the obtained solution is the Fourier transform of the solution in the sense of Definition 1.1. The second step is to prove the convergence of the process $\calX^\e_\omega$. We will show for this the convergence of the moments of $\calX^\e_\omega$ and use a regularized process.

\subsection{Existence theory for the fractional It\^o-Schr\"odinger equation \fref{fracit}}

As explained in the outline, we construct solutions via a regularization procedure. The solutions will be written in terms of Duhamel expansions, since, as in the proof of Proposition \ref{noback}, it will enough to check the term by term convergence to obtain the convergence of the whole series. We then introduce the process $\calX^A_{\omega}$, defined formally as

\begin{equation}\label{defXA}  \calX^A_{\omega}(z,\kappa) = \phi^0_\omega(\kappa)+ \sum_{n\geq 1}  \calX^{A,n}_{\omega}(z,\kappa),
\end{equation}
where
\be \label{expXA}\begin{split}
 \calX^{A,n}_{\omega}(z,\kappa)=(i k_\omega)^n& \int_{\Delta_n(z)}d\mathbf{u}^{(n)}  \int_{\calS^n}\mathbf{m}(d\mathbf{q}^{(n)})e^{iG_n(\mathbf{u}^{(n)},\mathbf{q}^{(n)})}  \phi^0_\omega(Q_n)  \\
 &\times \int_{(-A,A)^n} d\mathbf{r}^{(n)}\prod_{m=1}^n \frac{e^{ir_m u_m}}{\vert r_m \vert ^{H-1/2}} w(dr_m,q_m),
\end{split}\ee
with
 \begin{equation}\label{defG}
 G_n(\mathbf{u}^{(n)},\mathbf{q}^{(n)})=-\frac{1}{2k_\omega}\sum_{m=1}^{n} (\vert Q_m\vert^2-\vert Q_{m-1}\vert^2)u_m.
 \end{equation}
Here, we use the same notation as in Section \ref{proofnoback}. Moreover, $(w(dr,q))_{q\in\calS}$ is the family of complex Gaussian random measures, independent of $m(dq)$, defined by \eqref{defw}, such that $w^\ast(du,q_1)=w(-du,q)$, and with covariance function
\[\mathbb{E}[w(du,q_1)w^\ast(dv,q_2)]=\frac{C_{\mathfrak{H}}\Gamma(2H-1)\sin(\pi H) }{\pi}\hat{R}(q_1,q_2)\delta(u-v)dudv.\] 
We introduce as well the regularized standard fractional Brownian field $B^A_H$ and its $k-$th derivative $b^{A,k}_H := \partial^k_u B^A_H$ ($b^{A}_H:=b^{A,1}_H$),  for $q\in\calS$,
\[ \begin{split}B^A_H(u,q)&:=\sqrt{\frac{H(2H-1)}{C_\mathfrak{H}}}\int_{-A}^{A} \frac{e^{iru}-1}{ir \vert r \vert^{H-1/2}}w(dr,q), \\
b^{A,k}_H(u,q)&:=\sqrt{\frac{H(2H-1)}{C_\mathfrak{H}}} \int_{-A}^{A} \frac{(ir)^{k-1} e^{iru}}{\vert r \vert^{H-1/2}}w(dr,q). 
\end{split}\]
Note that $b^{A,k}_H$ is well-defined since
\be \label{estba}
\E\left[\int_0^z du\int_{\calS} |m|(dq)|b^{A,k}_H(u,q)|^2 \right] \leq C \E\left[\int_{\calS} |m|(dq) \hat R(q,q) \right]\int_0^z du\int_{-A}^A \frac{|r|^{2k-2}dr}{|r|^{2H-1}}  \leq C_A.
\ee
We will use the notation $B^{+\infty}_H=B_H$, with $B_H$ defined by \eqref{FBMB}. Let finally
\[ \calI^A(\psi)(z,\kappa):= ik_\omega \sigma_H \int_{\calS}m(dq) \int_0^z du  e^{-i(\vert \kappa -q \vert^2 -\vert \kappa \vert^2)u/(2k_\omega)} \psi(u,\kappa-q)dB^A_H(u,q),\]
 for $A\in [1,+\infty]$, whenever it is well-defined, and let us remind the reader about the following notation
 \[ \Lambda_{\alpha}(B_H^A(q)):=\frac{1}{\Gamma(1-\alpha)}\sup_{0\leq s\leq t \leq L} \vert D^{1-\alpha}_{t-}[B^A_H]_{t^-}(s,q)\vert.\] 
When $A<+\infty$, we wrote $\calI^A$ in terms of the fractional integral defined in \fref{defintsto}. Since $B_H^A$ has smooth trajectories, it follows from \cite[Theorem 2.4]{zahle} that the fractional integral is equal to the usual Lebesgue integral. 

Our goals are then to show that $(\calX^A_{\omega})_{A \geq 1}$ forms a Cauchy sequence in the appropriate metric space, and to show that the limit satisfies \fref{fracit}. We  start by addressing the path regularity of $\calX^A_\omega$, and by characterizing it as a mild solution to a regularized fractional It\^o-Schr\"odinger equation.

\begin{prop}\label{eqXA} 
The series $\calX^A_{\omega}$ defined by \eqref{defXA} converges in $\hat{\mathcal{C}}^{0}_\infty(0,L)$, and we have 
\begin{equation}\label{intstoXA}\begin{split}
\calX^A_{\omega}(z,\kappa)&= \phi^{0}_\omega(\kappa)+ik_\omega  \sigma_H \int_{\calS} m(dq) \int_0^z  e^{-i(\vert \kappa-q\vert^2-\vert \kappa \vert^2)u/(2k_\omega)}\calX^A_{\omega}(u,\kappa-q) dB_H^A(u,q),
\end{split} \end{equation}
with $\sigma_H^2=C_\mathfrak{H}/(H(2H-1))$. Moreover, the trajectories of $\calX^A_{\omega}$ belong to $\hat{\mathcal{C}}^{\infty}_\infty(0,L)$, and for all $z\geq 0$,
\be \label{conservA}\|\calX^A_{\omega}(z)\|_{L^2(\Rd)}=\|\calX^A_{\omega}(0)\|_{L^2(\Rd)}=\frac{1}{2}\|\hat{f}_0(\omega,\cdot)\|_{L^2(\Rd)}.\ee
\end{prop}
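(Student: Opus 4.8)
The plan is to establish three things in turn: convergence of the series $\calX^A_\omega$ in $\hat{\calC}^0_\infty(0,L)$, the fixed-point identity \eqref{intstoXA}, the smoothness of trajectories, and finally the conservation relation \eqref{conservA}. Throughout, the key simplification is that $A<+\infty$, so $B_H^A$ has $\calC^\infty$ trajectories and the stochastic integral $\calI^A$ reduces to an ordinary Lebesgue integral against $b_H^A(u,q)\,du$; combined with the estimate \eqref{estba}, this makes all the term-by-term manipulations legitimate.

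First I would prove convergence of the series. Each term $\calX^{A,n}_\omega(z,\kappa)$ has compact $\kappa$-support contained in $\{|\kappa|\leq n r_\calS + r_{\hat f_0}\}$ since $\phi^0_\omega=\hat f_0(\omega,\cdot)e^{i|\cdot|^2 L_S/k_\omega}$ has support in $B(0,r_{\hat f_0})$. To bound $\|\calX^{A,n}_\omega(z)\|_{\calH_k}$ in $L^2(\Omega)$, I would write out $\E[\|\calX^{A,n}_\omega(z)\|^2_{\calH_k}]$, use the independence of $m$ and $w$, pair up the Gaussian measures $w(dr_m,q_m)$ via Wick's theorem (which produces a sum over pairings with factors $\hat R(q_\alpha,q_\beta)\,|r_\alpha|^{-(2H-1)}\delta(r_\alpha-r_\beta)$), integrate in the $r$ variables, and control the $u$-integral over $\Delta_n(z)$ — where the simplex gives a $1/n!$-type gain via \eqref{fact} — and the $q$-integrals using \eqref{hypmomentm}. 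The weight $(1+|\kappa|^2)^{k/2}$ over a support of radius $\sim n$ contributes only a polynomial factor $(n r_\calS+r_{\hat f_0})^k$. This yields $\E[\|\calX^{A,n}_\omega(z)\|^2_{\calH_k}]\leq C_{A,k}^{2n}(n r_\calS+r_{\hat f_0})^{2k}/n!$, whose square root is summable; continuity in $z$ follows from the dominated convergence / absolute continuity of the $\Delta_n(z)$-integral and a similar bound on increments, giving convergence in $\calC^0([0,L],\calH_k)$ for every $k$, hence in $\hat{\calC}^0_\infty(0,L)$.

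Next, for the identity \eqref{intstoXA}: since $B_H^A$ is smooth, $\calI^A$ is a Lebesgue integral, so I would simply insert the series \eqref{defXA} into the right-hand side of \eqref{intstoXA}, interchange sum and integral (justified by the above absolute bounds together with \eqref{estba}), and recognize that applying $\calI^A$ to $\calX^{A,n-1}_\omega$ reproduces $\calX^{A,n}_\omega$ — this is a direct bookkeeping computation using $\sigma_H = (C_\mathfrak{H}/(H(2H-1)))^{1/2}$, the relation $dB_H^A(u,q)=b_H^A(u,q)\,du$ with $b_H^A$ built from $w(dr,q)$, and the fact that the phase $G_n$ in \eqref{defG} decomposes as $G_{n-1}$ plus the new increment $-(|Q_n|^2-|Q_{n-1}|^2)u_n/(2k_\omega)$, matching the exponential $e^{-i(|\kappa-q|^2-|\kappa|^2)u/(2k_\omega)}$ in $\calI^A$. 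For the $\calC^\infty_\infty$ trajectory regularity, I would differentiate \eqref{intstoXA} in $z$: since $B_H^A$ is smooth, $\partial_z\calX^A_\omega(z,\kappa) = ik_\omega\sigma_H\int_\calS m(dq)\,e^{-i(|\kappa-q|^2-|\kappa|^2)z/(2k_\omega)}\calX^A_\omega(z,\kappa-q)\,b_H^A(z,q)$, and bootstrapping (each $z$-derivative only involves more derivatives $b_H^{A,k}$ of $B_H^A$, all controlled by \eqref{estba}-type estimates and the exponential decay of the series bounds) yields $\calX^A_\omega(z)\in\calH_k$ for all $k$ with all $z$-derivatives continuous.

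Finally, \eqref{conservA}. The initial value $\|\calX^A_\omega(0)\|_{L^2}=\|\phi^0_\omega\|_{L^2}=\|\hat f_0(\omega,\cdot)\|_{L^2}/2$ is immediate since the phase $e^{i|\cdot|^2 L_S/k_\omega}$ is unimodular. For the conservation in $z$, I would compute $\frac{d}{dz}\|\calX^A_\omega(z)\|^2_{L^2(\Rd)}=2\,\textrm{Re}\langle\partial_z\calX^A_\omega(z),\calX^A_\omega(z)\rangle_{L^2}$, which is licit because the trajectories are $\calC^1$ in $z$ with values in $L^2$. Using the equation for $\partial_z\calX^A_\omega$, the free-Schr\"odinger part (the phase $e^{-i|\kappa|^2 z/(2k_\omega)}$ hidden in the mild form, equivalently the Laplacian term in the PDE form $\hat\Psi$ satisfies) contributes purely imaginary inner products and drops out of the real part; the noise part gives $2k_\omega\sigma_H\,\textrm{Re}\big(i\int m(dq)\int d\kappa\,e^{-i(\ldots)z/(2k_\omega)}\calX^A_\omega(z,\kappa-q)\overline{\calX^A_\omega(z,\kappa)}b_H^A(z,q)\big)$, and using $m^\ast(dq)=m(-dq)$, $b_H^A(z,-q)=\overline{b_H^A(z,q)}$ (from $w^\ast(dr,q)=w(-dr,q)$) together with the change of variables $\kappa\mapsto\kappa-q$, $q\mapsto -q$, one sees this term equals its own negative, hence vanishes. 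Therefore $\|\calX^A_\omega(z)\|_{L^2}$ is constant. The main obstacle I anticipate is not any single step but rather the care needed in the term-by-term Gaussian-moment estimate of the first step — getting the combinatorial $1/n!$ gain from the simplex to beat the factorially-growing number of pairings and the $A$-dependent constants — since everything downstream (interchanging sum and integral, differentiating, the cancellation in the conservation law) rests on those uniform bounds.
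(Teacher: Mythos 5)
Your proposal follows essentially the same route as the paper: term-by-term control of the Duhamel series with the $1/n!$ gain from the simplex, the recursion $\calI^A(\calX^{n-1,A}_\omega)=\calX^{n,A}_\omega$ passed to the limit to get \eqref{intstoXA}, bootstrapping that identity for the $\hat{\mathcal{C}}^\infty_\infty$ regularity, and the symmetry of $\calS$, $m$ and $w$ to show $\big<\partial_z\calX^A_\omega,\calX^A_\omega\big>_{L^2}$ is purely imaginary. The only divergence is that you establish summability via second-moment (Wick) estimates at fixed $z$, whereas the paper works pathwise on the high-probability event $\{\int_\calS\vert m\vert(dq)(\int_0^L\vert b_H^{A,2}\vert+\vert b_H^{A,1}(0,q)\vert)\leq M\}$, which yields the uniform-in-$z$ bound $\|\calX^{n,A}_\omega\|_{\calC^0([0,L],\calH_k)}\leq C(CM)^n/n!$ directly; your route still needs the routine extra step of controlling $\sup_{z\in[0,L]}\|\calX^{n,A}_\omega(z)\|_{\calH_k}$ rather than the norm at a fixed $z$.
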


The path regularity of $\calX^A_\omega$ will be useful to justify future calculations. The proposition is proved in Section \ref{proofeqXA}. The next step is to recover the mild formulation of \eqref{ItoSch0} in the Fourier domain by passing to the limit $A\to +\infty$ in \eqref{intstoXA}. We need for this uniform estimates in $A$. They are consequences of next two technical lemmas that follow the ideas of \cite{nualart}. The first one (Lemma \ref{techBA}) is based on the Garsia-Rademich-Rumsey inequality \cite{garcia} below: \\

\noindent \textbf{Garsia-Rademich-Rumsey inequality:}\label{garcia} Let $\tilde{p}\geq 1$ and $\tilde{\alpha}>1/\tilde{p}$. Then, there exists a constant $C_{\tilde{\alpha},\tilde{p}}>0$ such that, for any continuous function $f$ on $[0,L]$, we have
\[\vert f(t)-f(s)\vert^{\tilde{p}}\leq C_{\tilde{\alpha},\tilde{p}}\vert t-s \vert^{\tilde{\alpha}\tilde{p}-1} \int_0^L \int_0^L \frac{\vert f(u)-f(v)\vert^{\tilde{p}}}{\vert u-v\vert^{\tilde{\alpha} \tilde{p}+1}}dudv,\]
for all $(t,s)\in[0,L]$, and with the convention $0/0=0$.\\

Owing the inequality above, the increments (in time) of $B_H^A(t,q)$ can be estimated as follows:

\begin{lem}\label{techBA}
Let $\theta \in (0,H)$ and $A\in [1,+\infty]$. There exist positive random variables $(Z^A_{\theta}(q))_{q\in\calS}$, such that for all $q\in\calS$,
\[ \vert B_H^A(t,q)-B_H^A(s,q)\vert \leq Z^A_{\theta}(q)\vert t-s\vert^{H-\theta} \]
with probability one, and for all $p\geq 1$,
\[\sup_{A\geq 1}\sup_{q\in\calS} \E[\vert Z^A_\theta(q)\vert ^p]<+\infty.\]
Therefore, the trajectories of $B^A_H$ belong to $\hat{\mathcal{C}}^{H-\theta}_\infty(0,L)$ for all $\theta\in (0,H)$, and we also have
\[\sup_{A\geq 1} \sup_{q\in\calS} \E\Big[\Lambda^p_{\alpha}(B_H^A(q)) \Big]<+\infty,\] 
for $\alpha\in(1-H,1/2)$, and all $p\geq 1$. 
\end{lem}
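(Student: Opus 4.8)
The plan is to combine the Garsia-Rademich-Rumsey (GRR) inequality with Gaussian moment bounds that are uniform in the truncation parameter $A$. First I would fix $q\in\calS$ and observe that $t\mapsto B_H^A(t,q)$ is a mean-zero Gaussian process, so all its moments are controlled by its variance. The key estimate is the uniform (in $A$) bound on increments: for $s<t$ one has
\[
\E\big[\,|B_H^A(t,q)-B_H^A(s,q)|^2\,\big]= \frac{H(2H-1)}{C_\mathfrak H}\,\hat R(q,q)\int_{-A}^{A}\frac{|e^{irt}-e^{irs}|^2}{|r|^{2H}}\,dr \;\leq\; C\,|t-s|^{2H},
\]
where the last inequality follows by the change of variable $r\mapsto r/|t-s|$ after bounding $\hat R(q,q)\le 1$, and the constant is independent of $A$ and of $q$ since the resulting integral $\int_{\Rm}|e^{ir}-1|^2|r|^{-2H}\,dr$ is finite for $H\in(0,1)$. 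Hypercontractivity of Gaussian chaos (or Wick's theorem) then upgrades this to $\E[|B_H^A(t,q)-B_H^A(s,q)|^p]\le C_p\,|t-s|^{pH}$ for every $p\ge 1$, with $C_p$ independent of $A$ and $q$.

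Next I would apply the GRR inequality with $\tilde p$ large and $\tilde\alpha=H-\theta+1/\tilde p$, so that $\tilde\alpha\tilde p-1=(H-\theta)\tilde p$ and $\tilde\alpha>1/\tilde p$. This yields, with probability one,
\[
|B_H^A(t,q)-B_H^A(s,q)|\le Z_\theta^A(q)\,|t-s|^{H-\theta},\qquad
Z_\theta^A(q):= C_{\tilde\alpha,\tilde p}^{1/\tilde p}\Big(\int_0^L\!\!\int_0^L \frac{|B_H^A(u,q)-B_H^A(v,q)|^{\tilde p}}{|u-v|^{(H-\theta)\tilde p+2}}\,du\,dv\Big)^{1/\tilde p}.
\]
Taking expectations, using Minkowski's integral inequality and the moment bound from the first step, $\E[|Z_\theta^A(q)|^{\tilde p}]\le C\int_0^L\int_0^L|u-v|^{pH-(H-\theta)\tilde p-2}\,du\,dv$; choosing $\tilde p$ large enough (so that $pH-(H-\theta)\tilde p-2$, wait—more carefully, taking the $\tilde p$-th power inside and using $\E[|B^A_H(u,q)-B^A_H(v,q)|^{\tilde p}]\le C_{\tilde p}|u-v|^{\tilde pH}$, the exponent is $\tilde pH-(H-\theta)\tilde p-2=\theta\tilde p-2$, which is integrable on $[0,L]^2$ once $\theta\tilde p>1$). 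This gives $\sup_{A\ge1}\sup_{q\in\calS}\E[|Z_\theta^A(q)|^{\tilde p}]<\infty$, and since $\tilde p$ is arbitrary (and moments are monotone), $\sup_{A\ge1}\sup_{q\in\calS}\E[|Z_\theta^A(q)|^p]<\infty$ for all $p\ge1$. The statement that trajectories lie in $\hat{\mathcal C}^{H-\theta}_\infty(0,L)$ then follows from $W_H$'s representation \eqref{FBM}, the bound $|m|(\calS)\le C_m$, and summing over the Sobolev index $k$ in the defining metric; the $\calH_k$-norms only involve multiplication by polynomial weights in $\kappa$, which is harmless.

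Finally, for the Weyl-derivative functional $\Lambda_\alpha(B_H^A(q))$ with $\alpha\in(1-H,1/2)$, I would estimate $D^{1-\alpha}_{t^-}[B_H^A]_{t^-}(s,q)$ directly from its definition: it is a sum of a term $[B_H^A(s,q)-B_H^A(t,q)](t-s)^{-(1-\alpha)}$ and an integral $\int_s^t \big(B_H^A(s,q)-B_H^A(u,q)\big)(u-s)^{-(2-\alpha)}\,du$. Bounding each increment by $Z_\theta^A(q)$ times the appropriate power of the time gap, with $\theta$ chosen small enough that $H-\theta>1-\alpha$ (possible since $\alpha>1-H$), both pieces are bounded by $C\,L^{H-\theta-1+\alpha}\,Z_\theta^A(q)$; hence $\Lambda_\alpha(B_H^A(q))\le C\,Z_\theta^A(q)$ pointwise. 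Raising to the $p$-th power and taking expectations, the uniform bound on $\E[|Z_\theta^A(q)|^p]$ established above gives $\sup_{A\ge1}\sup_{q\in\calS}\E[\Lambda_\alpha^p(B_H^A(q))]<\infty$. The only point requiring care — and the one I would regard as the main technical obstacle — is establishing the increment variance bound $\E[|B_H^A(t,q)-B_H^A(s,q)|^2]\le C|t-s|^{2H}$ uniformly in $A$: one must verify that truncating the spectral integral to $(-A,A)$ does not worsen the constant, which follows because the integrand is nonnegative, so the truncated integral is dominated by the full one $\int_{\Rm}|e^{ir}-1|^2|r|^{-2H}dr<\infty$; everything downstream is then a routine application of GRR and Gaussian hypercontractivity.
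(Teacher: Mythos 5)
Your proposal is correct and follows essentially the same route as the paper: a uniform-in-$A$ second-moment bound on the increments of the truncated spectral integral, upgraded to all moments by Gaussianity, fed into the Garsia--Rademich--Rumsey inequality, and then a pointwise domination of $\Lambda_\alpha(B_H^A(q))$ by the H\"older constant $Z^A_{\tilde\theta}(q)$ with $\tilde\theta$ chosen so that $H-\tilde\theta>1-\alpha$. One small slip: the spectral density of the increment is $|e^{irt}-e^{irs}|^2/|r|^{2H+1}$ (squaring the factor $1/(ir|r|^{H-1/2})$ gives $|r|^{-(2H+1)}$, not $|r|^{-2H}$), and with your exponent the change of variables would produce $|t-s|^{2H-1}$ rather than the $|t-s|^{2H}$ you correctly state as the conclusion.
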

 The second lemma we need consists in adapting \cite[Proposition 4.1]{nualart} to our context, and provides us with estimates on $\calI^A(\psi)$.
\begin{lem}\label{techXnA}
Let $A\in [1,+\infty]$, $k\geq 0$, $\alpha\in(1-H,1/2)$, and $\psi\in  \hat{W}^{\alpha}_{k+2}(0,L)$. We have the following relations:
\begin{enumerate}
\item For all $0\leq s <t\leq L$,
\[\begin{split}
\| &\calI^A(\psi)(t)-\calI^A(\psi)(s)\|_{\calH_k}\leq K_{1,\alpha,k}\int_\calS \vert m\vert (dq) \Lambda_{\alpha}(B_H^A(q)) \\
&\times  \int_s^t dr \Big[ r_\calS(1+r_\calS)\|\psi(r) \|_{\calH_{k+2}}+\frac{\|\psi(r) \|_{\calH_k}}{(r-s)^\alpha}+\int_s^r dy \frac{\|\psi(r)-\psi(y) \|_{\calH_k}}{(r-y)^{\alpha+1}}\Big].
\end{split}\]
\item For all $t\in[0,L]$,
 \[\begin{split}
\|\calI^A(\psi)&(t) \|_{\calH_k} + \int_0^t\frac{\|\calI^A(\psi)(t)-\calI^A(\psi)(s)\|_{\calH_k}}{(t-s)^{\alpha+1}}ds  \leq K_{2,\alpha,k} \int_\calS \vert m\vert(dq) \Lambda_{\alpha}(B_H^A(q)) \\
&\times \int_0^t dr ((t-r)^{-2\alpha}+r^{-\alpha}) \Big[ \|\psi(r) \|_{\calH_{k+2}}+\|\psi(r) \|_{\calH_k} + \int_0^r dy \frac{\|\psi(r)-\psi(y)\|_{\calH_k}}{(r-y)^{\alpha+1}}\Big].
\end{split}\]
\item The following estimate holds:
\[
\| \calI^A(\psi)\|_{1-\alpha,\mathcal{C},\mathcal{H}_k} \leq K_{3,\alpha,k} \int_\calS \vert m\vert (dq) \Lambda_{\alpha}(B_H^A(q))\big[\|\psi \|_{\alpha,k}+\sup_{r\in[0,L]}\|\psi(r) \|_{\mathcal{H}_{k+2}}\big].
\]
\end{enumerate}
Here, $(K_{j,\alpha,k})_{j\in\{1,2,3\}}$ are non random positive constants.
\end{lem}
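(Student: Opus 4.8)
The plan is to adapt the fractional-integral estimates of \cite[Proposition 4.1]{nualart} to the present setting. The new features to handle are the oscillatory phase $e^{-i(\vert\kappa-q\vert^2-\vert\kappa\vert^2)u/(2k_\omega)}$, the convolution in the transverse variable against the random measure $m(dq)$, and the fact that one works in the scale of spaces $(\calH_k)_k$. All bounds below are understood pathwise (for a.e.\ realization of $m$ and of $B_H^A$), which includes the case $A=+\infty$ since $\Lambda_\alpha(B_H(q))<+\infty$ a.s.\ by Lemma \ref{techBA}. The single analytic ingredient is the pathwise estimate for the generalized Stieltjes integral recalled in Section \ref{intsto}: for $0\le s<t\le L$ and admissible $f,g$,
\[
\Big\vert \int_s^t f\,dg \Big\vert \le C_\alpha\, \Lambda_\alpha(g) \int_s^t \Big( \frac{\vert f(r)\vert}{(r-s)^\alpha} + \int_s^r \frac{\vert f(r)-f(y)\vert}{(r-y)^{\alpha+1}}\,dy \Big)dr ,
\]
where the $\Lambda_\alpha$ of a subinterval is dominated by that of $[0,L]$.

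First I would prove statement (1). Since the stochastic integral satisfies $\int_0^t-\int_0^s=\int_s^t$, for fixed $\kappa$ one writes $\calI^A(\psi)(t,\kappa)-\calI^A(\psi)(s,\kappa)=ik_\omega\sigma_H\int_\calS m(dq)\int_s^t f_{\kappa,q}(u)\,dB_H^A(u,q)$ with $f_{\kappa,q}(u):=e^{-ic_{\kappa,q}u}\psi(u,\kappa-q)$ and $c_{\kappa,q}:=(\vert\kappa-q\vert^2-\vert\kappa\vert^2)/(2k_\omega)$. From $\vert e^{-icr}-e^{-icy}\vert\le\vert c\vert\,\vert r-y\vert$ one gets $\vert f_{\kappa,q}(r)\vert\le\vert\psi(r,\kappa-q)\vert$ and $\vert f_{\kappa,q}(r)-f_{\kappa,q}(y)\vert\le\vert\psi(r,\kappa-q)-\psi(y,\kappa-q)\vert+\vert c_{\kappa,q}\vert\,\vert r-y\vert\,\vert\psi(y,\kappa-q)\vert$. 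Inserting these into the pathwise estimate, taking the $\calH_k$-norm in $\kappa$ (Minkowski's integral inequality to pull $\int_\calS\vert m\vert(dq)$ and the $r,y$-integrals outside), and changing variables $\kappa\mapsto\kappa+q$, one uses $\vert c_{\kappa+q,q}\vert\le\frac{r_\calS}{2\vert k_\omega\vert}(r_\calS+2\vert\kappa+q\vert)$ and $(1+\vert\kappa+q\vert^2)\le C_{r_\calS}(1+\vert\kappa\vert^2)$ (valid since $\calS\subset B(0,r_\calS)$): the extra factor $\vert c\vert$ thus costs exactly one power of $(1+\vert\kappa\vert^2)^{1/2}$, i.e.\ two Sobolev derivatives, which produces the term $r_\calS(1+r_\calS)\|\psi(r)\|_{\calH_{k+2}}$. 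A Fubini exchange replaces $\int_s^t dr\int_s^r(r-y)^{-\alpha}\vert\psi(y,\cdot)\vert\,dy$ by $\int_s^t dr\,(t-r)^{1-\alpha}\vert\psi(r,\cdot)\vert\lesssim\int_s^t dr\,\vert\psi(r,\cdot)\vert$, and collecting terms gives (1).

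For statements (2) and (3) I would reuse (1) together with elementary one-variable integral estimates. For (2), bound $\|\calI^A(\psi)(t)\|_{\calH_k}$ by the pathwise estimate with $s=0$ (giving the $r^{-\alpha}$ weight), and for $\int_0^t(t-s)^{-\alpha-1}\|\calI^A(\psi)(t)-\calI^A(\psi)(s)\|_{\calH_k}\,ds$ insert the bound from (1) and Fubini in $(s,r,y)$; the inequalities $\int_0^r(t-s)^{-\alpha-1}(r-s)^{-\alpha}ds\le C_\alpha(t-r)^{-2\alpha}$ (which uses $\alpha<1/2$) and $\int_0^y(t-s)^{-\alpha-1}ds\le C_\alpha(t-r)^{-\alpha}$ for $y<r$, combined with $(t-r)^{-\alpha}\le L^\alpha(t-r)^{-2\alpha}$ and $1\le L^{2\alpha}(t-r)^{-2\alpha}$, collapse everything into the asserted $\int_0^t((t-r)^{-2\alpha}+r^{-\alpha})[\,\cdots]\,dr$ form. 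For (3), the first term of $\|\calI^A(\psi)\|_{1-\alpha,\mathcal{C},\calH_k}$ follows from (2) after noting $\int_0^t((t-r)^{-2\alpha}+r^{-\alpha})\,dr\le C$ and bounding $\|\psi(r)\|_{\calH_k}$ and $\int_0^r(r-y)^{-\alpha-1}\|\psi(r)-\psi(y)\|_{\calH_k}dy$ by $\|\psi\|_{\alpha,k}$ and $\|\psi(r)\|_{\calH_{k+2}}$ by $\sup_r\|\psi(r)\|_{\calH_{k+2}}$; the Hölder seminorm follows from (1), integrating each term in $\int_s^t dr[\,\cdots]$ and using $\int_s^t(r-s)^{-\alpha}dr=\frac{(t-s)^{1-\alpha}}{1-\alpha}$, $\int_s^r(r-y)^{-\alpha-1}\|\psi(r)-\psi(y)\|_{\calH_k}dy\le\|\psi\|_{\alpha,k}$ and $t-s\le L^\alpha(t-s)^{1-\alpha}$ to extract the prefactor $(t-s)^{1-\alpha}$ and the bracket $\|\psi\|_{\alpha,k}+\sup_r\|\psi(r)\|_{\calH_{k+2}}$.

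The main obstacle is not any single inequality but the bookkeeping: tracking that the linear growth of $\vert c_{\kappa,q}\vert$ in $\vert\kappa\vert$ is absorbed by exactly two extra Sobolev derivatives while the $r_\calS$-dependent constants remain harmless, and carrying out the three-variable Fubini exchanges so that the precise singular weights $(t-r)^{-2\alpha}$ and $r^{-\alpha}$ emerge; everything else is routine once the pathwise integral estimate of Section \ref{intsto} is available.
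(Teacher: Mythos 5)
Your proposal is correct and follows essentially the same route as the paper: both apply the pathwise Nualart--R\u{a}\c{s}canu estimate to $u\mapsto e^{-ic_{\kappa,q}u}\psi(u,\kappa-q)$, control the phase increment by the Lipschitz bound $\vert c_{\kappa,q}\vert\le C\vert q\vert\,\vert 2\kappa-q\vert$ so that, after the change of variable $\kappa\to\kappa+q$, the linear growth in $\vert\kappa\vert$ is absorbed by two Sobolev derivatives (hence $\calH_{k+2}$), and then deduce (2) and (3) from (1) by the standard Fubini manipulations producing the weights $(t-r)^{-2\alpha}$ and $r^{-\alpha}$. The only cosmetic difference is that you evaluate $\psi$ at $y$ in the phase-difference term and restore $\psi(r)$ by a Fubini exchange, whereas the paper splits the increment so that $\psi(r)$ appears directly; this changes nothing of substance.
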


Note that there is a loss of regularity in the $q$ variable since estimates on $\calI^A(\psi)$ in $\calH_k$ require $\psi \in \calH_{k+2}$. This is due to the the lack of regularizing effects of the Schr\"odinger semigroup: regularity in time has to be exchanged for some regularity in space. The proofs of the last two lemmas are offered below.

\begin{proof}[Proof of Lemma \ref{techBA}] Let $\theta\in(0,H)$ and $p\geq1$  such that $p/\theta\in\mathbb{N}^\ast$. According to Lemma \ref{garcia} with $\tilde{\alpha}=H-\theta/2$ and $\tilde{p}=2/\theta$, we have for all $0\leq s <t\leq L$,
\[  \vert B^A_H(t,q) -B^A_H(s,q)\vert  \leq  \vert t - s  \vert^{H-\theta} \underbrace{C \Big[\int_0^L\int_0^L  \frac{ \vert B^A_H(u,q) -B^A_H(v,q)\vert ^{2/\theta}}{\vert u-v\vert^{2H/\theta}} dudv\Big]^{\theta/2}}_{:=Z^A_\theta(q)}\quad a.s.,\]
where $C$ is a deterministic constant. Moreover, 
\[
 \E\big[\vert Z^A_\theta(q)\vert ^p\big]\leq C^p L^{(p-1)\theta}\Big[\int_0^L\int_0^L \frac{\E[ \vert B^A_H(u,q) -B^A_H(v,q)\vert ^{2p/\theta}]}{\vert u-v\vert^{2Hp/\theta}} dudv\Big]^{\theta/2},
  \]
thanks to the Jensen's and H\"{o}lder's inequalities. Using that $B^A_H(u,q) -B^A_H(v,q)$ is a Gaussian random variable, we find
\[\begin{split}
 \E[ \vert B^A_H(u,q) -B^A_H(v,q)\vert ^{2p/\theta}]&\leq C_{p,\theta} \E[ \vert B^A_H(u,q) -B_H^A(v,q)\vert^2 ]^{p/\theta}\\
&\leq C_{p,\theta} \Big[  \int_{-A}^A \frac{\vert e^{ir(u-v)}-1\vert^2}{ \vert r \vert^{2H+1}}dr \Big]^{p/\theta} \hat{R}^{p/\theta}(q,q)\\
& \leq \tilde{C}_{p,\theta}  \vert u-v \vert^{2Hp/\theta}.
\end{split} \] 
This gives the first point of the lemma. Now, since 
\[\begin{split}
 \vert D^{1-\alpha}_{t-}[B^A_H]_{t^-}(s,q) \vert& \leq \frac{1}{\Gamma(\alpha)}\Big[  \frac{\vert B^A_H(t,q) -B^A_H(s,q) \vert }{(t-s)^{1-\alpha}}\\
 &+(1-\alpha)\int_s^t \frac{\vert B^A_H(y,q) - B^A_H(s,q) \vert }{(y-s)^{2-\alpha}}dy \Big],
\end{split} \]
we have, following the same lines as above, with now using a $\tilde{\theta}\in(0,H+\alpha-1)$ such that $p/\tilde{\theta}\in\mathbb{N}^\ast$,
\[\E\Big[  \Lambda^p_{\alpha}(B_H^A(q)) \Big]\leq  C_{p,\tilde{\theta},\alpha} \E[ \vert Z^A_\theta(q)\vert ^p] \leq \tilde{C}_{p,\tilde{\theta},\alpha}\hat{R}^{p/2}(q,q),\]
which concludes the proof of the lemma. 
 \end{proof}

\begin{proof}[Proof of Lemma \ref{techXnA}] According to the first item of \cite[Proposition 4.1]{nualart} and the fact that
\[\begin{split}
\int^{r}_s dy \frac{\vert 1- e^{i(\vert\kappa\vert^2-\vert \kappa-q\vert^2)(r-y)} \vert }{(r-y)^{\alpha+1}}&\leq \int^{L}_0 dy \frac{\vert 1- e^{i(\vert \kappa\vert^2-\vert \kappa-q\vert^2)y/(2k_\omega)} \vert }{y^{\alpha+1}} \\
&\leq (\vert \vert \kappa\vert^2-\vert \kappa-q\vert^2\vert)\frac{1}{2k_\omega}\int_0^L \frac{dy}{y^{\alpha}}\\
&\leq C \vert q\vert \vert 2\kappa-q\vert,
\end{split}\]
we have
\[\begin{split}
\| \calI^A(\psi)(t)-&\calI^A(\psi)(s)\|_{\calH_k}\leq \sigma_H \int_\calS \vert m\vert (dq) \Lambda_{\alpha}(B_H^A(q)) \int_s^t dr \Big[ C r_\calS \| \vert 2\cdot-q\vert\psi(r,\cdot-q) \|_{\calH_{k}}\\
&+\frac{\|\psi(r,\cdot-q) \|_{\calH_k}}{(r-s)^\alpha}+\alpha \int_s^r dy \frac{\|\psi(r,\cdot-q)-\psi(y,\cdot-q) \|_{\calH_k}}{(r-y)^{\alpha+1}}\Big],
\end{split}\]
which gives the first point of the lemma after the change of variable $\kappa\to \kappa +q$ and basic computations. The second point follows closely the proof of the first item of \cite[Proposition 4.1]{nualart}. The last point is a direct consequence of the first one. In fact, taking $s=0$ in the first relation yields
\[
\|\calI^A(\psi)(t)\|_{\mathcal{H}_k}\leq C_{1,\alpha,k}\Big[ \sup_{t\in[0,L]}\|\psi(t) \|_{\mathcal{H}_{k+2}}+ \sup_{t\in[0,L]} \Big[ \|\psi(t) \|_{\calH_{k}} +\int_0^t \frac{\| \psi(t)-\psi(s)  \|_{\calH^k}}{(t-s)^{\alpha+1}}ds\Big]\Big],
\]
and also
\[\begin{split}
\|\calI^A(\psi)&(t)-\calI^A(\psi)(s)\|_{\mathcal{H}_k}\\
&\leq C_{2,\alpha,k}(t-s)^{1-\alpha}\Big[ \sup_{t\in[0,L]}\|\psi(t) \|_{\mathcal{H}_{k+2}}+ \sup_{t\in[0,L]} \Big[ \|\psi(t) \|_{\calH^{k}} +\int_0^t \frac{\| \psi(t)-\psi(s)  \|_{\calH^k}}{(t-s)^{\alpha+1}}ds\Big]\Big].
\end{split}\]
This concludes the proof of the lemma.
\end{proof}

Now, using the previous two technical lemmas, we have the following proposition, proved in Section \ref{proofcvXA}.

\begin{prop}\label{cvXA}
Let $\theta\in (0,H-1/2)$. The family $(\calX^A_\omega)_{A\geq1}$ converges in probability as $A\to +\infty$ in $\hat{\mathcal{C}}^{H-\theta}_\infty(0,L)$, to a limit denoted by $\calX_\omega$, which is the unique pathwise solution to
\begin{equation}\label{intstoX}\begin{split}
\calX_{\omega}(z,\kappa)&= \phi^{0}_\omega(\kappa)+ik_\omega \sigma_H  \int_{\calS} m(dq)\int_0^z  e^{-i(\vert \kappa-q \vert^2-\vert \kappa\vert^2)u/(2k_\omega)}\calX_{\omega}(u,\kappa-q) dB_H(u,q).
\end{split} \end{equation}
\end{prop}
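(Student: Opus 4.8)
The plan is to combine the two technical lemmas (Lemma \ref{techBA} and Lemma \ref{techXnA}) with the contraction-mapping / iteration machinery already set up for $\calX^A_\omega$, and to pass to the limit $A\to+\infty$ by showing that $(\calX^A_\omega)_{A\geq 1}$ is a Cauchy sequence in probability in $\hat{\mathcal{C}}^{H-\theta}_\infty(0,L)$. First I would record the key structural fact established in Proposition \ref{eqXA}: each $\calX^A_\omega$ is the mild solution of \eqref{intstoXA}, written through the operator $\calI^A$, so that for $A,A'\geq 1$ the difference $\calX^A_\omega-\calX^{A'}_\omega$ satisfies a linear equation driven by $\calI^A-\calI^{A'}$, namely $\calX^A_\omega-\calX^{A'}_\omega=\calI^A(\calX^A_\omega-\calX^{A'}_\omega)+\big(\calI^A-\calI^{A'}\big)(\calX^{A'}_\omega)$, up to the deterministic semigroup factors. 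The heart of the argument is then to iterate this relation in the Duhamel/Born series just as in \eqref{defXA}–\eqref{expXA}, and to estimate each term in $\calH_k$ using item (3) of Lemma \ref{techXnA}, which bounds $\|\calI^A(\psi)\|_{1-\alpha,\mathcal{C},\mathcal{H}_k}$ by $C\,\int_\calS |m|(dq)\,\Lambda_\alpha(B_H^A(q))\,\big[\|\psi\|_{\alpha,k}+\sup_r\|\psi(r)\|_{\mathcal{H}_{k+2}}\big]$.

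The loss of two derivatives at each application of $\calI^A$ is handled exactly as in the expansions of Sections \ref{proofnoback}–\ref{proofItoSc}: after $n$ iterations one picks up a factor of order $(n r_\calS + r_{\hat f_0})^{2}$ from the spreading of supports in $\kappa$ (the measures $m$ are supported in $\calS\subset B(0,r_\calS)$ and $\hat f_0$ has compact support), together with the combinatorial gain $1/n!$ coming from integration over the simplex $\Delta_n(z)$; this is precisely the mechanism used to prove convergence of \eqref{decP} in Lemma \ref{inversion1}. Using Lemma \ref{techBA} to control all moments of $\sup_{q\in\calS}\Lambda_\alpha(B_H^A(q))$ uniformly in $A$, and \eqref{hypmomentm} to bound $|m|(\calS)\leq C_m$, one obtains, for each $k$, a bound of the form $\E\big[\|\calX^{A,n}_\omega(z)\|^2_{\mathcal{H}_k}\big]\leq C_k^{2n}(n r_\calS+r_{\hat f_0})^2/n!$, summable in $n$ and uniform in $A$; the same scheme applied to the difference of two Born series gives $\E\big[\,d_{H-\theta,\mathcal{C},\infty}(\calX^A_\omega,\calX^{A'}_\omega)\,\big]\to 0$ as $A,A'\to\infty$, since $B_H^A\to B_H$ in $\hat{\mathcal{C}}^{H-\theta}_\infty(0,L)$ and $\Lambda_\alpha(B_H^A-B_H^{A'})(q)\to 0$ in $L^p$ for every $q$. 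This yields a limit $\calX_\omega\in\hat{\mathcal{C}}^{H-\theta}_\infty(0,L)\subset\hat W^\alpha_\infty(0,L)$, and the uniform-in-$A$ estimates together with item (1) of Lemma \ref{techXnA} let me pass to the limit in \eqref{intstoXA} — the pathwise fractional integral being continuous with respect to the $\|\cdot\|_{\alpha,1}$–$\Lambda_\alpha$ topology by \cite{nualart,zahle} — to conclude that $\calX_\omega$ solves \eqref{intstoX}.

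For uniqueness I would argue directly on \eqref{intstoX}: if $\calX_\omega^{(1)},\calX_\omega^{(2)}$ are two pathwise solutions in $\hat W^\alpha_\infty(0,L)$, their difference $Y=\calX_\omega^{(1)}-\calX_\omega^{(2)}$ satisfies $Y=\calI(Y)$ with $\calI=\calI^{+\infty}$, and iterating $N$ times gives $\|Y\|_{\alpha,k}\leq \big(C\,\int_\calS|m|(dq)\,\Lambda_\alpha(B_H(q))\big)^{N}\,(N r_\calS+\cdots)^{2}/N!\;\sup_{\ell\leq 2N}\|Y\|_{\alpha,k+\ell}$ on the event where all the $\Lambda_\alpha(B_H(q))$ are finite (which has probability one by Lemma \ref{techBA}); since $Y\in\hat W^\alpha_\infty(0,L)$ the right-hand side $\to 0$ as $N\to\infty$, forcing $Y=0$ $\Pro$-a.s. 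The main obstacle, as in the rest of the paper, is the non-closedness in the $\kappa$-variable caused by the two-derivative loss in Lemma \ref{techXnA}: one must verify that the factorial gain from the simplex genuinely beats the polynomial-in-$n$ spreading of Fourier supports at every level $k$ of the metric $\hat d_{H-\theta,\mathcal{C},\infty}$, and that all these bounds are uniform in $A\geq 1$ — this is exactly where the uniform moment bound $\sup_{A\geq1}\sup_{q\in\calS}\E[\Lambda_\alpha^p(B_H^A(q))]<\infty$ from Lemma \ref{techBA} is indispensable.
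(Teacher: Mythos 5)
Your overall route coincides with the paper's: expand $\calX^A_\omega$ in its Born series, estimate the iterates with Lemma \ref{techXnA} and the uniform bounds on $\Lambda_\alpha(B^A_H(q))$ from Lemma \ref{techBA}, deduce a Cauchy property in probability, pass to the limit in the integral equation by splitting $\calI^{+\infty}(\calX_\omega-\calX^A_\omega)$ and $(\calI^{+\infty}-\calI^A)(\calX^A_\omega)$, and prove uniqueness by iteration. One bookkeeping point is off, though: the pathwise estimate of Lemma \ref{techXnA} produces the kernel $(t-r)^{-2\alpha}r^{-\alpha}$, so $n$ iterations yield a denominator $\Gamma((n+1)(1-2\alpha))$ (the sequence $\beta_n$ of \cite[Lemma 7.6]{nualart}), not $1/n!$; the $1/n!$ you quote comes from the fixed-$z$ Gaussian moment computation and does not control the H\"older seminorm in $z$. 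Since $\E\big[\big(\int_\calS|m|(dq)\Lambda_\alpha(B^A_H(q))\big)^n\big]$ grows like $n^{n/2}C^n$, taking raw expectations of the tails need not give a convergent series when $1-2\alpha<1/2$. This is why the paper restricts to the events $\{\int_\calS|m|(dq)\Lambda_\alpha(B^A_H(q))\leq M\}$, whose complements have probability $O(1/M)$ uniformly in $A$ by Lemma \ref{techBA} and Markov, and uses the deterministic bound $M^n/\Gamma((n+1)(1-2\alpha))$ there; your appeal to "the event where the $\Lambda_\alpha$ are finite" is not enough, since a.s.\ finiteness gives no uniform bound. With that modification your convergence argument is essentially the paper's Lemma \ref{cauchy2}, combined with Lemma \ref{cvB} for the term driven by $B^{A+B}_H-B^A_H$.

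The genuine gap is in your uniqueness step. Iterating $Y=\calI(Y)$ $N$ times at a fixed level $k$ leaves the factor $\sup_{\ell\leq 2N}\|Y\|_{\alpha,k+\ell}=\|Y\|_{\alpha,k+2N}$ on the right-hand side (the $\calH_k$ norms are increasing in $k$), and membership of $Y$ in $\hat W^{\alpha}_\infty(0,L)$ only guarantees that each of these norms is finite; it gives no control whatsoever on their growth in $k$. Unlike the existence estimates, where the role of the "initial data" at level $k+2n$ is played by the compactly supported $\hat f_0$ (so $\|\hat f_0\|_{\calH_{k+2n}}\leq C^n\|\hat f_0\|_{\calH_k}$), here the high-index norms are those of the unknown difference, and the product with $C^N/\Gamma((N+1)(1-2\alpha))$ need not tend to zero. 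The paper's Lemma \ref{upath} closes the system differently: it sums the one-step inequalities over all $k$, so that the index shift $k\to k+2$ is absorbed into the summed quantity $\tilde{\calJ}=\sum_k\calJ_k$, and then applies the weighted Gronwall lemma of \cite[Lemma 7.6]{nualart} to $\tilde{\calJ}$. You need this (or an equivalent closure of the non-closed hierarchy in $k$) for the uniqueness claim to hold.
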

A corollary (proved in Section \ref{proofcorocvXA}) of this convergence result is the following.  
\begin{cor}\label{corocvXA}
The process defined by $\hat{\Psi}^A_\omega(z,\kappa) = e^{-i\vert \kappa \vert^2z/(2k_\omega)}\calX^A_\omega(z,\kappa)$ converges in probability as $A\to +\infty$ in $\hat{\mathcal{C}}^{H-\theta}_\infty(0,L)$ to $\hat{\Psi}_\omega(z,\kappa)=e^{-i\vert \kappa \vert^2 z/(2k_\omega)}\calX_\omega(z,\kappa)$, which is the unique pathwise solution to
 \begin{equation}\label{ItoschroF}
  \hat{\Psi}_\omega (z,\kappa) = \hat{\Psi}_\omega (0,\kappa) -\frac{i\vert \kappa\vert^2}{2k_\omega} \int_0^z  \hat{\Psi}_\omega(u,\kappa) du  +i k_\omega \sigma_H \int_\calS m(dq) \int_0^z \hat{\Psi}_\omega(u,\kappa -q) dB_H(u,q).
  \end{equation}
\end{cor}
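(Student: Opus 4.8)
The plan is to deduce Corollary~\ref{corocvXA} from Proposition~\ref{cvXA} by observing that the $\kappa$-dependent multiplication $g(z,\kappa)\mapsto e^{-i|\kappa|^2z/(2k_\omega)}g(z,\kappa)$ is a smooth, unimodular, and invertible operation which conjugates \eqref{intstoX} and \eqref{ItoschroF} into one another.

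First I would record a continuity statement: if $g\in\hat{\mathcal{C}}^{H-\theta}_\infty(0,L)$ then $\tilde g(z,\kappa):=e^{-i|\kappa|^2z/(2k_\omega)}g(z,\kappa)$ is again in $\hat{\mathcal{C}}^{H-\theta}_\infty(0,L)$, the $\mathcal{H}_k$-level H\"older seminorm of $\tilde g$ being bounded by a constant times a fixed higher $\mathcal{H}_{k'}$-level norm of $g$. Indeed $|e^{-i|\kappa|^2z/(2k_\omega)}|=1$ leaves the supremum part of the norm untouched, while for the H\"older seminorm one uses $|e^{-i|\kappa|^2t/(2k_\omega)}-e^{-i|\kappa|^2s/(2k_\omega)}|\le C(|\kappa|^2|t-s|/(2k_\omega))^{H-\theta}$, valid because $H-\theta<1$, so that the extra polynomial weight $|\kappa|^{2(H-\theta)}$ is absorbed by passing to a higher-order Sobolev norm of $g$. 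Since $\hat d_{H-\theta,\mathcal{C},\infty}$ is a weighted sum over $k\in\mathbb{N}^\ast$, this finite shift of the Sobolev index is harmless, and $g\mapsto\tilde g$, together with its inverse $g\mapsto e^{i|\kappa|^2z/(2k_\omega)}g$, are continuous self-maps of $\hat{\mathcal{C}}^{H-\theta}_\infty(0,L)$ (and of $\hat{W}^{\alpha}_\infty(0,L)$). Applying this to the convergence in probability $\calX^A_\omega\to\calX_\omega$ in $\hat{\mathcal{C}}^{H-\theta}_\infty(0,L)$ furnished by Proposition~\ref{cvXA} gives immediately $\hat\Psi^A_\omega\to\hat\Psi_\omega$ in probability in $\hat{\mathcal{C}}^{H-\theta}_\infty(0,L)$, hence $\hat\Psi_\omega\in\hat{\mathcal{C}}^{H-\theta}_\infty(0,L)\subset\hat{W}^{\alpha}_\infty(0,L)$.

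Next I would identify the equation satisfied by $\hat\Psi_\omega$. Working $\Pro$-a.s.\ and pointwise in $\kappa$, the map $u\mapsto\calX_\omega(u,\kappa)$ is $(H-\theta)$-H\"older with $H-\theta>1/2$ and solves \eqref{intstoX}, whose differential form is purely of $dB_H$-type, with no drift term. Setting $g(u):=e^{-i|\kappa|^2u/(2k_\omega)}$, which is $\mathcal{C}^\infty$ hence H\"older of any order $\le1$, and noting $1+(H-\theta)>1$, the integration-by-parts rule for the Z\"ahle integral \cite{zahle} applies to $\hat\Psi_\omega(z,\kappa)=g(z)\calX_\omega(z,\kappa)$ and yields $\hat\Psi_\omega(z,\kappa)-\hat\Psi_\omega(0,\kappa)=\int_0^z\calX_\omega(u,\kappa)\,g'(u)\,du+\int_0^z g(u)\,d\calX_\omega(u,\kappa)$. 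Since $g'(u)=-\frac{i|\kappa|^2}{2k_\omega}g(u)$, the first integral equals $-\frac{i|\kappa|^2}{2k_\omega}\int_0^z\hat\Psi_\omega(u,\kappa)\,du$; by the composition rule $\int g\,d(\int h\,dB_H)=\int gh\,dB_H$ (valid because the relevant integrand is a product of a smooth function and an $(H-\theta)$-H\"older one) together with the identity $g(u)\,e^{-i(|\kappa-q|^2-|\kappa|^2)u/(2k_\omega)}=e^{-i|\kappa-q|^2u/(2k_\omega)}$, the second integral equals $ik_\omega\sigma_H\int_\calS m(dq)\int_0^z\hat\Psi_\omega(u,\kappa-q)\,dB_H(u,q)$. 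In view of $\hat\Psi_\omega(0,\kappa)=\calX_\omega(0,\kappa)=\phi^0_\omega(\kappa)$, this is exactly \eqref{ItoschroF}. Uniqueness for \eqref{ItoschroF} is then transferred from that for \eqref{intstoX} in Proposition~\ref{cvXA}: any solution $\hat\Psi\in\hat{W}^{\alpha}_\infty(0,L)$ of \eqref{ItoschroF} gives, through multiplication by $e^{i|\kappa|^2z/(2k_\omega)}$, a function $\calX:=e^{i|\kappa|^2z/(2k_\omega)}\hat\Psi\in\hat{W}^{\alpha}_\infty(0,L)$ that solves \eqref{intstoX} by the same computation read in the opposite direction, hence $\calX$, and therefore $\hat\Psi$, is unique.

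The two points requiring care — neither a genuine obstacle — are the bookkeeping of the (finite, hence irrelevant) Sobolev-index loss under the $\kappa$-growing phase multiplication, and the justification of the Z\"ahle integration-by-parts and composition formulas in the present $\mathcal{H}_k$-valued framework, which both rest on the H\"older regularity $H-\theta>1/2$ of $\calX_\omega$ and on the results of \cite{zahle}. If one prefers to bypass the pathwise formulas, an equivalent self-contained route is to first verify the regularized identity for $\hat\Psi^A_\omega=e^{-i|\kappa|^2z/(2k_\omega)}\calX^A_\omega$ using Proposition~\ref{eqXA} — where $B^A_H$ has smooth trajectories, so the product rule is elementary and $b^A_H\,du=dB^A_H$ by \cite[Theorem~2.4]{zahle} — and then let $A\to+\infty$, passing to the limit in the stochastic term via the bound $|\int f\,dg|\le\|f\|_{\alpha,1}\Lambda_\alpha(g)$, Lemma~\ref{techBA}, and the spectral-tail estimate $\sup_{q\in\calS}\Lambda_\alpha\big((B^A_H-B_H)(q)\big)\to0$, itself a consequence of $\int_{|r|>A}|e^{ir(u-v)}-1|^2|r|^{-2H-1}\,dr\to0$ combined with the Garsia--Rademich--Rumsey argument of Lemma~\ref{techBA}.
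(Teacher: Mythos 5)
Your proposal is correct, and your primary argument takes a genuinely different route from the paper's. The paper never applies Z\"ahle's integration-by-parts or composition formulas to the limiting ($A=+\infty$) fractional integral: instead it differentiates the regularized process, writing $\partial_z \hat{\Psi}^A_\omega = -\frac{i|\kappa|^2}{2k_\omega}\hat{\Psi}^A_\omega + e^{-i|\kappa|^2 z/(2k_\omega)}\partial_z\calX^A_\omega$ via the elementary product rule (legitimate because $B^A_H$ is smooth and $\calX^A_\omega\in\hat{\calC}^\infty_\infty(0,L)$ by Proposition \ref{eqXA}), obtains the identity \eqref{ItoschroF} with $B_H$ replaced by $B^A_H$, and then passes to the limit $A\to+\infty$ in the stochastic term by splitting $\calK^A(\hat{\Psi}^A_\omega)-\calK^{+\infty}(\hat{\Psi}_\omega)$ into $(\calK^A-\calK^{+\infty})(\hat{\Psi}_\omega)$ plus $\calK^A(\hat{\Psi}^A_\omega-\hat{\Psi}_\omega)$, controlled respectively by $\Lambda_\alpha$ of the spectral tail $B_H-B^A_H$ (Lemma \ref{cvB}) and by the convergence from Proposition \ref{cvXA}; uniqueness is transferred exactly as you do, through the one-to-one phase correspondence. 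This is precisely the ``equivalent self-contained route'' you sketch in your last paragraph, so that part of your proposal reproduces the paper's proof. Your primary route — applying the generalized Stieltjes integration-by-parts to $g(z)\calX_\omega(z,\kappa)$ with $g(z)=e^{-i|\kappa|^2 z/(2k_\omega)}$ and the associativity rule $\int g\,d(\int h\,dB_H)=\int gh\,dB_H$ directly at $A=+\infty$ — is shorter and avoids redoing the $A$-limit, but it leans on two pathwise identities for the Z\"ahle integral that the paper deliberately never invokes (its strategy throughout is to do all computations on the regularized process, where every integral is a Lebesgue integral); if you follow that route you should cite or verify those two formulas explicitly, since the H\"older exponents ($H-\theta>1/2$ for $\calX_\omega$, Lipschitz for $g$, sum $>1$) do make them applicable here. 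Your preliminary observation that the unimodular phase multiplication is continuous on $\hat{\calC}^{H-\theta}_\infty(0,L)$ up to a finite Sobolev-index shift is the content of the paper's unargued ``it is clear,'' and your justification of it is sound.
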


It remains to address the last point of Proposition \ref{ItoSc}, and to show that we can Fourier transform $\hat{\Psi}_\omega$ to recover the fractional Schr\"odinger equation of Definition 1.1. It is just a matter of switching order of integration. The proof is given in section \ref{proofcorocvXA} for the sake of completeness. 

\begin{prop}\label{correspondance}
The Fourier transform realize a one-to-one correspondence between the solution of \eqref{ItoschroF} and the ones of \eqref{ItoSch0}. 
\end{prop}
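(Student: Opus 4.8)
The plan is to show that the partial Fourier transform $\mathcal{F}$ in the $x$-variable intertwines \eqref{ItoSch0} (in the integrated form of Definition~1.1) with \eqref{ItoschroF}, and that it is a bijection between the corresponding solution classes; together with Corollary~\ref{corocvXA} this immediately yields the one-to-one correspondence, and hence also existence, uniqueness, the announced regularity, and the conservation \eqref{consITO} for \eqref{ItoSch0}. First I would record that, up to a multiplicative constant, $\mathcal{F}$ is an isometry from $H^k(\Rd)$ onto $\calH_k$; hence it maps $W^{\alpha}(0,L,H^k(\Rd))$ onto $\hat W^{\alpha}_k(0,L)$ and $\mathcal{C}^{H-\theta}([0,L],H^k(\Rd))$ onto $\mathcal{C}^{H-\theta}([0,L],\calH_k)$, and is therefore a homeomorphism $W^{\alpha}_\infty(0,L)\to\hat W^{\alpha}_\infty(0,L)$ and $\mathcal{C}^{H-\theta}_\infty(0,L)\to\hat{\mathcal{C}}^{H-\theta}_\infty(0,L)$. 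Thus the notion of pathwise solution, the regularity class, and the initial data are transported consistently, and it only remains to check that $\mathcal{F}$ sends each term of the identity of Definition~1.1 to the corresponding term of \eqref{ItoschroF}.

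The first two terms are immediate: $\mathcal{F}[\Psi_\omega(0,\cdot)]=\hat\Psi_\omega(0,\cdot)$, and since $u\mapsto\Psi_\omega(u,\cdot)$ is continuous into $H^k(\Rd)$ for every $k$, $\mathcal{F}$ commutes with the Bochner integral $\int_0^z\cdot\,du$ and with $\Delta_x$, giving $\mathcal{F}\big[\frac{i}{2k_\omega}\int_0^z\Delta_x\Psi_\omega(u,\cdot)\,du\big](\kappa)=-\frac{i|\kappa|^2}{2k_\omega}\int_0^z\hat\Psi_\omega(u,\kappa)\,du$. The substantive point is the stochastic term. Using the explicit representation $W_H(u,x)=\sigma_H\int_\calS m(dq)\,e^{-iq\cdot x}B_H(u,q)$ from \eqref{FBM} and the fact that the Z\"ahle integral \eqref{defintsto} is a Lebesgue integral of Weyl derivatives (hence linear under any bounded operation commuting with Lebesgue integration), I would first exchange $\int_\calS m(dq)$ with the fractional integral to obtain, $\Pro$-a.s.\ and a.e.\ in $x$,
\[
\int_0^z \Psi_\omega(u,x)\,dW_H(u,x)=\sigma_H\int_\calS m(dq)\,e^{-iq\cdot x}\int_0^z \Psi_\omega(u,x)\,dB_H(u,q),
\]
the exchange being legitimate since $|m|(\calS)\le C_m$ a.s.\ and $\int_\calS\E[|m|](dq)\,\E[\Lambda_\alpha(B_H(q))]\le C_m\sup_q\E[\Lambda_\alpha(B_H(q))]<\infty$ by Lemma~\ref{techBA}, together with the bound $|\int_0^z f\,dg|\le\|f\|_{\alpha,1}\Lambda_\alpha(g)$. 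Next, viewing for fixed $q$ the map $u\mapsto\Psi_\omega(u,\cdot)$ as $H^k(\Rd)$-valued and $u\mapsto B_H(u,q)$ as scalar, $\mathcal{F}$ commutes with the resulting $H^k(\Rd)$-valued fractional integral (since $\mathcal{F}\colon H^k(\Rd)\to\calH_k$ is bounded and $D^{\alpha}_{0^+}$ acts only in $u$), while the modulation $e^{-iq\cdot x}$ turns into the shift $\kappa\mapsto\kappa-q$; one further Fubini exchange of $\int_\calS m(dq)$ and $\mathcal{F}$ then produces exactly the stochastic term $ik_\omega\sigma_H\int_\calS m(dq)\int_0^z\hat\Psi_\omega(u,\kappa-q)\,dB_H(u,q)$ of \eqref{ItoschroF}.

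Running the same computation with $\mathcal{F}^{-1}$ shows that every solution of \eqref{ItoschroF} in $\hat{\mathcal{C}}^{H-\theta}_\infty(0,L)$ is the Fourier transform of a solution of \eqref{ItoSch0} in $\mathcal{C}^{H-\theta}_\infty(0,L)$, and conversely; since both transforms are bijective this gives the claimed one-to-one correspondence, the uniqueness of Corollary~\ref{corocvXA} transfers to \eqref{ItoSch0}, and \eqref{consITO} is the Plancherel image of \eqref{conservA}. The main obstacle is purely technical bookkeeping: making the several Fubini/commutation steps rigorous for the pathwise fractional integral, in particular dealing with the fact that the stochastic integrals are a priori defined only a.e.\ in $x$ (resp.\ pointwise in $\kappa$ and $q$). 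This is most cleanly handled by carrying out the whole argument with the $H^k(\Rd)$- and $\calH_k$-valued versions of the Z\"ahle integral, for which the relevant operators --- $\mathcal{F}$, multiplication by $e^{-iq\cdot x}$, and integration against $m(dq)$ --- are bounded and therefore commute with \eqref{defintsto}, the required uniform integrability being supplied by the $L^2(\Rd)$-valued estimate on the stochastic integral recalled in Section~\ref{IntSto} and by Lemmas~\ref{techBA} and~\ref{techXnA}.
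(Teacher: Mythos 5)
Your argument is correct, but it takes a genuinely different route from the paper's. The paper does not commute the Fourier transform directly with the pathwise Z\"ahle integral at $A=+\infty$: it first replaces $W_H$ by the regularized field $W_H^A$ (equivalently $B_H$ by $B_H^A$), whose trajectories are $\calC^\infty$ in $u$, so that by \cite[Theorem 2.4]{zahle} the stochastic term becomes an ordinary Lebesgue integral $ik_\omega\sigma_H\int_\calS m(dq)\int_0^z\Psi_\omega(u,x)e^{-iq\cdot x}b_H^A(u,q)\,du$; the exchange with $\mathcal{F}$ is then elementary Fubini plus Fourier--Plancherel, and the proof is completed by letting $A\to+\infty$ on both sides, the error being controlled through $\sup_{x}\Lambda_\alpha(W_H(x)-W_H^A(x))$, the a.s.\ bound $|m|(\calS)\le C_m$, and Lemma \ref{cvB}. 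You instead work directly with $W_H$, commuting $\mathcal{F}$, the modulation $e^{-iq\cdot x}$ and $\int_\calS m(dq)$ with the Weyl-derivative representation \eqref{defintsto}. This is legitimate --- the operators involved are bounded, and the Fubini exchanges through the singular kernels defining $D^{1-\alpha}_{L^-}$ are covered by \eqref{hypmomentm} and Lemma \ref{techBA} --- but the ``technical bookkeeping'' you defer (Banach-valued Z\"ahle integrals, identification of the a.e.-in-$x$ integral with the $H^k$-valued one, Fubini across the singular integrals) is precisely what the regularization is designed to bypass: for $A<\infty$ there is nothing singular to exchange. So your route is shorter and avoids the extra limit in $A$, at the price of carrying out those interchanges rigorously, while the paper's route keeps every interchange at the level of absolutely convergent Lebesgue integrals at the price of re-invoking the approximation machinery of Proposition \ref{cvXA} and Lemma \ref{cvB}. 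One small slip at the end: the conservation \eqref{consITO} for $\Psi_\omega$ is the Plancherel image of \eqref{conservX}, i.e.\ of the $A\to+\infty$ limit of \eqref{conservA}, not of \eqref{conservA} itself.
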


Finally, let us also remark that according to propositions  \ref{eqXA} and \ref{cvXA}, we have, for all $\eta>0$
\[\begin{split}
\Pro\Big(  \sup_{z\in[0,L]} \Big\vert\|\calX_\omega(z)\|_{L^2(\Rd)}&-\frac{1}{2}\|\hat{f}_0(\omega,\cdot)\|_{L^2(\Rd)} \Big\vert >\eta \Big)\\
&=\lim_{A\to+\infty} \Pro\Big( \sup_{z\in[0,L]} \Big\vert \|\calX^A_\omega(z)\|_{L^2(\Rd)}-\frac{1}{2}\|\hat{f}_0(\omega,\cdot)\|_{L^2(\Rd)} \Big\vert >\eta \Big)=0,
\end{split}\]
which yields the conservation relation for $\calX_\omega$, and at the same time for $\hat{\Psi}_\omega$ and $\Psi_\omega$.

\subsubsection{Proof of Proposition \ref{eqXA}}\label{proofeqXA}

The first step consists in studying the regularity of each term $\calX^{n,A}_\omega$ in the series \eqref{defXA}. This is straightforward: since $\phi^0_\omega \in \calH_k$, for all $k \in \mathbb{N}$, recasting then $\calX_\omega^{A,n}$ in terms of $b^A_H$ as
\be \label{expXA2}\begin{split}
 \calX^{A,n}_{\omega}(z,\kappa)=(i k_\omega)^n& \int_{\Delta_n(z)}d\mathbf{u}^{(n)}  \int_{\calS^n}\mathbf{m}(d\mathbf{q}^{(n)})e^{iG_n(\mathbf{u}^{(n)},\mathbf{q}^{(n)})} \prod_{m=1}^n b_H^{A}(u_m,q_m) \phi^0_\omega(Q_n),
\end{split}\ee
it is a direct consequence of \fref{estba}, the Jensen inequality and the Fubini theorem that $\calX_\omega^{A,n} \in \calC^1([0,L],\calH_k)$. We also obtain the recursive formula below by permuting order of integration, 
\begin{equation}\label{intstoXAn}
\calX^{n,A}_{\omega}(z,\kappa)=ik_\omega \sigma_H  \int_{\calS} m(dq) \int_0^z due^{-i (\vert \kappa-q \vert^2-\vert \kappa \vert^2)u/(2k_\omega)} \calX^{n-1,A}_{\omega}(u,\kappa-q)dB_H^A(u,q).
\end{equation}
We show now that for all $A\geq 1$
\begin{equation}\label{XNA}
\calX^{A}_{\omega,N}:=\sum_{n=0}^N \calX^{n,A}_\omega
\end{equation}
is a Cauchy sequence (with respect to $N$) in probability in $\calC^0([0,L],\calH_k)$ for all $k\geq 1$. We will use the following result (see \cite[Theorem 3.9 pp. 104]{cinlar}).

\begin{thm}
Let $(E,d)$ be a complete metric space. A sequence $(X_n)_{n\geq 0}$ of $E$-valued random variable converges in probability if and only if  
\[ \forall \eta>0\text{ and }\nu>0,\quad \exists n_0>0\quad\text{s.t.}\quad \forall n,m \geq n_0,\qquad \Pro(d(X_n,X_m) > \eta )\leq \nu,\]
that is $(X_n)_{n\geq 0}$ is a Cauchy sequence in probability. 
\end{thm}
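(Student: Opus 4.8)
The plan is to prove the two implications separately; the forward direction is routine, while the converse is where completeness of $(E,d)$ does the essential work.

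For the easy direction, suppose $X_n$ converges in probability to a limit $X$. I would fix $\eta>0$ and use the triangle inequality together with the inclusion
\[
\{ d(X_n, X_m) > \eta \} \subset \{ d(X_n, X) > \eta/2 \} \cup \{ d(X_m, X) > \eta/2 \},
\]
so that $\Pro(d(X_n,X_m) > \eta) \leq \Pro(d(X_n,X) > \eta/2) + \Pro(d(X_m,X) > \eta/2)$. Given $\nu>0$, both terms on the right are at most $\nu/2$ once $n,m$ are large, which is exactly the Cauchy-in-probability property.

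For the converse the plan is the classical subsequence-and-completeness argument. First I would invoke the hypothesis with $\eta=\nu=2^{-k}$ to extract a strictly increasing sequence $(n_k)$ with $\Pro(d(X_{n_{k+1}},X_{n_k}) > 2^{-k}) \leq 2^{-k}$ for every $k$. Since $\sum_k 2^{-k}<\infty$, Borel--Cantelli gives that almost surely $d(X_{n_{k+1}},X_{n_k}) \leq 2^{-k}$ for all large $k$, hence $\sum_k d(X_{n_{k+1}},X_{n_k})<\infty$ almost surely, which forces $(X_{n_k}(\omega))_k$ to be a Cauchy sequence in $(E,d)$ for almost every $\omega$. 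Here completeness enters: the limit $X(\omega):=\lim_k X_{n_k}(\omega)$ exists almost surely, and I would set $X$ equal to a fixed point of $E$ on the remaining null set. The map $X$ is measurable as an almost-sure pointwise limit of the measurable maps $X_{n_k}$, which is immediate when $E$ is separable, as is the case in our applications.

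It then remains to upgrade convergence of the subsequence to convergence of the whole sequence. For fixed $\eta>0$ I would split
\[
\Pro(d(X_n, X) > \eta) \leq \Pro(d(X_n, X_{n_k}) > \eta/2) + \Pro(d(X_{n_k}, X) > \eta/2).
\]
The second term tends to $0$ as $k\to\infty$ because $X_{n_k}\to X$ almost surely, hence in probability; for the first term, given $\nu>0$ the Cauchy hypothesis supplies $N$ with $\Pro(d(X_n,X_m) > \eta/2) \leq \nu$ whenever $n,m\geq N$, so taking $n\geq N$ and $k$ large enough that $n_k\geq N$ bounds it by $\nu$. Letting $k\to\infty$ yields $\limsup_n \Pro(d(X_n,X)>\eta)\leq\nu$, and since $\nu$ is arbitrary the full sequence converges in probability to $X$. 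The only nontrivial point — and thus the main obstacle — is this converse, specifically the passage through an almost surely convergent subsequence whose limit exists precisely because $(E,d)$ is complete; the rest is bookkeeping with triangle inequalities and Borel--Cantelli.
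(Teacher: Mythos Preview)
Your proof is correct and follows the standard textbook argument. The paper, however, does not prove this theorem at all: it simply quotes it as a known result from \c{C}{\i}nlar's book (Theorem 3.9, p.~104), so there is no ``paper's own proof'' to compare against. Your write-up supplies exactly the classical argument one would expect---triangle inequality for the forward direction, Borel--Cantelli plus completeness to produce an almost surely convergent subsequence for the converse, then a final triangle-inequality splitting to upgrade to the full sequence. The only caveat you already flag yourself: measurability of the a.s.\ limit is unproblematic when $E$ is separable (or more generally Polish), which covers the spaces used in the paper.
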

Let us first remark that it is enough to work on the event 
\[E=\left(\int_\calS \vert m\vert (dq) \left(\int_0^L du |b_H^{A,2}(u,q)| +|b_H^{A,1}(0,q)|\right) \leq M\right),\]
where $M>0$ is arbitrary.  Indeed, using the Markov and Cauchy-Schwarz inequalities,
\[
 \Pro\Big(\int_0^L du \int_\calS \vert m\vert (dq) |b_H^{A,2}(u,q)|> M \Big)\leq \frac{C}{M} \left(\E\left[\int_0^L du \int_\calS \vert m\vert (dq) |b_H^{A,2}(u,q)|^2 \right]\right)^{1/2}\leq \frac{C}{M},
\]
according to \fref{estba}, and a similar estimate holds for the term involving $b_H^{A,1}(0,q)$ after an easy adaptation of $\fref{estba}$. Hence, on the event $E$, we have $|b_H^{A,1}(u,q)| \leq M$, for all $u \in [0,L]$. It then follows from \fref{expXA} and the latter bound, that
\[\|\calX^{n,A}_\omega\|_{\calC^0([0,L],\calH_k)}\leq (1+n |\calS|)^k \| \hat{f}_0(\omega,\cdot) \|_{\calH_k} \frac{(CM)^n}{n!}.
\]
The division by $n!$ comes from the integration over $\Delta_n(z)$ as in \fref{fact}. This yields the desired Cauchy property of the partial sum  $(\calX^{A}_{\omega,N})_N$. Now, using
\[ \| \calI^A(\calX^{A}_\omega-\calX^{A}_{\omega,N})\|_{\calC^0([0,L],\calH_k)} \leq C \int_0^L du\int_\calS \vert m\vert (dq) |b_H^A(u,q)| \|\calX^{A}_\omega-\calX^{A}_{\omega,N} \|_{\calC^0([0,L],\calH_k)},
\]
it suffices to pass to the limit $N\to+\infty$ in \fref{intstoXAn} to obtain \eqref{intstoXA}. The fact that $\partial_z^p \calX^{A}_\omega \in \calC^0([0,L],\calH_k)$ for all $p \geq 0$ is a consequence of \fref{estba} and of \eqref{intstoXA}. It remains the obtain the conservation of the $L^2$ norm. Since we just proved that $\calX^{A}_\omega \in \calC^1([0,L],\calH_k)$, we can write
\be \label{dzX}\partial_z \calX^{A}_{\omega}(z,\kappa)=ik_\omega \sigma_H \int_{\calS} m(dq) e^{-i(\vert\kappa- q\vert^2-\vert \kappa \vert^2)z/(2k_\omega)}b^A_H(z,q)\calX^{A}_{\omega}(z,\kappa-q),\ee
so that
\[\begin{split}
\big< \partial_z \calX^A_{\omega}(z) ,\calX^A_{\omega}(z) \big>_{L^2(\Rd)}&=ik_\omega\sigma_H  \iint d\kappa m(dq) e^{-i(\vert \kappa- q\vert^2-\vert \kappa \vert^2)z/(2k_\omega)}b^A_H(z,q) \\
&\qquad \times \calX^{A}_{\omega}(z,\kappa-q)\overline{\calX^{A}_{\omega}(z,\kappa)} \in i\mathbb{R},
\end{split}\]
since $\calS$, $m$ and $w(dr,\cdot)$ are assumed to be symmetric. As a result, we obtain
\[\frac{d}{dz}\|\calX^A_{\omega}(z)\|^2_{L^2(\Rd)}= 2Re\Big(\big< \frac{d}{dz}\calX^A_{\omega}(z) ,\calX^A_{\omega}(z) \big>_{L^2(\Rd)}\Big)=0,\]
which concludes the proof of Proposition \ref{eqXA}.

\subsubsection{Proof of Proposition \ref{cvXA}} \label{proofcvXA}

The first step of the proof starts with the following lemma, proved further in this section.
\begin{lem}\label{cauchy2}
The family  $(\calX^{A}_\omega)_{A\geq 1}$ is Cauchy in probability on the complete metric space $(\hat{W}^{\alpha}_{\infty}(0,L),\hat{d}_{\alpha,\infty})$.
\end{lem}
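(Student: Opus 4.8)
The plan is to show that $(\calX^A_\omega)_{A\geq 1}$ is Cauchy in probability in $(\hat W^\alpha_\infty(0,L),\hat d_{\alpha,\infty})$ by estimating, for $1\leq A\leq A'$, the difference $\calX^{A'}_\omega-\calX^A_\omega$ in each $\hat W^\alpha_k(0,L)$ norm, and then combining these estimates through the definition of $\hat d_{\alpha,\infty}$ as a weighted sum over $k$. The starting point is the mild formulation \eqref{intstoXA}, which gives, after subtracting the two equations,
\[
\calX^{A'}_\omega(z,\kappa)-\calX^A_\omega(z,\kappa)=\calI^{A'}(\calX^{A'}_\omega-\calX^A_\omega)(z,\kappa)+\big[\calI^{A'}(\calX^A_\omega)-\calI^A(\calX^A_\omega)\big](z,\kappa).
\]
The second term is the ``source'' of the discrepancy and involves only the fixed regularized processes $B^{A'}_H-B^A_H$; the first term is the one we will absorb by a fixed-point / Gronwall-type argument. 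First I would condition on a large-probability event $E_M$ on which the random quantities $\int_\calS |m|(dq)\Lambda_\alpha(B^{A}_H(q))$ and $\int_\calS|m|(dq)\Lambda_\alpha(B^{A'}_H(q))$ are bounded by $M$; Lemma \ref{techBA} guarantees (via Markov) that $\Pro(E_M^c)$ is small uniformly in $A,A'$, so it suffices to work on $E_M$.

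On $E_M$, I would apply the estimates of Lemma \ref{techXnA} (items 1 and 2) to the operator $\calI^{A'}$ acting on $\calX^{A'}_\omega-\calX^A_\omega$. This produces a bound of the form
\[
\|\calX^{A'}_\omega-\calX^A_\omega\|_{\alpha,k}\leq C_{M,k}\int_0^L \big((L-r)^{-2\alpha}+r^{-\alpha}\big)\,\Phi_{k+2}(r)\,dr+\text{(source term in }\calH_k\text{)},
\]
where $\Phi_{k+2}(r)$ collects the pointwise norm and the fractional-increment integral of $\calX^{A'}_\omega-\calX^A_\omega$ in $\calH_{k+2}$. Because of the ``loss of regularity'' noted after Lemma \ref{techXnA} ($\calH_k$ controlled by $\calH_{k+2}$), this does not close directly at a fixed $k$; the way around it, exactly as in the existence proof of Proposition \ref{eqXA}, is to iterate the Duhamel expansion: write $\calX^{A'}_\omega-\calX^A_\omega=\sum_{n\geq 1}\big(\calX^{n,A'}_\omega-\calX^{n,A}_\omega\big)$ using \eqref{intstoXAn}, bound each term on $E_M$ by the $n$-fold composition of the linear estimate, and use the $1/n!$ gain from integration over the simplex $\Delta_n(z)$ (cf. \eqref{fact}) together with the polynomial growth factor $(1+n|\calS|)^k$ to get a convergent series. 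The single source term $\calI^{A'}(\phi^0_\omega)-\calI^A(\phi^0_\omega)$, and more generally the terms where exactly one $B^{A'}_H$ is replaced by $B^{A'}_H-B^A_H$, carry a factor
\[
\int_\calS|m|(dq)\,\Lambda_\alpha\big((B^{A'}_H-B^A_H)(q)\big),
\]
whose $p$-th moment is estimated, via the Garsia–Rademich–Rumsey argument of Lemma \ref{techBA} applied to $B^{A'}_H-B^A_H$, by a quantity of the form $\big(\int_{A<|r|<A'}|r|^{-2H-1+\text{(something)}}\,dr\big)^{c}$ that tends to $0$ as $A,A'\to\infty$. This gives $\E\big[\|\calX^{A'}_\omega-\calX^A_\omega\|_{\alpha,k}\mathbf 1_{E_M}\big]\to 0$ uniformly-in-$(A,A')$-jointly-large, hence convergence to $0$ in probability on $E_M$, hence for the full probability space by letting $M\to\infty$.

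Finally, to pass from the family of $k$-indexed estimates to the metric $\hat d_{\alpha,\infty}$, I would note that for any $\eta>0$ one can choose $k_0$ with $\sum_{k>k_0}2^{-k}<\eta/2$, and then make $\sum_{k\leq k_0}2^{-k}(1\wedge\|\calX^{A'}_\omega-\calX^A_\omega\|_{\alpha,k})<\eta/2$ with high probability for $A,A'$ large, using the finitely many estimates above; combining gives the Cauchy-in-probability property in $(\hat W^\alpha_\infty(0,L),\hat d_{\alpha,\infty})$. The main obstacle I anticipate is bookkeeping the Duhamel iteration carefully enough that the ``$+2$'' loss of spatial regularity at each application of Lemma \ref{techXnA} does not accumulate catastrophically — this is handled because each iterate only costs a fixed polynomial factor in $n$ against the factorial gain, but the constants $C_{M,k}$ and their dependence on $k$ must be tracked so the series over $n$ converges for every fixed $k$; a secondary technical point is verifying that the GRR-based moment bound for $\Lambda_\alpha(B^{A'}_H-B^A_H)$ genuinely vanishes as $A,A'\to\infty$ rather than merely staying bounded, which follows from dominated convergence in the spectral integral once one checks $\alpha\in(1-H,1/2)$ makes the relevant exponents integrable near $r=\infty$.
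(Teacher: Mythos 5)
Your plan follows essentially the same route as the paper: expand/truncate the Duhamel series, work on the high-probability event where $\int_\calS \vert m\vert(dq)\,\Lambda_\alpha(B^A_H(q))\leq M$, telescope the difference $\calX^{A'}_\omega-\calX^A_\omega$ so that $\Lambda_\alpha\big((B^{A'}_H-B^A_H)(q)\big)$ appears exactly once per term, and make it vanish via the Garsia--Rademich--Rumsey plus dominated-convergence estimate (the paper's Lemma \ref{cvB}), before summing over $k$ in the metric $\hat d_{\alpha,\infty}$. The one adjustment needed is that in the $\hat W^\alpha_k$ norm the factorial gain is not the bare $1/n!$ from simplex integration but the $1/\Gamma((n+1)(1-2\alpha))$ produced by iterating the singular kernels $(t-r)^{-2\alpha}r^{-2\alpha}$ coming from Lemma \ref{techXnA} (the $\beta_n$ recursion and \cite[Lemma 7.6]{nualart}), which still beats the exponential growth and makes the series over $n$ converge.
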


Owing the latter lemma, let us denote by $\calX_\omega$ the limit of $(\calX^{A}_\omega)_{A\geq 1}$ in probability in $(\hat{W}^{\alpha}_{\infty}(0,L),\hat{d}_{\alpha,\infty})$, and let us prove that this limit satisfies \eqref{intstoX}. Note that the last point of Lemma \ref{techXnA} implies that  $(\calX^{A}_\omega)_{A\geq 1}$ is also Cauchy in probability in $ \hat{\mathcal{C}}^{H-\theta}_\infty(0,L)$. According to the second point of Lemma \ref{techXnA}, we have 
\[
\| \calI^{+\infty}(\calX_\omega-\calX^{A}_{\omega})\|_{\alpha,k}\leq  K_{\alpha,\omega} \int_\calS \vert m\vert (dq) \Lambda_{\alpha}(B_H(q)) \|\calX_\omega-\calX^{A}_{\omega} \|_{\alpha,k+2}\]
and
\[\| \calI^{+\infty}(\calX^A_\omega)-\calI^{A}(\calX^{A}_{\omega})\|_{\alpha,k} \leq\int_\calS \vert m\vert (dq) \Lambda_{\alpha}(B_H(q)-B^A_H(q)) \|\calX^{A}_{\omega} \|_{\alpha,k+2}.\]
For the first term, we have for all $\eta$ and $M>0$,
\[\begin{split}
\Pro\Big(\| \calI^{+\infty}(\calX_\omega-\calX^{A}_{\omega})\|_{\alpha,k}>\eta\Big)& \leq \Pro\Big(\|\calX_\omega-\calX^{A}_{\omega}\|_{\alpha,k+2}>\eta/(MK_{\alpha,k})\Big) \\
&\qquad + \Pro\Big( \int_\calS m(dq) \Lambda_{\alpha}(B_H(q))\geq M\Big),
\end{split}\]
so that according to Lemmas \ref{techBA} and \ref{cauchy2},
\begin{equation}\label{lim1}\lim_{A\to+\infty}\Pro\Big(\| \calI^{+\infty}(\calX_\omega-\calX^{A}_{\omega})\|_{\alpha,k}>\eta\Big)=0.\end{equation}
For the second term, we find in the same way,
\[\begin{split}
\Pro\Big(\| \calI^{+\infty}(\calX^A_\omega)-\calI^{A}(\calX^{A}_{\omega})\|_{\alpha,k}>\eta\Big)&\leq  \Pro\Big(\|\calX^{A}_{\omega}\|_{\alpha,k+2}\geq M\Big)\\
&\quad +\Pro\Big(\int_\calS \vert m\vert (dq) \Lambda_{\alpha}(B_H(q)-B^A_H(q))>\eta/(MK_{\alpha,k})\Big).
\end{split}\]
Here, for the first term on the right hand side, using that the convergence in probability implies the convergence in law, the mapping theorem \cite[Theorem 2.7 pp. 21]{billingsley}, and the Portmanteau Theorem \cite[Theorem 2.1 pp. 16]{billingsley}, we have
\[\limsup_{A\to+\infty}  \Pro\Big(\|\calX^{A}_{\omega}\|_{\alpha,k+2}\geq M\Big)\leq \Pro\Big(\|\calX_{\omega}\|_{\alpha,k+2}\geq M\Big),\]
and
\[\lim_{M\to+\infty} \Pro\Big(\|\calX_{\omega}\|_{\alpha,k+2} \geq M\Big)=0,\qquad \textrm{since} \qquad\Pro(\|\calX_{\omega}\|_{\alpha,k+2}<+\infty)=1.\] Second, we have
\begin{equation}\label{lim2}\lim_{A\to+\infty}\Pro\Big(\int_\calS \vert m\vert (dq) \Lambda_{\alpha}(B_H(q)-B^A_H(q))>\eta/(MK_{\alpha,k})\Big)=0,
\end{equation}
following the proof of Lemma \ref{cvB} below. As a result, combining \eqref{lim1} and \eqref{lim2} we obtain that $\calX_\omega$ is a solution of \eqref{intstoX}. The next lemma addresses the pathwise uniqueness of solutions to \eqref{intstoX}, and concludes the proof of Proposition \ref{eqXA}.

\begin{lem}\label{upath} 
Equation \eqref{intstoX} admits a unique pathwise solution in $\hat{W}^{\alpha}_\infty(0,L)$.
\end{lem}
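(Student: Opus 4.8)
The plan is as follows. The existence of a pathwise solution has just been established (it is the limit $\calX_\omega$ of $(\calX^A_\omega)_{A\ge1}$ produced in the proof of Proposition \ref{cvXA}), so only uniqueness needs to be proved. Let $\calX^1_\omega,\calX^2_\omega\in\hat{W}^{\alpha}_\infty(0,L)$ be two pathwise solutions of \eqref{intstoX} and put $Y:=\calX^1_\omega-\calX^2_\omega$; by linearity $Y$ solves the homogeneous equation $Y=\calI^{+\infty}(Y)$, and in particular $Y(0,\cdot)=0$. By Lemma \ref{techBA}, $\int_\calS|m|(dq)\Lambda_{\alpha}(B_H(q))$ is a.s. finite, so the events $\Omega_M:=\{\int_\calS|m|(dq)\Lambda_{\alpha}(B_H(q))\le M\}$ exhaust $\Omega$ up to a null set; it therefore suffices to prove $Y\equiv0$ on each fixed $\Omega_M$. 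All estimates below are understood on $\Omega_M$, so that the quantity $\int_\calS|m|(dq)\Lambda_{\alpha}(B^{A}_H(q))$ appearing in Lemma \ref{techXnA} with $A=+\infty$ is replaced by the deterministic bound $M$.

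The second and central step is an \emph{a priori} Sobolev bound: I claim that on $\Omega_M$ every pathwise solution $\calX_\omega\in\hat{W}^{\alpha}_\infty(0,L)$ of \eqref{intstoX} satisfies $\|\calX_\omega\|_{\alpha,k}\le C_M\,\rho^{\,k}$ for all $k\ge1$, with $\rho$ depending only on $r_\calS$ and on the Fourier support radius $r_{\hat f_0}$ of $\hat f_0$, and $C_M$ independent of $k$. The starting point is that $\sup_{z\in[0,L]}\|\calX_\omega(z)\|_{\calH_k}=\sup_{z}\|\Psi_\omega(z)\|_{H^k}$ grows at most exponentially in $k$, where $\Psi_\omega$ is the corresponding solution of the physical equation \eqref{ItoSch0} (Proposition \ref{correspondance}): indeed $\Psi_\omega(0)$ is band-limited (Fourier support in the $\kappa$-support of $\hat f_0$) and the potential $W_H(z,\cdot)$ is band-limited as well (the measures $m$ and $w(\cdot,q)$ live on the fixed bounded set $\calS$), so all its $x$-derivatives are bounded by $C^{|\beta|}$ times a $\Lambda_{\alpha}$-type quantity; applying the energy method to $\partial^\beta_x\Psi_\omega$ — exactly as in the conservation computation in the proof of Proposition \ref{eqXA}, the top-order contribution vanishing by skew-adjointness of $\tfrac{i}{2k_\omega}\Delta_x$ — yields a Gronwall recursion in $|\beta|$ with the advertised exponential growth. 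Feeding this into Lemma \ref{techXnA}(3) applied to $\psi=\calX_\omega$ on a short interval $[0,\delta]$ with $\delta$ small enough that $2\,\delta^{1-2\alpha}K_{3,\alpha,k}M\le1$, together with $\calX_\omega=\phi^0_\omega+\calI^{+\infty}(\calX_\omega)$ and the inclusion \eqref{inclusion}, gives $\|\calX_\omega\|_{W^{\alpha}(0,\delta,\calH_k)}\le 2\|\phi^0_\omega\|_{\calH_k}+\sup_{[0,\delta]}\|\calX_\omega(\cdot)\|_{\calH_{k+2}}\le C_M\rho^k$; covering $[0,L]$ by finitely many such intervals (using the $z$-uniform Sobolev bound at the junctions) and the obvious comparison between the global $W^\alpha$ norm and the piece norms yields the claim. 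In particular $\|Y\|_{\alpha,k}\le 2C_M\rho^{\,k}$ for every $k$.

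The last step is a ``free gain of derivatives'' iteration on a short interval. Since $Y=\calI^{+\infty}(Y)$, applying Lemma \ref{techXnA}(3) on an interval $[0,\delta_0]$ with $\delta_0$ small gives $\|Y\|_{W^{\alpha}(0,\delta_0,\calH_k)}\le c_0(\delta_0)\,\|Y\|_{W^{\alpha}(0,\delta_0,\calH_{k+2})}$ with $c_0(\delta_0)\to0$ as $\delta_0\to0$; choosing $\delta_0$ so small that in addition $c_0(\delta_0)\,\rho^{\,2}<1$ and iterating this bound in $k$ gives, for every $N\ge1$,
\[
\|Y\|_{W^{\alpha}(0,\delta_0,\calH_k)}\le c_0(\delta_0)^N\,\|Y\|_{W^{\alpha}(0,\delta_0,\calH_{k+2N})}\le 2C_M\rho^{\,k}\,\bigl(c_0(\delta_0)\,\rho^{\,2}\bigr)^N\xrightarrow[N\to\infty]{}0 .
\]
Hence $Y\equiv0$ on $[0,\delta_0]$; since the equation has Volterra structure and $Y(\delta_0,\cdot)=0$, restarting at $\delta_0$ and repeating propagates $Y\equiv0$ to $[\delta_0,2\delta_0]$, and after $\lceil L/\delta_0\rceil$ steps to all of $[0,L]$. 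Thus $\calX^1_\omega=\calX^2_\omega$ on $\Omega_M$, and letting $M\to\infty$ this holds almost surely, proving uniqueness.

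The main obstacle is the genuine loss of two Sobolev derivatives in the estimate for the stochastic integral (Lemma \ref{techXnA}), which prevents a naive fixed-point argument at a single regularity level: the whole scheme hinges on the \emph{a priori} exponential-in-$k$ control of solutions obtained in the second step, and it is precisely there that one must leave the purely fractional-calculus framework and exploit the Schr\"odinger structure together with the band-limited nature of both the source and the limiting noise. The remaining ingredients — the short-interval contraction and the patching — are routine.
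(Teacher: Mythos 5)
Your overall strategy---beating the two-derivative loss of Lemma \ref{techXnA} by an a priori exponential-in-$k$ bound and then iterating the short-interval gain---is in the right spirit, but the keystone of the argument, the a priori estimate $\|\calX_\omega\|_{\alpha,k}\le C_M\rho^k$ for an \emph{arbitrary} pathwise solution, is not established, and this is a genuine gap. First, a general solution of \eqref{intstoX} in $\hat{W}^{\alpha}_\infty(0,L)$ is not known to be band-limited: the Duhamel term translates the Fourier support by at most $r_\calS$ but involves $\calX_\omega(u,\cdot)$ up to $u=z$, so no finite support radius is self-consistent; only the \emph{constructed} solution (each term of whose expansion lives in $B(0,nr_\calS+r_{\hat f_0})$) carries quantitative support information. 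Second, the ``energy method applied to $\partial_x^\beta\Psi_\omega$ exactly as in Proposition \ref{eqXA}'' is only valid for the regularized process, whose noise term is a genuine Lebesgue integral (see \eqref{dzX}); for a true pathwise solution the commutator terms are Z\"ahle integrals, and bounding $\int_s^t\langle \partial^{\beta-\gamma}dW_H\,\partial^\gamma\Psi_\omega,\partial^\beta\Psi_\omega\rangle$ requires the $\|\cdot\|_{\alpha,1}$ (time-H\"older) norm of the integrand, which by the very mechanism you are fighting costs two more spatial derivatives---so the recursion in $|\beta|$ at the level of $\sup_z\|\partial^\beta\Psi_\omega(z)\|_{L^2}$ does not close. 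Invoking instead properties of the limit of $\calX^A_\omega$ would be circular, since identifying an arbitrary solution with that limit is exactly the uniqueness being proved. A secondary quantitative problem: the constants $K_{j,\alpha,k}$ in Lemma \ref{techXnA} grow geometrically in $k$ (the change of variables $\kappa\to\kappa+q$ produces a factor of order $(1+r_\calS)^{k}$ in $\calH_k$), so your contraction factor is really $c_0(\delta_0,k)\sim\delta_0^{1-2\alpha}C^k$ and the $N$-fold product $\prod_{j<N}c_0(\delta_0,k+2j)$ behaves like $\delta_0^{N(1-2\alpha)}C^{N^2}$, which is not beaten by $\rho^{2N}$ for any fixed $\delta_0$.

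For comparison, the paper's proof is a short argument that sidesteps any a priori growth-in-$k$ control: it sets $\calJ_k(z)=\|Y(z)\|_{\calH_k}+\int_0^z\|Y(z)-Y(s)\|_{\calH_k}(z-s)^{-\alpha-1}ds$ for the difference $Y$ of two solutions, uses the second point of Lemma \ref{techXnA} to obtain $\calJ_k(z)\le Cz^{2\alpha}\int_\calS|m|(dq)\Lambda_\alpha(B_H(q))\int_0^z(z-r)^{-2\alpha}r^{-2\alpha}\calJ_{k+2}(r)\,dr$, sums over all $k\ge0$ by monotone convergence so that the index shift $k\mapsto k+2$ is absorbed into the sum, and applies the singular Gronwall lemma of \cite{nualart} to $\tilde\calJ=\sum_k\calJ_k$ to conclude $\tilde\calJ\equiv0$. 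If you wish to keep your route, you must supply the a priori bound of your second step by some other mechanism; as written, the final iteration rests on an unproved claim.
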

The section is ended by the proofs of Lemmas \ref{cauchy2} and \ref{upath}.

\begin{proof}[Proof of Lemma \ref{cauchy2}] Let $\eta>0$, $\nu>0$, $N>0$, and let us write 
\begin{equation}\label{triple}\begin{split}
\Pro(\hat d_{\alpha,\infty}(\calX^{A+B}_\omega,\calX^{A}_\omega)> \eta)&\leq  \Pro(\hat d_{\alpha,\infty}(\calX^{A+B}_\omega,\calX^{A+B}_{\omega,N})> \eta/3)\\
&\quad +\Pro(\hat d_{\alpha,\infty}(\calX^{A+B}_{\omega,N},\calX^{A}_{\omega,N})> \eta/3)\\
&\quad +\Pro(\hat d_{\alpha,\infty}(\calX^{A}_{\omega,N},\calX^{A}_\omega)> \eta/3),
\end{split}\end{equation}
where $\calX^{A}_{\omega,N}$ is defined by \eqref{XNA}. First, let $M>0$  and $k_\eta$ such that $\sum_{k\geq k_\eta}2^{-k}\leq \eta/6$, so that
\[\begin{split}
\Pro(\hat d_{\alpha,\infty}(\calX^{A}_{\omega,N},\calX^{A}_\omega)> \eta/3)&\leq\Pro\Big( \sum_{k=0}^{k_\eta} \| \calX^{A}_{\omega,N}-\calX^{A}_\omega\|_{\alpha,k} > \eta/6\Big)\\
&\leq \Pro\Big( \sum_{k=0}^{k_\eta} \| \calX^{A}_{\omega,N}-\calX^{A}_\omega\|_{\alpha,k} > \eta/6,\quad \int_\calS \vert m\vert (dq) \Lambda_{\alpha}(B_H^A(q))\leq M\Big)\\
&\quad + \Pro\Big( \int_\calS \vert m\vert (dq) \Lambda_{\alpha}(B_H^A(q)) > M\Big).
\end{split}\]
In order to treat the first term in the r.s.h, we introduce
\[ \calJ_{n,k}(t):= \|\calX^{n,A}_\omega (t)\|_{\calH^k} + \int_0^t ds \frac{\|\calX^{n,A}_\omega(t)-\calX^{n,A}_\omega(s)\|_{\calH^k}}{(t-s)^{\alpha+1}},\]
and find, using inductively the second point of Lemma  \ref{techXnA},
\[\begin{split}
\calJ_{n,k}(t) & \leq K_{2,\alpha,k} \int_\calS \vert m\vert (dq) \Lambda_{\alpha}(B_H^A(q))  \Big[ \int_0^t dr \|\calX^{n-1,A}_\omega (r) \|_{\calH_{k+2}}\\
&\hspace{4cm}+ \int_0^t dr ((t-r)^{-2\alpha}+r^{-\alpha}) \calJ_{n-1,k}(r)\Big]\\
&\leq \tilde{K}_{2,\alpha,k} \int_\calS \vert m\vert (dq) \Lambda_{\alpha}(B_H^A(q)) t^{2\alpha}  \int_0^t dr (t-r)^{-2\alpha}r^{-2\alpha} \calJ_{n-1,k+2}(r)\\
&\leq  \Big[\tilde{K}_{2,\alpha,k}\int_\calS \vert m\vert (dq) \Lambda_{\alpha}(B_H^A(q)) \Big]^n \beta_n(t) \|\hat{f}_0(\omega,\cdot)\|_{\calH_{k+2n}} ,
 \end{split}\]
where $(\beta_n(t))_n$ is given by
\[\beta_0(t):=1\qquad\text{and}\qquad \beta_{n+1}(t):=t^{2\alpha} \int_0^t (t-r)^{-2\alpha} r^{-2\alpha}\beta_n(r)dr.\]
According to \cite[Lemma 7.6]{nualart}, we actually have
\[\beta_n(t)=\frac{\Gamma(1-2\alpha)}{\Gamma((n+1)(1-2\alpha))}\big[ t^{1-2\alpha} \Gamma(1-2\alpha)\big]^n.\]
As a result, since $\hat{f}_0$ is compactly supported, we have
\begin{equation}\label{eeq1}
\|\calX^{n,A}_\omega\|_{\alpha,k}\leq \Gamma(1-2\alpha)  \| \hat{f}_0(\omega,\cdot) \|_{\calH_k} \frac{\Big[K_{\alpha,k,L} \int_\calS \vert m\vert (dq) \Lambda_{\alpha}(B_H^A(q)) \Big]^n}{\Gamma((n+1)(1-2\alpha))}.
\end{equation}
Hence, using \eqref{eeq1}, we have for all $A\geq 1$ and $N_\eta$ sufficiently large  (but not depending on $A$)
\[
 \Pro\Big( \sum_{k=0}^{k_\eta} \| \calX^{A}_{\omega,N_\eta}-\calX^{A}_\omega\|_{\alpha,k} > \eta/6,\quad \int_\calS \vert m\vert (dq) \Lambda_{\alpha}(B_H^A(q))\leq M\Big)=0,
\]
since we have for the previous event
\[\|\calX^{n,A}_\omega\|_{\alpha,k} \leq \frac{(MK_{\alpha,k})^n}{\Gamma((n+1)(1-2\alpha))}.\]
 Moreover, according to Lemma \ref{techBA} and the Markov's inequality, it is clear that
\[ \sup_{A\geq 1}\Pro\Big( \int_\calS \vert m\vert (dq) \Lambda_{\alpha}(B_H^A(q)) > M\Big) \leq \frac{C_0}{M},\]
so that finally 
\begin{equation}\label{res1}\sup_{A\geq 1}\Pro(\hat d_{\alpha,\infty}(\calX^{A}_{\omega,N_\eta},\calX^{A}_\omega)> \eta/3)\leq \frac{C_1}{M}.\end{equation}
In the same way, we obtain
\begin{equation}\label{res2}\sup_{A\geq 1}\sup_{B\geq 0}\Pro(\hat d_{\alpha,\infty,0}(\calX^{A+B}_{\omega,N_\eta},\calX^{A+B}_\omega)> \eta/3)\leq \frac{C_2}{M}.\end{equation}
For the remaining term of \fref{triple}, we have first
\[\begin{split}
\Pro(\hat d_{\alpha,\infty}(\calX^{A+B}_{\omega,N_\eta},\calX^{A}_{\omega,N_\eta})> \eta/3)&\leq \Pro\Big( \sum_{k=0}^{k_\eta} \| \calX^{A+B}_{\omega,N_\eta}-\calX^{A}_{\omega,N_\eta}\|_{\alpha,k} > \eta/6\Big)\\
&\leq \sum_{k=0}^{k_\eta} \Pro\Big( \| \calX^{A+B}_{\omega,N_\eta}-\calX^{A}_{\omega,N_\eta}\|_{\alpha,k} > \eta/(6(k_\eta+1))\Big).
\end{split}\]
Second, according to \eqref{intstoXAn}, we have for all $n\in\{0,\dots,N_\eta-1\}$,
\[\begin{split}
(&\calX^{n+1,A+B}_\omega  -\calX^{n+1,A}_\omega)(z,\kappa) =\calI^{A+B} (\calX^{n,A+B}_\omega-\calX^{n,A}_\omega)(z,\kappa)\\
&+ik_\omega \int_{\calS} m(dq)    \int_0^z du e^{-i (\vert \kappa-q\vert^2-\vert \kappa\vert^2)u/(2k_\omega)} \calX^{n,A}_\omega(u,\kappa-q) d(B_H^{A+B}-B_H^{A})(u,q),
\end{split}\]
and using the second point of Lemma \ref{techXnA} together with \eqref{eeq1}, we find
\begin{equation}\label{eq3}\begin{split} 
\| \calX^{n+1,A+B}_\omega& - \calX^{n+1,A}_\omega \|_{\alpha,k} \\
& \leq \int_\calS \vert m\vert (dq) \Lambda_\alpha (B_H^{A+B}(q) - B_H^A (q))  \sum_{m=0}^{n} \| \calX^{m,A}_\omega \|_{\alpha,k+\alpha(n+1- m)} \\
&\hspace{2cm}\times \Big[C_{1,\alpha,k}\int_\calS \vert m\vert (dq) \Lambda_\alpha (B_H^{A+B}(q)) \Big]^{n-m} \\
&\leq C_{2,\alpha,k}  \int_\calS \vert m\vert (dq) \Lambda_\alpha (B_H^{A+B}(q) - B_H^A (q)) \sum_{m=0}^{n}\frac{\Big[C_{3,\alpha,k}\int_\calS \vert m\vert (dq) \Lambda_\alpha (B_H^{A}(q)) \Big]^{m}}{\Gamma((m+1)(1-2\alpha))} \\
&\hspace{2cm}\times \Big[C_{4,\alpha,k}\int_\calS \vert m\vert (dq) \Lambda_\alpha (B_H^{A+B}(q)) \Big]^{n-m} . 
\end{split}\end{equation}
Now, using the facts that
\[\begin{split}
 \Pro\Big( \| \calX^{A+B}_{\omega,N_\eta}-\calX^{A}_{\omega,N_\eta}&\|_{\alpha,k} > \eta/(6(k_\eta+1))\Big)\\
 &\leq \sum_{n=0}^{N_\eta}   \Pro\Big( \| \calX^{n,A+B}_{\omega}-\calX^{n,A}_{\omega}\|_{\alpha,k} > \eta/(6(k_\eta+1)(N_\eta+1))\Big)
\end{split}\]
as well as
\[\begin{split}
  \Pro&\Big( \| \calX^{n,A+B}_{\omega}-\calX^{n,A}_{\omega}\|_{\alpha,k} > C_3 \Big)\\
  &\leq \Pro\Big( \| \calX^{n,A+B}_{\omega}-\calX^{n,A}_{\omega}\|_{\alpha,k} > C_3,\quad  \int_\calS \vert m\vert (dq )\Lambda_\alpha (B_H^{A}(q)) \leq M,\quad  \int_\calS \vert m\vert (dq) \Lambda_\alpha (B_H^{A+B}(q)) \leq M \Big)\\
  &\quad +  \Pro\Big( \int_\calS \vert m\vert (dq) \Lambda_\alpha (B_H^{A+B}(q)) > M\Big)+\Pro\Big( \int_\calS \vert m\vert (dq) \Lambda_\alpha (B_H^{A}(q)) > M\Big),
 \end{split}\]
we have according to Lemma \ref{techBA}, the Markov inequality, and \eqref{eq3}
\[\begin{split} 
\Pro\Big( \| \calX^{n,A+B}_{\omega}-\calX^{n,A}_{\omega}\|_{\alpha,k} > C_3 \Big)&\leq \frac{C_4}{M}+ \Pro\Big( \int_\calS \vert m\vert (dq) \Lambda_\alpha (B_H^{A+B}(q)-B_H^{A}(q)) \geq C_{5,n}\Big)\\
&\leq  \frac{C_4}{M}+ \E\Big[ \int_\calS \vert m\vert (dq) \Lambda_\alpha (B_H^{A+B}(q)-B_H^{A}(q)) \Big]/C_{5,n}
\end{split}\]
for all $M>0$ and all $C_{5,n}$ sufficiently small (independently of $A$ and $B$). Gathering all the previous estimates, we find, combining \eqref{res1} and \eqref{res2} in \eqref{triple} and taking $M\to+\infty$,
\[\Pro\big(\hat{d}_{\alpha,\infty}(\calX^{A+B}_\omega,\calX^{A}_\omega)>\eta\big)\leq \frac{\nu}{2}+\tilde{C}_{5,N_\eta} \E\Big[ \int_\calS \vert m\vert (dq) \Lambda_\alpha (B_H^{A+B}(q)-B_H^{A}(q)) \Big].\]
Then, the following lemma allows us to conclude the proof of Lemma \ref{cauchy2}.

\begin{lem}\label{cvB}
Let $\tilde{\alpha}\in(1-H,1/2)$, and
\[B_A^{A+B}(u,q):= \int_{\{A<\vert r\vert <A+B\}} \frac{e^{iru}-1}{ir \vert r \vert^{H-1/2}}w(dr,q).\]
We have 
\[\lim_{A\to +\infty}\sup_{B\geq 0}\sup_{q\in\calS} \E\Big[ \Lambda_{\alpha}(B_A^{A+B}(q)) \Big]=0.\]
\end{lem}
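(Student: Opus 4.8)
The plan is to follow the argument in the proof of Lemma \ref{techBA}, the only genuinely new ingredient being a scaling identity that isolates the dependence on the cut-off parameter $A$. Fix $\alpha\in(1-H,1/2)$ and $\theta\in(0,H+\alpha-1)$. First I would apply the Garsia-Rademich-Rumsey inequality (Lemma \ref{garcia}), exactly as in Lemma \ref{techBA}, with $\tilde\alpha=H-\theta/2$ and $\tilde p=2/\theta$ (admissible since $\theta<H$): for each $q\in\calS$ the trajectory $u\mapsto B_A^{A+B}(u,q)$ is continuous and satisfies, almost surely,
$$\vert B_A^{A+B}(t,q)-B_A^{A+B}(s,q)\vert\le \vert t-s\vert^{H-\theta}\,Z_A^{A+B}(q),\qquad Z_A^{A+B}(q):=C\Big[\int_0^L\!\!\int_0^L\frac{\vert B_A^{A+B}(u,q)-B_A^{A+B}(v,q)\vert^{2/\theta}}{\vert u-v\vert^{2H/\theta}}\,dudv\Big]^{\theta/2}.$$
Inserting this into the pointwise bound on $D^{1-\alpha}_{t-}$ recalled in the proof of Lemma \ref{techBA}, the choice $\theta<H+\alpha-1$ makes the exponent $(H+\alpha-1)-\theta$ of $t-s$ nonnegative in the first term and $>-1$ under the integral in the second, so that $\Lambda_\alpha(B_A^{A+B}(q))\le C_{\alpha,\theta,L}\,Z_A^{A+B}(q)$ almost surely. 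Hence it suffices to prove $\sup_{B\ge0}\sup_{q\in\calS}\E[Z_A^{A+B}(q)]\to0$ as $A\to+\infty$, and by Jensen's inequality it is enough to establish this for $\E[(Z_A^{A+B}(q))^p]$ for one (hence any large) $p\ge1$.

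The second step is the moment bound. As in Lemma \ref{techBA}, Jensen's and H\"older's inequalities give
$$\E[(Z_A^{A+B}(q))^p]\le C^p L^{(p-1)\theta}\Big[\int_0^L\!\!\int_0^L\frac{\E[\vert B_A^{A+B}(u,q)-B_A^{A+B}(v,q)\vert^{2p/\theta}]}{\vert u-v\vert^{2Hp/\theta}}\,dudv\Big]^{\theta/2},$$
and since $B_A^{A+B}(u,q)-B_A^{A+B}(v,q)$ is centered Gaussian, $\E[\vert\cdot\vert^{2p/\theta}]\le C_{p,\theta}\,(\E[\vert\cdot\vert^2])^{p/\theta}$. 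The key computation is the variance estimate: using $w^\ast(dr,q)=w(-dr,q)$, $\hat R(q,q)=1$, $\vert e^{iru}-e^{irv}\vert^2=2(1-\cos(r(u-v)))$, and the inclusion $\{A<\vert r\vert<A+B\}\subset\{\vert r\vert>A\}$,
$$\E[\vert B_A^{A+B}(u,q)-B_A^{A+B}(v,q)\vert^2]\le c_H\int_{\{\vert r\vert>A\}}\frac{2(1-\cos(r(u-v)))}{\vert r\vert^{2H+1}}\,dr=c_H\,\vert u-v\vert^{2H}\,J(A\vert u-v\vert),$$
where $c_H>0$ is the (constant, because $\hat R(q,q)=1$) value of the covariance density of $w(\cdot,q)$, and the substitution $s=r(u-v)$ produces
$$J(x):=\int_{\{\vert s\vert>x\}}\frac{2(1-\cos s)}{\vert s\vert^{2H+1}}\,ds,\qquad x\ge0.$$
The elementary properties I would record are: $J$ is continuous, nonincreasing, bounded by $J(0)=\int_{\Rm}2(1-\cos s)\vert s\vert^{-2H-1}\,ds<+\infty$ (integrable near $0$ since $2H+1<3$ and near $\infty$ since $2H+1>1$), and $J(x)\to0$ as $x\to+\infty$.

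Finally, inserting the variance estimate, the weights $\vert u-v\vert^{-2Hp/\theta}$ cancel exactly against the factor $\vert u-v\vert^{2Hp/\theta}$ coming from $(\vert u-v\vert^{2H})^{p/\theta}$, leaving a bound with no dependence on $q$ or $B$:
$$\sup_{B\ge0}\sup_{q\in\calS}\E[(Z_A^{A+B}(q))^p]\le C_{p,\theta,H,L}\Big[\int_0^L\!\!\int_0^L J(A\vert u-v\vert)^{p/\theta}\,dudv\Big]^{\theta/2}.$$
Since $0\le J(A\vert u-v\vert)^{p/\theta}\le J(0)^{p/\theta}$ and $J(A\vert u-v\vert)\to0$ for every $u\ne v$, dominated convergence forces the double integral to $0$ as $A\to+\infty$; together with $\Lambda_\alpha(B_A^{A+B}(q))\le C\,Z_A^{A+B}(q)$ and Jensen's inequality, this yields $\sup_{B\ge0}\sup_{q\in\calS}\E[\Lambda_\alpha(B_A^{A+B}(q))]\to0$. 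There is no serious obstacle: the scheme is a variant of Lemma \ref{techBA}, and the only point requiring care is the scaling identity $\E[\vert B_A^{A+B}(u,q)-B_A^{A+B}(v,q)\vert^2]\le c_H\vert u-v\vert^{2H}J(A\vert u-v\vert)$, which cleanly separates the $\vert u-v\vert$ singularity (absorbed by the $\vert u-v\vert^{2H}$ prefactor, as in the full-range case) from the tail parameter $A$ (absorbed by the vanishing of $J$ at infinity, uniformly in $B$).
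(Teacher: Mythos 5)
Your proof is correct and follows essentially the same route as the paper: the Garsia--Rademich--Rumsey reduction of $\Lambda_\alpha$ to a double integral of increment moments exactly as in Lemma \ref{techBA}, the Gaussian reduction to the variance, the bound of the restricted spectral integral by the full tail $\{\vert r\vert>A\}$ (giving uniformity in $B$), and dominated convergence with the dominating bound $\int_{\{\vert r\vert>A\}}\vert e^{ir(u-v)}-1\vert^2\vert r\vert^{-2H-1}dr\leq d_H\vert u-v\vert^{2H}$. Your explicit scaling substitution producing $J(A\vert u-v\vert)$ is just a clean reformulation of the paper's pointwise-vanishing-plus-domination argument.
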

As a result, we finally obtain with \eqref{hypmomentm},
\[\lim_{A\to+\infty}\sup_{B\geq 0}\Pro(\hat d_{\alpha,\infty}(\calX^{A+B}_\omega,\calX^{A}_\omega)> \eta)=0,\]
which ends the proof.\end{proof}
\begin{proof}[Proof of Lemma \ref{cvB}]
Following exactly the proof of Lemma \ref{techBA}, we have for $\tilde{\theta}\in(0,H+\tilde{\alpha}-1)$ such that $1/\tilde{\theta}\in\mathbb{N}^\ast$,
\[\begin{split}
 \E\Big[  \Lambda_{\tilde{\alpha}}(B_A^{A+B}(q)) \Big]&\leq K_{\tilde \theta,\tilde \alpha}L^{H+\tilde\alpha-1-\tilde\theta}\big[1+1/(H+\tilde\alpha-1-\tilde\theta)  \big] \\
 &\quad \times \Big[\int_0^L\int_0^L  \frac{\E[ \vert B_A^{A+B}(u,q) -B_A^{A+B}(v,q)\vert ^{2/\tilde\theta}]}{\vert u-v\vert^{2H/\tilde\theta}} dudv\Big]^{\tilde\theta/2},
 \end{split}\]
and since $B_A^{A+B}(u,q) -B_A^{A+B}(v,q)$ is a Gaussian random variable, one has
\[\begin{split}\sup_{q\in\calS}\E\Big[  \Lambda_{\tilde\alpha}(B_A^{A+B}(q)) \Big]&\leq \tilde{K}_{\tilde\theta,\tilde\alpha}\sup_{q\in \calS} \hat{R}^{1/\tilde\theta}(q,q) \\
&\quad \times \Big[\int_0^L\int_0^L  \Big(\int_{\{A<\vert r\vert <A+B\}} \frac{\vert e^{ir(u-v)}-1\vert^2}{ \vert r \vert^{2H+1}}dr\Big)^{1/\tilde\theta}\frac{dudv}{\vert u-v\vert^{2H/\tilde\theta}} \Big]^{\tilde\theta/2}, 
\end{split} \]
The proof is ended using the dominated convergence owing that
\[ \int_{\{\vert r\vert >A\}} \frac{\vert e^{ir(u-v)}-1\vert^2}{ \vert r \vert^{2H+1}}dr\leq d_H \vert u-v \vert^{2H}.\]
\end{proof}
We end the section with the proof of Lemma \ref{upath}.
\begin{proof}[Proof of Lemma \ref{upath}] Let us consider two solutions of \eqref{intstoX} denoted by $\calX_\omega$ and $\tilde{\calX}_\omega$, and let
\[\calJ_k(z)=\|\calX_\omega(z)-\tilde{\calX}_\omega(z)\|_{\calH_k}+\int_0^z ds \frac{\|\calX_\omega(z)-\calX_\omega(s)-(\tilde{\calX}_\omega(z)-\tilde{\calX}_\omega(s))\|_{\calH_k}}{(z-s)^{\alpha+1}}.\]
Using the second point of Lemma \ref{techXnA}, we find
\[\calJ_k(z)\leq Cz^{2\alpha}\int_\calS \vert m\vert(dq)\Lambda_\alpha(B_H(q)) \int_0^z dr(z-r)^{-2\alpha}r^{-2\alpha} \calJ_{k+2}(r),\]
so that considering
\[\tilde{\calJ}(z)=\lim_{N\to+\infty}\sum^N_{k= 0}\calJ_k(z),\]
which takes its values in $\mathbb{R}_+\cup\{+\infty\},$
we obtain by the monotone convergence theorem
\[
\tilde{\calJ}(z)\leq \tilde{C}z^{2\alpha}\int_\calS \vert m\vert(dq)\Lambda_\alpha(B_H(q)) \int_0^z dr(z-r)^{-2\alpha}r^{-2\alpha} \tilde{\calJ}(r).
\]
As a result, according to the particular version of the Gronwall lemma given in \cite[Lemma 7.6]{nualart}, we have $\tilde{\calJ}(z)=0$ for all $z\geq0$. 
\end{proof}

\subsubsection{Proof of Corollary \ref{corocvXA}} \label{proofcorocvXA}

First, it is clear $\hat{\Psi}^A$ converges in probability as $A\to +\infty$ in $\hat{\mathcal{C}}^{H-\theta}_\infty(0,L)$ to $\hat{\Psi}_\omega$. Moreover according to \eqref{dzX}, we obtain
\begin{equation}\label{tempA}\begin{split}
\hat{ \Psi}^A_\omega (z,\kappa)  &=\hat{ \Psi}^A_\omega (0,\kappa)- \frac{i\vert \kappa \vert^2}{2k_\omega}\int_0^z  \hat{\Psi}^A_\omega(u,\kappa)du  +i k_\omega  \sigma_H \int_\calS m(dq) \int_0^z \hat{\Psi}^A_\omega(u,\kappa-q) dB^A_H(u,q)
\end{split}\end{equation}
and we only need to address the convergence of the last term. Introducing
\[ 
\calK^A(\psi)(z,\kappa)=i k_\omega \sigma_H \int_\calS m(dq) \int_0^z \hat{\Psi}^A_\omega(u,\kappa-q) dB^A_H(u,q),
\]
for $A\in [1,+\infty]$ where $B_H^{+\infty}=B_H$, we have
\[\begin{split}
\| \calK^A(\hat{\Psi}^A_\omega)- \calK^{+\infty}(\hat{\Psi}_\omega)\|_{\alpha,k}&\leq \| (\calK^A- \calK^{+\infty})(\hat{\Psi}_\omega)\|_{\alpha,k}+\| \calK^A(\hat{\Psi}^A_\omega-\hat{\Psi}_\omega)\|_{\alpha,k}\\
&\leq  \vert k_\omega\vert \sigma_H \int_\calS \vert m\vert (dq)\Lambda_\alpha (B^{+\infty}_A(q)) \|\hat{\Psi}_\omega \|_{\alpha,k}\\
&\quad +\vert k_\omega\vert \sigma_H \int_\calS \vert m\vert (dq)\Lambda_\alpha (B^A_H(q)) \|\hat{\Psi}^A_\omega-\hat{\Psi}_\omega \|_{\alpha,k},
\end{split}\]
so that proceeding as in the proof of Proposition \ref{cvXA}, we obtain that, for all $\eta>0$
\[\lim_{A\to+\infty}\Pro\Big( \| \calK^A(\hat{\Psi}^A_\omega)- \calK^{+\infty}(\hat{\Psi}_\omega)\|_{\alpha,k}>\eta \Big)=0.\]
Therefore, we can pass to the limit $A \to + \infty$ in \eqref{tempA} and obtain that $\hat{\Psi}_\omega$ satisfies the desired equation. Uniqueness follows from the one-to-one correspondence between \eqref{intstoX} and \eqref{ItoschroF} via $\hat{\Psi}_\omega$ and $\calX_\omega$.

\subsubsection{Proof of Proposition \ref{correspondance}}

With the notations of the previous section, we just need to show that
\[
\big<\tilde{\calK}^{+\infty}(\Psi_\omega)(z) ,\phi \big>_{L^2(\Rd)}=(2 \pi)^2\big<\calK^{+\infty}(\hat{\Psi}_\omega)(z) ,\hat{\phi} \big>_{L^2(\Rd)}
\]
where, for $A\in [1,+\infty]$ and the notation $W_H^{+\infty}=W_H$,
\[\begin{split}
\tilde{\calK}^A(\psi)(z,x)=i k_\omega \int_0^z \psi(u,x)dW^A_H(u,x), \qquad W_H^A(z,x)=\sigma_H \int_\calS m(dq) e^{-iq\cdot x} B^A_H(u,q),
\end{split}\] 
The proof consists in approximating $W_H$ by $W^A_H$ with $A<+\infty$ in order to have sufficient regularity to justify the calculations. We thus write
\[
\big<\tilde{\calK}^{+\infty}(\Psi_\omega)(z) ,\phi \big>_{L^2(\Rd)}=\big<(\tilde{\calK}^{+\infty}-\tilde{\calK}^A)(\Psi_\omega)(z) ,\phi \big>_{L^2(\Rd)}+\big<\tilde{\calK}^A(\Psi_\omega)(z) ,\phi \big>_{L^2(\Rd)}
\]
with
\[\begin{split}
\E\left[\|\tilde{\calK}^{+\infty}(\Psi_\omega)(z)-\tilde{\calK}^A(\Psi_\omega)(z) \|_{L^2(\Rd)}\right]& \leq \E\Big[\|\Psi_\omega(z)\|_{L^2(\Rd)} \sup_{x\in\Rd}\Lambda_\alpha(W_H(x)-W^A_H(x))\Big]\\
&\leq C \| \check{f}_0(\omega,\cdot)\|_{L^2(\Rd)} \sup_{q\in\calS}\E\Big[ \Lambda_\alpha(B^{+\infty}_A(q)) \Big]
\end{split}\]
which converges to $0$ as $A\to+\infty$ according to Lemma \ref{cvB}. Following \fref{estba} and the fact that  $\Psi_\omega \in \calC^0([0,L],H^k(\Rm^2))$ for all $k \in \mathbb{N}$, we can write, 
\[ \tilde{\calK}^A(\Psi)(z,x)= ik_\omega \sigma_H \int_\calS m(dq) \int_0^z  du  \Psi_\omega(u,x) e^{-iq\cdot x}b^A_H(u,q),\]
and thus use the Fubini and Fourier-Plancherel theorems to arrive at
\[
\big<\tilde{\calK}^A(\Psi_\omega)(z) ,\phi \big>_{L^2(\Rd)}=(2\pi)^2\big<\calK^A(\hat{\Psi}_\omega)(z) ,\hat{\phi} \big>_{L^2(\Rd)}.
\]
The proof is ended by passing to the limit in $A$ by adapting what was done above for $\tilde{\calK}^A$.

\subsection{Convergence of $\calX^\e_\omega$}
We investigate in this section the limit in law of $(\calX^\e_{\omega_1}(z),\dots,\calX^\e_{\omega_M}(z))_{\e}$. The tightness of this family is addressed in Proposition \ref{tightness} below and is the straightforward consequence of the conservation of the $L^2$ norm of $\calX^\e_{\omega}$. The characterization of the limit in distribution of the family requires more work. We will use for this a moment method and the regularized process $\calX^A_\omega$ for which formal calculations are justified. In order to fix the ideas, we will investigate first the moment of order one in Proposition \ref{moment1}, and then generalize to moments of any order in Proposition \ref{momentmulti}. The various results are put together in a conclusion at the end of the section.

\subsubsection{Tightness}

This section is devoted to the tightness of the family $(\calX^\e_{\omega_1}(z),\dots,\calX^\e_{\omega_M}(z))_{\e}$. We have the following result:

 \begin{prop}\label{tightness}
For all $z\in [0,L]$, the family $(\calX^\e_{\omega_1}(z),\dots,\calX^\e_{\omega_M}(z))_{\e}$ is tight in $L^2(\Rd)$ equipped with the weak topology.
 \end{prop}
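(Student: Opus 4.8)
The plan is to establish tightness by combining the conservation of the $L^2$ norm of $\calX^\e_\omega$ with the standard characterization of relative compactness in the weak topology of a Hilbert space. First I would recall that, according to the estimate proved in Section \ref{exist} and the analogue of \fref{conservA} at the level of $\calX^\e_\omega$ (or more directly the uniform bound $\E[\|\calX^\e_\omega(z)\|^2_{L^2(\Rd)}]\leq C\|\hat f_0(\omega,\cdot)\|^2_{L^2(\Rd)}$ obtained in the proof of Lemma \ref{inversion1} via the series expansion \fref{transferQ}), the family $(\calX^\e_{\omega_1}(z),\dots,\calX^\e_{\omega_M}(z))_\e$ is almost surely (hence in probability) contained in a fixed ball $\overline{B}(0,R)$ of $L^2(\mathbb{R}^{2M})$, with $R$ depending only on the frequencies $\omega_1,\dots,\omega_M$ and on $\|\hat f_0\|$. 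Indeed, each $\calX^\e_{\omega_j}(z)$ satisfies $\|\calX^\e_{\omega_j}(z)\|_{L^2(\Rd)}\leq C\|\hat f_0(\omega_j,\cdot)\|_{L^2(\Rd)}$ for some deterministic constant, by the energy estimates of Lemma \ref{borneL2} and the expressions \fref{formulea}--\fref{formuleb} together with the definition of $\calX^\e_\omega$ as the leading part of $\calP^{a,\e}_{\omega,\phi^\e_\omega}$.

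Next I would invoke the classical fact that a closed ball of a separable Hilbert space is metrizable and compact for the weak topology (Banach--Alaoglu plus separability). Therefore the laws of $(\calX^\e_{\omega_1}(z),\dots,\calX^\e_{\omega_M}(z))_\e$ are all supported on the single weakly compact metrizable set $\overline{B}(0,R)\subset L^2(\mathbb{R}^{2M})$ equipped with the weak topology. A family of probability measures all supported on a fixed compact set is automatically tight: for any $\mu>0$ one simply takes the compact set $\overline{B}(0,R)$ itself, which carries all the mass. This yields the tightness statement of Proposition \ref{tightness} directly, with no quantitative modulus-of-continuity estimate required.

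The only point that deserves a line of care is the passage from the deterministic $L^2$ bound on the wavefield $\hat p^\e_\omega$ to a deterministic (or at least almost sure uniform) $L^2$ bound on $\calX^\e_\omega$. This I would handle exactly as in Lemma \ref{inversion1}: from the series $\calX^\e_\omega=\sum_{n\geq 0}\calX^{\e,n}_\omega$ defined in \fref{transferQ}--\fref{defXen}, the bound $\E[\|\calX^{\e,n}_\omega(z)\|^2_{L^2(\Rd)}]\leq C^{2n}(nr_\calS+r_{\hat f_0})^2/n!$ (a direct adaptation of the estimate on $\tilde R^\e_n$ from the proof of Proposition \ref{evaprop}, using the uniform bound $T_{n,\eps}\leq(n^{1/2}C)^{2n}$ of \fref{defTV}) gives a summable series and hence $\E[\|\calX^\e_\omega(z)\|^2_{L^2(\Rd)}]\leq C\|\hat f_0(\omega,\cdot)\|^2_{L^2(\Rd)}$ uniformly in $\eps$. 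Since the joint family lives in $L^2(\mathbb{R}^{2M})=\bigotimes_{j=1}^M L^2(\Rd)$ (identifying a tuple with the function $(x_1,\dots,x_M)\mapsto\prod_j\calX^\e_{\omega_j}(z,x_j)$, or rather working componentwise on the product space), the uniform second-moment bound transfers to the product, and Markov's inequality gives, for every $\delta>0$, a radius $R_\delta$ with $\sup_\e\Pro(\|(\calX^\e_{\omega_1}(z),\dots,\calX^\e_{\omega_M}(z))\|_{L^2(\mathbb{R}^{2M})}>R_\delta)<\delta$. Combined with the weak compactness of balls, this is precisely tightness in the weak topology. I do not anticipate any genuine obstacle here; the main (minor) subtlety is simply to be clean about which norm bound one is using and to invoke metrizability of bounded sets in the weak topology so that ``tight'' has its usual meaning.
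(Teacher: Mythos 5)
Your proposal is correct and follows essentially the same route as the paper: a uniform-in-$\e$ control of $\|\calX^\e_\omega(z)\|_{L^2(\Rd)}$ combined with the weak compactness (and metrizability) of balls in the separable Hilbert space $L^2$. The only cosmetic difference is that the paper gets the bound from the exact almost-sure conservation $\|\calX^\e_\omega(z)\|_{L^2(\Rd)}=\|\phi^\e_\omega\|_{L^2(\Rd)}$ (Lemma \ref{consXe}, proved from the skew-symmetric structure of \eqref{eqdiff0} as in \eqref{conservA}), whereas your fallback via the second-moment bound and Markov's inequality is slightly weaker but equally sufficient.
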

\begin{proof} It suffices to show that the family of complex-valued random variable $(\big<\calX^\e_{\omega_j}(z),\phi_j\big>)_{\e,j\in\{1,\dots,M\}}$ is tight on $\mathbb{C}^M$ for all $\phi_j\in L^2(\Rd)$ ($j\in\{1,\dots,M\}$), which amounts to prove that
\[ \forall \eta>0, \quad\exists \mu>0 \quad \text{such that}\quad\overline{\lim_{\e\to 0}} \mathbb{P}\Big(\sum_{j=1}^M\vert \big<\calX^\e_{\omega_j}(z),\phi_j\big>\vert ^2>\mu \Big)\leq \eta. \]
This is a direct consequence of the following lemma:
\begin{lem}\label{consXe} We have, for all $z\in [0,L]$,
\[\|\calX^\e_{\omega}(z)\|_{L^2(\Rd)}=\|\phi^\e_{\omega}\|_{L^2(\Rd)}.\]
\end{lem}
The proof of the lemma is left to the reader. Since $\calX^\e_{\omega}$ has sufficient regularity (i.e. at least $\calC^1([0,L],L^2(\Rm^2))$ almost surely), it suffices to adapt the proof of \fref{conservA} to obtain the result.
\end{proof}

 \subsubsection{Moment of order one}

\begin{prop}\label{moment1}
For all  $z\in[0,L]$ and $\phi\in \calC^{\infty}_0(\Rd)$, we have 
\[\lim_{\e\to 0}\E\big[\big<\calX^\e_{\omega}(z),\phi\big>_{L^2(\Rd)}\big]=\lim_{A\to +\infty }\E\big[\big<\calX^A_{\omega}(z),\phi\big>_{L^2(\Rd)}\big].\]
\end{prop}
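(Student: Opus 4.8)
The strategy is to expand both $\calX^\e_\omega(z)$ and $\calX^A_\omega(z)$ in their respective Duhamel series and show that the expectation of each term converges, then control the tails uniformly so the series and the limit can be interchanged. The key point is that in the expansion \fref{defXen} of $\calX^\e_\omega$, only even-order terms survive taking the expectation (by the symmetry of $V$), and among those, Proposition \ref{LRM} tells us that the only surviving contribution in the limit $\e\to 0$ comes from pairing the factors $\hat V(u_j/\e,q_j)$ two by two, each pair producing a kernel $C_\mathfrak{H}\hat R(q_\alpha,q_\beta)/|u_\alpha-u_\beta|^{\mathfrak{H}}$. This matches exactly the covariance structure of the Gaussian measures $w(dr,q)$ appearing in $\calX^A_\omega$ once one uses the representation $|u-v|^{-\mathfrak{H}} = \tilde c_\mathfrak{H}\int e^{ir(u-v)}|r|^{\mathfrak{H}-1}dr$ as in the proof of Proposition \ref{noback}.

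First I would write, using Lemma \ref{inversion1} (which applies verbatim to $\calX^\e_\omega$ after extending it to cover the $n_\e$-truncation-free series, or more simply using the already-established bound $\E[\|\calT^{n,\e}_\omega(z)\|^2_{L^2}] \leq C^{2n}(nr_\calS+r_{\hat f_0})^2/n!$ from the proof of \fref{IL}),
\[
\E\big[\big<\calX^\e_\omega(z),\phi\big>_{L^2(\Rd)}\big] = \sum_{n\geq 0}\E\big[\big<\calX^{\e,n}_\omega(z),\phi\big>_{L^2(\Rd)}\big],
\]
and similarly for $\calX^A_\omega$ with the bound from \fref{estba} and the proof of Proposition \ref{eqXA}. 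Since the $n$-th term of $\calX^\e_\omega$ has expectation zero for $n$ odd, and since Proposition \ref{LRM} gives a uniform-in-$\e$ bound $C^n n^{n/2}(nr_\calS+r_{\hat f_0})^2/n!$ on each even term (which is summable in $n$), dominated convergence for series reduces the problem to showing, for each fixed even $n$,
\[
\lim_{\e\to 0}\E\big[\big<\calX^{\e,n}_\omega(z),\phi\big>_{L^2(\Rd)}\big] = \lim_{A\to+\infty}\E\big[\big<\calX^{A,n}_\omega(z),\phi\big>_{L^2(\Rd)}\big].
\]
Here I would also need to check that the tail $\sum_{n\geq N}$ of the $\calX^A_\omega$ series, after taking $\E[\langle\cdot,\phi\rangle]$, is bounded uniformly in $A$ by something summable; this follows from the moment bound on $\Lambda_\alpha(B^A_H(q))$ in Lemma \ref{techBA} together with \eqref{eeq1}, or directly from the Gaussian Wick formula applied to \fref{expXA}, which again produces a sum over pairings bounded by $C^n n^{n/2}/n!$.

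The core computation is then the fixed-$n$ identity. Using the second statement of Proposition \ref{LRM} with $\varphi_\e$ the bounded oscillatory factor $e^{iG^\e_n(\mathbf u^{(n)},\mathbf q^{(n)})}$ (recall $G^\e_n \to G_n$ as $\e\to 0$ since $\e^2|Q_j|^2/k_\omega^2 \to 0$ on the relevant bounded sets, by Taylor-expanding $\lambda^r_{\e,\omega}$), together with Fubini and the boundedness of $m$ via \fref{hypmomentm}, I would obtain
\[
\lim_{\e\to 0}\E\big[\big<\calX^{\e,n}_\omega(z),\phi\big>_{L^2(\Rd)}\big]
= (ik_\omega)^n C_\mathfrak{H}^{n/2}\!\!\int\!\! d\kappa\,\overline{\hat\phi(\kappa)}\!\!\int_{\calS^n}\!\!\E[\mathbf m(d\mathbf q^{(n)})]\!\!\int_{\Delta_n(z)}\!\!d\mathbf u^{(n)}\,e^{iG_n}\phi^0_\omega(Q_n)\!\!\sum_{\mathcal F}\prod_{(\alpha,\beta)\in\mathcal F}\frac{\hat R(q_\alpha,q_\beta)}{|u_\alpha-u_\beta|^{\mathfrak{H}}},
\]
where I also used $\phi^\e_\omega \to \phi^0_\omega$ (from \fref{defphi0} and the convergence of $\lambda_{\e,\omega}$). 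On the other side, expanding $\calX^{A,n}_\omega$ from \fref{expXA}, taking the expectation with respect to the Gaussian measures $w(dr,q)$ (independent of $m$), and applying Wick's theorem gives a sum over the same pairings $\mathcal F$; each pair contributes $\int_{(-A,A)^2}\frac{e^{ir_\alpha u_\alpha}e^{ir_\beta u_\beta}}{|r_\alpha|^{H-1/2}|r_\beta|^{H-1/2}}\E[w(dr_\alpha,q_\alpha)w(dr_\beta,q_\beta)]$, and since $w^\ast(dr,q)=w(-dr,q)$ this equals $\frac{C_\mathfrak{H}\Gamma(2H-1)\sin(\pi H)}{\pi}\hat R(q_\alpha,q_\beta)\int_{-A}^A \frac{e^{ir(u_\alpha-u_\beta)}}{|r|^{2H-1}}dr$. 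Letting $A\to+\infty$ and using $\int e^{ir(u-v)}|r|^{-(2H-1)}dr = \tilde c_\mathfrak{H}^{-1}|u-v|^{-\mathfrak{H}}$ with $2H-1 = 1-\mathfrak{H}$ and $\tilde c_\mathfrak{H} = \Gamma(2H-1)\sin(\pi H)/\pi$, one recovers exactly the same expression, which establishes the proposition.

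**Main obstacle.** The delicate point is not the algebra of matching the two limits but the justification of the interchanges: passing the $\e\to 0$ limit inside the infinite series requires a dominating bound that is uniform in $\e$ and summable in $n$, and Proposition \ref{LRM} delivers $C^n n^{n/2}/n!$, which is indeed summable — but one must be careful that the prefactors (the $\kappa$-integral over the growing support $\{|\kappa|\leq nr_\calS + r_{\hat f_0}\}$, the $L^2(\Rd)$ pairing with $\phi$, and the moments of $|m|(\calS)$) only contribute polynomially in $n$, which they do by \fref{hypmomentm} and $\hat f_0,\phi$ compactly supported. A second subtlety is the uniform-in-$A$ control of the $\calX^A_\omega$ tail, handled via \eqref{eeq1} and Lemma \ref{techBA}. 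Finally, the convergence $G^\e_n \to G_n$ and $\phi^\e_\omega \to \phi^0_\omega$ must be quantified well enough (uniformly on the bounded $\kappa,\mathbf q$ domains) that the oscillatory factor $\varphi_\e$ in Proposition \ref{LRM} is genuinely a uniformly bounded convergent sequence; this is routine since $\lambda_{\e,\omega}(\kappa) = 1 - \e^2|\kappa|^2/(2k_\omega^2) + O(\e^4)$ uniformly for $|\kappa|$ bounded.
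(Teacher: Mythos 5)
Your proposal is correct and follows essentially the same route as the paper: term-by-term treatment of the Duhamel series justified by the summable bounds of Lemma \ref{inversion1}, the second statement of Proposition \ref{LRM} with the oscillatory factor $e^{iG^\e_n}$ to identify the $\e\to0$ limit of each even term as a sum over pairings of $C_\mathfrak{H}\hat R(q_\alpha,q_\beta)|u_\alpha-u_\beta|^{-\mathfrak{H}}$, and the Wick/Gaussianity of the measures $w(dr,q)$ to match this with $\lim_{A\to+\infty}\E[\langle\calX^{A,n}_\omega(z),\phi\rangle]$, with the uniform-in-$A$ tail controlled exactly as in Lemma \ref{inversion1s} via the pairing count and the bound $|u_1-u_2|^{2H-2}$. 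No substantive differences.
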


\begin{proof}
 According to Lemma \ref{inversion1}, it suffices to show the term by term convergence of the series defining $\calX^\e_{\omega}$. Moreover, since the integrand in \fref{defXen} is $L^1$ in all variables, we can invoke Fubini Theorem to permute order of integration.  Using the second point of Proposition \ref{LRM}, we have for $n=2n'$ (if $n$ is odd the limit is $0$),
 \[\begin{split}
\lim_{\e\to 0}\E[\big<\calX^{\e,2n'}_{\omega}(z),\phi\big>]&=(i  k_\omega)^{2n'}\int d\kappa \overline{\phi(\kappa)}\int_{\Delta_{2n'}(z)}\int_{ \calS^n} d\mathbf{u}^{(2n')}  \E[\mathbf{m}(d\mathbf{q}^{(2n')})]\\ 
&\quad \times e^{i G_n(\mathbf{u}^{(n)},\mathbf{q}^{(n)})}  \phi^0_\omega(Q_n)C_\mathfrak{H}^{n'} \sum_{\mathcal{F}_{2n'}}  \prod_{(\alpha,\beta)\in\mathcal{F}_{2n'}}  \frac{\hat{R}(q_\alpha,q_{\beta})}{\vert u_\alpha-u_\beta\vert^\mathfrak{H}},
 \end{split}\]
where the sum runs over the pairings $\mathcal{F}_{2n'}$ of $\{1,\dots,2n'\}$ and $G_n$ is defined by \eqref{defG}. We want to relate now the term above with $\calX^{A,2n'}_{\omega}$. We use first for this the Gaussianity of the measures $w$ to find
\[
C_\mathfrak{H}^{n'} \sum_{\mathcal{F}_{2n'}}\prod_{(\alpha,\beta)\in\mathcal{F}_{2n'}} \frac{\hat{R}(q_\alpha,q_{\beta}) }{\vert u_\alpha-u_\beta \vert^{\mathfrak{H}}}=\lim_{A\to+\infty}\E\Big[\prod_{m=1}^{2n'}  \int_{-A}^A \frac{e^{i r_m u_m}}{\vert r_m \vert^{H-1/2}}w(dr_m,q_m) \Big].
\]
Moreover, since on the one hand,
\be \label{maj}
\left|\E\Big[\prod_{m=1}^{2}  \int_{-A}^A \frac{e^{i r_m u_m}}{\vert r_m \vert^{H-1/2}}w(dr_m,q_m) \Big] \right| \leq C |u_1-u_2|^{2H-2}, \qquad H \in (1/2,1),
\ee
and on the other hand that the integrand in \fref{expXA2} is $L^1$ in all variables, we can invoke both the Fubini Theorem and dominated convergence to obtain, for all $n\geq1$
 \[\begin{split}
\lim_{\e\to 0}\E[\big<\calX^{\e,n}_{\omega}(z),\phi\big>]&=(i  k_\omega)^{n}\int d\kappa \overline{\phi(\kappa)}\int_{\Delta_{n}(z)}d\mathbf{u}^{(n)}\int_{\calS^n}\E[\mathbf{m}(d\mathbf{q}^{(n)})]e^{i G_n(\mathbf{u}^{(n)},\mathbf{q}^{(n)})}  \phi^0_\omega(Q_n)\\ 
&\qquad \qquad \qquad\times \lim_{A\to+\infty }\E\Big[\prod_{j=1}^{n} \int_{-A}^A \frac{e^{i r_j u_j}}{\vert r_j \vert^{H-1/2}}w(dr_j,q_j) \Big]\\
&=\lim_{A\to+\infty }\E[\big<\calX^{A,n}_{\omega}(z),\phi\big>].
 \end{split}\]
Above, $\phi^0_\omega=\lim_\e \phi^\e_\omega$ is given by \eqref{defphi0}. It just remains to show that limit and expectation can be taken term by term in the series defining $\calX^{A}_{\omega}$. This is the object of the next lemma.
\begin{lem}\label{inversion1s}
We have for all $z\in[0,L]$
\[ \lim_{A\to+\infty }\E[\big<\calX^{A}_{\omega}(z),\phi\big>]=\sum_{n\geq 0}\lim_{A\to+\infty }\E[\big<\calX^{A,n}_{\omega}(z),\phi\big>]<\infty.\]
\end{lem}
\begin{proof} This result is just a consequence of the fact that
\[\sum_{n\geq 1}\sup_{A\geq1}\E[\| \calX^{A,n}_{\omega}(z) \| ^2_{L^2(\Rd)}]^{1/2}<+\infty.
\]
As in Lemma \ref{inversion1}, we have using \eqref{hypmomentm},
\[\begin{split}
\E\Big[\|  \calX^{A,n}_{\omega}(z)\|^2_{L^2(\Rd)} \Big] &\leq  k^{2n}_\omega \int d\kappa \int_{\calS^{n}\times \calS^n} \E[ \vert \mathbf{m}\vert (d\mathbf{q}^{(n)}_1) \vert \mathbf{m}\vert (d\mathbf{q}^{(n)}_2)]\int_{\Delta_n(z)}d\mathbf{u}^{(n)}_1 \int_{\Delta_n(z)}  d\mathbf{u}^{(n)}_2\\
& \qquad \times \Big\vert \E\Big[\prod_{m=1}^{n} \int_{-A}^{A}\frac{e^{ir^1_m u^1_m}}{\vert r^1_m \vert ^{H-1/2}}w(dr^1_{m},q^1_m)\overline{\int_{-A}^{A}\frac{e^{ir^2_m u^2_m}}{\vert r^2_m \vert ^{H-1/2}}w(dr^2_{m},q^2_m)}\Big]\\
&\qquad  \times \Big\vert \vert \hat{f}_0(\omega,Q_{1,n})\hat{f}_0(\omega,Q_{2,n})\vert \\
&\leq (nr_\calS+r_{\hat{f}_0})^2\frac{C^{2n} }{(n!)^2} (2n-1)!! \Big[\int_{(0,z)^2} |u_1-u_2|^{2H-2} du_1 du_2 \Big]^{n}\\
&\leq  (nr_\calS+r_{\hat{f}_0})^2 \frac{C^{n} }{n!}.
\end{split}\]
Above, we used \fref{maj}, the term $(2n-1)!!=(2n)!/(2^{n} n!)$ is the number of pairings of $\{1,\dots,2n\}$, and the term $(n!)^2$ is a consequence of \fref{fact}. 
\end{proof}

This  concludes the proof of Proposition \ref{moment1}.
\end{proof}

\subsubsection{Arbitrary Order Moments}

In the forthcoming computations, all indices with the subscript 2 correspond to the complex conjugate terms.  

\begin{prop}\label{momentmulti}
We have for all $z\in[0,L]$, frequencies $(\omega_{1,1},\dots,\omega_{1,M_1},\omega_{2,1},\dots, \omega_{2,M_2})$, and test functions $(\phi_{1,1},\dots,\phi_{1,M_1},\phi_{2,1},\dots,\phi_{2,M_2})$ in $ \calC^{\infty}_0(\Rd)$,
\[\begin{split}
\lim_{\e\to0}\mathbb{E}\Big[\prod_{j_1=1}^{M_1}\big<\calX^\e_{\omega_{1,j_1}}(z)&,\phi_{1,j_1}\big>_{L^2(\Rd)} \prod_{j_2=1}^{M_2}\overline{\big<\calX^\e_{\omega_{2,j_2}}(z),\phi_{2,j_2}\big>_{L^2(\Rd)}}\Big]\\
&=\lim_{A\to+\infty} \E\Big[\prod_{j_1=1}^{M_1} \big<\calX^A_{\omega_{1,j_1}}(z),\phi_{1,j_1}\big>_{L^2(\Rd)}\prod_{p_2=1}^{M_2}\overline{\big<\calX^A_{\omega_{2,j_2}}(z),\phi_{2,j_2}\big>_{L^2(\Rd)}}\Big]<+\infty.
\end{split}\]
\end{prop}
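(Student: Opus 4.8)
The plan is to reduce the multi-point moment statement to the kind of term-by-term analysis already carried out for the first moment in Proposition \ref{moment1}, so the structure will closely mirror that proof. First I would expand each factor $\langle \calX^\e_{\omega_{i,j}}(z),\phi_{i,j}\rangle$ (and its conjugate) as the series $\sum_{n\geq 0}\langle\calX^{\e,n}_{\omega_{i,j}}(z),\phi_{i,j}\rangle$ given by \fref{decP}--\fref{defXen}, and expand the products so that the moment becomes a sum over multi-indices $(n_{1,1},\dots,n_{1,M_1},n_{2,1},\dots,n_{2,M_2})\in\mathbb{N}^{M_1+M_2}$ of expectations of products of the $\calX^{\e,n}$'s. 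The first thing to check is that this double series (over the multi-indices, and over the implicit expansion) is absolutely summable uniformly in $\e$, so that Fubini applies and we may study each term separately; this is the analogue of Lemma \ref{inversion1} and uses the bound $\E[\|\calX^{\e,n}_\omega(z)\|^2_{L^2}]\leq (nr_\calS+r_{\hat f_0})^2 C^{2n}/(n!)^2$ from the proof of \fref{IL}, combined with the generalized H\"older inequality to control the product of $M_1+M_2$ factors. The factorials $(n_{i,j}!)^{-2}$ coming from the simplex integrations \fref{fact} guarantee convergence of the resulting multiple series with room to spare, uniformly in $\e$ and in $A$.

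Next, for each fixed multi-index, the term is an integral against $\prod \overline{\phi_{i,j}}$ of a product of $N:=\sum n_{i,j}$ factors $\hat V(u/\e,q)=\Theta(\BB_\mathfrak{H}(u/\e,q))$, integrated over a product of simplices $\prod_{i,j}\Delta_{n_{i,j}}(z)$, together with the deterministic phases $e^{iG^\e_{n}}$, the measure $\E[\mathbf{m}(d\mathbf q)]$ (the $m$-field being independent of $\BB_\mathfrak{H}$), and the factors $\phi^\e_\omega(Q_{n})$. The key input is Proposition \ref{LRM}: after the scaling $u_j/\e$ and normalization by $\e^{-N(2-s)}$, the expectation $\E[\prod_j\Theta(\BB_\mathfrak{H}(u_j/\e,p_j))]$ is uniformly bounded (this controls the domination needed for dominated convergence across the whole simplex, since $|G^\e_n|$-phases and $\phi^\e_\omega$ are bounded) and, when $N$ is even, converges to $C_\mathfrak{H}^{N/2}\sum_{\mathcal F}\prod_{(\alpha,\beta)\in\mathcal F}\hat R(p_\alpha,p_\beta)|u_\alpha-u_\beta|^{-\mathfrak{H}}$, while for $N$ odd the contribution vanishes. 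When some individual $n_{i,j}$ are odd but $N$ is even, pairings across different factors arise, which is exactly why the statement couples all the frequencies together. Passing to the limit $\e\to0$ term by term then gives each limiting term as an integral of $\sum_{\mathcal F}\prod_{(\alpha,\beta)\in\mathcal F}\hat R(q_\alpha,q_\beta)|u_\alpha-u_\beta|^{-\mathfrak H}$ over the product simplex, paired against the limiting phases $e^{iG_n}$ and $\phi^0_\omega(Q_n)$ from \fref{defphi0} and \fref{defG}.

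It then remains to recognize this limit as the corresponding term in the expansion of $\lim_{A\to\infty}\E[\prod_{j_1}\langle\calX^A_{\omega_{1,j_1}}(z),\phi_{1,j_1}\rangle\prod_{j_2}\overline{\langle\calX^A_{\omega_{2,j_2}}(z),\phi_{2,j_2}\rangle}]$. For this I would use, exactly as in Proposition \ref{moment1}, the Gaussianity of the measures $w(\cdot,q)$ in \fref{defw}: Wick's theorem expresses $\E[\prod_{m=1}^{N}\int_{-A}^A \frac{e^{ir_mu_m}}{|r_m|^{H-1/2}}w(dr_m,q_m)]$ as a sum over pairings, and as $A\to\infty$ each paired factor converges to $\tilde c_\mathfrak{H}\hat R(q_\alpha,q_\beta)|u_\alpha-u_\beta|^{-\mathfrak H}$ up to the constant absorbed into $\sigma_H$; the bound \fref{maj}, namely $|u_\alpha-u_\beta|^{2H-2}$ with $2H-2\in(-1,0)$, is integrable over the simplex and provides the domination needed for Fubini and dominated convergence in $A$. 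Matching constants $C_\mathfrak{H}^{N/2}$ versus $\sigma_H^{N}$ and $(ik_\omega)^N$ then identifies the two sides term by term, and finally the term-by-term summation in $A$ is justified by the analogue of Lemma \ref{inversion1s} (the same $C^n/n!$ bound with the generalized H\"older inequality). The main obstacle, as in Proposition \ref{LRM} itself, is the bookkeeping of the cross-pairings between the $M_1+M_2$ factors together with the uniform-in-$\e$ domination of the non-leading Hermite contributions; once Proposition \ref{LRM} is granted, this is a matter of organizing the combinatorics of pairings rather than any new estimate, so I would state the argument by careful reference to the one-factor case and emphasize only the points where the several factors genuinely interact.
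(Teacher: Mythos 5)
Your proposal is correct and follows essentially the same route as the paper: expand into a multi-indexed Duhamel series, justify term-by-term passage of expectation and limit via a uniform-in-$\e$ summable bound (the paper's Lemma \ref{inversion2} bounds $\E[\vert\mathbf{X}^\e_{\mathbf{n}}\vert^2]$ directly by adapting Lemma \ref{inversion1} and the first part of Proposition \ref{LRM}, rather than invoking a generalized H\"older inequality on the individual factors, but this is a minor technical variant), then apply the second part of Proposition \ref{LRM} to get the sum over cross-pairings of the full index set, and finally match it with the Wick expansion of the $A$-regularized Gaussian moments using the domination \fref{maj}. No gap.
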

\begin{proof} Using \eqref{transferQ}, we have
\[
\prod_{j_1=1}^{M_1}\big<\calX^\e_{\omega_{1,j_1}}(z),\phi_{1,j_1}\big>_{L^2(\Rd)} \prod_{j_2=1}^{M_2}\overline{\big<\calX^\e_{\omega_{2,j_2}}(z),\phi_{2,j_2}\big>_{L^2(\Rd)}}=\sum_{j_1=1}^{M_1}\sum_{j_2=1}^{M_2}\sum_{n_{1,j_1}= 0}^{+\infty} \sum_{n_{2,j_2}= 0}^{+\infty}\mathbf{X}^{\e}_{\mathbf{n}}:=\sum_{\mathbf{J_n}} \mathbf{X}^{\e}_{\mathbf{n}},
\]
where  
\[ \begin{split}
\mathbf{X}^{\e}_{\mathbf{n}}&= \frac{i^{n_1-n_2}}{ \e^{n(s-1/2)}}  \int\cdots\int  \prod_{j_1=1}^{M_1}d\kappa_{1,j_1} k^{n_{1,j_1}}_{\omega_{1,j_1}} \overline{\phi_{1,j_1}(\kappa_{1,j_1})} \prod_{j_2=1}^{M_2}d\kappa_{2,j_2}k^{n_{2,j_2}}_{\omega_{2,j_2}}\phi_{2,j_2}(\kappa_{2,j_2}) \\
&\quad \times \prod_{j_1=1}^{M_1} \int_{\calS^{n_{1,j_1}}(\kappa_{1,j_1})} \mathbf{m}(d\mathbf{q}^{(n_{1,j_1})}_{1,j_1})\int_{\Delta_{n_{1,j_1}}(z)} d\mathbf{u}^{(n_{1,j_1})}_{1,j_1} \\
&\quad \times \prod_{j_2=1}^{M_2}  \int_{\calS^{n_{2,j_2}}(\kappa_{2,j_2})} \mathbf{m}(d\mathbf{q}^{(n_{2,j_2})}_{2,j_2})\int_{\Delta_{n_{2,j_2}}(z)} d\mathbf{u}^{(n_{2,j_2})}_{2,j_2} \\
&\quad \times\prod_{j_1=1}^{M_1} e^{iG^\e_n(\mathbf{u}^{(n_{1,j_1})}_{1,j_1},\mathbf{q}^{(n_{1,j_1})}_{1,j_1})} \phi^\e_{\omega_{1,j_1}}(Q_{1,j_1,n_{1,j_1}})\prod_{j_2=1}^{M_2}e^{-iG^\e_n(\mathbf{u}^{(n_{2,j_2})}_{2,j_2},\mathbf{q}^{(n_{2,j_2})}_{2,j_2})}\overline{\phi^\e_{\omega_{2,j_2}}(Q_{2,j_2,n_{2,j_2}})}\\
&\quad \times\prod_{j_1=1}^{M_1} \prod_{m_{1,j_1}=1}^{n_{1,j_1}}  \hat{V}(u_{1,j_1,m_{1,j_1}}/\e, q_{1,j_1,m_{1,j_1}})\prod_{j_2=1}^{M_2} \prod_{m_{2,j_2}=1}^{n_{2,j_2}} \overline{\hat{V}(u_{2,j_2,m_{2,j_2}}/\e, q_{2,j_2,m_{2,j_2}})},
\end{split}
\]
with
\[
n_1:=\sum_{j_1=1}^{M_1}n_{1,j_1},\qquad n_2:=\sum_{j_2=1}^{M_2}n_{2,j_2}, \qquad\text{and} \qquad n:=n_1+n_2.
\]
As before, we need to show that limit and expectation can be taken term by term.
\begin{lem}\label{inversion2}
The series $\sum_{\mathbf{J}_{\mathbf{n}}} \mathbf{X}^{\e}_{\mathbf{n}}$ is well-defined, and we have 
\[\mathbb{E}\Big[\prod_{j_1=1}^{M_1}\big<\calX^\e_{\omega_{1,j_1}}(z),\phi_{1,j_1}\big>_{L^2(\Rd)} \prod_{j_2=1}^{M_2}\overline{\big<\calX^\e_{\omega_{2,j_2}}(z),\phi_{2,j_2}\big>_{L^2(\Rd)}}\Big]=\sum_{\mathbf{J}_{\mathbf{n}}} \E[\mathbf{X}^{\e}_{\mathbf{n}}],\]
as well as
\[\lim_{\e\to0}\mathbb{E}\Big[\prod_{j_1=1}^{M_1}\big<\calX^\e_{\omega_{1,j_1}}(z),\phi_{1,j_1}\big> _{L^2(\Rd)}\prod_{j_2=1}^{M_2}\overline{\big<\calX^\e_{\omega_{2,j_2}}(z),\phi_{2,j_2}\big>_{L^2(\Rd)}}\Big]=\sum_{\mathbf{J}_{\mathbf{n}}}\lim_{\e\to0} \E[\mathbf{X}^{\e}_{\mathbf{n}}],\]
\end{lem}

\begin{proof} As usual, it suffices to show that
\begin{equation}\label{unifboundXe2}
\sum_{\mathbf{J}_{\mathbf{n}}} \sup_{\e\in(0,1)}\mathbb{E}[\vert \mathbf{X}^{\e}_{\mathbf{n}}\vert ^2]^{1/2}<+\infty.
\end{equation}
Adapting once more Lemma \ref{inversion1} and the first point of Proposition \ref{LRM}, we have for $s=2-\mathfrak{H}/2$,
\[\begin{split}
\mathbb{E}[\vert \mathbf{X}^{\e}_{\mathbf{n}}\vert ^2]&\leq  \int\cdots\int  \prod_{j_1=1}^{M_1}d\kappa_{1,j_1}\mathbf{1}_{\{\vert \kappa _{1,j_1}\vert < n_{1,j_1} r_\calS+r_{\hat{f}_0} \}}  \prod_{j_2=1}^{M_2}d\kappa_{2,j_2}\mathbf{1}_{\{\vert \kappa _{2,j_2}\vert < n_{2,j_2} r_\calS+r_{\hat{f}_0} \}} \\
&\quad \times \prod_{j_1=1}^{M_1}\frac{C^{n_{1,j_1}}}{n_{1,p_1}!}  \prod_{j_2=1}^{M_2} \frac{C^{n_{2,j_2}}}{n_{2,j_2}!} ,
\end{split}\]
which gives \eqref{unifboundXe2}.
\end{proof}
The latter lemma can be directly generalized to moments of $\calX^A_\omega$ using the fact that the measures are Gaussian. Using now the second point of Proposition \ref{LRM}, we obtain for $n=2n'$,
\[ \begin{split}
\lim_{\e\to0}\mathbb{E}[&\mathbf{X}^{\e}_{\mathbf{n}}]= i^{n_1-n_2}  \int\cdots\int  \prod_{j_1=1}^{M_1}d\kappa_{1,j_1}k^{n_{1,j_1}}_{\omega_{1,j_1}}\overline{\phi_{1,j_1}(\kappa_{1,j_1})} \prod_{j_2=1}^{M_2}d\kappa_{2,j_2}k^{n_{2,j_2}}_{\omega_{2,j_2}}\phi_{2,j_2}(\kappa_{2,j_2})\\
&\hspace{2cm}\times\E\Big[ \prod_{j_1=1}^{M_1}\prod_{j_2=1}^{M_2} \int_{\calS^{n_{1,j_1}}(\kappa_{1,j_1})\times\calS^{n_{2,j_2}}(\kappa_{2,j_2})} \mathbf{m}(d\mathbf{q}^{(n_{1,j_1})}_{1,j_1}) \mathbf{m}(d\mathbf{q}^{(n_{2,j_2})}_{2,j_2})\Big]\\
&\hspace{2cm}\times\int_{\Delta_{n_{1,j_1}}(z)} d\mathbf{u}^{(n_{1,j_1})}_{1,j_1} e^{iG_n(\mathbf{u}^{(n_{1,j_1})}_{1,j_1},\mathbf{q}^{(n_{1,j_1})}_{1,j_1})}  \phi^0_\omega(Q_{1,j_1,n_{1,j_1}})\\
&\hspace{2cm}\times   \int_{\Delta_{n_{2,j_2}}(z)} d\mathbf{u}^{(n_{2,j_2})}_{2,j_2}e^{-iG_n(\mathbf{u}^{(n_{2,j_2})}_{2,j_2},\mathbf{q}^{(n_{2,j_2})}_{2,j_2})}  \overline{\phi^0_\omega(Q_{2,j_2,n_{2,j_2}})} \\
&\hspace{2cm}\times C^{n'}_\mathfrak{H}\sum_{\mathcal{F}_{\mathbf{n}}}\prod_{(\alpha,\beta)\in\mathcal{F}_{\mathbf{n}}}\frac{\hat{R}(q_{\alpha},q_\beta)}{\vert u_{\alpha}-u_\beta\vert^{\mathfrak{H}}},
\end{split}
\]
where the sum runs over the pairings $\mathcal{F}_{\mathbf{n}}$ of 
\[
\mathbf{I}_{\mathbf{n}}:=\Big\{ (i,j_i,m_{i,j_i}) \in\{1,2\}\times\{1,\dots,M_i\}\times \{1,\dots,n_{i,j_i}\} \Big\}.
\]
Moreover, in the same way as in Proposition \ref{moment1},
\[\begin{split}
C^{n'}_\mathfrak{H}\sum_{\mathcal{F}_{\mathbf{n}}}&\prod_{(\alpha,\beta)\in\mathcal{F}_{\mathbf{n}}}\frac{\hat{R}(q_{\alpha},q_{q_\beta)}}{\vert u_{\alpha}-u_\beta\vert^{\mathfrak{H}}}\\
&=\lim_{A\to +\infty}\E\Big[\prod_{j_1=1}^{M_1}\prod_{m_{1,j_1}=1}^{n_{1,j_1}}   \int_{-A}^A \frac{e^{i r_{1,j_1,m_{1,j_1}} u_{1,j_1,m_{1,j_1}} }}{\vert r_{1,j_1,m_{1,j_1}} \vert^{H-1/2}}w(dr_{1,j_1,m_{1,j_1}},q_{1,j_1,m_{1,j_1}})\\
&\qquad \times \prod_{j_2=1}^{M_2}\prod_{m_{2,j_2}=1}^{n_{2,j_2}} \overline{  \int_{-A}^A \frac{e^{i r_{2,j_2,m_{2,j_2}} u_{2,j_2,m_{2,j_2}} }}{\vert r_{2,j_2,m_{2,j_2}} \vert^{H-1/2}}w(dr_{2,j_2,m_{2,j_2}},q_{2,j_2,m_{2,j_2}}) }\Big],
\end{split}\]
so that the proof is concluded by dominated convergence and the Fubini Theorem.
\end{proof}

\subsubsection{Conclusion}

We have now  everything needed to conclude the proof of convergence of $\calX_\omega^\e$. Consider first the limiting process $\calX_\omega$ solution to \fref{fracit}. Thanks to \fref{conservX}, the moment generating function of the random variable $Y(z)=\sum_{j=1}^M \langle \calX_{\omega_j}(z),\phi_j \rangle $ is perfectly defined for $z$ fixed in $[0,L]$, so that the law of $Y(z)$ is uniquely defined by its moments. Then, since $\calX_\omega^A$ converges in probability to $\calX_\omega$ according to Proposition \ref{cvXA}, and since all  moments of $Y^A(z):=\sum_{j=1}^M \langle \calX_{\omega_j}^A(z),\phi_j \rangle$ (test functions $\phi_j$ in $\calC^\infty_0(\Rm^2)$ are sufficient by density) converge according to Proposition \ref{momentmulti}, they necessarily converge to those of $Y(z)$. Furthermore, since the limits of the moments of $Y^\eps(z):=\sum_{j=1}^M \langle \calX_{\omega_j}^\eps(z),\phi_j \rangle$ are the same as those of $Y^A(z)$ according to Proposition \ref{momentmulti}, we conclude that the moments of $Y^\eps(z)$ converge to the moments of $Y(z)$. Proposition \ref{tightness} finally implies that $(\calX^\e_{\omega_1}(z),\dots,\calX^\e_{\omega_M}(z))_{\e}$ converges in law in $L^2(\Rm^2)$ equipped with the weak topology to $(\calX_{\omega_1}(z),\dots,\calX_{\omega_M}(z))$.

Finally,  convergence in law in $L^2(\Rd)$ for the strong topology is obtained thanks to Lemma \ref{consXe}, the Skorohod's representation theorem \cite[Theorem 6.7 pp.70]{billingsley}, and the following relation
 \[\lim_{\e\to0}\|\calX^\e_\omega (z)\|_{L^2(\Rd)}=\lim_{\e\to0}\|\phi^\e_\omega\|=\frac{1}{2}\|\hat{f}_0(\omega,\cdot )\|_{L^2(\Rd)}=\|\calX_\omega (z)\|_{L^2(\Rd)}.\]
This concludes the proof of Proposition \ref{ItoSc}.

\section{Proof of Theorem \ref{mainth1}}\label{proofpulse}

The proof is a direct application of Propositions \ref{evaprop} and \ref{noback} and Theorem \ref{mainth2}. As already mentioned in Section \ref{outline}, owing the convergence results of Propositions \ref{evaprop} and \ref{noback}, it is enough to check the convergence in law of 
\[p^\e_{2,L}(t,x):=\iint  e^{-i\omega t}e^{i\kappa\cdot x}\psi^\e_\omega(\kappa) \calX^{\e}_\omega(L,\kappa)d\omega d\kappa.
\]
Note that we cannot directly use here any continuity arguments of the map $(\calX^{\e}_\omega)_\omega \mapsto (p^\e_{2,L})_{t,x}$ for the appropriate topology since we only previously obtained pointwise information about $\calX^{\e}_\omega$ in the variable $\omega$. 

The proof is then done in three steps. First, we prove the tightness of $p^\e_{2,L}$ in the space $\mathcal{C}^0([-T,T],L^2_w(\Rm^2))$ for all $T>0$, where $L^2_w(\Rm^2)$ stands for the space $L^2(\Rm^2)$ equipped with the weak topology. Second, we characterize all the accumulation points using the convergence of the moments of $\calX^{\e}_\omega$. Finally, we obtain the convergence in $L^2((-\infty,+\infty)\times\Rd)$ and then in $\mathcal{C}^0([-T,T],L^2(\Rd))$, where $L^2(\Rd)$ is equipped with the strong topology.

We will use the notation
\[
p^\e_{2,L}(t,\phi):=\langle  p^\e_{2,L}(t,\cdot), \phi \rangle = (2 \pi)^2 \langle  \tilde{p}^\e_{2,L}(t,\cdot),\hat{\phi} \rangle,
\]
with 
\[   \tilde{p}^\e_{2,L}(t,\kappa)= \int   e^{-i\omega t} \psi^\e_\omega(\kappa) \calX^{\e}_\omega(L,\kappa)d\omega.\]
According to \cite[Theorem 7.3 pp. 70]{billingsley}, tightness is a consequence of the following lemma:    
\begin{lem}\label{tightnesspulse}
We have, for all $\phi\in L^2(\Rm^2)$,
\[
\lim_{M\to+\infty}\overline{\lim_{\e\to  0}}\Pro\Big(\sup_{t\in[-T,T]} \vert p^\e_{2,L}(t,\phi)\vert>M\Big)=0,
\]
and for all $\eta>0$,
\[
\lim_{\tau \to 0}\overline{\lim_{\e\to  0}}\Pro\big( \sup_{ \rvert t_1-t_2\lvert \leq \tau}\vert p^\e_{2,L}(t_1,\phi)-p^\e_{2,L}(t_2,\phi)\vert  >\eta\big)=0.
\]
\end{lem}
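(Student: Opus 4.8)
The plan is to reduce both statements to uniform moment bounds on $\calX^\e_\omega(L,\kappa)$ coming from the $L^2$-conservation of Lemma \ref{consXe}, together with equicontinuity in $t$ that is gained for free because the only $t$-dependence in $p^\e_{2,L}(t,\phi)$ sits in the smooth, bounded factor $e^{-i\omega t}$ integrated against a compactly supported, bounded amplitude in $\omega$. First I would fix $\phi\in L^2(\Rm^2)$ and write, as in the statement, $p^\e_{2,L}(t,\phi)=(2\pi)^2\langle \tilde p^\e_{2,L}(t,\cdot),\hat\phi\rangle$ with $\tilde p^\e_{2,L}(t,\kappa)=\int e^{-i\omega t}\psi^\e_\omega(\kappa)\calX^\e_\omega(L,\kappa)d\omega$. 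Recall from assumption \fref{hypsup} that $\hat f_0$ is compactly supported in $\omega$ and supported away from $0$, so $\omega$ ranges over a fixed compact set $K_\omega$ bounded away from the origin; on $K_\omega$ one has $|\psi^\e_\omega(\kappa)|=1/(2\sqrt{|\lambda_{\e,\omega}(\kappa)|})\leq C$ uniformly in $\e$ and $\kappa$, since $|\kappa|$ is restricted to the support of $\hat f_0$ (recall $\calX^\e_\omega(L,\kappa)$ is supported there). For the first bound, by Cauchy--Schwarz in $\omega$ and then Fubini,
\[
\E\big[\|\tilde p^\e_{2,L}(t,\cdot)\|_{L^2(\Rd)}^2\big]\leq C|K_\omega|\int_{K_\omega}\E\big[\|\calX^\e_\omega(L)\|_{L^2(\Rd)}^2\big]d\omega
=C|K_\omega|\int_{K_\omega}\tfrac14\|\hat f_0(\omega,\cdot)\|_{L^2(\Rd)}^2 d\omega<\infty,
\]
using Lemma \ref{consXe}; hence $\E[\sup_{t\in[-T,T]}|p^\e_{2,L}(t,\phi)|^2]\leq C\|\phi\|^2$ uniformly in $\e$ (the sup over $t$ is harmless since the bound is $t$-independent), and Markov's inequality gives the first claim.

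For the modulus-of-continuity estimate, I would exploit that $p^\e_{2,L}(t_1,\phi)-p^\e_{2,L}(t_2,\phi)=(2\pi)^2\int_{K_\omega}(e^{-i\omega t_1}-e^{-i\omega t_2})\langle \psi^\e_\omega\calX^\e_\omega(L),\hat\phi\rangle d\omega$, and $|e^{-i\omega t_1}-e^{-i\omega t_2}|\leq |\omega|\,|t_1-t_2|\leq C\tau$ for $|t_1-t_2|\leq\tau$, uniformly for $\omega\in K_\omega$. Therefore
\[
\sup_{|t_1-t_2|\leq\tau}|p^\e_{2,L}(t_1,\phi)-p^\e_{2,L}(t_2,\phi)|\leq C\tau\int_{K_\omega}|\langle\psi^\e_\omega\calX^\e_\omega(L),\hat\phi\rangle|\,d\omega,
\]
and the expectation of the right-hand side is bounded, by Cauchy--Schwarz and Lemma \ref{consXe} exactly as above, by $C\tau\|\phi\|$ uniformly in $\e$. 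A final application of Markov's inequality yields $\Pro(\sup_{|t_1-t_2|\leq\tau}|p^\e_{2,L}(t_1,\phi)-p^\e_{2,L}(t_2,\phi)|>\eta)\leq C\tau\|\phi\|/\eta$, which tends to $0$ as $\tau\to0$, uniformly in $\e$; taking $\overline{\lim}_{\e\to0}$ and then $\tau\to0$ gives the second claim.

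The only mildly delicate point — and the one I would write out carefully — is the uniform-in-$\e$ bound $|\psi^\e_\omega(\kappa)|\leq C$ on the relevant range of $(\omega,\kappa)$: this uses that $1-\e^2|\kappa|^2/k_\omega^2$ stays bounded away from $0$ because $|\kappa|$ lies in the fixed compact support of $\hat f_0$ while $k_\omega$ is bounded away from $0$ on $K_\omega$, hence $|\lambda_{\e,\omega}(\kappa)|=|\sqrt{1-\e^2|\kappa|^2/k_\omega^2+i\alpha_{\omega,\e}}|\to1$, and in particular $|\lambda_{\e,\omega}(\kappa)|\geq c>0$ for $\e$ small. Everything else is a routine combination of Cauchy--Schwarz in $\omega$, Fubini, the energy identity of Lemma \ref{consXe}, the elementary Lipschitz bound on $t\mapsto e^{-i\omega t}$, and Markov's inequality; no sharp estimate on $\calX^\e_\omega$ beyond its conserved $L^2$ norm is needed.
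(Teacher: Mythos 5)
Your proposal is correct and follows essentially the same route as the paper: both reduce the two claims to the deterministic $L^2$-conservation of $\calX^\e_\omega(L)$ from Lemma \ref{consXe} via Cauchy--Schwarz in $(\omega,\kappa)$, use the Lipschitz bound $\vert e^{-i\omega t_1}-e^{-i\omega t_2}\vert\leq \vert\omega\vert\,\vert t_1-t_2\vert$ for the modulus of continuity, and conclude with Markov's inequality. The only cosmetic difference is that you spell out the uniform bound on $\psi^\e_\omega$ (where one should also note that the exponential prefactor $e^{ik_\omega(\lambda_{\e,\omega}(\kappa)-1)L/\e^2}$ has modulus uniformly close to $1$ since $\mathrm{Im}(\lambda_{\e,\omega})/\e^2=O(\sqrt{\alpha_\e}/\e^2)\to 0$), which the paper absorbs silently into the constant.
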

\begin{proof} This lemma is a  direct consequence of Lemma \ref{consXe}. Let indeed $\phi\in L^2(\Rd)$. For the first point, we have  
\[\overline{\lim_{\e\to0}}\E\big[\sup_{t\in[-T,T]}\big\vert p^\e_{2,L}(t,\phi) \big\vert\Big] \leq C \|\hat{\phi}\|_{L^2(\Rd)} \int d\omega \; \overline{\lim_{\e\to0}}\E\big[ \|\calX^{\e}_\omega(L)\|_{L^2(\Rd)}\big]  <+\infty,\]
and the conclusion follows from the Markov inequality. In the same way, we have for the second point, for all $\tau >0$,
\[
\overline{\lim_{\e\to0}}\E\left[\sup_{\rvert u_1-u_2\lvert \leq \tau}\vert  p^\e_{2,L}(t_1,\phi)- p^\e_{2,L}(t_2,\phi) \lvert \right]\leq C \tau \int d\omega \vert \omega \vert \overline{\lim_{\e\to0}}\E\left[ \|\calX^\e_\omega(L)\|_{L^2(\Rd)}\right],
 \]    
which concludes the proof of Lemma \ref{tightnesspulse}.
\end{proof}

In order to identify the accumulation points, we consider the finite-dimensional distributions of $p^\e_{2,L}$. We remark first that thanks to Lemma \ref{consXe},
\[\sup_{t\in[-T,T]}\|p_{2,L}^\e(t)\|_{L^2(\Rm^2)}\] 
is uniformly bounded in $\e$ by a deterministic constant, and therefore that the finite-dimensional distributions are uniquely characterized by their moments. Using once again Lemma \ref{consXe} in order to justify the use of the Fubini Theorem and dominated convergence, it follows from Proposition \ref{ItoSc} that, for all $M\in \mathbb{N}^\ast$, $(t_{m})_{m\in\{1,\dots,M\}}\in[-T,T]^{M}$, $( \phi_{m})_{m\in\{1,\dots,M\}}\in (L^2(\Rm^2))^{M}$, we have 
\[
\lim_{\e\to 0}\mathbb{E}\Big[\prod_{m=1}^{M}p^\e_{2,L}( t_{m}, \phi_{m}) \Big]= \mathbb{E}\Big[\prod_{m=1}^{M} p^0_{L}( t_{m}, \phi_{m})\Big].
\]
In order to go back to the original pulse $p_{L}^\e$ defined by \eqref{pulse22}, we remark first that \fref{CVpp1} holds for all $\phi \in L^2(\Rm^2)$ thanks to the bound \fref{L23} and the density of $\calC_0^\infty(\Rm^2)$ in $L^2(\Rm^2)$. Hence, using Proposition \ref{noback}, we obtain the convergence in law of $p_{L}^\e$  to $p^0_{L}$ in $\mathcal{C}^0([-T,T],L_w^2(\Rm^2))$. To conclude, we use the Skorohod's representation theorem \cite[Theorem 6.7 pp.70]{billingsley}: there exist a probability space $(\tilde{\Omega},\tilde{\calT},\tilde{\Pro})$ and random variables $\mathfrak{p}_{L}^\e$ and $\mathfrak{p}_{L}^0$, with the same laws as $p_{L}^\e$ and $p_{L}^0$, respectively, and such that  
\[\lim_{\e\to0}\sup_{t\in[-T,T]}\vert \mathfrak{p}_{L}^\e(t,\phi)- \mathfrak{p}_{L}^0(t,\phi)\vert =0\qquad\tilde{\Pro}-\text{a.s},\]
for all $\phi\in L^2(\Rm^2)$. A direct consequence is that $\mathfrak{p}_{L}^\e$ converges $\tilde{\Pro}$-a.s. to $\mathfrak{p}_{L}^0$ in $L^2_w((-\infty,+\infty)\times\Rd)$, since using \eqref{L24} one has 
\[ \overline{\lim_{\e\to0}}\|  \mathfrak{p}_{L}^\e \|_{L^2((\infty,+\infty)\times \Rd)} \leq \frac{1}{2}\| f_0 \|_{L^2((-\infty,+\infty)\times \Rd)},\]
and the unit ball of $L^2((-\infty,+\infty)\times \Rd)$ is weakly compact. Moreover, this convergence also holds in $L^2((-\infty,+\infty)\times \Rd)$ with the strong topology because of the conservation relation
\[\frac{1}{2}\| f_0 \|_{L^2((-\infty,+\infty)\times \Rd)}= \|  \mathfrak{p}_{L}^0 \|_{L^2((-\infty,+\infty)\times \Rd)}=\lim_{\e\to0}\|  \mathfrak{p}_{L}^\e \|_{L^2((\infty,+\infty)\times \Rd)}.\]
As a result, using the Plancherel theorem 
\[I_\e:=\int d\omega dx \vert \tilde{\mathfrak{p}}^\e_ L(\omega,x)-\tilde{\Psi}_\omega(L,x)\vert^2\underset{\e\to0}{\longrightarrow}0\qquad\tilde{\Pro}-\text{a.s}\]
where
\[ \tilde{\mathfrak{p}}^\e_\omega (L,x)=\frac{1}{2\pi}\int e^{i\omega t} \mathfrak{p}_{L}^\e(t,x)  dt \qquad\text{and}\qquad\tilde{\Psi}_\omega(L,x)=\frac{1}{2\pi}\int e^{i\omega t} \mathfrak{p}_{L}^0(t,x)  dt.\]
Since $\hat{f}_0(\omega,\kappa)$ has  a compact support with respect to $\omega$, so do  $\tilde{\mathfrak{p}}^\e_\omega$ and $\tilde{\Psi}_\omega$ according to \fref{L23} and \fref{consITO}. The Jensen's inequality then yields 
\[\sup_{t\in[-T,T]} \| \mathfrak{p}_{L}^\e(t,\cdot)- \mathfrak{p}_{L}^0(t,\cdot)\|_{L^2(\Rd)}\leq C I_\e. \]
This proves the convergence in $\calC^0([-T,T],L^2(\Rd))$ and concludes the proof of Theorem \ref{mainth1} since almost sure convergence implies convergence in law.

\section{Proof of estimate \fref{L2p}} \label{proofL2p}

We use here the notation of sections \ref{intro} and \ref{outline}. The core of the proof is the following lemma:
\begin{lem} \label{estest}We have the estimate,
\[ 
\| \hat{p}^\eps_\omega \|_{L^2(\Rm^3)} \leq \frac{C}{\eps \sqrt{\alpha_\eps}} \| \hat{f}_0(\omega,\cdot)\|_{L^2(\Rm^2)}.
\]
\end{lem}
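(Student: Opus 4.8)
The natural route is an energy estimate for the Helmholtz equation \fref{helm1} in $L^2((-\infty,+\infty)\times\Rm^2)$, exploiting that the absorption $\alpha_\eps$ gives coercivity on the imaginary part, exactly as in the proof of Lemma \ref{borneL2} but now integrated over all of $z\in\Rm$ rather than just over the random slab. First I would multiply \fref{helm1} by $\overline{\check{p}^\eps_\omega}$ and integrate over $z\in\Rm$ and $x\in\Rm^2$; since $\check{p}^\eps_\omega$ and $\partial_z\check{p}^\eps_\omega$ decay at $z=\pm\infty$ (they are purely outgoing in the homogeneous half-spaces and belong to $W^{2,p}$ away from $z=L_S$ by Lemma \ref{lemex}), the integration by parts in $z$ produces no boundary terms at infinity, only the jump contribution at $z=L_S$ coming from the $\delta'(z-L_S)$ source, which is handled via the jump conditions \fref{jump}. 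Taking the imaginary part of the resulting identity, the real (Laplacian and $k_\omega^2/\eps^4$) terms drop out and one is left with
\[
\alpha_\eps \|\check{p}^\eps_\omega\|_{L^2(\Rm^3)}^2 \leq C\,\big|\,\langle \text{source term},\check{p}^\eps_\omega\rangle\,\big| + (\text{contribution of }\eps^s V),
\]
where the $\eps^s V$ term is purely real up to the $i\alpha_\eps$ (so it also contributes only through the real part) — more precisely, because $V$ is real, the term $\eps^{s-4}k_\omega^2\int m(dq)\hat V \check p^\eps_\omega(\kappa-q)\overline{\check p^\eps_\omega(\kappa)}$ is not obviously real after the $\kappa$-convolution, so one should instead keep it and bound it crudely: its imaginary part is $O(\eps^{s-4})$ times $\|\check p^\eps_\omega\|_{L^2}^2$, which is $o(\eps^{-4})\|\check p^\eps_\omega\|_{L^2}^2$ and will need to be absorbed, or more safely, one returns to the real part of the identity to control $\|\nabla\check p^\eps_\omega\|$ and feeds that back. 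The cleanest version is to mimic \fref{L21}–\fref{L22} verbatim but over $\Rm$ instead of $(0,L)$.

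Concretely, the steps I would carry out: (1) write the weak formulation of \fref{helm1} tested against $\overline{\check p^\eps_\omega}$, with the $\delta'(z-L_S)$ handled by $\langle \delta'(\cdot-L_S)\check f_0(\omega,\cdot/\eps),\overline{\check p^\eps_\omega}\rangle = -\check f_0(\omega,\cdot/\eps)\,\partial_z\overline{\check p^\eps_\omega}(L_S,\cdot)$, or better, pass to the Fourier variable $\kappa$ where \fref{helmholtzF} holds and where the boundary conditions \fref{CL1}–\fref{CL2} have already been derived; (2) take the imaginary part to get $\alpha_\eps\|\check p^\eps_\omega\|_{L^2}^2$ on the left together with the nonnegative boundary quantities $\int \mathrm{Re}(\lambda_{\eps,\omega})|\hat p^\eps_\omega(0)|^2$ and $\int\mathrm{Re}(\lambda_{\eps,\omega})|\hat p^\eps_\omega(L)|^2$ (which are $\geq 0$ by \fref{propsqrt} and can be discarded); on the right one gets $2\,\mathrm{Re}\int\sqrt{\lambda_{\eps,\omega}(\kappa)}\hat a^\eps_\omega(L_S^+,\kappa)\overline{\hat p^\eps_\omega(0,\kappa)}\,d\kappa$ exactly as in Lemma \ref{borneL2}; (3) bound that right-hand side by $\frac{\alpha_\eps\eps^2}{k_\omega}\|\hat p^\eps_\omega\|_{L^2((0,L)\times\Rm^2)}^2 + \ldots$? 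No — rather bound it via \fref{estP0} and \fref{L21} directly by $C\|\hat f_0(\omega,\cdot)\|_{L^2(\Rm^2)}^2$, which is already proved. This gives $\alpha_\eps\|\check p^\eps_\omega\|_{L^2(\Rm^3)}^2\leq C\|\hat f_0(\omega,\cdot)\|_{L^2(\Rm^2)}^2$, hence $\|\hat p^\eps_\omega\|_{L^2(\Rm^3)}\leq C\alpha_\eps^{-1/2}\|\hat f_0(\omega,\cdot)\|_{L^2(\Rm^2)}$ — but note the claimed bound has an extra $\eps^{-1}$.

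The extra $\eps^{-1}$ comes from the rescaling in the source: $\check f_0(\omega,x/\eps)$ has $\|\check f_0(\omega,\cdot/\eps)\|_{L^2(\Rm^2)} = \eps\|\check f_0(\omega,\cdot)\|_{L^2(\Rm^2)}$, but in \fref{helm1} the relevant source is as written (without rescaling of the target), whereas the boundary amplitude $\hat a^\eps_\omega(L_S^+,\kappa)$ in \fref{expa} involves $\hat f_0(\omega,\kappa)$ with the transverse Fourier variable already rescaled. Tracking this one factor of $\eps$ carefully through the change of variables $x\mapsto\eps x$ relating $\check p^\eps_\omega(z,x)$ and $\hat p^\eps_\omega(z,\kappa)$ is the only genuinely bookkeeping-sensitive point; I expect the statement to follow once the Plancherel identity $\|\check p^\eps_\omega(z,\eps\,\cdot)\|_{L^2(\Rm^2_x)} = \eps^{-1}\|\check p^\eps_\omega(z,\cdot)\|_{L^2(\Rm^2)}$ and $\|\hat p^\eps_\omega(z,\cdot)\|_{L^2(\Rm^2_\kappa)} = (2\pi)^{-1}\|\check p^\eps_\omega(z,\eps\,\cdot)\|_{L^2(\Rm^2_x)}$ is used to convert the clean estimate into the stated one. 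The main obstacle, such as it is, is thus not analytical depth but getting the $\eps$-powers and the normalization conventions consistent; once that is done, the estimate \fref{L2p} itself follows since $P-p$ solves \fref{helm1} with $\alpha_\eps=0$ minus the same equation with absorption, i.e. $(\Delta + k_\omega^2\eps^{-4}(1+\eps^s V\mathbf 1_{(0,L/\eps)}))(\check P^\eps_\omega-\check p^\eps_\omega) = -i\alpha_\eps \check p^\eps_\omega$, no source, so by the same energy identity applied to the difference (which is now outgoing everywhere and for which the homogeneous-medium radiation estimate gives $\|\check P^\eps_\omega - \check p^\eps_\omega\|\leq C\alpha_\eps\|\check p^\eps_\omega\|$), one concludes $\sup_z\|\check P^\eps_\omega(z,\cdot)-\check p^\eps_\omega(z,\cdot)\|_{L^2}\leq C\alpha_\eps^{1/2}\eps^{-1}\|\hat f_0(\omega,\cdot)\|$, and integrating (or taking sup) over the compactly-supported $\omega$-band and using Plancherel in $t$ gives \fref{L2p}.
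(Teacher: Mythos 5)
Your proposal has the right mechanism (the absorption gives coercivity on the imaginary part) and you correctly trace the factor $\eps^{-1}$ to the transverse rescaling, but as executed it only proves the estimate on the slab $(0,L)$, not on $\Rm^3$. Your concrete steps (2)--(3) are the energy identity of Lemma \ref{borneL2}: they use the transparent boundary conditions \fref{CL1}--\fref{CL2} at $z=0$ and $z=L$, and the absorption term they produce is $\frac{\alpha_\eps\eps^2}{k_\omega}\|\hat p^\eps_\omega\|^2_{L^2((0,L)\times\Rm^2)}$ --- that is exactly \fref{L21}, which is the easy half of the lemma and is simply quoted by the paper. That identity says nothing about $z<0$ or $z>L$, and you cannot simultaneously have $\alpha_\eps\|\check p^\eps_\omega\|^2_{L^2(\Rm^3)}$ on the left and boundary terms at $0$ and $L$ plus the term $2\,\mathrm{Re}\int\sqrt{\lambda_{\eps,\omega}}\,\hat a^\eps_\omega(L_S^+)\overline{\hat p^\eps_\omega(0)}$ on the right: the former belongs to a global identity, the latter to the slab identity. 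The exterior regions are where the real work is. There the paper writes $\hat p^\eps_\omega$ explicitly in terms of the boundary data (controlled via \fref{L23}, \fref{expa}, \fref{E2} and the jump conditions \fref{jump}) and integrates the decay $e^{-2k_\omega\mathrm{Im}(\lambda_{\eps,\omega}(\kappa))(z-L)/\eps^2}$ over $z>L$; for propagating modes $\mathrm{Im}(\lambda_{\eps,\omega})$ is only of order $\alpha_{\omega,\eps}=\alpha_\eps\eps^4/k_\omega^2$, so this integral contributes a factor $1/(\eps^2\alpha_\eps)$ --- it is the region $z>L$, not the slab, that saturates the claimed bound. Your alternative of running the identity ``over $\Rm$ instead of $(0,L)$'' could in principle capture the exterior, but you do not carry it out, and it is not free: since $\check p^\eps_\omega$ jumps across $z=L_S$, the pairing with $\delta'(z-L_S)$ must be obtained by integrating by parts on each half-line and collecting boundary terms, and the resulting source term involves $\partial_z\hat p^\eps_\omega(L_S,\cdot)$, whose $L^2_\kappa$ norm --- including its evanescent content, generated by backscattering from the slab rather than by the compactly supported $\hat f_0$ --- is not among the established estimates and needs its own argument.

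A secondary point on your last paragraph: the deduction of \fref{L2p} via a frequency-domain ``radiation estimate'' for the difference is not valid. The difference $v=P-p$ solves the wave equation \emph{without} absorption, with right-hand side $i\alpha_\eps p$; in the frequency domain this is the pure Helmholtz operator, whose inverse is not bounded on $L^2$, so there is no estimate of the form $\|\check P^\eps_\omega-\check p^\eps_\omega\|\leq C\alpha_\eps\|\check p^\eps_\omega\|$. The paper instead works in the time domain: an energy identity for $v$ with vanishing initial data, the Gronwall lemma and a Poincar\'e inequality in $t$, followed by a Sobolev embedding in $z$; this is where the square root $\alpha_\eps^{1/2}$ in \fref{L2p} comes from.
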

\begin{proof}
We already have an estimate on $(0,L)$ according to Lemma \ref{borneL2}, which is
\[ 
\| \hat{p}^\eps_\omega \|_{L^2((0,L)\times \Rm^2)} \leq \frac{C}{\eps \sqrt{ \alpha_\eps}} \| \hat{f}_0(\omega,\cdot)\|_{L^2(\Rm^2)},
\]
so that it remains to treat the domain $z \notin (0,L)$. Consider first the case $z>L$. Plugging $z=L$ into \fref{formulea} and \fref{formuleb}, and using the fact that $\hat{b}_\omega^\eps(L,\kappa)=0$, we find that $\hat{p}^\eps_\omega$ reads
\[
\hat{p}^\eps_\omega(z,\kappa)= e^{i k_\omega \lambda_{\eps,\omega}(\kappa)(z-L)/\e^2}\hat{p}^\eps_\omega(L,\kappa), \qquad z>L.
\]
We need to split the domain of integration in $\kappa$ in order to obtain appropriate estimates. Suppose first that $\eps^2 |\kappa|^2/k_\omega^2 \geq 1$ (evanescent modes), then $\textrm{Im}(\lambda_{\eps,\omega}(\kappa)) \geq C \sqrt{\alpha_{\eps,\omega}}$ by \fref{expsqrt}. Together with \fref{L23}, this yields
\[
\int_{L}^\infty dz \int_{\{\eps^2 |\kappa|^2/k_\omega^2 \geq 1\}}  d\kappa| \hat{p}^\eps_\omega (z,\kappa)|^2  \leq \frac{C}{\sqrt{\alpha_\eps} }\| \hat{f}_0(\omega,\cdot)\|^2_{L^2(\Rm^2)}.
\]
When $\eps^2 |\kappa|^2/k_\omega^2 \leq 1$ (propagative modes), consider the strictly decreasing function $f_b(x)=\sqrt{x^2+b^2}-x$, for $x \geq 0$. It satisfies $f_b(x) \geq f_b(1) \geq C b^2$ for $x\in[0,1]$, $0<b\ll1$ and some constant $C$. This then yields $\textrm{Im}(\lambda_{\eps,\omega}(\kappa)) \geq C \alpha_{\eps,\omega}$ for $x=1-\eps^2 |\kappa|^2/k_\omega^2$. Together with  \fref{L23}, we find
\[
\int_{L}^\infty dz \int_{\{\eps^2 |\kappa|^2/k_\omega^2 \leq 1\}} d\kappa | \hat{p}^\eps_\omega (z,\kappa)|^2  \leq \frac{C}{\eps^2 \alpha_\eps} \| \hat{f}_0(\omega,\cdot)\|^2_{L^2(\Rm^2)}.
\]
We turn now to the case $z \in (L_S,0)$. We have
\[
 \hat{p}^\e_\omega(z,\kappa)=\frac{1}{\sqrt{\lambda_{\e,\omega}(\kappa)}}\Big( \hat{a}^\e_\omega(L_S^+,\kappa)e^{ik_{\omega}\lambda_{\e,\omega}(\kappa)z/\e^2}+ \hat{b}^\e_\omega(0,\kappa)e^{-ik_{\omega}\lambda_{\e,\omega}(\kappa)z/\e^2}\Big).
\]
Owing \fref{expa}, the first term of the r.h.s is direct and yields a control by $C \| \hat{f}_0(\omega,\cdot)\|_{L^2(\Rm^2)}$. For the second one, we write 
\be \label{relab}
\frac{\hat{b}^\e_\omega(0,\kappa)}{\sqrt{\lambda_{\e,\omega}(\kappa)}}=\hat{p}^\e_\omega(0,\kappa)-\frac{\hat{a}^\e_\omega(L_S^+,\kappa)}{\sqrt{\lambda_{\e,\omega}(\kappa)}},
\ee
and obtain, thanks to \fref{L23}, again a control by $C \| \hat{f}_0(\omega,\cdot)\|_{L^2(\Rm^2)}$. Consider finally the case $z<L_S$, for which
\[
\hat{p}^\eps_\omega(z,\kappa)= e^{i k_\omega \lambda_{\eps,\omega}(\kappa)(L_S-z)/\e^2}\hat{p}^\eps_\omega(L_S^-,\kappa), \qquad z<L_S.
\]
The jump condition \fref{jump} yields $\hat{p}^\eps_\omega(L_S^-,\kappa)=\hat{p}^\eps_\omega(L_S^+,\kappa)-\hat f_0(\omega,\kappa)$, which, together with \fref{expa} and \fref{relab}, gives the expression
\[
\hat{p}^\eps_\omega(L_S^-,\kappa)=-\frac{\hat f_0(\omega,\kappa)}{2} \left(1+e^{-2i k_\omega \lambda_{\eps,\omega}(\kappa)L_S/\e^2}\right)+\hat{p}^\eps_\omega(0,\kappa)
e^{-i k_\omega \lambda_{\eps,\omega}(\kappa)L_S/\e^2}.
\]
Using again \fref{L23}, we then proceed as in the case $z>L$ and obtain the same estimate. Putting together all previous estimates ends the proof of the lemma.
\end{proof}
Owing the previous lemma, we can proceed to the proof. Let $v=P-p$, which satisfies
\[
\Delta v-\frac{1}{c^2_0}\Big(1+\e^s V\Big(\frac{z}{\e},\frac{x}{\e}\Big)\mathbf{1}_{(0,L)}(z)\Big)\partial^2_t v = i \alpha_\eps p, \qquad (t,z,x)\in (0,+\infty)\times\mathbb{R}\times\Rd,
\]
equipped with $v(0,\cdot)=\partial_t v(0,\cdot)=0$. Since $V$ is uniformly bounded by a deterministic constant, and $p \in \calC^0((0,+\infty),L^2(\Rm^3))$ according to Lemma \ref{estest}, it is a classical problem to construct solutions to the above equation which satisfies the energy conservation relation
\begin{align*}
&\frac{1}{2} \frac{d}{dt} \left( \| \nabla v(t)\|^2_{L^2(\Rm^3)}+\frac{1}{c^2_0}\int_{\Rm^2} \Big(1+\e^s V\Big(\frac{z}{\e},\frac{x}{\e}\Big)\mathbf{1}_{(0,L)}(z)\Big)| \partial_t v(t,z,x)|^2 dzdx\right)\\
& \hspace{3cm}= \textrm{Re} \left(i \alpha_\eps \int_{\Rm^3} p(t,z,x) \partial_t \overline{v}(t,z,x) dx dz \right).
\end{align*}
After integration and the use of the Cauchy-Schwarz and Young inequalities, it follows that
\[
\| \nabla v(t)\|^2_{L^2(\Rm^3)}+\| \partial_t v(t)\|^2_{L^2(\Rm^3)} \leq C \alpha_\eps^2 \int_0^t \| p(s)\|^2_{L^2(\Rm^3)} ds +C\int_0^t \| \partial_t v(s)\|^2_{L^2(\Rm^3)}.
\]
Since $v(t=0,\cdot)=0$, we can use the Poincar\'e inequality (w.r.t. $t$), which together with the Gronwall lemma yield
\[
\| v(t)\|^2_{H^1(\Rm^3)}+\| \partial_t v(t)\|^2_{L^2(\Rm^3)} \leq C \alpha_\eps^2 \int_0^t \| p(s)\|^2_{L^2(\Rm^3)} ds. 
\]
In order to apply Lemma \ref{estest}, we notice that
\[
\| p(s)\|_{L^2(\Rm^3)}=\eps \| p(s,\eps \cdot)\|_{L^2(\Rm^3)} \leq \eps \int_{\Rm} \| \hat{p}^\eps_\omega \|_{L^2(\Rm^3)} d\omega.
\]
Above, we used the fact that $\hat{p}^\eps_\omega $ has compact support according to the aforementioned lemma since $f_0$ does.
Standard Sobolev embeddings then yield
\[
\sup_{(t,z) \in (0,T) \times \Rm}\| v(t,z, \cdot)\|_{L^2(\Rm^2)}\leq  C \alpha_\eps^{1/2},
\]
which concludes the proof after rescaling $x$ by $\eps x$.


\end{document}